\newtheorem{theorem}{Theorem}[chapter]
\newtheorem{lemma}[theorem]{Lemma}
\newtheorem{proposition}[theorem]{Proposition}
\newtheorem{corollary}[theorem]{Corollary}
\theoremstyle{definition}
\newtheorem{definition}[theorem]{Definition}
\newtheorem{example}[theorem]{Example}
\theoremstyle{remark}
\newtheorem{remark}[theorem]{Remark}
\newtheorem*{assumption}{Assumption}
\numberwithin{section}{chapter}
\numberwithin{equation}{chapter}
\numberwithin{figure}{chapter}
\def\RR{\mathbb{R}}       
\def\Rd{\mathbb{R}^d}     
\def\NN{\mathbb{N}}       
\def\CC{\mathbb{C}}       
\def\PP{\mathbb{P}}
\def\TT{\mathbb{T}}
\def\ZZ{\mathbb{Z}}
\def\ud{\mathrm{d}}       
\def\vx{\boldsymbol{x}}   %
\def\vy{\boldsymbol{y}}   %
\def\vz{\boldsymbol{z}}
\def\vc{\boldsymbol{c}}
\def\va{\boldsymbol{a}}
\def\valpha{\boldsymbol{\alpha}}
\def\vtheta{\boldsymbol{\theta}}
\def\vvarphi{\boldsymbol{\varphi}}
\def\vbeta{\boldsymbol{\beta}}
\def\vxi{\boldsymbol{\xi}}
\def\ve{\boldsymbol{e}}
\def\vv{\boldsymbol{v}}
\def\vn{\boldsymbol{n}}
\def\vl{\boldsymbol{l}}
\def\v0{\boldsymbol{0}}
\def\vb{\boldsymbol{b}}
\def\vl{\boldsymbol{l}}
\def\veta{\boldsymbol{\eta}}
\def\v0{\boldsymbol{0}}
\def\vvartheta{\boldsymbol{\vartheta}}
\def\vw{\boldsymbol{w}}
\def\vTheta{\boldsymbol{\Theta}}
\def\vA{\mathsf{A}}
\def\vI{\mathsf{I}}
\def\vD{\mathsf{D}}
\def\vU{\mathsf{U}}
\def\vE{\mathsf{E}}
\newcommand{\norm}[1]{\left\lVert#1\right\rVert}  
\newcommand{\abs}[1]{\left\lvert#1\right\rvert}   
\def\Banach{\mathcal{B}}
\def\Hilbert{\mathcal{H}}
\def\Fun{\mathcal{F}}
\def\Cont{\mathrm{C}}
\def\Leb{\mathrm{L}}
\def\Linfty{\mathbb{L}_{\infty}}
\def\Span{\mathrm{span}}
\def\Sign{\mathrm{sign}}
\def\Space{\mathcal{N}}
\def\Kset{\mathcal{K}}
\def\Aset{\mathcal{A}}
\def\Eset{\mathcal{E}}
\def\Sset{\mathcal{S}}
\def\Yset{\mathcal{Y}}
\def\VecSpace{\mathcal{V}}
\def\Domain{\Omega}
\def\risk{\mathcal{L}}
\def\svm{\mathcal{T}}
\def\measure{\upsilon}
\def\Borel{\mathcal{B}}
\def\Gateaux{d_G}
\def\GateauxNorm{\iota}
\def\Frechet{d_F}
\def\adjK{K'} 
\def\normopt{\mathcal{P}}
\def\order{o}
\def\adjphi{\phi'}
\def\adjvarphi{\varphi'}
\def\ii{\mathrm{i}}
\def\lp{\mathrm{l}_p}
\def\lq{\mathrm{l}_q}
\def\lr{\mathrm{l}_r}
\def\lone{\mathrm{l}_1}
\def\linfty{\mathrm{l}_{\infty}}
\def\czero{\mathrm{c}_0}
\def\ltwo{\mathrm{l}_2}
\def\lspace{\mathrm{l}}
\def\DualMeasure{\mathcal{M}}
\def\supp{\mathrm{supp}}
\def\diag{\mathrm{diag}}
\begin{document}

\frontmatter

\title[Generalized Mercer Kernels and RKBSs]{Generalized Mercer Kernels and Reproducing Kernel Banach Spaces}


\author{Yuesheng Xu\footnote{The first author is a professor emeritus of Syracuse University, Syracuse, NY 13244, USA.}}
\address{School of Data and Computer Science, Guangdong Province Key Laboratory of Computational Science, Sun Yat-Sen University, Guangzhou, Guangdong 510275 P. R. China}
\email{xuyuesh@mail.sysu.edu.cn, yxu06@syr.edu}

\author{Qi Ye\footnote{The second author is a corresponding author.}}
\address{School of Mathematical Sciences, South China Normal University, Guangzhou, Guangdong 510631 P. R. China}
\email{yeqi@m.scnu.edu.cn, qiye@syr.edu}

\date{}

\subjclass[2010]{Primary 68Q32, 68T05; Secondary 46E22, 68P01}

\keywords{Reproducing Kernel Banach Spaces, Generalized Mercer Kernels, Positive Definite Kernels, Machine Learning, Support Vector Machines, Sparse Learning Methods.}


\begin{abstract}

This article studies constructions of reproducing kernel Banach spaces (RKBSs) which may be viewed as a generalization of reproducing kernel Hilbert spaces (RKHSs).
A key point is to endow Banach spaces with reproducing kernels such that machine learning in RKBSs can be well-posed and of easy implementation.
First we verify many advanced properties of the general RKBSs such as density, continuity, separability, implicit representation, imbedding, compactness, representer theorem for learning methods, oracle inequality, and universal approximation.
Then, we develop a new concept of generalized Mercer kernels to construct $p$-norm RKBSs for $1\leq p\leq\infty$.
The $p$-norm RKBSs preserve the same simple format as the Mercer representation of RKHSs.
Moreover, the $p$-norm RKBSs are isometrically equivalent to the standard $p$-norm spaces of countable sequences. Hence, the $p$-norm RKBSs possess more geometrical structures than RKHSs including sparsity.
To be more precise, the suitable countable expansion terms of the generalized Mercer kernels can be used to represent the pairs of Schauder bases and biorthogonal systems of the $p$-norm RKBSs such that the generalized Mercer kernels become the reproducing kernels of the $p$-norm RKBSs.
The generalized Mercer kernels also cover many well-known kernels, for example, min kernels, Gaussian kernels, and power series kernels.
Finally, we propose to solve the support vector machines in the $p$-norm RKBSs, which are to minimize the regularized empirical risks over the $p$-norm RKBSs.
We show that the infinite dimensional support vector machines in the $p$-norm RKBSs can be equivalently transferred to finite dimensional convex optimization problems such that we obtain the finite dimensional representations of the support vector machine solutions for practical applications.
In particular,
we verify that some special support vector machines in the $1$-norm RKBSs are equivalent to the classical $1$-norm sparse regressions. This gives fundamental supports of a novel learning tool called sparse learning methods to be investigated in our next research project.

\end{abstract}

\maketitle

\tableofcontents


\mainmatter

\chapter{Introduction}\label{char-Intr}

Machine learning in Hilbert spaces has become a useful modeling and prediction tool in many areas of science and engineering. Recently, there was an emerging interest in developing learning algorithms in Banach spaces \cite{Zhou2003, MicchelliPontil2004, ZhangXuZhang2009}. Machine learning is usually well-posed in reproducing kernel Hilbert spaces (RKHSs). It is desirable to solve learning problems in Banach spaces endowed with certain reproducing kernels. Paper \cite{ZhangXuZhang2009} introduced a concept of reproducing kernel Banach spaces (RKBSs) in the context of machine learning by employing the notion of semi-inner products. The use of semi-inner products in construction of  RKBSs has its limitation. The main purpose of this paper is to systematically study the construction of RKBSs without using the semi-inner products.

\section{Machine Learning in Banach Spaces}\label{s:Motivation}
\sectionmark{Motivations of Machine Learning in Banach Spaces}

After the success of machine learning in Hilbert spaces, people naturally would like to know whether machine learning can be achieved in Banach spaces~\cite{Zhou2003, MicchelliPontil2004} because a Hilbert space is a special case of Banach spaces. There are many advantages of machine learning in Banach spaces. Banach spaces may have richer geometrical structures than Hilbert spaces. Hence, it is feasible to develop more efficient learning methods in Banach spaces than in Hilbert spaces. In particular, a variety of norms of Banach spaces can be employed to construct the function spaces and to measure the margins of the objects even if they do not satisfy the parallelogram law of a Hilbert space. For example,
we may develop the sparse learning methods in the $1$-norm space, making use of the geometrical property of a ball of the $1$-norm. This may not be accomplished in the $2$-norm space due its lack of the geometrical property. A special example of such learning methods is the sparse regression
\[
\min_{\vxi\in\lone}\left\{\frac{1}{2N}\norm{\vy-\vA\vxi}_2^2+\sigma\norm{\vxi}_1\right\}.
\]

It is well-known that the learning methods in Hilbert spaces have vigorous abilities. The main reason is that the certain Hilbert spaces can be endowed with reproducing kernels such that the solution of a learning problem can be written as the finite dimensional kernel-based representations for practical implementations and computations.
Informally, the reproducing properties can reconstruct the functions values using the reproducing kernels.
In a Hilbert space, the reproducing properties are defined by its inner product. Clearly, a Banach space may not always possess an inner product.
Nevertheless, paper \cite{ZhangXuZhang2009} shows that the reproducing properties can still be achieved with the semi-inner products. This indicates that the machine learning can be well-posed in the Banach spaces having the semi-inner-products.
However, the original definitions of RKBSs require the reflexivity condition. We know that the infinite dimensional $\lone$ space is a Banach space but it is not reflexive. There is a question whether the reproducing properties can be defined in the general Banach spaces. Here, we redefine the reproducing properties in the dual bilinear products and investigate the learning solutions by the associated kernels.
Moreover, we complete and improve the theoretical analysis of RKBSs.
This can give the support of theoretical analysis to develop another vigorous tools for machine learning such as classifications and data mining.
Then the reproducing properties given in Banach spaces could be a feasible way to obtain the kernel-based solutions of learning problems.

\section{Overview of Kernel-based Function Spaces}\label{s:Overview-Ker-FunSpace}
\sectionmark{Overview of Kernel-based Function Spaces}

Kernel-based methods have been a focus of attention in high dimensional approximation and machine learning.
The first deep investigation of reproducing kernels went back to paper~\cite{Aronszajn1950}.
There is another good source~\cite{Meschkowski1962} for RKHSs.

More recent, books~\cite{Buhmann2003,Wendland2005,Fasshauer2007} show how to do scattered data approximation and meshfree approximation
in native spaces induced by radial basis functions and (conditionally) positive definite kernels,
and books~\cite{Wahba1990,ScholkopfSmola2002,BerlinetThomas-Agnan2004,SteinwartChristmann2008,HastieTibshiraniFriedman2009} give another theory for statistical learning in RKHSs. Actually,
the native spaces and the RKHSs are the equivalent concepts.
Paper~\cite{SchabackWendland2006} connects machine learning and meshfree approximation in RKHSs and native spaces.
Papers~\cite{FasshauerYe2011Dist,FasshauerYe2011DiffBound} further establish
a connection of reproducing kernels and RKHSs with Green functions and generalized Sobolev spaces driven by differential operators and possible
boundary operators.

Currently people exert a great interest in generalization of kernel-based methods in Hilbert spaces to Banach spaces,
because Banach spaces have more geometrical structures produced by their norms of various kinds.
For the meshfree approximation methods, paper~\cite{EricksonFasshauer2008} extends native spaces to Banach spaces called
generalized native spaces to do the high dimensional interpolations.
In another way, paper~\cite{Zhou2003} shows the capacity of the learning theory in Banach spaces by using the imbedding properties of
RKHSs, and papers~\cite{MicchelliPontil2004,ArgyriouMicchelliPontil2009,ArgyriouMicchelliPontil2010} start with the representer theorem for learning methods in Banach spaces. Moreover, paper~\cite{ZhangXuZhang2009} first proposes a new concept of RKBSs for machine learning.
Combining the ideas of generalized native spaces and RKBSs, recent paper~\cite{FasshauerHickernellYe2013} ensures that we obtain the finite dimensional representations of support vector machine solutions in Banach spaces in terms of the positive definite functions.
Paper~\cite{Ye2014RKBS} shows that the machine learning can be well-posed and of easy implementation in the Banach spaces endowed with the reproducing kernels.
Base on theory of RKBSs given in~\cite{ZhangXuZhang2009}, papers~\cite{SriperumbudurFukumizuLanckriet2011,SongZhang2011,GarciaPortal2012,ZhangZhang2012,VillmannHaaseKastner2013,SongZhangHickernell2013}
give other applications of RKBSs such as embedding probability measures, least square regressions, sampling, regularized learning methods, and gradients based learning methods.
However, there are still couple important issues that need discussions:
\begin{itemize}
\item how to find the reproducing kernel of a given Banach space,
\item how to obtain representations of RKBSs by a given kernel.
\end{itemize}
In this article, we answer these two questions in a new setting of the reproducing properties of Banach spaces.

\section{Main Results}\label{s:Results}
\sectionmark{Main Results}

This article redefines the RKBSs given in Definition~\ref{d:RKBS} -- differently from \cite[Definition~1]{ZhangXuZhang2009} -- such that the reflexivity condition of RKBSs becomes unnecessary.
Roughly speaking, the RKBSs given here can be seen as the weaker format of the original reflexive RKBSs and the semi-inner-product RKBSs. The redefined RKBSs further cover the $1$-norm geometrical structures. This ensures that the sparse learning methods can be well-posed in RKBSs.
This development of RKBSs is to generalize the inner products of Hilbert spaces to the dual bilinear products of Banach spaces.
To be more precise, the reproducing properties of the RKHS $\Hilbert$ given by the inner products such as
\[
\left(f,K(\vx,\cdot)\right)_{\Hilbert}=f(\vx),\quad \left(K(\cdot,\vy),g\right)_{\Hilbert}=g(\vy),
\]
are generalized to the dual-bilinear-product format of the RKBS $\Banach$, that is,
\[
\langle f,K(\vx,\cdot)\rangle_{\Banach}=f(\vx),\quad \langle K(\cdot,\vy),g\rangle_{\Banach}=g(\vy),
\]
where $K$ is the reproducing kernel.

In Chapter~\ref{char:RKBS}, we verify the advanced properties of RKBSs such as density, continuity, separability, implicit representation, imbedding, and compactness.
Moreover, we generalize the representer theorem for learning methods over RKHSs to RKBSs including the regularized empirical risks
\[
\min_{f\in\Banach}
\left\{\frac{1}{N}\sum_{k\in\NN_N}L\left(\vx_k,y_k,f(\vx_k)\right)+R\left(\norm{f}_{\Banach}\right)\right\},
\]
where $\NN_N:=\left\{1,\ldots,N\right\}$
and the regularized infinite-sample risks
\[
\min_{f\in\Banach}
\left\{\int_{\Domain\times\RR}L\left(\vx,y,f(\vx)\right)\PP\left(\ud\vx,\ud y\right)+R\left(\norm{f}_{\Banach}\right)\right\},
\]
where $\Domain$ is a locally compact Hausdorff space (see Theorems~\ref{t:RKBS-opt-rep} and~\ref{t:RKBS-opt-rep-gen}).
In Proposition~\ref{p:RKBS-oracle}, we give the oracle inequality
\[
\PP^N\left(\widetilde{\risk}(s)\geq\tilde{r}^{\ast}+\kappa\right)\leq e^{-\tau},
\]
to measure the errors of the minimizer $s$ of the empirical risks over the closed ball $B_M$ of the RKBS $\Banach$
such that we obtain the approximation of the minimization $\tilde{r}^{\ast}$ of the infinite-sample risks over the closed set $\Eset_M$ of $B_M$ with respect to the uniform norm.
At the end of the chapter, we show that the dual space $\Banach'$ of the RKBS $\Banach$
has the universal approximation in Proposition~\ref{p:RKBS-universial-approx}, that is,
$\Banach'$ is dense in the space of continuous functions with the uniform norm. This property also indicates that the collection of all $\Eset_M'$ for $M>0$ covers the whole space of continuous functions.

In practical implementation of machine learning, the explicit representations of RKBSs and reproducing kernels need to be known specifically.
Now we look at the initial ideas of the generalized Mercer kernels and their RKBSs.
In tutorial lectures of RKHSs, we usually start with a simple example of the Hilbert space $\Hilbert$ spanned by the finite orthonormal basis
$\left\{\phi_k(x):=\sin(k\pi x):k\in\NN_n\right\}$ of $\Leb_2(-1,1)$, that is,
\[
\Hilbert:=\left\{f:=\sum_{k\in\NN_n}a_k\phi_k: a_1,\ldots,a_n\in\RR\right\},
\]
equipped with the norm
\[
\norm{f}_{\Hilbert}:=\left(\sum_{k\in\NN_n}\abs{a_k}^2\right)^{1/2}.
\]
Obviously, the normed space $\Hilbert$ is isometrically equivalent to the standard $2$-norm space $\ltwo^n$ composed of $n$-dimensional vectors; hence we check the reproducing properties of
the Hilbert space $\Hilbert$ by the well-defined reproducing kernel
\[
K(x,y):=\sum_{k\in\NN_n}\phi_k(x)\phi_k(y),\quad
\text{for all }x,y\in(-1,1),
\]
that is,
\[
\left(f,K(x,\cdot)\right)_{\Hilbert}=\sum_{k\in\NN_n}a_k\phi_k(x)=f(x),\quad
\left(K(\cdot,y),g\right)_{\Hilbert}=\sum_{k\in\NN_n}b_k\phi_k(y)=g(y),
\]
for all $f:=\sum_{k\in\NN_n}a_k\phi_k,g:=\sum_{k\in\NN_n}b_k\phi_k\in\Hilbert$.
For this special case, we find that the inner product of $\Hilbert$ is consistent with the integral product, that is,
\[
\int_{-1}^{1}f(x)g(x)\ud x=\sum_{j,k\in\NN_n}a_jb_k\int_{-1}^{1}\phi_j(x)\phi_k(x)\ud x=\sum_{k\in\NN_n}a_kb_k=\left(f,g\right)_{\Hilbert}.
\]

Based on the constructions of $\Hilbert$ we introduce another normed spaces with the same basis $\phi_1,\ldots,\phi_n$ and the same reproducing kernel $K$.
A natural idea is to extend the $2$-norm space to a $p$-norm space for $1\leq p\leq\infty$, that is,
\[
\Banach^p:=\left\{f:=\sum_{k\in\NN_n}a_k\phi_k: a_1,\ldots,a_n\in\RR\right\},
\]
equipped with a norm
\[
\norm{f}_{\Banach^p}:=
\left(\sum_{k\in\NN_n}\abs{a_k}^p\right)^{1/p},\text{ when }1\leq p<\infty,
\quad
\norm{f}_{\Banach^p}:=\sup_{k\in\NN_n}\abs{a_k},\text{ when }p=\infty.
\]
This construction ensures that the normed space $\Banach^p$ is isometrically equivalent to the standard $p$-norm space $\lp^n$ composed of $n$-dimensional real vectors. Moreover, the dual space of $\Banach^p$ is isometrically equivalent to $\Banach^q$ when $q$ is conjugate to $p$ because the dual space of $\lp^n$ and the normed space $\lq^n$ are isometrically isomorphic.
This guarantees that the dual bilinear product of $\Banach^p$ is consistent with the integral product, that is,
\[
\int_{-1}^{1}f(x)g(x)\ud x=\sum_{j,k\in\NN_n}a_jb_k\int_{-1}^{1}\phi_j(x)\phi_k(x)\ud x=\sum_{k\in\NN_n}a_kb_k=\langle f,g\rangle_{\Banach^p},
\]
for all $f:=\sum_{k\in\NN_n}a_k\phi_k\in\Banach^p$ and all $g:=\sum_{k\in\NN_n}b_k\phi_k\in\Banach^q$.
Therefore, we obtain the reproducing properties of the Banach space $\Banach^p$, that is,
\[
\langle f,K(x,\cdot)\rangle_{\Banach^p}=\sum_{k\in\NN_n}a_k\phi_k(x)=f(x),\quad
\langle K(\cdot,y),g\rangle_{\Banach^p}=\sum_{k\in\NN_n}b_k\phi_k(y)=g(y).
\]
In particular, the RKHS $\Hilbert$ is consistent with $\Banach^2$ and the existence of the $1$-norm RKBS $\Banach^1$ is verified.
However, these RKBSs $\Banach^p$ are just finite dimensional Banach spaces.

In the above example, for any finite number $n$, the reproducing kernel $K$ is always well-defined; but the kernel $K$ does not converge pointwisely when $n\to\infty$; hence the above countable basis $\phi_1,\ldots,\phi_n,\ldots$ can not be used to set up the infinite dimensional RKBSs.
Currently, people are most interested in the infinite dimensional Banach spaces for applications of machine learning. In this article, we mainly consider the infinite dimensional RKBSs such that the learning algorithms can be chosen from the enough large amounts of suitable solutions.
In the following chapters, we shall extend the initial ideas of the RKBSs $\Banach^p$ to construct the infinite dimensional RKBSs and we shall discuss what kinds of kernels can become reproducing kernels.
We know that any positive definite kernel defined on a compact Hausdorff space can always be written as the classical Mercer kernel that is a sum of its countable eigenvalues multiplying eigenfunctions in the form of equation~\eqref{eq:pdk-Mercer}, that is,
\[
K(\vx,\vy)=\sum_{n\in\NN}\lambda_ne_n(\vx)e_n(\vy).
\]
It is also well-known that the classical Mercer kernel is always a reproducing kernel of some separable RKHS and
the Mercer representation of this RKHS
guarantees the isometrical isomorphism onto the standard $2$-norm space of countable sequences (see the Mercer representation theorem of RKHSs in \cite[Theorem~10.29]{Wendland2005} and \cite[Theorem~4.51]{SteinwartChristmann2008}).
This leads us to generalize the Mercer kernel to develop the reproducing kernel of RKBSs.
To be more precise, the generalized Mercer kernels are the sums of countable symmetric or nonsymmetric expansion terms such as
\[
K(\vx,\vy)=\sum_{n\in\NN}\phi_n(\vx)\adjphi_n(\vy),
\]
given in Definition~\ref{d:GMK}.
Hence, we extend
the Mercer representations of the separable RKHSs to the infinite dimensional RKBSs in Sections~\ref{s:p-RKBS} and~\ref{s:1-RKBS}.

Chapter~\ref{char-GMK} primarily focuses on the construction of the $p$-norm RKBSs induced by the generalized Mercer kernels.
These $p$-norm RKBSs are infinite dimensional.
With additional conditions, the generalized Mercer kernels can become the reproducing kernels of the $p$-norm RKBSs. The $p$-norm RKBSs for $1<p<\infty$ are uniformly convex and smooth; but the $1$-norm RKBSs and the $\infty$-norm RKBSs are not reflexive.
The main idea of the $p$-norm RKBSs is that the expansion terms of the generalized Mercer kernels are viewed as the Schauder bases and the biorthogonal systems of the $p$-norm RKBSs (see Theorems~\ref{t:RKBS-MercerKer-p}, \ref{t:RKBS-MercerKer-1}, and~\ref{t:RKBS-MercerKer-infty}).
The techniques of their proofs are to verify that the $p$-norm RKBSs have the same geometrical structures of the standard $p$-norm spaces of countable sequences by the associated isometrical isomorphisms. This means that we transfer the kernel bases into the Schauder bases of the $p$-norm RKBSs. The advantage of these constructions is that we deal with the reproducing properties in a fundamental way.
Moreover, the imbedding, compactness, and universal approximation of the $p$-norm RKBSs can be checked by the expansion terms of the generalized Mercer kernels.

The positive definite kernels defined on the compact Hausdorff spaces are the special cases of the generalized Mercer kernels because these positive definite kernels can be expanded by their eigenvalues and eigenfunctions. Chapter~\ref{char-PDK} shows that the $p$-norm RKBSs of many well-known positive definite kernels, such as min kernels, Gaussian kernels, and power series kernels, are also well-defined.

At last, we discuss the solutions of the support vector machines in the $p$-norm RKBSs in Chapter~\ref{char-SVM}.
The theoretical results cover not only the classical support vector machines driven by the hinge loss but also general support vector machines driven by many other loss functions, for example, the least square loss, the logistic loss, and the mean loss.
Theorem~\ref{t:RKBS-MercerKer-svm-rep-pq} provides that the support vector machine solutions in the $p$-norm RKBSs for $1<p<\infty$ have the finite dimensional representations.
To be more precise, the infinite dimensional support vector machine solutions in these $p$-norm RKBSs can be equivalently transferred into the finite dimensional convex optimization problems such that the learning solutions can be set up by the suitable finite parameters.
This guarantees that the support vector machines in the $p$-norm RKBSs for $1<p<\infty$ can be well-computable and easy in implementation.
According to Theorem~\ref{t:opt-svm-s1}, the support vector machines in the $1$-norm RKBSs can be approximated by the support vector machines in the $p$-norm RKBSs when $p\to1$.
Moreover, we show that the support vector machine solutions in the special $p_m$-norm RKBSs can be written as a linear combination of the kernel bases even when these $p_m$-norm RKBSs are just Banach spaces without inner products (see Theorem~\ref{t:SVM-opt-rep-typ}).
It is well-known that the norm of the support vector machine solutions in RKHSs depends on the positive definite matrices. We also find that the norm of the support vector machine solutions in the $p_m$-norm RKBSs is related to the positive definite tensors (high-order matrices).
Hence, a tensor decomposition will be a good numerical tool to speed up the computations of the support vector machines in the $p_m$-norm RKBSs.
In particular, some special support vector machines in the $1$-norm RKBSs and the classical $\lone$-sparse regressions are equivalent. This offers a fresh direction to design a vigorous algorithm for sparse sampling. We shall develop fast algorithms and consider practical applications for the sparse learning methods in our future work for data analysis.

\chapter{Reproducing Kernel Banach Spaces}\label{char:RKBS}

In this chapter we provide an alternative definition of RKBSs originally introduced in~\cite{ZhangXuZhang2009}.
The definition of RKBSs given here is a natural generalization of RKHSs by viewing the inner products as the dual bilinear products to introduce the reproducing properties.
Moreover, we verify many other properties of RKBSs including density, continuity, separability, implicit representation, imbedding, compactness, representer theorem for learning methods, oracle inequality, and universal approximation.

\section{Reproducing Kernels and Reproducing Kernel Banach Spaces}\label{s:RKBS-RK}
\sectionmark{Reproducing Kernels and RKBS}

In this section we show a way to construct the reproducing properties in Banach spaces with reproducing kernels.

We begin with a review of the classical RKHSs.
Let $\Domain$ be a locally compact Hausdorff space and $\Hilbert$ be a Hilbert space composed of functions $f\in\Leb_0(\Domain)$. Here $\Leb_0(\Domain)$\footnote{\color{black}{In this article, all measurable functions are real-valued functions if it is not specified, for example, $\textrm{range}(f)\subseteq\RR$ if $f\in\Leb_0(\Domain)$.}} is the collection of all measurable functions defined on $\Domain$. The Hilbert space $\Hilbert$ is called a RKHS with the reproducing kernel $K\in\Leb_0(\Domain\times\Domain)$ if it satisfies the following two conditions
\begin{align*}
&\text{(i)}~K(\vx,\cdot)\in\Hilbert,\quad\text{for each }\vx\in\Domain,\\
&\text{(ii)}~( f,K(\vx,\cdot) )_{\Hilbert}=f(\vx),\quad\text{for
all }f\in\Hilbert\text{ and all }\vx\in\Domain,
\end{align*}
(see, \cite[Definition~10.1]{Wendland2005}).
We find that the dual space $\Hilbert'$ of $\Hilbert$ is isometrically equivalent to itself and the inner product $\left(\cdot,\cdot\right)_{\Hilbert}$ defined on $\Hilbert$ and $\Hilbert$ can be seen as an equivalent format of the dual bilinear product $\langle \cdot,\cdot \rangle_{\Hilbert}$ defined on $\Hilbert$ and $\Hilbert'$. This means that the classical reproducing properties can be represented by the dual space and the dual bilinear product. This gives an idea to extend the reproducing properties of Hilbert spaces to Banach spaces.

We need the notation and concepts of the Banach space.
A normed space $\Banach$ is called a Banach space if its norm induces a complete metric, or more precisely, every Cauchy sequence of $\Banach$ is convergent.
The dual space $\Banach'$ of the Banach space $\Banach$ is the collection of all continuous (bounded) linear functionals defined on $\Banach$ with the norm
$$
\norm{G}_{\Banach'}:=\sup_{f\in\Banach,f\neq0}\frac{\abs{G(f)}}{\norm{f}_{\Banach}}
$$
whenever $G\in\Banach'$.
The dual bilinear product $\langle\cdot,\cdot\rangle_{\Banach}$ is defined on the Banach space $\Banach$ and its dual space $\Banach'$ as
\[
\langle f,G \rangle_{\Banach}:=G(f),\quad\text{for all }f\in\Banach\text{ and all }G\in\Banach'.
\]
Since the natural map is the isometrically imbedding map from $\Banach$ into $\Banach''$, we have that
$\langle f, G \rangle_{\Banach}=\langle G,f \rangle_{\Banach'}$ for all $f\in\Banach$ and all $G\in\Banach'$.
Normed spaces $\Banach_1$ and $\Banach_2$ are said to be isometrically isomorphic, if there is an isometric isomorphism $T:\Banach_1\to\Banach_2$ from $\Banach_1$ onto $\Banach_2$, or more precisely, the linear operator $T$ is bijective and continuous such that $\norm{T(f)}_{{\Banach}_2}=\norm{f}_{{\Banach}_1}$ whenever $f\in\Banach_1$. By $\Banach_1\cong\Banach_2$ we mean that $\Banach_1$ and $\Banach_2$ are isometrically isomorphic.
This shows that the isometric isomorphism $T$ provides a way of identifying both the vector space structure and the topology of $\Banach_1$ and $\Banach_2$.
In particular, a Banach space $\Banach$ is said to be reflexive if $\Banach\cong\Banach''$.

Next, we give the definition of RKBSs and reproducing kernels to be based on the idea of \cite[Definition~3.1]{FasshauerHickernellYe2013}.

\begin{definition}\label{d:RKBS}
Let $\Domain$ and $\Domain'$ be locally compact Hausdorff spaces equipped with regular Borel measures $\mu$ and $\mu'$, respectively,
let a kernel $K\in\Leb_0(\Domain\times\Domain')$, and let
a normed space $\Banach$ be a Banach space composed of functions
$f\in\Leb_0(\Domain)$ such that the dual space $\Banach'$ of $\Banach$ is isometrically equivalent to a normed space $\Fun$ composed of functions $g\in\Leb_0(\Domain')$.
We call $\Banach$ a \emph{right-sided reproducing kernel Banach space} and $K$ its \emph{right-sided reproducing kernel} if
\begin{align*}
&\text{(i)}~K(\vx,\cdot)\in\Fun\cong\Banach',\quad\text{for each }\vx\in\Domain,\\
&\text{(ii)}~\langle f,K(\vx,\cdot) \rangle_{\Banach}=f(\vx),\quad\text{for
all }f\in\Banach\text{ and all }\vx\in\Domain.
\end{align*}
If the Banach space $\Banach$ reproduces from the other side, that is,
\begin{align*}
&\text{(iii)}~K(\cdot,\vy)\in\Banach,\quad\text{for each }\vy\in\Domain',\\
&\text{(iv)}~\langle K(\cdot,\vy),g \rangle_{\Banach}=g(\vy),\quad\text{for all }g\in\Fun\cong\Banach'\text{ and all }\vy\in\Domain',
\end{align*}
then $\Banach$ is called a \emph{left-sided reproducing kernel Banach space} and $K$ its \emph{left-sided reproducing kernel}.
If two-sided reproducing properties (i)-(iv) as above are satisfied, then we say that $\Banach$ is a \emph{two-sided reproducing kernel Banach space} with the \emph{two-sided reproducing kernel} $K$.
\end{definition}

Observing the conjugated structures, the \emph{adjoint kernel} $\adjK$ of the reproducing kernel $K$ is defined by
\[
\adjK(\vy,\vx):=K(\vx,\vy),\quad\text{for all }\vx\in\Domain\text{ and all }\vy\in\Domain'.
\]
The reproducing properties can also be described by the adjoint kernel $\adjK$, that is,
\begin{align*}
&\text{(i)}~\adjK(\cdot,\vx)\in\Banach',\quad\text{for each }\vx\in\Domain,\\
&\text{(ii)}~\langle f,\adjK(\cdot,\vx) \rangle_{\Banach}=f(\vx),\quad
\text{for all }f\in\Banach\text{ and all }\vx\in\Domain,\\
&\text{(iii)}~\adjK(\vy,\cdot)\in\Banach,\quad \text{for each }\vy\in\Domain',\\
&\text{(iv)}~\langle \adjK(\vy,\cdot),g \rangle_{\Banach}=g(\vy),
\quad
\text{for all }g\in\Banach'\text{ and all }\vy\in\Domain'.
\end{align*}
The features of the two-sided reproducing kernel $K$ and its adjoint kernel $\adjK$ can be represented by the dual bilinear products in the following manner
\[
K(\vx,\vy)
=
\langle \adjK(\vy,\cdot),K(\vx,\cdot) \rangle_{\Banach}
=
\langle K(\cdot,\vy),\adjK(\cdot,\vx) \rangle_{\Banach}
=\adjK(\vy,\vx),
\]
for all $\vx\in\Domain$ and all $\vy\in\Domain'$.
This means that both $\vx\mapsto K(\vx,\cdot)$, $\vy\mapsto K(\cdot,\vy)$ and $\vx\mapsto\adjK(\cdot,\vx)$, $\vy\mapsto\adjK(\vy,\cdot)$ can be viewed as the feature maps of RKBSs.

Clearly, the classical reproducing kernels of RKHSs can be seen as the two-sided reproducing kernels of the two-sided RKBS. It is well-known that the reproducing kernels of RKHSs are always positive definite while the reproducing kernels of RKBSs may be neither symmetric nor positive definite.

Our definition of RKBSs is different from the original definition of RKBSs originally introduced in \cite[Definition~1]{ZhangXuZhang2009} defined by the point evaluation functions.
The two-sided reproducing properties ensure that all point evaluation functionals $\delta_{\vx}$ and $\delta_{\vy}$ defined on $\Banach$ and $\Banach'$ are continuous linear functionals, because
$$
\delta_{\vx}(f)=\langle f,K(\vx,\cdot)\rangle_{\Banach}=f(\vx)
$$
and
$$
\delta_{\vy}(g)=\langle g,K(\cdot,\vy)\rangle_{\Banach'}=\langle K(\cdot,\vy),g \rangle_{\Banach}=g(\vy).
$$
This means that $K(\vx,\cdot)$ and $K(\cdot,\vy)$ can be seen as equivalent elements of $\delta_{\vx}$ and $\delta_{\vy}$, respectively.
When our two-sided RKBSs are reflexive and $\Domain=\Domain'$,
these two-sided RKBSs also fit \cite[Definition~1]{ZhangXuZhang2009}.
This shows that our definition of RKBSs contains that of \cite[Definition~1]{ZhangXuZhang2009} as a special example.

There are two reasons for us to extend the definition of RKBSs. The new definition allows the reproducing kernels to be defined on the nonsymmetric domains. Moreover, the reflexivity condition necessary for the original definition is not required for our RKBSs in order to introduce the $1$-norm RKBSs for sparse learning methods.
Even though we have a non-reflexive Banach space $\Banach$ such that all point evaluation functionals defined on $\Banach$ and $\Banach'$ are continuous linear functionals, this Banach space $\Banach$ still may not own a two-sided reproducing kernel. The reason is that $\Banach$ is merely isometrically imbedded into $\Banach''$, and then $\delta_{\vx}$ may not have the equivalent element in $\Banach$ for the reproducing property (i).

\subsection*{Uniqueness}

Now, we study the uniqueness of RKBSs and reproducing kernels.
It was shown in \cite[Theorem 10.3]{Wendland2005} that the reproducing kernel of a RKHS is unique.
But, the situation for the reproducing kernels of RKBSs is different. There may be many choices of the normed space $\Fun$ to which $\Banach'$ is isometrically equivalent. Moreover, the left-sided domain $\Domain$ of the kernel $K$ is corrective to the space $\Banach$ while the right-sided domain $\Domain'$ of the kernel $K$ is corrective to the space $\Fun$. This means that the change of $\Fun$ affects the selection of the kernel $K$ which implies that the reproducing kernel $K$ of the RKBS $\Banach$ may not be unique (see Section~\ref{s:p-RKBS}).
However, we have the following results.

\begin{proposition}\label{p:RKBS-uniquess-1}
If $\Banach$ is the left-sided reproducing kernel Banach space with
the fixed left-sided reproducing kernel $K$, then the isometrically isomorphic function space $\Fun$ of the dual space $\Banach'$ of $\Banach$ is unique.
\end{proposition}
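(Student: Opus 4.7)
The plan is to show that the reproducing property (iv) forces the isometric isomorphism $T:\Banach'\to\Fun$ to take one explicit form in terms of $K$, so that its image $\Fun$ is completely determined by $K$ and $\Banach$.

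First, I would unpack the left-sided reproducing property after making the identification $\Fun\cong\Banach'$ explicit. Let $T:\Banach'\to\Fun$ denote the isometric isomorphism appearing in Definition~\ref{d:RKBS}. For $G\in\Banach'$ and $g:=T(G)\in\Fun$, the bilinear product $\langle K(\cdot,\vy),g\rangle_{\Banach}$ is, by definition, the evaluation $G(K(\cdot,\vy))$ of the functional $G$ at the element $K(\cdot,\vy)\in\Banach$ (which lies in $\Banach$ by property (iii)). Combining this with (iv) gives
\[
T(G)(\vy) \;=\; g(\vy) \;=\; \langle K(\cdot,\vy),g\rangle_{\Banach} \;=\; G\bigl(K(\cdot,\vy)\bigr),\qquad \vy\in\Domain'.
\]

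Second, this identity determines $T$ pointwise: for every $G\in\Banach'$, the function $T(G)\in\Leb_0(\Domain')$ is forced to be $\vy\mapsto G(K(\cdot,\vy))$, an expression that depends only on $G$, $K$, and $\Banach$ and not on any choice of $\Fun$. Consequently the set of functions $\Fun=T(\Banach')$ is determined by
\[
\Fun \;=\; \bigl\{\, \vy\mapsto G(K(\cdot,\vy)) \,:\, G\in\Banach'\,\bigr\},
\]
and the norm on $\Fun$ is likewise forced, namely $\norm{T(G)}_{\Fun}=\norm{G}_{\Banach'}$ by the isometry.

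Third, I would conclude with a uniqueness argument: suppose $\Fun_1$ and $\Fun_2$ are two function spaces consisting of elements of $\Leb_0(\Domain')$, each isometrically isomorphic to $\Banach'$ via $T_1$ and $T_2$, and each satisfying the left-sided reproducing property with the same kernel $K$. By the previous step applied to both, $T_1(G)(\vy)=G(K(\cdot,\vy))=T_2(G)(\vy)$ for all $\vy\in\Domain'$ and all $G\in\Banach'$, so $T_1=T_2$ as maps into $\Leb_0(\Domain')$. Hence $\Fun_1=T_1(\Banach')=T_2(\Banach')=\Fun_2$ as function spaces, and the norms inherited from $\Banach'$ also coincide.

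The only real subtlety, which I expect to be the main obstacle notationally rather than conceptually, is keeping the identification $\Fun\cong\Banach'$ honest: elements of $\Fun$ are by definition functions on $\Domain'$, not a priori continuous functionals on $\Banach$, so one must transport the bilinear product $\langle\cdot,\cdot\rangle_{\Banach}$ through $T$ to make sense of (iv). Once this is done carefully, the proposition follows directly from the displayed formula $T(G)(\vy)=G(K(\cdot,\vy))$.
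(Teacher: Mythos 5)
Your proof is correct and follows essentially the same route as the paper's: both arguments reduce to the observation that the left-sided reproducing property forces the equivalent element $g=T(G)$ of any $G\in\Banach'$ to satisfy $g(\vy)=\langle K(\cdot,\vy),g\rangle_{\Banach}=G(K(\cdot,\vy))$, which pins down $g$ pointwise and hence determines $\Fun$ uniquely. Your version is slightly more explicit about the isomorphism $T$ and the induced norm, but the substance is identical.
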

\begin{proof}
Suppose that the dual space $\Banach'$ has two isometrically isomorphic function spaces $\Fun_1$ and $\Fun_2$. Take any element $G\in\Banach'$. Let $g_1\in\Fun_1$ and $g_2\in\Fun_2$ be the isometrically equivalent elements of $G$. Because of the left-sided reproducing properties, we have for all $\vy\in\Domain'$ that
\[
g_1(\vy)=\langle K(\cdot,\vy),g_1 \rangle_{\Banach}=\langle K(\cdot,\vy),G \rangle_{\Banach}
=\langle K(\cdot,\vy),g_2 \rangle_{\Banach}=g_2(\vy).
\]
This ensures that $g_1=g_2$ and thus, $\Fun_1=\Fun_2$.
\end{proof}

\begin{proposition}\label{p:RKBS-uniquess-2}
If the dual space $\Banach'$ of the two-sided reproducing kernel Banach space $\Banach$
is isometrically equivalent to the fixed function space $\Fun$, then the two-sided reproducing kernel $K$ of $\Banach$ is unique.
\end{proposition}
\begin{proof}
Assume that the two-sided RKBS $\Banach$ has two two-sided reproducing kernels $K$ and $W$ such that the corresponding isometrically isomorphic function space $\Fun$ of the dual space $\Banach'$ of $\Banach$ are the same, that is, $K(\vx,\cdot),W(\vx,\cdot)\in\Fun\cong\Banach'$ for all $\vx\in\Domain$. By the right-sided reproducing properties of $K$, we have that
\begin{equation}\label{eq:RK-unique-1}
\langle W(\cdot,\vy),K(\vx,\cdot) \rangle_{\Banach}=W(\vx,\vy),
\quad\text{for all }\vx\in\Domain\text{ and all }\vy\in\Domain'.
\end{equation}
Using the left-sided reproducing properties of $W$, we obtain
\begin{equation}\label{eq:RK-unique-2}
\langle W(\cdot,\vy),K(\vx,\cdot) \rangle_{\Banach}=K(\vx,\vy),
\quad\text{for all }\vx\in\Domain\text{ and all }\vy\in\Domain'.
\end{equation}
Combining equations~\eqref{eq:RK-unique-1} and~\eqref{eq:RK-unique-2}, we conclude that
\[
K(\vx,\vy)=W(\vx,\vy),\quad
\text{for all }\vx\in\Domain\text{ and all }\vy\in\Domain'.
\]
\end{proof}

\begin{remark}\label{r:RKBS-Fun}
According to the uniqueness results shown above, we do NOT consider different choices of $\Fun$. Therefore, the isometrically equivalent function space $\Fun$ is FIXED such that $\Banach'$ and $\Fun$ can be thought as the SAME in the following sections.
\end{remark}

\section{Density, Continuity, and Separability}\label{s:DensityContinuity}
\sectionmark{Density, Continuity, and Separability}

In this section, we show that RKBSs have similar density and continuity properties as RKHSs.

\subsection*{Density}

First we prove that a RKBS or its dual space can be seen as a completion of the linear vector space spanned linearly by its reproducing kernel (see Propositions~\ref{p:RKBS-dense} and~\ref{p:RKBS-dense-pointevalue}).
Let the right-sided and left-sided kernel sets be
\[
\Kset_{K}':=\left\{K(\vx,\cdot): \vx\in\Domain\right\}\subseteq\Banach',\quad
\Kset_K:=\left\{K(\cdot,\vy): \vy\in\Domain'\right\}\subseteq\Banach,
\]
respectively.
We now verify the density of $\Span\left\{\Kset_{K}'\right\}$ and $\Span\left\{\Kset_{K}\right\}$ in the RKBS $\Banach$ and its dual space $\Banach'$, respectively, using similar techniques of \cite[Theorem~2]{ZhangXuZhang2009} with which the density of point evaluation functionals defined on RKBSs was verified.
In other words, we look at whether the collection of all finite linear combinations of $\Kset_{K}'$ or $\Kset_{K}$ is dense in $\Banach'$ or $\Banach$, that is,
$$
\overline{\Span\left\{\Kset_{K}'\right\}}=\Banach'\ \ \mbox{or}\ \ \overline{\Span\left\{\Kset_{K}\right\}}=\Banach.
$$

To this end, we apply the Hahn-Banach extension theorem (\cite[Theorem~1.9.1 and Corollary~1.9.7]{Megginson1998}) that ensures that if $\Space$ is a closed subspace of a Banach space $\Banach$ such that $\Space\subsetneqq\Banach$, then there is a continuous linear functional $G$ defined on $\Banach$ such that $\norm{G}_{\Banach'}=1$ and
$$
\Space\subseteq\text{ker}(G)=\left\{f\in\Banach:\langle f,G \rangle_{\Banach}=0\right\}.
$$

\begin{proposition}\label{p:RKBS-dense}
If $\Banach$ is the left-sided reproducing kernel Banach space with the left-sided reproducing kernel $K\in\Leb_0(\Domain\times\Domain')$,
then $\Span\left\{\Kset_{K}\right\}$ is dense in $\Banach$.
\end{proposition}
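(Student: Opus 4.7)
The plan is to use the Hahn–Banach separation theorem that the excerpt has already highlighted, in its standard contrapositive form: if a closed subspace $\Space$ of $\Banach$ is proper, then some unit-norm functional annihilates it. I would set $\Space := \overline{\Span\{\Kset_K\}}$, which is a closed subspace of $\Banach$ by construction, and aim for a contradiction from the hypothesis $\Space \subsetneqq \Banach$.

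From that hypothesis, Hahn–Banach (\cite[Theorem~1.9.1 and Corollary~1.9.7]{Megginson1998}, as quoted in the excerpt) supplies $G \in \Banach'$ with $\norm{G}_{\Banach'} = 1$ and $\Space \subseteq \ker(G)$. Under the fixed identification $\Banach' \cong \Fun$ (justified in Remark~\ref{r:RKBS-Fun} and Proposition~\ref{p:RKBS-uniquess-1}), the functional $G$ corresponds to a unique $g \in \Fun$. Since every $K(\cdot,\vy) \in \Kset_K \subseteq \Space$, we get
\[
\langle K(\cdot,\vy), g \rangle_{\Banach} = 0 \quad \text{for every } \vy \in \Domain'.
\]

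Now I invoke the left-sided reproducing property (iv), which states exactly $\langle K(\cdot,\vy), g\rangle_{\Banach} = g(\vy)$ for all $g \in \Fun \cong \Banach'$ and all $\vy \in \Domain'$. Combining this with the display above gives $g(\vy) = 0$ for every $\vy \in \Domain'$, i.e.\ $g = 0$ in $\Fun$. By the isometric isomorphism, $G = 0$ in $\Banach'$, contradicting $\norm{G}_{\Banach'} = 1$. Hence $\Space = \Banach$, as required.

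No step looks genuinely hard: the whole argument is a one-line application of Hahn–Banach plus the left-sided reproducing identity. The only mild care point is to make sure the identification $\Banach' \cong \Fun$ is applied consistently so that "$G$ vanishes on $\Kset_K$" translates verbatim to "$g$ vanishes on $\Domain'$"; this is legitimate precisely because $\Fun$ is fixed (Remark~\ref{r:RKBS-Fun}) and the dual bilinear product $\langle \cdot,\cdot\rangle_{\Banach}$ was defined on $\Banach \times \Banach'$ in exactly the form $\langle f, G\rangle_{\Banach} = G(f)$, so the substitution $G \leftrightarrow g$ inside the bracket is unambiguous.
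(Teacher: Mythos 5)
Your proof is correct and follows essentially the same route as the paper's: both assume $\overline{\Span\{\Kset_K\}}\subsetneqq\Banach$, invoke the Hahn--Banach extension theorem to produce a norm-one annihilating functional, and then use the left-sided reproducing property (iv) to force that functional to vanish, a contradiction. Your extra remark about applying the identification $\Banach'\cong\Fun$ consistently is a harmless refinement of what the paper does implicitly.
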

\begin{proof}
Let $\Space$ be the completion (closure) of $\Span\left\{\Kset_{K}\right\}$ in the $\Banach$-norm.
If we verify that $\Space=\Banach$, then the proof is complete.

Since $\Banach$ is a Banach space, we have that $\Space\subseteq\Banach$. Assume to the contrary that $\Space\subsetneqq\Banach$.
By the Hahn-Banach extension theorem, there is a $g\in\Banach'$ such that $\norm{g}_{\Banach'}=1$ and
\begin{equation}\label{eq:RKBS-dense}
\Space\subseteq\text{ker}(g):=\left\{f\in\Banach: \langle f,g \rangle_{\Banach}=0\right\}.
\end{equation}
Combining equation~\eqref{eq:RKBS-dense} and the reproducing properties (iii)-(iv), we observe that
\[
g(\vy)=\langle K(\cdot,\vy),g \rangle_{\Banach}=0,\quad\text{for all }\vy\in\Domain',
\]
because $\Span\left\{\Kset_{K}\right\}\subseteq\Space$.
This contradicts the fact that $\norm{g}_{\Banach'}=1$ and $g=0$. Therefore, we must reject the assumption that $\Space\subsetneqq\Banach$. Consequently, $\Space=\Banach$.
\end{proof}

Proposition~\ref{p:RKBS-dense} shows that the left-sided RKBS $\Banach$ is separable if the left-sided kernel sets
$\Kset_{K}$ has the countable dense subset.

\begin{remark}\label{r:RKBS-dense}
Clearly, the closure of $\Span\left\{\Kset_{K}'\right\}$ is a closed subspace of the dual space $\Banach'$ of the right-sided RKBS $\Banach$.
However, $\Span\left\{\Kset_{K}'\right\}$ may not be dense in $\Banach'$.
Let us look at a counter example of the $1$-norm RKBS $\Banach$ with the right-sided reproducing kernel $K$ which is the integral min kernel given in Section~\ref{s:min}.
In Section~\ref{s:1-RKBS}, we show that the dual space $\Banach'$ of the $1$-norm RKBS $\Banach$ is isometrically isomorphic onto the space $\linfty$;
hence $\Span\left\{\Kset_{K}'\right\}$ is isometrically imbedding into $\linfty$.
It is obvious that the left-sided domain $\Domain$ of the integral min kernel $K$ is a separable space. This shows that $\Domain$ has a countable dense subset $X$. Moreover, we verify that $\Kset_{K|_{X\times\Domain'}}':=\left\{K(\vx,\cdot):\vx\in X\right\}$ is dense in the right-sided kernel set $\Kset_{K}'$.
Thus, the closure of all finite linear combinations of $\Kset_{K|_{X\times\Domain'}}'$ is equal to the closure of $\Span\left\{\Kset_{K}'\right\}$. This show that the closure of $\Span\left\{\Kset_{K}'\right\}$ is separable. Since $\linfty$ is non-separable, the closure of $\Span\left\{\Kset_{K}'\right\}$ is a proper closed subspace of $\Banach'$. Therefore, $\Span\left\{\Kset_{K}'\right\}$ is not dense in $\Banach'$.
\end{remark}

For the right-sided RKBSs, we need an additional condition on the reflexivity of the space to ensure the density of $\Span\left\{\Kset_{K}'\right\}$ in $\Banach'$.

\begin{proposition}\label{p:RKBS-dense-pointevalue}
If $\Banach$ is the reflexive right-sided reproducing kernel Banach space,
then $\Span\left\{\Kset_{K}'\right\}$ is dense in the dual space $\Banach'$ of $\Banach$.
\end{proposition}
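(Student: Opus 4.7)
The plan is to mimic the proof of Proposition~\ref{p:RKBS-dense} but carried out one level up, on $\Banach'$ rather than on $\Banach$, and then use reflexivity to bring the obstructing functional back down into $\Banach$ where the right-sided reproducing property can be applied.

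Concretely, I would let $\Space := \overline{\Span\{\Kset_K'\}} \subseteq \Banach'$ and assume for contradiction that $\Space \subsetneqq \Banach'$. By the Hahn--Banach extension theorem applied to the Banach space $\Banach'$, there exists a bounded linear functional $F \in \Banach''$ with $\norm{F}_{\Banach''} = 1$ and $\Space \subseteq \ker(F)$. In particular, $F(K(\vx,\cdot)) = 0$ for every $\vx \in \Domain$.

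At this point the reflexivity hypothesis enters decisively: since $\Banach \cong \Banach''$ via the natural map, there is a unique $f \in \Banach$ with $\norm{f}_{\Banach} = 1$ corresponding to $F$, satisfying $\langle G, f\rangle_{\Banach'} = \langle f, G\rangle_{\Banach} = F(G)$ for every $G \in \Banach'$. Setting $G = K(\vx,\cdot) \in \Banach'$ (which is allowed by reproducing property (i)) and invoking the right-sided reproducing property (ii), I get
\[
f(\vx) = \langle f, K(\vx,\cdot)\rangle_{\Banach} = F(K(\vx,\cdot)) = 0, \quad \text{for all } \vx \in \Domain.
\]
Hence $f \equiv 0$ as an element of $\Leb_0(\Domain)$, contradicting $\norm{f}_{\Banach} = 1$. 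This forces $\Space = \Banach'$, completing the argument.

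The only genuinely nontrivial step is the third one, and it is precisely where reflexivity is indispensable: without $\Banach \cong \Banach''$, the Hahn--Banach functional $F$ lives in $\Banach''$ but need not come from any element of $\Banach$, so one cannot apply the right-sided reproducing property to convert $F(K(\vx,\cdot)) = 0$ into a pointwise vanishing statement about a function on $\Domain$. This is exactly the asymmetry that was exploited in Proposition~\ref{p:RKBS-dense}, where the left-sided property directly gave $g(\vy) = \langle K(\cdot,\vy), g\rangle_{\Banach}$ without any need to identify $\Banach'$ with $\Banach'''$, and it also explains the authors' earlier remark that the $1$-norm RKBSs (being non-reflexive) will require separate treatment.
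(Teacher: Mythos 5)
Your proof is correct and is essentially the paper's argument in unwound form: the paper uses reflexivity to observe that $\Banach'$ is itself a left-sided RKBS with left-sided reproducing kernel $\adjK$ (since $\langle \adjK(\cdot,\vx),f\rangle_{\Banach'}=\langle f,K(\vx,\cdot)\rangle_{\Banach}=f(\vx)$ for $f\in\Banach\cong\Banach''$) and then cites Proposition~\ref{p:RKBS-dense}, whose proof is exactly your Hahn--Banach step. Inlining that step, as you do, yields the same chain of ideas — Hahn--Banach on $\Banach'$, reflexivity to pull the obstructing functional back into $\Banach$, and the right-sided reproducing property to force pointwise vanishing — so there is nothing to correct.
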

\begin{proof}
The main technique used in this proof is a combination of the reflexivity of $\Banach$ and Proposition~\ref{p:RKBS-dense}.
According to the construction of the adjoint kernel $\adjK$ of the right-sided reproducing kernel $K$, we find that
\[
\Span\left\{\Kset_{K}'\right\}=\Span\left\{K(\vx,\cdot):\vx\in\Domain\right\}=\Span\left\{\adjK(\cdot,\vx): \vx\in\Domain\right\}=\Span\big\{\Kset_{\adjK}\big\}.
\]
Since $\Banach$ is reflexive, we have that
$
\Banach''\cong\Banach.
$
Combining the right-sided reproducing properties of $\Banach$,
we observe that
$$
\adjK(\cdot,\vx)=K(\vx,\cdot)\in\Banach', \ \ \mbox{for all}\ \ \vx\in\Domain
$$
and
$$
\langle \adjK(\cdot,\vx),f \rangle_{\Banach'}=\langle f,K(\vx,\cdot) \rangle_{\Banach}=f(\vx), \ \
\mbox{for all}\ \ f\in\Banach\cong\Banach'' \
\mbox{and for all}\ \ \vx\in\Domain.
$$
Hence, $\adjK$ is the left-sided reproducing kernel of the left-sided RKBS $\Banach'$.
Proposition~\ref{p:RKBS-dense} ensures that $\Span\big\{\Kset_{\adjK}\big\}$ is dense in $\Banach'$.
\end{proof}

By the preliminaries in \cite[Section~1.1]{Megginson1998}, a set $\Eset$ of a normed space $\Banach_1$ is called linearly independent if, for any $N\in\NN$ and any finite pairwise distinct elements $\phi_1,\ldots,\phi_N\in\Eset$, their linear combination $\sum_{k\in\NN_N}c_k\phi_k=0$ implies $c_1=\ldots=c_N=0$.
Moreover, a linearly independent set $\Eset$ is said to be a basis of a normed space $\Banach_1$ if the collocation of all finite linear combinations of $\Eset$ equals to  $\Banach_1$, that is, $\Span\left\{\Eset\right\}=\Banach_1$.

Moreover,
if $\Kset_{K}$ (resp. $\Kset_{K}'$) is linearly independent, then
$\Kset_{K}$ (resp. $\Kset_{K}'$) is a basis of
$\Span\big\{\Kset_{K}\big\}$ (resp. $\Span\big\{\Kset_{K}'\big\}$).
Propositions~\ref{p:RKBS-dense} and~\ref{p:RKBS-dense-pointevalue} also show that
the RKBS $\Banach$ and its dual space $\Banach'$
can be seen as the completion of
the linear vector spaces $\Span\big\{\Kset_{K}\big\}$ and $\Span\big\{\Kset_{K}'\big\}$ respectively.
Since no Banach space has a countably infinite basis \cite[Theorem~1.5.8]{Megginson1998},
the Banach spaces $\Banach$ (resp. $\Banach'$) can not be equal to $\Span\big\{\Kset_{K}\big\}$ (resp. $\Span\big\{\Kset_{K}'\big\}$) when the domains $\Domain'$ (resp. $\Domain$) is not a finite set.

\subsection*{Continuity}

In Propositions~\ref{p:weakly-conv-RKBS} and~\ref{p:weaklystar-conv-RKBS}., we discuss the relationships of the weak and weak* convergence and the pointwise convergence of RKBSs, respectively.
According to
\cite[Propositions~2.4.4 and 2.4.13]{Megginson1998}, we obtain the equivalent definitions of the \emph{weak and weak* convergence} of sequences of the Banach space $\Banach$ and its dual space $\Banach'$ as follows:
$$
f_n\in\Banach\overset{\text{weak}-\Banach}{\longrightarrow}f\in\Banach\ \ \mbox{if}\ \langle f_n,G \rangle_{\Banach}\to \langle f,G \rangle_{\Banach}\  \mbox{for all} \ G\in\Banach',
$$
and
$$
G_n\in\Banach'\overset{\text{weak*}-\Banach}{\longrightarrow}G\in\Banach'\ \ \mbox{if}\ \langle f,G_n \rangle_{\Banach}\to \langle f,G \rangle_{\Banach}\ \mbox{for all}\ f\in\Banach.
$$

\begin{proposition}\label{p:weakly-conv-RKBS}
The weak convergence in the right-sided reproducing kernel Banach space implies the pointwise convergence in this Banach space.
\end{proposition}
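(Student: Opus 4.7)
The plan is to invoke the right-sided reproducing property directly against the definition of weak convergence, since the kernel sections $K(\vx,\cdot)$ are precisely the test functionals needed to extract point values.

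More concretely, I would proceed as follows. Fix an arbitrary $\vx\in\Domain$. By the right-sided reproducing property (i) of Definition~\ref{d:RKBS}, the kernel section $K(\vx,\cdot)$ belongs to $\Fun\cong\Banach'$, so it is a legitimate continuous linear functional on $\Banach$. Next, assume $f_n\to f$ weakly in $\Banach$. By the equivalent definition of weak convergence recalled just before the statement, this means
\[
\langle f_n,G\rangle_{\Banach}\to\langle f,G\rangle_{\Banach},\quad\text{for all }G\in\Banach'.
\]
Specializing this convergence to the particular choice $G=K(\vx,\cdot)\in\Banach'$ gives
\[
\langle f_n,K(\vx,\cdot)\rangle_{\Banach}\to\langle f,K(\vx,\cdot)\rangle_{\Banach}.
\]
Finally, applying the right-sided reproducing property (ii) to each side rewrites this convergence as $f_n(\vx)\to f(\vx)$. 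Since $\vx\in\Domain$ was arbitrary, point-wise convergence on all of $\Domain$ follows.

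There is essentially no obstacle here: the argument is a direct consequence of the fact that point evaluation at $\vx$ has been built into the space as a specific element of the dual, namely $K(\vx,\cdot)$, so weak convergence (which by definition is testing against every dual element) automatically tests against the point evaluations. The analogous result for weak* convergence in $\Banach'$ (Proposition~\ref{p:weaklystar-conv-RKBS}) will presumably run symmetrically using the left-sided properties (iii)--(iv) and the dual kernel sections $K(\cdot,\vy)\in\Banach$.
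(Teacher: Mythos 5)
Your proof is correct and follows exactly the same route as the paper's: fix $\vx$, use reproducing property (i) to realize $K(\vx,\cdot)$ as an element of $\Banach'$, test the weak convergence against it, and convert both sides to point values via property (ii). Nothing is missing.
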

\begin{proof}
Let $\Banach$ be a right-sided RKBS with the right-sided reproducing kernel $K$.
We shall prove that
$$
\lim_{n\to\infty}f_n(\vx)=f(\vx)\ \mbox{for all}\ \vx\in\Domain\ \ \mbox{if} \ f_n\in\Banach\overset{\text{weak}-\Banach}{\longrightarrow}f\in\Banach, n\to\infty.
$$

Take any $\vx\in\Domain$ and any sequence $f_n\in\Banach$ for $n\in\NN$ which is weakly convergent to $f\in\Banach$ when $n\to\infty$.
By the reproducing property (i), we find that $K(\vx,\cdot)\in\Banach'$. This ensures that
\[
\lim_{n\to\infty}\langle f_n,K(\vx,\cdot) \rangle_{\Banach}
=\langle f,K(\vx,\cdot) \rangle_{\Banach}.
\]
Moreover, the reproducing property (ii) shows that
\[
\langle f_n,K(\vx,\cdot) \rangle_{\Banach}=f_n(\vx),\quad
\langle f,K(\vx,\cdot) \rangle_{\Banach}=f(\vx).
\]
Therefore, we have that
\[
\lim_{n\to\infty}f_n(\vx)=f(\vx).
\]
\end{proof}

The pointwise convergence may not imply the weak convergence in the right-sided RKBS $\Banach$ because $\Banach$ may not be reflexive so that we only determine that $\overline{\Span\left\{\Kset_{K}'\right\}}\subseteq\Banach'$.
If the right-sided RKBS $\Banach$ is reflexive, then Proposition~\ref{p:RKBS-dense-pointevalue} ensures the density of $\Span\left\{\Kset_{K}'\right\}$ in $\Banach'$. Hence, the weak convergence is equivalent to the pointwise convergence by the continuous extension.
In other words, the weak* convergence and the pointwise convergence are equivalent in left-sided RKBSs.

\begin{proposition}\label{p:weaklystar-conv-RKBS}
The weak* convergence in the dual space of the left-sided reproducing kernel Banach space is equivalent to the pointwise convergence in this dual space.
\end{proposition}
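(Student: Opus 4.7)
The plan is to handle the two implications separately, modeling the argument on Proposition~\ref{p:weakly-conv-RKBS} but replacing the right-sided reproducing property by the left-sided one, and adding a density-plus-extension step that leverages Proposition~\ref{p:RKBS-dense}. Throughout, let $\Banach$ be a left-sided RKBS with left-sided reproducing kernel $K$, and let $G_n,G\in\Banach'$.

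For the easy direction (weak* $\Rightarrow$ pointwise), I would fix any $\vy\in\Domain'$ and take the test element $f:=K(\cdot,\vy)$, which belongs to $\Banach$ by reproducing property~(iii). Then property~(iv) gives $\langle K(\cdot,\vy),G_n\rangle_{\Banach}=G_n(\vy)$ and $\langle K(\cdot,\vy),G\rangle_{\Banach}=G(\vy)$, so the assumed weak* convergence $\langle f,G_n\rangle_{\Banach}\to\langle f,G\rangle_{\Banach}$ immediately yields $G_n(\vy)\to G(\vy)$. Since $\vy$ was arbitrary, this is exactly pointwise convergence of $G_n$ to $G$ on $\Domain'$.

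For the reverse direction (pointwise $\Rightarrow$ weak*), property~(iv) and linearity upgrade the pointwise hypothesis to $\langle h,G_n\rangle_{\Banach}\to\langle h,G\rangle_{\Banach}$ for every $h\in\Span\{\Kset_K\}$. Proposition~\ref{p:RKBS-dense} tells us that this linear span is dense in $\Banach$. The goal is then to promote the convergence from the dense set $\Span\{\Kset_K\}$ to the whole Banach space $\Banach$. The standard device is an $\epsilon/3$ estimate: given $f\in\Banach$ and $\epsilon>0$, pick $h\in\Span\{\Kset_K\}$ with $\norm{f-h}_{\Banach}$ small, then bound
\[
\abs{\langle f,G_n\rangle_{\Banach}-\langle f,G\rangle_{\Banach}}\leq \norm{f-h}_{\Banach}\bigl(\norm{G_n}_{\Banach'}+\norm{G}_{\Banach'}\bigr)+\abs{\langle h,G_n-G\rangle_{\Banach}},
\]
and send $n\to\infty$ followed by $\epsilon\to 0$.

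The principal obstacle is that this $\epsilon/3$ argument presupposes uniform norm boundedness $\sup_n\norm{G_n}_{\Banach'}<\infty$. I would obtain this via the Banach--Steinhaus (uniform boundedness) principle: the pointwise convergence $G_n(\vy)\to G(\vy)$ makes $\{\langle h,G_n\rangle_{\Banach}\}_n$ a convergent—hence bounded—scalar sequence for each $h\in\Span\{\Kset_K\}$, and combining this with the density supplied by Proposition~\ref{p:RKBS-dense} one extends pointwise boundedness of the family $\{G_n\}$ (viewed as continuous linear functionals on the Banach space $\Banach$) to all of $\Banach$, so that Banach--Steinhaus delivers the required uniform bound. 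Once that is in hand, the continuous extension argument finishes the proof and establishes the equivalence.
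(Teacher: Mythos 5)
Your forward direction (weak* $\Rightarrow$ pointwise) is exactly the paper's argument: test against $K(\cdot,\vy)\in\Banach$ and use reproducing property (iv). Your reverse direction also follows the paper's skeleton — linearity gives convergence of $\langle h,G_n\rangle_{\Banach}$ for $h\in\Span\{\Kset_K\}$, then Proposition~\ref{p:RKBS-dense} plus a continuous-extension/$\epsilon/3$ argument is supposed to finish. You are right that this extension step secretly requires $\sup_n\norm{G_n}_{\Banach'}<\infty$; the paper's own proof simply writes ``by the continuous extensions'' and never justifies that bound, so you have correctly located the missing ingredient.

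The problem is that your proposed repair does not work. The Banach--Steinhaus theorem requires pointwise boundedness of $\{G_n\}$ on all of the Banach space $\Banach$ (or at least on a nonmeager subset); pointwise boundedness on the dense subspace $\Span\{\Kset_K\}$ does \emph{not} extend to $\Banach$ without already knowing the norms are uniformly bounded, which is precisely what you are trying to prove. Concretely, take $\Banach=\lone$ with dense subspace the finitely supported sequences and $G_n:=n\,e_n^{\ast}\in\linfty\cong(\lone)'$, where $e_n^{\ast}$ is the $n$th coordinate functional. For each finitely supported $h$ one has $\langle h,G_n\rangle=n h_n=0$ for all $n$ past the support of $h$, so the family is pointwise bounded (indeed eventually zero) on the dense subspace; yet $\norm{G_n}_{(\lone)'}=n\to\infty$, and Banach--Steinhaus then guarantees some $f\in\lone$ with $\sup_n\abs{\langle f,G_n\rangle}=\infty$. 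So the step ``extend pointwise boundedness from $\Span\{\Kset_K\}$ to $\Banach$, then apply Banach--Steinhaus'' is circular and fails. The honest conclusion is that the implication (pointwise $\Rightarrow$ weak*) needs either an added hypothesis that the sequence $\{G_n\}$ is norm-bounded in $\Banach'$ (after which your $\epsilon/3$ argument is fine), or a genuinely different argument; as stated, neither your proof nor the paper's closes this gap.
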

\begin{proof}
Let $\Banach'$ be the dual space of a left-sided RKBS $\Banach$ with the right-sided reproducing kernel $K$.
We shall prove that $g_n\in\Banach'\overset{\text{weak*}-\Banach}{\longrightarrow}g\in\Banach'$ as $n\to\infty$ if and only if $\lim_{n\to\infty}g_n(\vy)=g(\vy)$ for all $\vy\in\Domain_2$.

Take any $\vy\in\Domain'$ and any sequence $g_n\in\Banach'$ for $n\in\NN$ which is weakly* convergent to $g\in\Banach'$ when $n\to\infty$.
The reproducing property (iii) ensures that $K(\cdot,\vy)\in\Banach$; hence
\[
\lim_{n\to\infty}\langle K(\cdot,\vy),g_n \rangle_{\Banach}
=\langle K(\cdot,\vy),g \rangle_{\Banach}.
\]
Moreover, the reproducing property (iv) shows that
\[
\langle K(\cdot,\vy),g_n \rangle_{\Banach}=g_n(\vy),\quad
\langle K(\cdot,\vy),g \rangle_{\Banach}=g(\vy).
\]
Thus, we have that
\[
\lim_{n\to\infty}g_n(\vy)=g(\vy).
\]

Conversely, suppose that $g_n,g\in\Banach'$ such that $\lim_{n\to\infty}g_n(\vy)=g(\vy)$ for all $\vy\in\Domain'$ when $n\to\infty$.
We take any $f\in\Span\left\{\Kset_K\right\}$. Then $f$ can be written as a linear combination of some finite terms $K(\cdot,\vy_1),\ldots,K(\cdot,\vy_N)\in\Kset_K$, that is,
$$
f=\sum_{k\in\NN_N}c_kK(\cdot,\vy_k).
$$
According to the reproducing properties (iii)-(iv), we have that
\[
\langle f,g_n \rangle_{\Banach}=\sum_{k\in\NN_N}c_k\langle K(\cdot,\vy_k),g_n \rangle_{\Banach}
=\sum_{k\in\NN_N}c_kg_n(\vy_k),
\]
and
\[
\langle f,g \rangle_{\Banach}=\sum_{k\in\NN_N}c_k\langle K(\cdot,\vy_k),g \rangle_{\Banach}=\sum_{k\in\NN_N}c_kg(\vy_k);
\]
hence
\[
\lim_{n\to\infty}\langle f,g_n \rangle_{\Banach}=\langle f,g \rangle_{\Banach}.
\]
In addition, Proposition~\ref{p:RKBS-dense} ensures that $\overline{\Span\left\{\Kset_K\right\}}=\Banach$. Therefore, we verify the general case of $f\in\Banach$ by the continuous extension, that is,
\[
\lim_{n\to\infty}\langle f,g_n \rangle_{\Banach}=\langle f,g \rangle_{\Banach},\quad \text{for all }f\in\Banach.
\]
This ensures that $g_n\overset{\text{weak*}-\Banach}{\longrightarrow}g$ as $n\to\infty$.
\end{proof}

It is well-known that the continuity of reproducing kernels ensures that all functions in their RKHSs are continuous.
The RKBSs also have a similar property of continuity depending on their reproducing kernels. We present it below.

\begin{proposition}\label{p:RKBS-continue}
If $\Banach$ is the right-sided reproducing kernel Banach space with the right-sided reproducing kernel $K\in\Leb_0(\Domain\times\Domain')$ such that the map $\vx\mapsto K(\vx,\cdot)$ is continuous on $\Domain$, then $\Banach$ composes of continuous functions.
\end{proposition}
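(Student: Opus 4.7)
The plan is to use the reproducing property to rewrite pointwise differences of a function $f\in\Banach$ as values of a bounded linear functional, and then pass continuity of $\vx\mapsto K(\vx,\cdot)$ in the norm of $\Banach'$ through the standard dual-pairing inequality $|\langle f, G\rangle_{\Banach}|\le\norm{f}_{\Banach}\norm{G}_{\Banach'}$.

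More explicitly, fix an arbitrary $f\in\Banach$ and an arbitrary point $\vx_0\in\Domain$. I want to show $f$ is continuous at $\vx_0$. Let $\vx\in\Domain$. By right-sided reproducing property (i), both $K(\vx,\cdot)$ and $K(\vx_0,\cdot)$ lie in $\Banach'$, so by linearity the difference $K(\vx,\cdot)-K(\vx_0,\cdot)$ is also in $\Banach'$. Applying reproducing property (ii) twice gives
\[
f(\vx)-f(\vx_0)=\langle f,K(\vx,\cdot)\rangle_{\Banach}-\langle f,K(\vx_0,\cdot)\rangle_{\Banach}=\langle f,K(\vx,\cdot)-K(\vx_0,\cdot)\rangle_{\Banach}.
\]
The defining property of the operator norm on the dual space $\Banach'$ then yields the estimate
\[
\abs{f(\vx)-f(\vx_0)}\le\norm{f}_{\Banach}\norm{K(\vx,\cdot)-K(\vx_0,\cdot)}_{\Banach'}.
\]

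The hypothesis that $\vx\mapsto K(\vx,\cdot)$ is continuous on $\Domain$ (as a map from $\Domain$ into $\Banach'$ equipped with its norm topology) means exactly that $\norm{K(\vx,\cdot)-K(\vx_0,\cdot)}_{\Banach'}\to 0$ as $\vx\to\vx_0$. Since $\norm{f}_{\Banach}$ is a fixed finite constant, the right-hand side of the displayed inequality tends to zero, which proves that $f$ is continuous at $\vx_0$. Because $\vx_0\in\Domain$ and $f\in\Banach$ were arbitrary, we conclude $\Banach\subseteq\Cont(\Domain)$.

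There is no real obstacle here; the argument is essentially the Banach-space analogue of the classical RKHS proof, and the only thing worth noting is that the inequality $\abs{\langle f,G\rangle_{\Banach}}\le\norm{f}_{\Banach}\norm{G}_{\Banach'}$ follows directly from the definition of $\norm{G}_{\Banach'}$ given in Chapter~\ref{char:RKBS} and replaces the Cauchy--Schwarz inequality used in the Hilbert space setting. The only subtle interpretive point is that the continuity hypothesis should be read as continuity into the norm topology of $\Banach'$, which is precisely what makes the estimate work uniformly in the sense needed.
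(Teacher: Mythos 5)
Your proof is correct and follows essentially the same route as the paper: rewrite $f(\vx)-f(\vx_0)$ as $\langle f,K(\vx,\cdot)-K(\vx_0,\cdot)\rangle_{\Banach}$ via the right-sided reproducing properties, bound it by $\norm{f}_{\Banach}\norm{K(\vx,\cdot)-K(\vx_0,\cdot)}_{\Banach'}$, and invoke the norm-continuity of $\vx\mapsto K(\vx,\cdot)$. Your explicit remark that the hypothesis must be read as continuity into the norm topology of $\Banach'$ is a point the paper leaves implicit, but the argument is the same.
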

\begin{proof}
Take any $f\in\Banach$. We shall prove that $f\in\Cont(\Domain)$.
For any $\vx,\vz\in\Domain$, the reproducing properties (i)-(ii) imply that
\[
f(\vz)-f(\vx)=\langle f,K(\vz,\cdot)-K(\vx,\cdot)\rangle_{\Banach}.
\]
Hence, we have that
\begin{equation}\label{eq:RKBS-continuous}
\abs{f(\vz)-f(\vx)}
\leq\norm{f}_{\Banach}\norm{K(\vz,\cdot)-K(\vx,\cdot)}_{\Banach'}, \ \ \vx,\vz\in\Domain.
\end{equation}
Combining inequality~\eqref{eq:RKBS-continuous} and the continuity of the map $\vx\mapsto K(\cdot,\vx)$, we conclude that $f$ is a continuous function.
\end{proof}

\subsection*{Separability}
We already know that a RKHS can be non-separable as well as the example of the Hilbert space $\lspace_2(\Omega)$ with the uncountable domain $\Omega$.
Thus, a RKBS is not necessarily separable.
Now we show the sufficient conditions of separable RKBSs the same as separable RKHSs in \cite[Theorem~15]{BerlinetThomas-Agnan2004}.
The following criterion of the separability will be proved by the result that the closure of the linear combination of a countable subset of a normed space is separable (see
\cite[Proposition~1.12.1]{Megginson1998}).
We review the definition of the annihilator in \cite[Definition~1.10.14]{Megginson1998}.
Let $\Eset$ and $\Eset'$ be subsets of a Banach space $\Banach$ and its dual space $\Banach'$, respectively.
The annihilator of $\Eset$ in $\Banach'$ and the annihilator of $\Eset'$ in $\Banach$ are defined by
\[
\Eset^{\perp}:=\left\{g\in\Banach':\langle f,g\rangle_{\Banach}=0\text{ for all }f\in\Eset\right\},
\]
and
\[
{}^{\perp}\Eset':=\left\{f\in\Banach:\langle f,g\rangle_{\Banach}=0\text{ for all }g\in\Eset'\right\},
\]
respectively.

\begin{proposition}\label{p:RKBS-separable}
If $\Banach$ is the left-sided reproducing kernel Banach space and
the right-sided domain $\Domain'$ of the dual space $\Banach'$ of $\Banach$ contains a countable subset $X'\subseteq\Domain'$ such that for any $g\in\Banach'$, $g|_{X'}=0$ if and only if $g=0$, then $\Banach$ is separable.
\end{proposition}
\begin{proof}
Take any $g\in\Banach'$ such that $\langle K(\cdot,\vy),g \rangle_{\Banach}=0$ for all $\vy\in X'$.
So $g=0$. This ensures that the annihilator of $\Span\left\{K(\cdot,\vy):\vy\in X'\right\}$ is equal to $\{0\}$, that is, $\Span\left\{K(\cdot,\vy):\vy\in X'\right\}^{\perp}=\{0\}$.
Therefore, by \cite[Proposition~1.10.15]{Megginson1998}, the closure of the linear combination of $\left\{K(\cdot,\vy):\vy\in X'\right\}$ is equal to the annihilator of $\{0\}$ which is the whole space $\Banach$, that is,
$$
\overline{\Span\left\{K(\cdot,\vy):\vy\in X'\right\}}={}^{\perp}\{0\}=\Banach.
$$
\end{proof}

The following corollary exhibits a class of Banach spaces composed of continuous functions which have no reproducing kernels.

\begin{corollary}\label{c:RKBS-separable}
If $\Banach$ is a non-separable Banach space such that the dual space $\Banach'$ of $\Banach$ is composed of continuous functions defined on a separable domain $\Domain'$, then $\Banach$ is not a left-sided reproducing kernel Banach space.
\end{corollary}
\begin{proof}
Let $X'$ be a countable dense subset of $\Domain'$. As any element of $\Banach$ is continuous the conditions of Proposition~\ref{p:RKBS-separable} are satisfied. Therefore, if we assume that $\Banach$ had a left-sided reproducing kernel, then it would be separable. This would contradict the hypothesis.
\end{proof}

\section{Implicit Representation}\label{s:ImplicitRepresentation}
\sectionmark{Implicit Representation}

In this section, we find the implicit representation of RKBSs using the techniques of \cite[Theorem~10.22]{Wendland2005} for the implicit representation of RKHSs.
Specifically,
we employ a given kernel to set up a Banach space such that this Banach space becomes a two-sided RKBS with this kernel.

Suppose that
the right-sided kernel set $\Kset_{K}'$ of a given kernel $K\in\Leb_0(\Domain\times\Domain')$ is
linearly independent and the linear span of $\Kset_{K}'$ is endowed with some norm $\norm{\cdot}_{K}$.
Further suppose that
the completion $\Fun$ of $\Span\left\{\Kset_{K}'\right\}$ is reflexive, that is,
\[
\Fun:=\overline{\Span\left\{\Kset_{K}'\right\}}, \ \Fun\cong\Fun'',
\]
and the point evaluation functionals $\delta_{\vy}$ are continuous on $\Fun$, that is, $\delta_{\vy}\in\Fun'$ for all $\vy\in\Domain'$.

We denote by $\Delta_K$ the linear vector space spanned by the point evaluation functionals $\delta_{\vx}$ defined on $\Leb_0(\Domain)$, namely,
\[
\Delta_K:=\Span\left\{\delta_{\vx}: \vx\in\Domain\right\}.
\]
Clearly, $\left\{\delta_{\vx}:\vx\in\Domain\right\}$ is a basis of the linear vector space $\Delta_K$.
Moreover, $\Delta_K$ can be endowed with a norm by the equivalent norm $\norm{\cdot}_{K}$, that is, taking each $\lambda\in\Delta_K$, we have that $\lambda=\sum_{k\in\NN_N}c_k\delta_{\vx_k}$ for some finite pairwise distinct points $\vx_1,\ldots,\vx_N\in\Domain$. So, the linear independence of $\Kset_{K}'$ ensures that the norm of $\lambda$ is well-defined by
\[
\norm{\lambda}_{K}:=\norm{\sum_{k\in\NN_N}c_kK(\vx_k,\cdot)}_{K}.
\]
Comparing the norms of $\Delta_K$ and $\Span\left\{\Kset_{K}'\right\}$, we find that $\Delta_K$ and $\Span\left\{\Kset_{K}'\right\}$ are isometrically isomorphic by the linear operator
\[
T(\lambda):=\sum_{k\in\NN_N}c_kK(\vx_k,\cdot).
\]
This ensures that $\Delta_K$ is isometrically imbedded into the function space $\Fun$.

Next, we show how a two-sided RKBS $\Banach$ is constructed such that its two-sided reproducing kernel is equal to the given kernel $K$
and $\Banach'\cong\Fun$.

\begin{proposition}
If the kernel $K$, the reflexive Banach space $\Fun$, and the normed space $\Delta_K$ are defined as above,
then the space
\[
\Banach:=\left\{f\in\Leb_0(\Domain):\exists~C_f>0\text{ s.t. }\abs{\lambda(f)}\leq C_f\norm{\lambda}_{K}\text{ for all }\lambda\in\Delta_K\right\},
\]
equipped with the norm
\[
\norm{f}_{\Banach}:=\sup_{\lambda\in\Delta_K,\lambda\neq0}\frac{\abs{\lambda(f)}}{\norm{\lambda}_K},
\]
is a two-sided reproducing kernel Banach space with the two-sided reproducing kernel $K$, and the dual space $\Banach'$ of $\Banach$ is isometrically equivalent to $\Fun$.
\end{proposition}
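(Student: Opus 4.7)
The plan is to identify $\Banach$ isometrically with $\Fun'$, and then invoke the reflexivity of $\Fun$ to conclude $\Banach' \cong \Fun'' \cong \Fun$ as required. The reproducing properties will fall out by tracking how the elements $K(\vx,\cdot)$ and $K(\cdot,\vy)$ behave under these identifications. Throughout, the linear operator $T:\Delta_K \to \Span\{\Kset_K'\}$ sending $\delta_{\vx}$ to $K(\vx,\cdot)$ is an isometric isomorphism (by the very definition of $\norm{\cdot}_K$ on $\Delta_K$) whose image is dense in $\Fun$; this will be the bridge used in every step.

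First I would build a map $\Phi:\Banach \to \Fun'$ as follows. For $f \in \Banach$, the functional $\lambda \mapsto \lambda(f)$ is bounded on $\Delta_K$ with operator norm $\norm{f}_\Banach$, so transporting through $T^{-1}$ and extending by continuity across the dense subspace $T(\Delta_K) \subseteq \Fun$ yields a unique $L_f \in \Fun'$ with $\norm{L_f}_{\Fun'} = \norm{f}_\Banach$. Surjectivity onto $\Fun'$ is the content of the argument: given any $L \in \Fun'$, I would define $f(\vx) := L(K(\vx,\cdot))$ and check by linearity that $\lambda(f) = L(T(\lambda))$ for every $\lambda \in \Delta_K$, whence $\abs{\lambda(f)} \leq \norm{L}_{\Fun'}\norm{\lambda}_K$, so $f \in \Banach$ with $\Phi(f) = L$. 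Completeness of $\Banach$ then passes to it from $\Fun'$ through this isometric identification. Reflexivity of $\Fun$ promotes this to $\Banach' \cong \Fun'' \cong \Fun$, and the dual bilinear product reads $\langle f,g\rangle_\Banach = L_f(g)$ for $f \in \Banach$ and $g \in \Fun$.

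With these identifications in hand, the reproducing properties are routine to verify. For (i)-(ii): $K(\vx,\cdot)$ lies in $\Span\{\Kset_K'\} \subseteq \Fun \cong \Banach'$, and since $L_f$ was constructed to extend $\lambda \mapsto \lambda(f)$, evaluating at the element corresponding to $\delta_{\vx}$ gives $\langle f, K(\vx,\cdot)\rangle_\Banach = L_f(K(\vx,\cdot)) = f(\vx)$. For (iii)-(iv), the hypothesis $\delta_{\vy} \in \Fun'$ becomes essential: for any $\lambda = \sum_{k\in\NN_N} c_k \delta_{\vx_k} \in \Delta_K$, a direct computation yields $\lambda(K(\cdot,\vy)) = \sum_{k\in\NN_N} c_k K(\vx_k,\vy) = \delta_{\vy}(T(\lambda))$, which is bounded by $\norm{\delta_{\vy}}_{\Fun'}\norm{\lambda}_K$, placing $K(\cdot,\vy) \in \Banach$. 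Moreover $L_{K(\cdot,\vy)}$ agrees with $\delta_{\vy}$ on the dense subspace $T(\Delta_K)$ and so coincides with $\delta_{\vy}$ on all of $\Fun$, yielding $\langle K(\cdot,\vy), g\rangle_\Banach = L_{K(\cdot,\vy)}(g) = \delta_{\vy}(g) = g(\vy)$.

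The main obstacle will be the careful bookkeeping across the chain of identifications $\Banach \leftrightarrow \Fun'$, $\Delta_K \leftrightarrow \Span\{\Kset_K'\}$, and $\Fun \leftrightarrow \Fun''$, so that each reproducing identity is stated in the correct space. In particular, reflexivity of $\Fun$ is used precisely to interpret an element $g \in \Fun$ as a functional on $\Fun' \cong \Banach$ via the natural map $g \mapsto (L \mapsto L(g))$, without which the left-sided reproducing identity (iv) could not even be phrased; and the density of $T(\Delta_K)$ in $\Fun$ is what guarantees uniqueness of the continuous extensions defining $L_f$ and identifying $L_{K(\cdot,\vy)}$ with $\delta_{\vy}$.
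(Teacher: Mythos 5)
Your proof is correct and follows essentially the same route as the paper's: identify $\Banach\cong\Fun'$ via the isometry $\Delta_K\cong\Span\{\Kset_K'\}$ and the density of $\Span\{\Kset_K'\}$ in $\Fun$, invoke reflexivity to get $\Banach'\cong\Fun$, read off the right-sided properties from the identification of $K(\vx,\cdot)$ with $\delta_{\vx}$, and obtain the left-sided ones from $\delta_{\vy}\in\Fun'$ together with the computation $\lambda(K(\cdot,\vy))=\delta_{\vy}(T(\lambda))$ and continuous extension. Your explicit surjectivity check (constructing $f(\vx):=L(K(\vx,\cdot))$ for a given $L\in\Fun'$ and verifying $f\in\Banach$) is a step the paper leaves implicit in its appeal to ``the standard definition of the dual space of $\Delta_K$,'' and it is a worthwhile addition since $\Banach$ is defined as a concrete function space rather than abstractly as a dual.
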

\begin{proof}
The primary idea of implicit representation is to use the point evaluation functional $\delta_{\vx}$ to set up the normed space $\Banach$ composed of function $f\in\Leb_0(\Domain)$ such that
$\delta_{\vx}$ is continuous on $\Banach$.

By the standard definition of the dual space of $\Delta_K$,
the normed space $\Banach$ is isometrically equivalent to the dual space of $\Delta_K$, that is, $\Banach\cong\left(\Delta_K\right)'$.
Since the dual space of $\Delta_K$ is a Banach space, the space $\Banach$ is also a Banach space.
Next, we verify the right-sided reproducing properties of $\Banach$. Since
$$
\Delta_K\cong\Span\left\{\Kset_{K}'\right\},
$$
we have that
$$
\Banach\cong\left(\Span\left\{\Kset_{K}'\right\}\right)'.
$$
According to the density of $\Span\left\{\Kset_{K}'\right\}$ in $\Fun$,
we determine that $\Banach\cong\Fun'$ by the Hahn-Banach extension theorem.
The reflexivity of $\Fun$ ensures that
$$
\Banach'\cong\Fun''\cong\Fun.
$$
This implies that $\Delta_K$ is isometrically imbedded into $\Banach'$.
Since $K(\vx,\cdot)\in\Kset_{K}'$ is the equivalent element of $\delta_{\vx}\in\Delta_K$ for all $\vx\in\Domain$, the right-sided reproducing properties of $\Banach$ are well-defined, that is, $K(\vx,\cdot)\in\Banach'$ and
$$
\langle f,K(\vx,\cdot)\rangle_{\Banach}=\langle f,\delta_{\vx} \rangle_{\Banach}=f(\vx), \ \  \mbox{for all}\ \ f\in\Banach.
$$

Finally, we verify the left-sided reproducing properties of $\Banach$. Let $\vy\in\Domain'$ and $\lambda\in\Delta_K$. Hence, $\lambda$ can be represented as a linear combination of some $\delta_{\vx_1},\ldots,\delta_{\vx_N}\in\Delta_K$, that is,
$$
\lambda=\sum_{k\in\NN_N}c_k\delta_{\vx_k}.
$$
Moreover, we have that
\begin{equation}\label{eq:RKBS-implict-1}
\abs{\lambda\left(K(\cdot,\vy)\right)}\leq
\norm{\delta_{\vy}}_{\Fun'}\norm{\sum_{k\in\NN_N}c_kK(\vx_k,\cdot)}_{K}
=\norm{\delta_{\vy}}_{\Fun'}\norm{\lambda}_{K},
\end{equation}
because $\delta_{\vy}\in\Fun'$ and
\begin{equation}\label{eq:RKBS-implict-2}
\lambda\left(K(\cdot,\vy)\right)=\sum_{k\in\NN_N}c_kK(\vx_k,\vy)
=\delta_{\vy}\left(\sum_{k\in\NN_N}c_kK(\vx_k,\cdot)\right).
\end{equation}
Inequality~\eqref{eq:RKBS-implict-1} guarantees that $K(\cdot,\vy)\in\Banach$. Let $g$ be the equivalent element of $\lambda$ in $\Span\left\{\Kset_K'\right\}$.
Because of $\lambda\in\Banach'$, we have that
$$
\langle K(\cdot,\vy),g \rangle_{\Banach}=\lambda\left(K(\cdot,\vy)\right)=g(\vy)
$$
by equation~\eqref{eq:RKBS-implict-2}. Using the density of $\Span\left\{\Kset_K'\right\}$ in $\Fun$, we verify the general case of $g\in\Fun\cong\Banach'$ by the continuous extension, that is, $\langle K(\cdot,\vy),g \rangle_{\Banach}=g(\vy)$.
\end{proof}

\section{Imbedding}\label{s:Imbedding}
\sectionmark{Imbedding}

In this section, we establish the imbedding of RKBSs, an important property of RKBSs. In particular, it is useful in machine learning.
Note that the imbedding of RKHSs was given in \cite[Lemma~10.27 and Proposition~10.28]{Wendland2005}.

We say that a normed space $\Banach_1$ is imbedded into another normed space $\Banach_2$ if there exists an injective and continuous linear operator $T$ from $\Banach_1$ into $\Banach_2$.

Let $\Leb_p(\Domain)$ and $\Leb_q(\Domain')$ be the standard $p$-norm and $q$-norm Lebesgue spaces defined on $\Domain$ and $\Domain'$, respectively, where $1\leq p,q\leq \infty$. We already know that any $f\in\Leb_p(\Domain)$ is equal to $0$ almost everywhere if and only if
$f$ satisfies the condition
\[
\tag{C-$\mu$}
\mu\left(\left\{\vx\in\Domain:f(\vx)\neq0\right\}\right)=0.
\]
By the reproducing properties, the functions in the RKBSs are distinguished point wisely.
Hence, the RKBSs need another condition such that the zero element of RKBSs is equivalent to the zero element defined by the measure $\mu$ almost everywhere.
Then we say that the RKBS $\Banach$ satisfies the \emph{$\mu$-measure zero condition} if for any $f\in\Banach$, we have that $f=0$ if and only if $f$ satisfies the condition (C-$\mu$).
The $\mu$-measure zero condition of the RKBS $\Banach$ guarantees that for any $f,g\in\Banach$, we have that $f=g$ if and only if $f$ is equal to $g$ almost everywhere.
For example, when $\Banach\subseteq\Cont(\Domain)$ and $\supp(\mu)=\Domain$, then $\Banach$ satisfies the $\mu$-measure zero condition (see the Lusin Theorem).
Here, the support $\supp(\mu)$ of the Borel measure $\mu$ is defined as the set of all points $\vx$ in $\Domain$ for which every open neighbourhood $A$ of $\vx$ has positive measure. (More details of Borel measures and Lebesgue integrals can be found in \cite[Chapters 2 and 3]{Rudin1987}.)

We first study the imbedding of right-sided RKBSs.

\begin{proposition}\label{p:RKBS-imbedding}
Let $1\leq q\leq\infty$.
If $\Banach$ is the right-sided reproducing kernel Banach space with the right-sided reproducing kernel $K\in\Leb_0(\Domain\times\Domain')$ such that $\vx\mapsto\norm{K(\vx,\cdot)}_{\Banach'}\in\Leb_q(\Domain)$,
then the identity map from $\Banach$ into $\Leb_q(\Domain)$ is continuous.
\end{proposition}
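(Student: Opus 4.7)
The proof plan hinges on exploiting the right-sided reproducing property to convert a pointwise bound into an $\Leb_q$ norm bound. Concretely, for any $f \in \Banach$ and any $\vx \in \Domain$, property (ii) of Definition~\ref{d:RKBS} gives $f(\vx) = \langle f, K(\vx,\cdot)\rangle_{\Banach}$, and then the standard dual-bilinear estimate yields
\[
\abs{f(\vx)} \leq \norm{f}_{\Banach}\,\norm{K(\vx,\cdot)}_{\Banach'}, \qquad \vx \in \Domain.
\]
This is the only place the reproducing property enters; everything else is integration theory.

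Next, I would raise this pointwise bound to the $q$-th power (for $1 \leq q < \infty$) and integrate with respect to $\mu$, using the hypothesis that the function $\vx \mapsto \norm{K(\vx,\cdot)}_{\Banach'}$ lies in $\Leb_q(\Domain)$. This produces
\[
\norm{f}_{\Leb_q(\Domain)} \leq \norm{f}_{\Banach} \,\bigl\lVert \vx \mapsto \norm{K(\vx,\cdot)}_{\Banach'}\bigr\rVert_{\Leb_q(\Domain)},
\]
which shows first that the pointwise function $f$ is $\mu$-measurable and belongs to $\Leb_q(\Domain)$, and second that the identity map $T : \Banach \to \Leb_q(\Domain)$, $f \mapsto f$, is bounded with operator norm at most $\bigl\lVert\norm{K(\cdot,\cdot)}_{\Banach'}\bigr\rVert_{\Leb_q(\Domain)}$. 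The case $q = \infty$ is handled analogously by replacing the integration step with an essential-supremum argument.

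The remaining task is to check that $T$ is injective, so that it qualifies as an imbedding in the sense defined before the proposition. If $f \in \Banach$ satisfies $\norm{f}_{\Banach} = 0$, then the displayed inequality forces $f(\vx) = 0$ for every $\vx \in \Domain$, so $T$ is injective as a map on functions, and hence continuous linearity together with the bound above give the desired imbedding. The mildly delicate point — and the part I expect to require the most care to phrase precisely — is the interplay between $\Banach$ as a space of genuine functions in $\Leb_0(\Domain)$ and $\Leb_q(\Domain)$ as a space of equivalence classes; the reproducing identity $f(\vx) = \langle f, K(\vx,\cdot)\rangle_{\Banach}$ shows that pointwise values of $f$ are controlled entirely by the $\Banach$-norm, so the identification is consistent and the argument goes through without further hypotheses.
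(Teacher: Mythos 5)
Your proof is correct and follows essentially the same route as the paper: apply the right-sided reproducing property to get the pointwise bound $\abs{f(\vx)}\leq\norm{f}_{\Banach}\norm{K(\vx,\cdot)}_{\Banach'}$, then integrate (or take the essential supremum when $q=\infty$) against the hypothesis that $\vx\mapsto\norm{K(\vx,\cdot)}_{\Banach'}$ lies in $\Leb_q(\Domain)$. Your additional remarks on injectivity and the function-versus-equivalence-class issue go slightly beyond what the paper records but do not change the argument.
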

\begin{proof}
We shall prove the imbedding by verifying that the identify map from $\Banach$ into $\Leb_q(\Domain)$ is continuous.
Let
\[
\Phi(\vx):=\norm{K(\vx,\cdot)}_{\Banach'},\quad \text{for }\vx\in\Domain.
\]
Since $\Phi\in\Leb_q(\Domain)$, we obtain a positive constant $\norm{\Phi}_{\Leb_q(\Domain)}$.
We take any $f\in\Banach$.
By the reproducing properties (i)-(ii), we have that
\[
f(\vx)=\langle f,K(\cdot,\vx) \rangle_{\Banach},\quad \text{for all }\vx\in\Domain.
\]
Hence, it follows that
\begin{equation}\label{eq:RKBS-imbedding}
\abs{f(\vx)}=\abs{\langle f,K(\cdot,\vx) \rangle_{\Banach}}\leq\norm{f}_{\Banach}\norm{K(\vx,\cdot)}_{\Banach'}=\norm{f}_{\Banach}\Phi(\vx),
\quad
\text{for }\vx\in\Domain.
\end{equation}
Integrating both sides of inequality~\eqref{eq:RKBS-imbedding} yields the continuity of the identify map
\[
\norm{f}_{\Leb_q(\Domain)}=\left(\int_{\Domain}\abs{f(\vx)}^q\mu(\ud\vx)\right)^{1/q}\leq
\norm{f}_{\Banach}\left(\int_{\Domain}\abs{\Phi(\vx)}^q\mu(\ud\vx)\right)^{1/q}
=\norm{\Phi}_{\Leb_q(\Domain)}\norm{f}_{\Banach},
\]
when $1\leq q<\infty$, or
\[
\norm{f}_{\Leb_{\infty}(\Domain)}=\underset{\vx\in\Domain}{\text{ess sup}}\abs{f(\vx)}\leq
\norm{f}_{\Banach}\underset{\vx\in\Domain}{\text{ess sup}}\abs{\Phi(\vx)}
=\norm{\Phi}_{\Leb_{\infty}(\Domain)}\norm{f}_{\Banach},
\]
when $q=\infty$.
\end{proof}

\begin{corollary}\label{c:RKBS-imbedding}
Let $1\leq q\leq\infty$ and let $\Banach$ be the right-sided reproducing kernel Banach space with the right-sided reproducing kernel $K\in\Leb_0(\Domain\times\Domain')$ such that $\vx\mapsto\norm{K(\vx,\cdot)}_{\Banach'}\in\Leb_q(\Domain)$.
If $\Banach$ satisfies the $\mu$-measure zero condition, then $\Banach$ is imbedded into $\Leb_q(\Domain)$.
\end{corollary}
\begin{proof}
According to Proposition~\ref{p:RKBS-imbedding}, the identity map $I:\Banach\to\Leb_q(\Domain)$ is continuous. Since $\Banach$ satisfies the $\mu$-measure zero condition, the identity map $I$ is injective.
Therefore, the RKBS $\Banach$ is imbedded into $\Leb_q(\Domain)$ by the identity map.
\end{proof}

\begin{remark}
If $\Banach$ does not satisfy the $\mu$-measure zero condition, then the above identity map is not injective. In this case, we can not say that $\Banach$ is imbedded into $\Leb_q(\Domain)$. But, to avoid the duplicated notations, the RKBS $\Banach$ can still be seen as a subspace of $\Leb_q(\Domain)$ in this article.
This means that functions of $\Banach$, which are equal almost everywhere, are seen as the same element in $\Leb_q(\Domain)$.
\end{remark}

For the compact domain $\Domain$ and the two-sided reproducing kernel $K\in\Leb_0(\Domain\times\Domain')$,  we define the left-sided integral operator $I_K:\Leb_p(\Domain)\to\Leb_0(\Domain')$ by
\[
I_K(\zeta)(\vy):=\int_{\Domain}K(\vx,\vy)\zeta(\vx)\mu(\ud\vx),\quad \text{for all }\zeta\in\Leb_p(\Domain)\text{ and all } \vy\in\Domain'.
\]
When $K(\cdot,\vy)\in\Leb_q(\Domain)$ for all $\vy\in\Domain'$, the linear operator $I_K$ is well-defined.
Here $p^{-1}+q^{-1}=1$.
We verify that the integral operator $I_K$ is also a continuous operator from $\Leb_p(\Domain)$ into the dual space $\Banach'$ of the two-sided RKBS $\Banach$ with the two-sided reproducing kernel $K$.

Same as \cite[Definition~3.1.3]{Megginson1998}, we call the linear operator $T^{\ast}:\Banach_2'\to\Banach_1'$ the adjoint operator of the continuous linear operator $T:\Banach_1\to\Banach_2$ if
\[
\langle Tf,g\rangle_{\Banach_1}=\langle f,T^{\ast}g \rangle_{\Banach_2}.
\]
\cite[Theorem~3.1.17]{Megginson1998} ensures that $T$ is injective if and only if the range of $T^{\ast}$ is weakly* dense in $\Banach_1'$, and $T^{\ast}$ is injective if and only if the range of $T$ is dense in $\Banach_2$. Here, a subset $\Eset$ is weakly* dense in a Banach space $\Banach$ if for any $f\in\Banach$, there exists a sequence $\left\{f_n:n\in\NN\right\}\subseteq\Eset$ such that $f_n$ is weakly* convergent to $f$ when $n\to\infty$.
Moreover, we shall show that $I_K$ is the adjoint operator of the identify map $I$ mentioned in Proposition~\ref{p:RKBS-imbedding}.
For convenience, we denote that the general division $1/\infty$ is equal to $0$.

\begin{proposition}\label{p:RKBS-imbedding-dual}
Let $1\leq p,q\leq\infty$ such that $p^{-1}+q^{-1}=1$.
If the kernel $K\in\Leb_0(\Domain\times\Domain')$
is the two-sided reproducing kernel of the two-sided reproducing kernel Banach space $\Banach$ such that $K(\cdot,\vy)\in\Leb_q(\Domain)$ for all $\vy\in\Domain'$ and the map $\vx\mapsto\norm{K(\vx,\cdot)}_{\Banach'}\in\Leb_q(\Domain)$,
then the left-sided integral operator $I_K$ maps $\Leb_p(\Domain)$ into the dual space $\Banach'$ of $\Banach$ continuously and
\begin{equation}\label{eq:imbeding-RKBS-dual}
\int_{\Domain}f(\vx)\zeta(\vx)\mu(\ud\vx)=\langle f,I_K\zeta \rangle_{\Banach},\quad \text{for all }\zeta\in\Leb_p(\Domain)\text{ and all }f\in\Banach.
\end{equation}
\end{proposition}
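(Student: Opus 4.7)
The plan is to combine Proposition~\ref{p:RKBS-imbedding} applied with the conjugate exponent $q$ with H\"older's inequality, and then use the left-sided reproducing property to identify $I_K\zeta$ with the function representative in $\Fun\cong\Banach'$ of a naturally associated bounded functional on $\Banach$. First I would set $r_K(\vx):=\norm{K(\vx,\cdot)}_{\Banach'}$ and $M_{K,q}:=\norm{r_K}_{\Leb_q(\Domain)}$, which is finite by hypothesis. Proposition~\ref{p:RKBS-imbedding} yields the continuous inclusion $\Banach\hookrightarrow\Leb_q(\Domain)$ with $\norm{f}_{\Leb_q(\Domain)}\leq M_{K,q}\norm{f}_{\Banach}$. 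For any fixed $\zeta\in\Leb_p(\Domain)$, H\"older's inequality then gives
\[
\abs{\int_{\Domain}f(\vx)\zeta(\vx)\mu(\ud\vx)}\leq M_{K,q}\norm{\zeta}_{\Leb_p(\Domain)}\norm{f}_{\Banach},\quad\text{for all }f\in\Banach,
\]
so that $G_{\zeta}(f):=\int_{\Domain}f(\vx)\zeta(\vx)\mu(\ud\vx)$ defines a bounded linear functional on $\Banach$ with $\norm{G_{\zeta}}_{\Banach'}\leq M_{K,q}\norm{\zeta}_{\Leb_p(\Domain)}$; linearity of $\zeta\mapsto G_{\zeta}$ is immediate from linearity of the integral.

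The crux is then to identify $G_{\zeta}$, viewed as an element of $\Banach'\cong\Fun$, with the function $I_K\zeta$. Let $g_{\zeta}\in\Fun$ be the function equivalent to $G_{\zeta}$ under the isometric isomorphism. The two-sided reproducing property ensures $K(\cdot,\vy)\in\Banach$ for every $\vy\in\Domain'$, while property~(iv) applied to $g_{\zeta}$ yields
\[
g_{\zeta}(\vy)=\langle K(\cdot,\vy),g_{\zeta}\rangle_{\Banach}=G_{\zeta}\left(K(\cdot,\vy)\right)=\int_{\Domain}K(\vx,\vy)\zeta(\vx)\mu(\ud\vx)=I_K\zeta(\vy),
\]
for every $\vy\in\Domain'$. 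Hence $I_K\zeta=g_{\zeta}$ as elements of $\Fun\cong\Banach'$, and combined with the norm bound from the previous paragraph this shows that $I_K:\Leb_p(\Domain)\to\Banach'$ is a continuous linear operator. The identity~\eqref{eq:imbeding-RKBS-dual} is then the restatement $\langle f,I_K\zeta\rangle_{\Banach}=\langle f,g_{\zeta}\rangle_{\Banach}=G_{\zeta}(f)=\int_{\Domain}f(\vx)\zeta(\vx)\mu(\ud\vx)$.

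The main technical point I anticipate is the identification step: it uses crucially that $\Banach$ is two-sided, so that the kernel sections $K(\cdot,\vy)$ lie in $\Banach$ and the functional $G_{\zeta}$ can be evaluated on them, and that property~(iv) allows the pointwise values of an element of $\Fun\cong\Banach'$ to be recovered from dual pairings with these sections. Compactness of $\Domain$ together with continuity of $K$ guarantees that $\mu$ is finite and that $\vx\mapsto K(\vx,\vy)\zeta(\vx)$ is integrable for each fixed $\vy$, so the evaluation $G_{\zeta}(K(\cdot,\vy))$ really does agree with the integral defining $I_K\zeta(\vy)$, which makes the formal manipulation rigorous.
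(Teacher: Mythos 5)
Your proof is correct and follows essentially the same route as the paper: the imbedding $\Banach\hookrightarrow\Leb_q(\Domain)$ from Proposition~\ref{p:RKBS-imbedding} plus H\"older's inequality gives boundedness of $G_{\zeta}$, and the left-sided reproducing property identifies its representative $g_{\zeta}\in\Fun$ with $I_K\zeta$. The only (harmless) difference is that you define $G_{\zeta}$ directly on all of $\Banach$, which slightly streamlines the paper's argument of first defining it on $\Span\{\Kset_K\}$ and extending by density via Hahn--Banach, and likewise makes the final identity immediate rather than requiring a second density argument.
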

\begin{proof}
Since $K(\cdot,\vy)\in\Leb_q(\Domain)$ for all $\vy\in\Domain'$, the integral operator $I_K$ is well-defined on $\Leb_p(\Domain)$.
First we prove that the integral operator $I_K$ is a continuous linear operator from $\Leb_p(\Domain)$ into $\Banach'$.
Then we take a $\zeta\in\Leb_p(\Domain)$ and show that $I_K\zeta\in\Fun\cong\Banach'$.
To simplify the notation in the proof, we let the subspace $\VecSpace:=\Span\left\{\Kset_K\right\}$.
Define a linear functional $G_{\zeta}$ on $\VecSpace$ of $\Banach$ by $\zeta$, that is,
\[
G_{\zeta}(f):=
\int_{\Domain}f(\vx)\zeta(\vx)\mu(\ud\vx),
\quad
\text{for }f\in\VecSpace.
\]
Since $\Banach$ is the two-sided RKBS with the two-sided reproducing kernel $K$ and $\vx\mapsto\norm{K(\vx,\cdot)}_{\Banach'}\in\Leb_q(\Domain)$, Proposition~\ref{p:RKBS-imbedding} ensures that the identity map from $\Banach$ into $\Leb_q(\Domain)$ is continuous; hence there exists a positive constant $\norm{\Phi}_{\Leb_q(\Domain)}$ such that
\begin{equation}\label{eq:RKBS-imbedding-dual-e1}
\norm{f}_{\Leb_q(\Domain)}\leq\norm{\Phi}_{\Leb_q(\Domain)}\norm{f}_{\Banach},\quad
\text{for }f\in\Banach.
\end{equation}
By the H\"{o}lder inequality, we have that
\begin{equation}\label{eq:RKBS-imbedding-dual-e2}
\abs{G_{\zeta}(f)}=\abs{\int_{\Domain}f(\vx)\zeta(\vx)\mu(\ud\vx)}
\leq\norm{f}_{\Leb_q(\Domain)}\norm{\zeta}_{\Leb_p(\Domain)},
\quad
\text{for }f\in\VecSpace.
\end{equation}
Combining inequalities~\eqref{eq:RKBS-imbedding-dual-e1} and~\eqref{eq:RKBS-imbedding-dual-e2}, we find that
\begin{equation}\label{eq:RKBS-imbedding-dual-e3}
\abs{G_{\zeta}(f)}\leq\norm{\Phi}_{\Leb_q(\Domain)}\norm{f}_{\Banach}\norm{\zeta}_{\Leb_p(\Domain_1)},
\quad
\text{for }f\in\VecSpace,
\end{equation}
which implies that $G_{\zeta}$ is a continuous linear functional on $\VecSpace$.
On the other hand, Proposition~\ref{p:RKBS-dense} ensures that $\VecSpace$ is dense in $\Banach$. As a result, $G_{\zeta}$ can be uniquely extended to a continuous linear functional on $\Banach$ by the Hahn-Banach extension theorem, that is, $G_{\zeta}\in\Banach'$.
Since $\Fun$ and $\Banach'$ are isometrically isomorphic, there exists a function $g_{\zeta}\in\Fun$ which is an equivalent element of $G_{\zeta}$, such that
$$
G_{\zeta}(f)=\langle f,g_{\zeta} \rangle_{\Banach}\ \  \mbox{for all}\ \ f\in\Banach.
$$
Using the reproducing properties (iii)-(iv), we observe for all $\vy\in\Domain'$ that
\[
G_{\zeta}\left(K(\cdot,\vy)\right)=\langle K(\cdot,\vy),g_{\zeta} \rangle_{\Banach}
=g_{\zeta}(\vy);
\]
hence,
\[
(I_K\zeta)(\vy)=\int_{\Domain}K(\vx,\vy)\zeta(\vx)\mu(\ud\vx)=G_{\zeta}\left(K(\cdot,\vy)\right)
=g_{\zeta}(\vy).
\]
This ensures that
$$
I_K\zeta=g_{\zeta}\in\Fun\cong\Banach'.
$$
Moreover, inequality~\eqref{eq:RKBS-imbedding-dual-e3} gives
\[
\norm{g_{\zeta}}_{\Banach'}=\norm{G_{\zeta}}_{\Banach'}\leq \norm{\Phi}_{\Leb_q(\Domain)}\norm{\zeta}_{\Leb_p(\Domain')}.
\]
Therefore, the integral operator $I_K$ is also a continuous linear operator.

Next we verify equation~\eqref{eq:imbeding-RKBS-dual}. Let $f\in\VecSpace$ be arbitrary. Then $f$ can be represented as a linear combination of finite elements $K(\cdot,\vy_1),\ldots,K(\cdot,\vy_N)$ in $\Kset_K$ in the form that
$$
f=\sum_{k\in\NN_N}c_kK(\cdot,\vy_k).
$$
By the reproducing properties~(iii)-(iv), we have for all $\zeta\in\Leb_p(\Domain)$ that
\begin{align*}
\int_{\Domain}f(\vx)\zeta(\vx)\mu(\ud\vx)&=\sum_{k\in\NN_N}c_k\int_{\Domain}K(\vx,\vy_k)\zeta(\vx)\mu(\ud\vx)\\
&=\sum_{k\in\NN_N}c_k\left(I_K\zeta\right)(\vy_k)\\
&=\sum_{k\in\NN_N}c_k\langle K(\cdot,\vy_k),I_K(\zeta) \rangle_{\Banach}\\
&=\langle f,I_K(\zeta) \rangle_{\Banach}.
\end{align*}
Using the density of $\VecSpace$ in $\Banach$ and the continuity of the identity map from $\Banach$ into $\Leb_q(\Domain)$, we obtain the general case of $f\in\Banach$ by the continuous extension. Therefore, equation \eqref{eq:imbeding-RKBS-dual} holds.
\end{proof}

\begin{corollary}\label{c:RKBS-imbedding-dual}
Let $1\leq p,q\leq\infty$ such that $p^{-1}+q^{-1}=1$ and
let the kernel $K\in\Leb_0(\Domain\times\Domain')$
be the two-sided reproducing kernel of the two-sided reproducing kernel Banach space $\Banach$ such that $K(\cdot,\vy)\in\Leb_q(\Domain)$ for all $\vy\in\Domain'$ and the map $\vx\mapsto\norm{K(\vx,\cdot)}_{\Banach'}\in\Leb_q(\Domain)$.
If $\Banach$ satisfies the $\mu$-measure zero condition,
then the range $I_K\left(\Leb_p(\Domain)\right)$ is weakly* dense in the dual space $\Banach'$
and
the range $I_K\left(\Leb_p(\Domain)\right)$ is dense in the dual space $\Banach'$ when $\Banach$ is reflexive and $1<p,q<\infty$.
\end{corollary}
\begin{proof}
The proof follows from a consequence of the general properties of adjoint mappings in \cite[Theorem~3.1.17]{Megginson1998}.

According to Proposition~\ref{p:RKBS-imbedding-dual}, the integral operator $I_K:\Leb_p(\Domain)\to\Banach'$ is continuous.
Equation~\eqref{eq:imbeding-RKBS-dual} shows that the integral operator $I_K:\Leb_p(\Domain)\to\Banach'$ is the adjoint operator of the identity map $I:\Banach\to\Leb_q(\Domain)$.
Moreover, since $\Banach$ satisfies the $\mu$-measure zero condition, the identity map $I:\Banach\to\Leb_q(\Domain)$ is injective. This ensures that the weakly* closure of the range of $I_K$ is equal to $\Banach'$.

The last statement is true, because the reflexivity of $\Banach$ and $\Leb_q(\Domain)$ for $1<q<\infty$ ensures that
the closure of the range of $I_K$ is equal to $\Banach'$.
\end{proof}

To close this section, we discuss two applications of the imbedding theorems of RKBSs.

\subsection*{Imbedding Probability Measures}

Since the injectivity of the integral operator $I_K$ may not hold, the operator $I_K$ may not be the imbedding operator from $\Leb_p(\Domain)$ into $\Banach$.
Nevertheless, Proposition~\ref{p:RKBS-imbedding-dual} provides a useful tool to compute the imbedding probability measure in Banach spaces.
We consider computing the integral
\[
I_f:=\int_{\Domain}f(\vx)\mu_{\zeta}(\ud\vx),
\]
where the probability measure $\mu_{\zeta}$ is written as $\mu_{\zeta}(\ud\vx)=\zeta(\vx)\mu(\ud\vx)$ for a function $\zeta\in\Leb_p(\Domain)$ and
$f$ belongs to the two-sided RKBS $\Banach$ given in Proposition~\ref{p:RKBS-imbedding-dual}.
Usually it is impossible to calculate the exact value $I_f$.
In practice, we approximate $I_f$ by a countable sequence of easy-implementation integrals
\[
I_n:=\int_{\Domain}f_n(\vx)\mu_{\zeta}(\ud\vx),
\]
hoping that $\lim_{n\to\infty}I_n=I_f$.
If we find a countable sequence $f_n\in\Banach$ such that $\norm{f_n-f}_{\Banach}\to0$ when $n\to\infty$, then we have that
\[
\lim_{n\to\infty}\abs{I_n-I_f}=0.
\]
The reason is that Proposition~\ref{p:RKBS-imbedding-dual} ensures that
\begin{align*}
\abs{I_n-I_f}&=\abs{\int_{\Domain}\left(f_n(\vx)-f(\vx)\right)\mu_{\zeta}(\ud\vx)}\\
&=\abs{\int_{\Domain}\left(f_n-f\right)(\vx)\zeta(\vx)\mu(\ud\vx)}\\
&=\abs{\langle f_n-f, I_K(\zeta) \rangle_{\Banach}}\\
&\leq\norm{f_n-f}_{\Banach}\norm{I_K(\zeta)}_{\Banach'}.
\end{align*}
The estimator $f_n$ may be constructed by the reproducing kernel $K$ for the discrete data information $\left\{\left(\vx_k,y_k\right):k\in\NN_N\right\}\subseteq\Domain\times\RR$ induced by $f$.

\subsection*{Fr\'{e}chet Derivatives of Loss Risks}

In the learning theory, one often considers the expected loss of $f\in\Leb_{\infty}(\Domain)$ given by
\[
\int_{\Domain\times\RR}L(\vx,y,f(\vx))\PP\left(\ud\vx,\ud y\right)
=\frac{1}{C_{\mu}}\int_{\Domain}\int_{\RR}L(\vx,y,f(\vx))\PP(\ud y|\vx)\mu(\ud\vx),
\]
where $L:\Domain\times\RR\times\RR\to[0,\infty)$ is the given loss function, $C_{\mu}:=\mu(\Domain)$ and $\PP(y|\vx)$ is the regular conditional probability of the probability $\PP(\vx,y)$.
Here, the measure $\mu(\Domain)$ is usually assumed to be finite for practical machine problems.
In this case, we define a function
\[
H(\vx,t):=\frac{1}{C_{\mu}}\int_{\RR}L(\vx,y,t)\PP(\ud y|\vx),\quad\text{for }\vx\in\Domain\text{ and }t\in\RR.
\]

Next, we suppose that the function $t\mapsto H(\vx,t)$ is differentiable for any fixed $\vx\in\Domain$. We then write
\[
H_t(\vx,t):=\frac{\ud}{\ud t}H(\vx,t),\quad\text{for }\vx\in\Domain\text{ and }t\in\RR.
\]
Furthermore, we suppose that $\vx\mapsto H(\vx,f(\vx))\in\Leb_1(\Domain)$ for all $f\in\Leb_{\infty}(\Domain)$. Then the operator
\[
\risk_{\infty}(f):=\int_{\Domain}H(\vx,f(\vx))\mu(\ud\vx),\quad\text{for }f\in\Leb_{\infty}(\Domain)
\]
is clearly well-defined.
Finally, we suppose that $\vx\mapsto H_t(\vx,f(\vx))\in\Leb_1(\Domain)$ whenever $f\in\Leb_{\infty}(\Domain)$. Then the Fr\'{e}chet derivative of $\risk_{\infty}$ can be written as
\begin{equation}\label{eq:Frechet-risk}
\langle h,\Frechet\risk_{\infty}(f)\rangle_{\Leb_{\infty}(\Domain)}
=\int_{\Domain}h(\vx)H_t(\vx,f(\vx))\mu(\ud\vx),\quad
\text{for all }h\in\Leb_{\infty}(\Domain).
\end{equation}
Here, the operator $T$ from a normed space $\Banach_1$ into another normed space $\Banach_2$ is said to be Fr\'{e}chet differentiable at $f\in\Banach_1$ if there is a continuous linear operator
$\Frechet T(f):\Banach_1\to\Banach_2$ such that
\[
\lim_{\norm{h}_{\Banach_1}\to0}\frac{\norm{T(f+h)-T(f)-\Frechet T(f)(h)}_{\Banach_2}}{\norm{h}_{\Banach_1}}=0,
\]
and the continuous linear operator $\Frechet T(f)$ is called the Fr\'{e}chet derivative of $T$ at $f$ (see~\cite[Definition~A.5.14]{SteinwartChristmann2008}).
We next discuss the Fr\'{e}chet derivatives of loss risks in two-sided RKBSs based on the above conditions.

\begin{corollary}\label{c:RKBS-FrechetDerivative}
Let $K\in\Leb_0(\Domain\times\Domain')$ be the two-sided reproducing kernel of the two-sided reproducing kernel Banach space $\Banach$ such that $\vx\mapsto K(\vx,\vy)\in\Leb_{\infty}(\Domain)$ for all $\vy\in\Domain'$ and the map $\vx\mapsto\norm{K(\vx,\cdot)}_{\Banach'}\in\Leb_{\infty}(\Domain)$.
If the function $H:\Domain\times\RR\to[0,\infty)$ satisfies that the map $t\mapsto H(\vx,t)$ is differentiable for any fixed $\vx\in\Domain$ and $\vx\mapsto H(\vx,f(\vx)),\vx\mapsto H_t(\vx,f(\vx))\in\Leb_1(\Domain)$ whenever $f\in\Leb_{\infty}(\Domain)$, then
the operator
\[
\risk(f):=\int_{\Domain}H(\vx,f(\vx))\mu(\ud\vx),\quad\text{for }f\in\Banach,
\]
is well-defined and
the Fr\'{e}chet derivative of $\risk$ at $f\in\Banach$
can be represented as
\[
\Frechet\risk(f)
=\int_{\Domain}H_t(\vx,f(\vx))K(\vx,\cdot)\mu(\ud\vx).
\]
\end{corollary}
\begin{proof}
According to the imbedding of RKBSs (Propositions~\ref{p:RKBS-imbedding} and~\ref{p:RKBS-imbedding-dual}), the identity map $I:\Banach\to\Leb_{\infty}(\Domain)$ and the integral operator $I_K:\Leb_1(\Domain)\to\Banach'$ are continuous.
This ensures that the operator $\risk$ is well-defined for all $f\in\Banach$ and
\[
I_K(\zeta_f)\in\Banach',
\]
for all
\[
\zeta_f(\vx):=H_t(\vx,f(\vx)),\quad\text{for }\vx\in\Domain,
\]
driven by any $f\in\Banach$.

Clearly, $\risk_{\infty}=\risk\circ I$.
Using the chain rule of Fr\'{e}chet derivatives, we have that
\[
\Frechet\risk_{\infty}(f)(h)=
\Frechet(\risk\circ I)(f)(h)
=\Frechet\risk(If)\circ\Frechet I(f)(h)
=\Frechet\risk(f)(h),
\]
whenever $f,h\in\Banach$.
Therefore, we combine equations~\eqref{eq:imbeding-RKBS-dual} and \eqref{eq:Frechet-risk} to conclude that
\begin{align*}
\langle h,\Frechet\risk(f)\rangle_{\Banach}
=\langle h,\Frechet\risk_q(f)\rangle_{\Leb_{\infty}(\Domain)}
=\int_{\Domain}h(\vx)\zeta_f(\vx)\mu(\ud\vx)
=\langle h,I_K(\zeta_f) \rangle_{\Banach},
\end{align*}
for all $h\in\Banach$, whenever $f\in\Banach$. This means that $\Frechet\risk(f)=I_K(\zeta_f)$.
\end{proof}

\section{Compactness}\label{s:Compactness}
\sectionmark{Compactness}

In this section, we investigate the compactness of RKBSs. The compactness of RKBSs will play an important role in applications such as support vector machines.

Let the function space
\[
\Linfty(\Domain):=\left\{f\in\Leb_0(\Domain): \sup_{\vx\in\Domain}\abs{f(\vx)}<\infty\right\},
\]
be equipped with the uniform norm
\[
\norm{f}_{\infty}:=\sup_{\vx\in\Domain}\abs{f(\vx)}.
\]
Since $\Linfty(\Domain)$ is the collection of all bounded functions, the space $\Linfty(\Domain)$ is a subspace of the infinity-norm Lebesgue space $\Leb_{\infty}(\Domain)$.
We next verify that the identity map $I:\Banach\to \Linfty(\Domain)$ is a compact operator, where $\Banach$ is a RKBS.

We first review some classical results of the
compactness of normed spaces for the purpose of establishing the compactness of RKBSs.
A set $\Eset$ of a normed space $\Banach_1$ is called relatively compact if the closure of $\Eset$ is compact in the completion of $\Banach_1$.
A linear operator $T$ from a normed space $\Banach_1$ into another normed space $\Banach_2$ is compact if $T(\Eset)$ is a relatively compact set of $\Banach_2$ whenever $\Eset$ is a bounded set of $\Banach_1$ (see \cite[Definition~3.4.1]{Megginson1998}).

Let $\Cont(\Domain)$ be the collection of all continuous functions defined on a compact Hausdorff space $\Domain$ equipped with the uniform norm. We say that a set $\Sset$ of $\Cont(\Domain)$ is equicontinuous if, for any $\vx\in\Domain$ and any $\epsilon>0$, there is a neighborhood $U_{\vx,\epsilon}$ of $\vx$ such that $\abs{f(\vx)-f(\vy)}<\epsilon$ whenever $f\in\Sset$ and $\vy\in U_{\vx,\epsilon}$ (see \cite[Definition~3.4.13]{Megginson1998}).
The Arzel\`{a}-Ascoli theorem (\cite[Theorem~3.4.14]{Megginson1998}) will be applied in the following development because
the theorem guarantees that a set $\Sset$ of $\Cont(\Domain)$ is relatively compact if and only if the set $\Sset$ is bounded and equicontinuous.

\begin{proposition}\label{p:RKBS-compact}
Let $\Banach$ be the right-sided RKBS with the right-sided reproducing kernel $K\in\Leb_0(\Domain\times\Domain')$.
If the right-sided kernel set $\Kset_{K}'$ is compact in the dual space $\Banach'$ of $\Banach$, then the identity map from $\Banach$ into $\Linfty(\Domain)$ is compact.
\end{proposition}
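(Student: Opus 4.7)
The plan is to show that the image of the closed unit ball $B_\Banach$ under the identity map is totally bounded in $\linfty(\Domain)$; since $\linfty(\Domain)$ is complete under the uniform norm, total boundedness will give relative compactness, hence compactness of the operator. The engine that drives everything is the right-sided reproducing property, which produces a uniform modulus-of-continuity-type inequality on $\Domain$ in terms of the geometry of $\Kset_K'$ in $\Banach'$:
\begin{equation*}
\abs{f(\vx)-f(\vy)}=\abs{\langle f,K(\vx,\cdot)-K(\vy,\cdot)\rangle_{\Banach}}\leq\norm{f}_{\Banach}\norm{K(\vx,\cdot)-K(\vy,\cdot)}_{\Banach'}.
\end{equation*}
In particular, since $\Kset_K'$ is compact in $\Banach'$ it is bounded, so $C_0:=\sup_{\vx\in\Domain}\norm{K(\vx,\cdot)}_{\Banach'}<\infty$ and every $f\in B_\Banach$ satisfies $\abs{f(\vx)}\leq C_0$, giving $I(B_\Banach)\subseteq\linfty(\Domain)$ at the outset.

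Next I would fix an arbitrary $\epsilon>0$, set $\eta:=\epsilon/3$, and use the total boundedness of the compact set $\Kset_K'\subseteq\Banach'$ to pick finitely many points $\vx_1,\ldots,\vx_N\in\Domain$ such that $\Kset_K'\subseteq\bigcup_{i=1}^{N}B_{\Banach'}\bigl(K(\vx_i,\cdot),\eta\bigr)$. This reduces the problem on the infinite set $\Domain$ to control at $N$ points. Define the finite-dimensional evaluation map $\Phi:B_\Banach\to\RR^N$ by $\Phi(f):=(f(\vx_1),\ldots,f(\vx_N))$; by the previous paragraph the image $\Phi(B_\Banach)$ lies in the compact cube $[-C_0,C_0]^N$, so it admits a finite $\eta$-net, which I realize as $\Phi(f_1),\ldots,\Phi(f_M)$ for some $f_1,\ldots,f_M\in B_\Banach$.

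The main step is to check that $\{f_1,\ldots,f_M\}$ is a $3\eta$-net for $I(B_\Banach)$ in $\linfty(\Domain)$. Given $f\in B_\Banach$, choose $j$ with $\max_i\abs{f(\vx_i)-f_j(\vx_i)}<\eta$, and for an arbitrary $\vx\in\Domain$ choose $i$ with $\norm{K(\vx,\cdot)-K(\vx_i,\cdot)}_{\Banach'}<\eta$. A three-term triangle split combined with the reproducing inequality applied to both $f$ and $f_j$ yields
\begin{equation*}
\abs{f(\vx)-f_j(\vx)}\leq\norm{f}_{\Banach}\eta+\eta+\norm{f_j}_{\Banach}\eta\leq 3\eta=\epsilon,
\end{equation*}
uniformly in $\vx\in\Domain$. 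Hence $I(B_\Banach)$ is totally bounded in $\linfty(\Domain)$, and completeness finishes the proof.

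The main obstacle is purely bookkeeping: matching the two $\eta$-net arguments (one in the dual space $\Banach'$ providing the finite sampling set, one in $\RR^N$ providing the finite family $f_1,\ldots,f_M$) and applying the reproducing-property modulus to both $f$ and the approximating $f_j$ so that every term in the triangle inequality is controlled by the same $\eta$. There is no subtle topological issue since we do not need $\Domain$ to be compact and we do not need continuity of $\vx\mapsto K(\vx,\cdot)$ beyond what is implicitly encoded in the compactness of its image.
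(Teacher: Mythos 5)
Your proof is correct. It rests on exactly the same two ingredients as the paper's proof --- the Lipschitz-type estimate $\abs{f(\vx)-f(\vy)}\leq\norm{f}_{\Banach}\norm{K(\vx,\cdot)-K(\vy,\cdot)}_{\Banach'}$ coming from the right-sided reproducing property, and the boundedness/total boundedness of the compact set $\Kset_{K}'$ in $\Banach'$ --- but packages them differently. The paper pulls the metric back to the domain by defining the semi-metric $d_K(\vx,\vy):=\norm{K(\vx,\cdot)-K(\vy,\cdot)}_{\Banach'}$, observes that $(\Domain,d_K)$ is compact, checks that the unit ball $B_{\Banach}$ is uniformly bounded and equicontinuous with respect to $d_K$, and then invokes the Arzel\`{a}--Ascoli theorem. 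You instead run the standard proof of Arzel\`{a}--Ascoli by hand: a finite $\eta$-net $\{K(\vx_i,\cdot)\}_{i\leq N}$ for $\Kset_{K}'$ reduces the problem to the finite sampling map $\Phi(f)=(f(\vx_1),\ldots,f(\vx_N))$, whose image lies in a compact cube and therefore admits a finite $\eta$-net realized by functions $f_1,\ldots,f_M\in B_{\Banach}$, and the three-term triangle estimate shows these form a $3\eta$-net for $I(B_{\Banach})$ in $\linfty(\Domain)$. What your route buys is self-containedness and the avoidance of a small technical wrinkle in the paper's argument: $d_K$ is in general only a semi-metric (distinct points may satisfy $d_K(\vx,\vy)=0$), so $(\Domain,d_K)$ need not be Hausdorff, whereas the cited version of Arzel\`{a}--Ascoli is stated for compact Hausdorff spaces; your direct total-boundedness argument never needs a topology on $\Domain$ at all. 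What the paper's route buys is brevity and a reusable geometric object (the kernel semi-metric $d_K$). Two harmless points to make explicit if you write this up: the $\eta$-net for the compact set $\Kset_{K}'$ can indeed be chosen with centers of the form $K(\vx_i,\cdot)$ (any totally bounded set admits a finite net with centers in the set itself), and the $\eta$-net in $\RR^N$ should be taken with respect to the max norm, as your displayed estimate assumes.
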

\begin{proof}
It suffices to show that the unit ball
$$
B_{\Banach}:=\left\{f\in\Banach:\norm{f}_{\Banach}\leq1\right\}
$$
of $\Banach$ is relatively compact in $\Linfty(\Domain)$.

First, we construct a new semi-metric $d_K$ on the domain $\Domain$. Let
\[
d_K(\vx,\vy):=\norm{K(\vx,\cdot)-K(\vy,\cdot)}_{\Banach'},\quad\text{for all }\vx,\vy\in\Domain.
\]
Since $\Kset_{K}'$ is compact in $\Banach'$, the semi-metric space $\left(\Domain,d_K\right)$ is compact.
Let $\Cont(\Domain,d_K)$ be the collection of all continuous functions defined on $\Domain$ with respect to $d_K$ such that its norm is endowed with the uniform norm. Thus, $\Cont(\Domain,d_K)$ is a subspace of $\Linfty(\Domain)$.

Let $f\in\Banach$. According to the reproducing properties (i)-(ii), we have that
\begin{equation}\label{eq:RKBS-compact-1}
\abs{f(\vx)-f(\vy)}=\abs{\langle f,K(\vx,\cdot)-K(\vy,\cdot)\rangle_{\Banach}},\quad\text{for }\vx,\vy\in\Domain.
\end{equation}
By the Cauchy-Schwarz inequality, we observe that
\begin{equation}\label{eq:RKBS-compact-2}
\abs{\langle f,K(\vx,\cdot)-K(\vy,\cdot)\rangle_{\Banach}}
\leq\norm{f}_{\Banach}\norm{K(\vx,\cdot)-K(\vy,\cdot)}_{\Banach'},\quad\text{for }\vx,\vy\in\Domain.
\end{equation}
Combining equation~\eqref{eq:RKBS-compact-1} and inequality~\eqref{eq:RKBS-compact-2}, we obtain that
\[
\abs{f(\vx)-f(\vy)}\leq\norm{f}_{\Banach}d_K(\vx,\vy),\quad\text{for }\vx,\vy\in\Domain,
\]
which ensures that $f$ is Lipschitz continuous on $\Cont(\Domain,d_K)$ with a Lipschitz constant not larger than $\norm{f}_{\Banach}$.
This ensures that the unit ball $B_{\Banach}$ is equicontinuous on $\left(\Domain,d_K\right)$.

The compactness of $\Kset_{K}'$ of $\Banach'$ guarantees that $\Kset_{K}'$ is bounded in $\Banach'$. Hence, $\Phi\in \Linfty(\Domain)$.
By the reproducing properties (i)-(ii) and the Cauchy-Schwarz inequality, we confirm the uniform-norm boundedness of the unit ball $B_{\Banach}$, namely,
\[
\norm{f}_{\infty}
=\sup_{\vx\in\Domain}\abs{f(\vx)}
=\sup_{\vx\in\Domain}\abs{\langle f,K(\vx,\cdot) \rangle_{\Banach}}
\leq\norm{f}_{\Banach}\sup_{\vx\in\Domain}\norm{K(\vx,\cdot)}_{\Banach'}
\leq \norm{\Phi}_{\infty}<\infty,
\]
for all $f\in B_{\Banach}$.
Therefore, the Arzel\`{a}-Ascoli theorem guarantees that $B_{\Banach}$ is relatively compact in $\Cont(\Domain,d_K)$ and thus in $\Linfty(\Domain)$.
\end{proof}

The compactness of RKHSs plays a crucial role in estimating the error bounds and the convergence rates of the learning solutions in RKHSs.
In Section~\ref{s:OracleInequality}, the fact that the bounded sets of RKBSs are equivalent to the relatively compact sets of $\Linfty(\Domain)$ will be used to prove the oracle inequality for the RKBSs as shown in book~\cite{SteinwartChristmann2008} and papers~\cite{CuckerSmale2002,SmaleZhou2004} for the RKHSs.

\section{Representer Theorem}\label{s:RepThm}
\sectionmark{Representer Theorem}

In this section, we focus on finding the finite dimensional optimal solutions to minimize the regularized risks over RKBSs. Specifically,
we solve the following learning problems in the RKBS $\Banach$
\[
\min_{f\in\Banach}
\left\{\frac{1}{N}\sum_{k\in\NN_N}L\left(\vx_k,y_k,f(\vx_k)\right)+R\left(\norm{f}_{\Banach}\right)\right\},
\]
or
\[
\min_{f\in\Banach}
\left\{\int_{\Domain\times\RR}L\left(\vx,y,f(\vx)\right)\PP(\ud\vx,\ud y)+R\left(\norm{f}_{\Banach}\right)\right\}.
\]
These learning problems connect to the support vector machines to be discussed in Chapter~\ref{char-SVM}.

In this article, the representer theorems in RKBSs are expressed by the combinations of G\^{a}teaux/Fr\'echet derivatives
and reproducing kernels. The representer theorems presented next are developed from a point of view different from those in papers~\cite{ZhangXuZhang2009,FasshauerHickernellYe2013}
where the representer theorems were derived from the dual elements and the semi-inner products.
The classical representer theorems in RKHSs~\cite[Theorem 5.5 and 5.8]{SteinwartChristmann2008} can be viewed as a special case of the representer theorems in RKBSs.

\subsection*{Optimal Recovery}

We begin by considering the optimal recovery in RKBSs for given pairwise distinct data points $X:=\left\{\vx_k:k\in\NN_N\right\}\subseteq\Domain$ and associated data values $Y:=\left\{y_k:k\in\NN_N\right\}\subseteq\RR$, that is,
\[
\min_{f\in\Banach}\norm{f}_{\Banach}\text{ s.t. }f(\vx_k)=y_k,\text{ for all }k\in\NN_N,
\]
where $\Banach$ is the right-sided RKBS with the right-sided reproducing kernel $K\in\Leb_0(\Domain\times\Domain')$.

Let
\[
\Space_{X,Y}:=\left\{f\in\Banach: f(\vx_k)=y_k,\text{ for all }k\in\NN_N\right\}.
\]
If $\Space_{X,Y}$ is a null set, then the optimal recovery may have no meaning for the given data $X$ and $Y$. Hence, we assume that $\Space_{X,Y}$ is always non-null for any choice of finite data.

\emph{Linear Independence of Kernel Sets:}
Now we show that the linear independence of $K(\vx_1,\cdot),\ldots,K(\vx_N,\cdot)$ implies that $\Space_{X,Y}$ is always non-null.
The reproducing properties (i)-(ii) show that
\[
\left\langle f,\sum_{k\in\NN_N}c_kK(\vx_k,\cdot) \right\rangle_{\Banach}
=\sum_{k\in\NN_N}c_k\langle f,K(\vx_k,\cdot) \rangle_{\Banach}
=\sum_{k\in\NN_N}c_kf(\vx_k),\quad
\text{for all }f\in\Banach,
\]
which ensures that
$$
\sum_{k\in\NN_N}c_kK(\vx_k,\cdot)=0\ \ \mbox{if and only if}\ \ \sum_{k\in\NN_N}c_kf(\vx_k)=0, \ \ \mbox{for all}\ \ f\in\Banach.
$$
In other words, $\vc:=\left(c_k:k\in\NN_N\right)=\v0$ if and only if
\[
\sum_{k\in\NN_N}c_kz_k=0,\quad \text{for all }\vz:=\left(z_k:k\in\NN_N\right)\in\RR^N.
\]
If we check that $\vc=\v0$ if and only if
\[
\sum_{k\in\NN_N}c_kK(\vx_k,\cdot)=0,
\]
then
there exists at least one $f\in\Banach$ such that $f(\vx_k)=y_k$ for all $k\in\NN_N$.
Therefore, when $K(\vx_1,\cdot),\ldots,K(\vx_N,\cdot)$ are linearly independent, then $\Space_{X,Y}$ is non-null.
Since the data points $X$ are chosen arbitrarily, the linear independence of the right-sided kernel set $\Kset_{K}'$ is needed.

To further discuss optimal recovery, we require the RKBS $\Banach$ to satisfy additional conditions such as reflexivity, strict convexity, and smoothness (see \cite{Megginson1998}).

\emph{Reflexivity:} The RKBS $\Banach$ is reflexive if $\Banach''\cong\Banach$.

\emph{Strict Convexity:} The RKBS $\Banach$ is strictly convex (rotund) if
$$
\norm{tf+(1-t)g}_{\Banach}<1\ \ \mbox{whenever}\ \  \norm{f}_{\Banach}=\norm{g}_{\Banach}=1, 0<t<1.
$$
By \cite[Corollary~5.1.12]{Megginson1998}, the strict convexity of $\Banach$ implies that
$$
\norm{f+g}_{\Banach}<\norm{f}_{\Banach}+\norm{g}_{\Banach}\ \ \mbox{when}\ \ f\neq g.
$$

Moreover, \cite[Corollary~5.1.19]{Megginson1998} (M. M. Day, 1947) provides that any nonempty closed convex subset of a Banach space is a Chebyshev set if the Banach space is reflexive and strictly convex. This guarantees that, for each $f\in\Banach$ and any nonempty closed convex subset $\Eset\subseteq\Banach$,
there exists an exactly one $g\in\Eset$ such that $\norm{f-g}_{\Banach}=\text{dist}(f,\Eset)$ if the RKBS $\Banach$ is reflexive and strictly convex.

\emph{Smoothness:} The RKBS $\Banach$ is smooth (G\^{a}teaux differentiable) if
the normed operator $\normopt$ is G\^{a}teaux differentiable for all $f\in\Banach\backslash\{0\}$, that is,
\[
\lim_{\tau\to0}\frac{\norm{f+\tau h}_{\Banach}-\norm{f}_{\Banach}}{\tau}\text{ exists},\quad\text{for all }h\in\Banach,
\]
where
the normed operator $\normopt$ is given by
\[
\normopt(f):=\norm{f}_{\Banach}.
\]
The smoothness of $\Banach$ implies that
the G\^{a}teaux derivative $\Gateaux\normopt$ of $\normopt$ is well-defined on $\Banach$. In other words,
for each $f\in\Banach\backslash\{0\}$, there exists a continuous linear functional $\Gateaux\normopt(f)$ defined on $\Banach$ such that
\[
\langle h,\Gateaux\normopt(f) \rangle_{\Banach}
=\lim_{\tau\to0}\frac{\norm{f+\tau h}_{\Banach}-\norm{f}_{\Banach}}{\tau},\quad\text{for all }h\in\Banach.
\]
This means that $\Gateaux\normopt$ is an operator from $\Banach\backslash\{0\}$ into $\Banach'$.
According to \cite[Proposition~5.4.2 and Theorem~5.4.17]{Megginson1998} (S. Banach, 1932), the G\^{a}teaux derivative $\Gateaux\normopt$
restricted to the unit sphere
$$
S_{\Banach}:=\left\{f\in\Banach:\norm{f}_{\Banach}=1\right\}
$$
is equal to the spherical image from $S_{\Banach}$ into $S_{\Banach'}$, that is,
\[
\norm{\Gateaux\normopt(f)}_{\Banach'}=1\text{ and }\langle f,\Gateaux\normopt(f) \rangle_{\Banach}=1,\quad\text{for }f\in S_{\Banach}.
\]
The equation
\[
\frac{\norm{\alpha f+\tau h}_{\Banach}-\norm{\alpha f}_{\Banach}}{\tau}
=\frac{\norm{f+\alpha^{-1}\tau h}_{\Banach}-\norm{f}_{\Banach}}{\alpha^{-1}\tau},
\quad\text{for }f\in\Banach\backslash\{0\}\text{ and }\alpha>0,
\]
implies that
$$
\Gateaux\normopt(\alpha f)=\Gateaux\normopt(f).
$$
Hence, by taking $\alpha=\norm{f}_{\Banach}^{-1}$, we also check that
\begin{equation}\label{eq:Gateaux-Der-norm}
\norm{\Gateaux\normopt(f)}_{\Banach'}=1\text{ and }
\norm{f}_{\Banach}=\langle f,\Gateaux\normopt(f) \rangle_{\Banach}.
\end{equation}
Furthermore, the map $\Gateaux\normopt:S_{\Banach}\to S_{\Banach'}$ is bijective, because
the spherical image is a one-to-one map from $S_{\Banach}$ onto $S_{\Banach'}$ when $\Banach$ is reflexive, strictly convex, and smooth (see \cite[Corollary~5.4.26]{Megginson1998}).
In the classical sense, the G\^{a}teaux derivative $\Gateaux\normopt$ does not exist at $0$. However, to simplify the notation and presentation, we
redefine the G\^{a}teaux derivative of the map $f\mapsto\norm{f}_{\Banach}$ by
\begin{equation}\label{eq:Gateaux-norm}
\GateauxNorm:=
\begin{cases}
\Gateaux\normopt,&\text{ when }f\neq0,\\
0,&\text{ when }f=0.
\end{cases}
\end{equation}
According to the above discussion,
we determine that $\norm{\GateauxNorm(f)}_{\Banach'}=1$ when $f\neq0$ and the restricted map $\GateauxNorm|_{S_{\Banach}}$ is bijective.
On the other hand, the isometrical isomorphism between the dual space $\Banach'$ and the function space $\Fun$ implies that
the G\^{a}teaux derivative $\GateauxNorm(f)$ can be viewed as an equivalent element in $\Fun$ so that it can be represented as the reproducing kernel $K$.

To complete the proof of optimal recovery of RKBS, we need to find the relationship of G\^{a}teaux derivatives and orthogonality.
According to \cite{James1947},
a function $f\in\Banach\backslash\{0\}$ is said to be orthogonal to another function $g\in\Banach$ if
\[
\norm{f+\tau g}_{\Banach}\geq\norm{f}_{\Banach},\quad \text{for all }\tau\in\RR.
\]
This indicates that a function $f\in\Banach\backslash\{0\}$ is orthogonal to a subspace $\Space$ of $\Banach$ if $f$ is orthogonal to each element $h\in\Space$, that is,
\[
\norm{f+h}_{\Banach}\geq\norm{f}_{\Banach},\quad \text{for all }h\in\Space.
\]

\begin{lemma}\label{l:Gateaux-Der-Orth}
If the Banach space $\Banach$ is smooth, then $f\in\Banach\backslash\{0\}$ is orthogonal to $g\in\Banach$ if and only if $\langle g,\Gateaux\normopt(f) \rangle_{\Banach}=0$ where $\Gateaux\normopt(f)$ is the G\^{a}teaux derivative of the norm of $\Banach$ at $f$, and a nonzero $f$ is orthogonal to a subspace $\Space$ of $\Banach$ if and only if
\[
\langle h,\Gateaux\normopt(f) \rangle_{\Banach}=0,\quad \text{for all }h\in\Space.
\]
\end{lemma}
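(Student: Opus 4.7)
The plan is to reduce both directions to a one-variable convex analysis argument on the function $\varphi(\tau) := \norm{f+\tau g}_{\Banach}$ for $\tau\in\RR$. First I would observe that, by the triangle inequality applied twice, $\varphi$ is a convex function on $\RR$, and by smoothness of $\Banach$ at $f\neq0$ together with the definition of $\Gateaux\normopt(f)$, the function $\varphi$ is differentiable at $0$ with
\[
\varphi'(0)=\lim_{\tau\to0}\frac{\norm{f+\tau g}_{\Banach}-\norm{f}_{\Banach}}{\tau}=\langle g,\Gateaux\normopt(f)\rangle_{\Banach}.
\]
Thus the orthogonality condition $\norm{f+\tau g}_{\Banach}\geq\norm{f}_{\Banach}$ for all $\tau\in\RR$ is equivalent to saying that $\tau=0$ is a global minimizer of $\varphi$.

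For the forward direction, if $f$ is orthogonal to $g$, then $0$ minimizes $\varphi$, so by the differentiability just noted, Fermat's theorem gives $\varphi'(0)=0$, which is exactly $\langle g,\Gateaux\normopt(f)\rangle_{\Banach}=0$. For the converse, suppose $\varphi'(0)=0$. Since $\varphi$ is convex and differentiable at $0$, the supporting line at $0$ gives $\varphi(\tau)\geq\varphi(0)+\varphi'(0)\tau=\varphi(0)$ for every $\tau\in\RR$; this is precisely $\norm{f+\tau g}_{\Banach}\geq\norm{f}_{\Banach}$, so $f$ is orthogonal to $g$. This first part is the crux of the lemma, and the only mild subtlety is invoking the convex supporting line inequality rather than relying on a second-order argument, since we have no differentiability beyond first order.

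The subspace statement then follows immediately. If $f$ is orthogonal to the subspace $\Space$, then in particular $f$ is orthogonal to each $h\in\Space$, so the first part gives $\langle h,\Gateaux\normopt(f)\rangle_{\Banach}=0$ for every $h\in\Space$. Conversely, if this vanishing holds for every $h\in\Space$, then fixing $h\in\Space$ and applying the first part to the pair $(f,h)$ yields $\norm{f+\tau h}_{\Banach}\geq\norm{f}_{\Banach}$ for every $\tau\in\RR$; specializing to $\tau=1$ and noting that $h$ ranges over all of $\Space$ gives the orthogonality of $f$ to $\Space$. No further machinery from the preceding sections is needed beyond the smoothness hypothesis and the definition of $\Gateaux\normopt$ given in the equations leading to~\eqref{eq:Gateaux-Der-norm}.
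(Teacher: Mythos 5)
Your proposal is correct and follows essentially the same route as the paper's proof: the paper's first direction is exactly your convex supporting-line inequality (derived there by hand from the triangle inequality and the monotonicity of the difference quotient), and its second direction is your Fermat argument comparing the signs of the one-sided difference quotients. The subspace statement is handled identically in both.
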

\begin{proof}
First we
suppose that $\langle g,\Gateaux\normopt(f) \rangle_{\Banach}=0$.
For any $h\in\Banach$ and any $0<t<1$, we have that
\[
\norm{f+th}_{\Banach}=\norm{(1-t)f+t(f+h)}_{\Banach}
\leq(1-t)\norm{f}_{\Banach}+t\norm{f+h}_{\Banach},
\]
which ensures that
\[
\norm{f+h}_{\Banach}\geq \norm{f}_{\Banach}+\frac{\norm{f+th}_{\Banach}-\norm{f}_{\Banach}}{t}.
\]
Thus,
\[
\norm{f+h}_{\Banach}\geq \norm{f}_{\Banach}+\lim_{t\downarrow0}\frac{\norm{f+th}_{\Banach}-\norm{f}_{\Banach}}{t}
=\norm{f}_{\Banach}+\langle h,\Gateaux\normopt(f) \rangle_{\Banach}.
\]
Replacing $h=\tau g$ for any $\tau\in\RR$, we obtain that
\[
\norm{f+\tau g}_{\Banach}\geq\norm{f}_{\Banach}+\langle \tau g,\Gateaux(f) \rangle_{\Banach}
=\norm{f}_{\Banach}+\tau\langle g,\Gateaux\normopt(f) \rangle_{\Banach}=\norm{f}_{\Banach}.
\]
That is, $f$ is orthogonal to $g$.

Next, we
suppose that $f$ is orthogonal to $g$. Then $\norm{f+\tau g}_{\Banach}\geq\norm{f}_{\Banach}$ for all $\tau\in\RR$; hence
\[
\frac{\norm{f+\tau g}_{\Banach}-\norm{f}_{\Banach}}{\tau}\geq0\text{ when }\tau>0,\quad
\frac{\norm{f+\tau g}_{\Banach}-\norm{f}_{\Banach}}{\tau}\leq0\text{ when }\tau<0,
\]
which ensures that
\[
\lim_{\tau\downarrow0}\frac{\norm{f+\tau g}_{\Banach}-\norm{f}_{\Banach}}{\tau}\geq0,\quad
\lim_{\tau\uparrow0}\frac{\norm{f+\tau g}_{\Banach}-\norm{f}_{\Banach}}{\tau}\leq0.
\]
Therefore
\[
\langle g,\Gateaux\normopt(f) \rangle_{\Banach}=
\lim_{\tau\to0}\frac{\norm{f+\tau g}_{\Banach}-\norm{f}_{\Banach}}{\tau}=0.
\]

This guarantees that $f$ is orthogonal to $\Space$ if and only if
$\langle h,\Gateaux\normopt(f) \rangle_{\Banach}=0$ for all $h\in\Space$.
\end{proof}

Using the G\^{a}teaux derivative we reprove the optimal recovery of RKBSs in a way similar to that used in~\cite[Lemma~3.1]{FasshauerHickernellYe2013} for proving the optimal recovery of semi-inner-product RKBSs.

\begin{lemma}\label{l:RKBS-opt-rep}
Let $\Banach$ be a right-sided reproducing kernel Banach space with the right-sided reproducing kernel $K\in\Leb_0(\Domain\times\Domain')$.
If the right-sided kernel set $\Kset_{K}'$ is linearly independent and $\Banach$ is reflexive, strictly convex, and smooth,
then for any pairwise distinct data points $X\subseteq\Domain$ and any associated data values $Y\subseteq\RR$,  the minimization problem
\begin{equation}\label{e:opt-simple}
\min_{f\in\Space_{X,Y}}\norm{f}_{\Banach},
\end{equation}
has a unique optimal solution $s$ with the G\^{a}teaux derivative $\GateauxNorm(s)$, defined by equation \eqref{eq:Gateaux-norm},
being a linear combination of $K(\vx_1,\cdot),\ldots,K(\vx_N,\cdot)$.
\end{lemma}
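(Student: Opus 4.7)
The plan is to split the argument into existence/uniqueness of the minimizer $s$ and the representation of $\GateauxNorm(s)$.

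For existence and uniqueness, I would first check that $\Space_{X,Y}$ is nonempty, closed, and convex. Non-emptiness follows from the linear independence of the right-sided kernel set $\Kset_{K}'$: the reproducing property identifies the evaluation map $\Lambda\colon\Banach\to\RR^N$, $\Lambda(f):=(\langle f,K(\vx_k,\cdot)\rangle_{\Banach})_{k\in\NN_N}$, with the tuple of functionals determined by $N$ linearly independent elements of $\Banach'$, so the surjectivity argument given in the paragraph preceding the lemma produces some $f_0\in\Banach$ with $\Lambda(f_0)=(y_k)_{k\in\NN_N}$. Closedness follows from continuity of each $\delta_{\vx_k}$ (represented by $K(\vx_k,\cdot)\in\Banach'$), and convexity is immediate since $\Space_{X,Y}$ is affine. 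The Day theorem cited as \cite[Corollary~5.1.19]{Megginson1998} then ensures that, in a reflexive and strictly convex Banach space, every nonempty closed convex set is Chebyshev; applied to the origin and the set $\Space_{X,Y}$, this produces a unique minimizer $s$ of~\eqref{e:opt-simple}.

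For the representation of $\GateauxNorm(s)$, I would characterize $s$ through James-type orthogonality. Writing $\Space_{X,\v0}:=\{h\in\Banach:h(\vx_k)=0\text{ for all }k\in\NN_N\}$, we have $\Space_{X,Y}=s+\Space_{X,\v0}$. Minimality of $\norm{s}_{\Banach}$ yields $\norm{s+\tau h}_{\Banach}\geq\norm{s}_{\Banach}$ for every $\tau\in\RR$ and every $h\in\Space_{X,\v0}$, i.e., $s$ is orthogonal to the subspace $\Space_{X,\v0}$ in the sense of James. If $s=0$, then by definition~\eqref{eq:Gateaux-norm} we have $\GateauxNorm(s)=0$, which trivially lies in $\Span\{K(\vx_k,\cdot):k\in\NN_N\}$. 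Otherwise, smoothness of $\Banach$ together with Lemma~\ref{l:Gateaux-Der-Orth} gives
\[
\langle h,\GateauxNorm(s)\rangle_{\Banach}=0,\quad\text{for every }h\in\Space_{X,\v0}.
\]

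To conclude, set $W:=\Span\{K(\vx_k,\cdot):k\in\NN_N\}\subseteq\Banach'$. The reproducing property rewrites $\Space_{X,\v0}$ as the pre-annihilator of $W$ in $\Banach$, and the preceding display says precisely that $\GateauxNorm(s)\in\Banach'$ annihilates this pre-annihilator. Since $W$ is finite-dimensional it is automatically weak-$\ast$ closed in $\Banach'$, so the bipolar theorem collapses the double annihilator back to $W$ itself, yielding $\GateauxNorm(s)\in W$ as required. The main obstacle is the bipolar/annihilator step: one must carefully distinguish annihilators taken in $\Banach$ versus in $\Banach'$, and in principle one needs weak-$\ast$ closure of $W$ rather than norm closure, but finite-dimensionality of $W$ makes those topologies agree so the argument goes through without any further appeal to reflexivity beyond what was already used to obtain $s$.
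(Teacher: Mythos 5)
Your proposal is correct and follows essentially the same route as the paper's proof: nonemptiness via linear independence, closedness and convexity of $\Space_{X,Y}$, Day's theorem for the unique Chebyshev-set minimizer, then James orthogonality to $\Space_{X,\v0}$ combined with Lemma~\ref{l:Gateaux-Der-Orth} and the double-annihilator identity $\left({}^{\perp}\Span\{K(\vx_k,\cdot):k\in\NN_N\}\right)^{\perp}=\Span\{K(\vx_k,\cdot):k\in\NN_N\}$. Your explicit remark that finite-dimensionality supplies the weak-$\ast$ closedness needed to collapse the double annihilator is a welcome clarification of the step the paper handles by citing \cite[Proposition~2.6.6]{Megginson1998}.
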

\begin{proof}
We shall prove this lemma by combining the G\^{a}teaux derivatives and reproducing properties to describe the orthogonality.

We first verify the uniqueness and existence of the optimal solution $s$.

The linear independence of $\Kset_{K}'$ ensures that the set $\Space_{X,Y}$ is not empty.
Now we show that $\Space_{X,Y}$ is convex. Take any $f_1,f_2\in\Space_{X,Y}$ and any $t\in(0,1)$. Since
$$
t f_1(\vx_k)+(1-t)f_2(\vx_k)=t y_k+(1-t)y_k=y_k, \ \ \mbox{for all}\ \ k\in\NN_N,
$$
we have that $t f_1+(1-t)f_2\in\Space_{X,Y}$.
Next, we verify that $\Space_{X,Y}$ is closed.
We see that any convergent sequence $\left\{f_n:n\in\NN\right\}\subseteq\Space_{X,Y}$ and $f\in\Banach$ satisfy $\norm{f_n-f}_{\Banach}\to0$ as $n\to\infty$. Then $f_n\overset{\text{weak}-\Banach}{\longrightarrow}f$ as $n\to\infty$. According to Proposition~\ref{p:weakly-conv-RKBS}, we obtain that
$$
f(\vx_k)=\lim_{n\to\infty}f_n(\vx_k)=y_k, \ \ \mbox{for all}\ \ k\in\NN_N,
$$
which ensures that $f\in\Space_{X,Y}$.
Thus $\Space_{X,Y}$ is a nonempty closed convex subset of the RKBS $\Banach$.

Moreover, the reflexivity and strict convexity of $\Banach$ ensure that the closed convex set $\Space_{X,Y}$ is a Chebyshev set.
This ensures that there exists a unique $s$ in $\Space_{X,Y}$ such that
$$
\norm{s}_{\Banach}=\text{dist}\left(0,\Space_{X,Y}\right)=\inf_{f\in\Space_{X,Y}}\norm{f}_{\Banach}.
$$

When $s=0$, $\GateauxNorm(s)=\Gateaux(s)=0$. Hence, in this case the conclusion holds true. Next, we assume that $s\neq0$.
Recall that
\[
\Space_{X,0}=\left\{f\in\Banach: f(\vx_k)=0,\text{ for all }k\in\NN_N\right\}.
\]
For any $f_1,f_2\in\Space_{X,0}$ and any $c_1,c_2\in\RR$,
since $c_1f_1(\vx_k)+c_2f_2(\vx_k)=0$ for all $k\in\NN_N$, we have that $c_1f_1+c_2f_2\in\Space_{X,0}$,
which guarantees that $\Space_{X,0}$ is a subspace of $\Banach$.
Because $s$ is the optimal solution of the minimization problem~\eqref{e:opt-simple} and $s+\Space_{X,0}=\Space_{X,Y}$,
we determine that
$$
\norm{s+h}_{\Banach}\geq\norm{s}_{\Banach}, \ \ \mbox{for all}\ \  h\in\Space_{X,0}.
$$
This means that $s$ is orthogonal to the subspace $\Space_{X,0}$.
Since the RKBS $\Banach$ is smooth, the orthogonality of the G\^{a}teaux derivatives of the normed operators given in Lemma~\ref{l:Gateaux-Der-Orth} ensures that
\[
\langle h,\GateauxNorm(s) \rangle_{\Banach}=0,\text{ for all }h\in\Space_{X,0},
\]
which shows that
\begin{equation}\label{eq:RKBS-opt-rep-l1}
\GateauxNorm(s)\in\Space_{X,0}^{\perp}.
\end{equation}
Using the reproducing properties (i)-(ii), we check that
\begin{equation}\label{eq:RKBS-opt-rep-l2}
\begin{split}
\Space_{X,0}&=\left\{f\in\Banach: \langle f,K(\vx_k,\cdot)\rangle_{\Banach}=f(\vx_k)=0,\text{ for all }k\in\NN_N\right\}\\
&=\left\{f\in\Banach: \langle f,h\rangle_{\Banach}=0,\text{ for all }h\in\Span\left\{K(\vx_k,\cdot):k\in\NN_N\right\}\right\}\\
&={}^{\perp}\Span\left\{K(\vx_k,\cdot):k\in\NN_N\right\}.
\end{split}
\end{equation}
Combining equations~\eqref{eq:RKBS-opt-rep-l1} and~\eqref{eq:RKBS-opt-rep-l2}, we obtain that
\[
\GateauxNorm(s)\in\left({}^{\perp}\Span\left\{K(\vx_k,\cdot):k\in\NN_N\right\}\right)^{\perp}
=\Span\left\{K(\vx_k,\cdot):k\in\NN_N\right\},
\]
by the characterizations of annihilators given in \cite[Proposition~2.6.6]{Megginson1998}.
There exist suitable parameters $\beta_1,\ldots,\beta_N\in\RR$ such that
\[
\GateauxNorm(s)=\sum_{k\in\NN_N}\beta_kK(\vx_k,\cdot).
\]
\end{proof}

Generally speaking, the G\^{a}teaux derivative $\GateauxNorm(s)$ is seen as a
continuous linear functional. More precisely, $\GateauxNorm(s)$ can also be rewritten as a linear combination of $\delta_{\vx_1},\ldots,\delta_{\vx_N}$, that is,
\begin{equation}\label{eq:RKBS-opt-rep-delta}
\GateauxNorm(s)=\sum_{k\in\NN_N}\beta_k\delta_{\vx_k},
\end{equation}
because the function $K(\vx_k,\cdot)$ is identical to the point evaluation functional $\delta_{\vx_k}$ for all $k\in\NN_N$.

\emph{Comparisons:}
Now we comment the difference of Lemma~\ref{l:RKBS-opt-rep} from the classical optimal recovery in Hilbert and Banach spaces.
\begin{itemize}
\item
When the RKBS $\Banach$ reduces to a Hilbert space framework, $\Banach$ becomes a RKHS and $s=\norm{s}_{\Banach}\GateauxNorm(s)$; hence Lemma~\ref{l:RKBS-opt-rep} is the generalization of the optimal recovery in RKHSs \cite[Theorem~13.2]{Wendland2005}, that is, the minimizer of the reproducing norms under the interpolatory functions in RKHSs is a linear combination of the kernel basis $K(\cdot,\vx_1),\ldots,K(\cdot,\vx_N)$.
The equation~\eqref{eq:RKBS-opt-rep-delta} implies that $\GateauxNorm(s)$ is a linear combination of the continuous linear functionals $\delta_{\vx_1},\ldots,\delta_{\vx_N}$.

\item
If the Banach space $\Banach$ is reflexive, strictly convex, and smooth, then $\Banach$ has a semi-inner product (see \cite{James1947,Lumer1961}).
In original paper \cite{ZhangXuZhang2009}, the RKBSs are mainly investigated by the semi-inner products.
In paper \cite{FasshauerHickernellYe2013}, it points out that the RKBSs can be reconstructed without the semi-inner products.
But, the optimal recovery in the renew RKBSs is still shown by the orthogonality semi-inner products in \cite[Lemma~3.1]{FasshauerHickernellYe2013} the same as in \cite[Theorem~19]{ZhangXuZhang2009}.
In this article, we avoid the concept of semi-inner products to investigate all properties of the new RKBSs.
Since the semi-inner product is set up by the G\^{a}teaux derivative of the norm, the key technique of the proof of Lemma~\ref{l:RKBS-opt-rep}, which is to solve the minimizer by the G\^{a}teaux derivative directly, is similar to \cite[Theorem~19]{ZhangXuZhang2009} and \cite[Lemma~3.1]{FasshauerHickernellYe2013}.
Hence, we discuss the optimal recovery in RKBSs in a totally different way here.
This prepares a new investigation of another weak optimal recovery in the new RKBSs in our current research work of sparse learning.

\item
Since $\langle s,\GateauxNorm(s) \rangle_{\Banach}=\norm{s}_{\Banach}$ and $\norm{\GateauxNorm(s)}_{\Banach'}=1$ when $s\neq0$, the G\^{a}teaux derivative $\GateauxNorm(s)$ peaks at the minimizer $s$, that is, $\langle s,\GateauxNorm(s) \rangle_{\Banach}=\norm{\GateauxNorm(s)}_{\Banach'}\norm{s}_{\Banach}$. Thus,
Lemma~\ref{l:RKBS-opt-rep} can even be seen as a special case of
the representer theorem in Banach spaces \cite[Theorem~1]{MicchelliPontil2004}, more precisely, for the given continuous linear functional $T_1,\ldots,T_N$ on the Banach space $\Banach$, some linear combination of $T_1,\ldots,T_N$ peaks at the minimizer $s$
of the norm $\norm{f}_{\Banach}$ subject to the interpolations $T_1(f)=y_1,\ldots,T_N(f)=y_N$.
Differently from \cite[Theorem~1]{MicchelliPontil2004},
the reproducing properties of RKHSs ensure that the minimizers over RKHSs can be represented by the reproducing kernels.
We conjecture that the reproducing kernels would also help us to obtain the explicit representations of the minimizers over RKBSs rather than the abstract formats.
In Chapter~\ref{char-SVM}, we shall show how to solve the support vector machines in the $p$-norm RKBSs by the reproducing kernels.
\end{itemize}

\subsection*{Regularized Empirical Risks}

Monographs \cite{KimeldorfWahba1970, Wahba1990} first showed the representer theorem in RKHSs
and paper \cite{ScholkopfHerbrichSmola2001} generalized the representer theorem in RKHSs. Papers
\cite{MicchelliPontil2004,ArgyriouMicchelliPontil2009,ArgyriouMicchelliPontil2010} gave the general framework of the function representation for learning methods in Banach spaces. Now we discuss the representer theorem in RKBSs based on the kernel-based approach.

To ensure the uniqueness and existence of empirical learning solutions, the loss functions and
the regularization functions are endowed with additional geometrical properties to be described below.

\begin{assumption}[A-ELR]
Suppose that the regularization function $R:[0,\infty)\to[0,\infty)$ is convex and strictly increasing, and
the loss function $L:\Domain\times\RR\times\RR\to[0,\infty)$ is defined such that $t\mapsto L(\vx,y,t)$ is a convex map
for each fixed $\vx\in\Domain$ and each fixed $y\in\RR$.
\end{assumption}\label{a:A-ELR}

Given any finite data
$$
D:=\left\{\left(\vx_k,y_k\right):k\in\NN_N\right\}\subseteq\Domain\times\RR,
$$
we define the regularized empirical risks in the right-sided RKBS $\Banach$ by
\begin{equation}\label{eq:svm-RKBS}
\svm(f):=\frac{1}{N}\sum_{k\in\NN_N}L\left(\vx_k,y_k,f(\vx_k)\right)+R\left(\norm{f}_{\Banach}\right),\quad\text{for }f\in\Banach.
\end{equation}
Next, we establish the representer theorem in right-sided RKBSs by the same techniques as those used in \cite[Theorem~3.2]{FasshauerHickernellYe2013} (the representer theorem in semi-inner-product RKBSs).

\begin{theorem}[{Representer Theorem}]\label{t:RKBS-opt-rep}
Given any pairwise distinct data points $X\subseteq\Domain$ and any associated data values $Y\subseteq\RR$,
the regularized empirical risk $\svm$ is defined as in equation~\eqref{eq:svm-RKBS}.
Let $\Banach$ be a right-sided reproducing kernel Banach space with the right-sided reproducing kernel $K\in\Leb_0(\Domain\times\Domain')$
such that the right-sided kernel set $\Kset_{K}'$ is linearly independent and $\Banach$ is reflexive, strictly convex, and smooth.
If the loss function $L$ and the regularization function $R$ satisfy assumption~(A-ELR),
then the empirical learning problem
\begin{equation}\label{eq:opt-svm}
\min_{f\in\Banach}\svm(f),
\end{equation}
has a unique optimal solution $s$ such that the G\^{a}teaux derivative $\GateauxNorm(s)$, defined by equation \eqref{eq:Gateaux-norm}, has the finite dimensional representation
\[
\GateauxNorm(s)=\sum_{k\in\NN_N}\beta_kK(\vx_k,\cdot),
\]
for some suitable parameters $\beta_1,\ldots,\beta_N\in\RR$ and the norm of $s$ can be written as
\[
\norm{s}_{\Banach}=\sum_{k\in\NN_N}\beta_ks(\vx_k).
\]
\end{theorem}
\begin{proof}
We first prove the uniqueness of the minimizer of optimization problem~\eqref{eq:opt-svm}.
Assume to the contrary that there exist two different minimizers $s_1,s_2\in\Banach$ of $\svm$.
Let $s_3:=\frac{1}{2}\left(s_1+s_2\right)$.
Since $\Banach$ is strictly convex, we have that
$$
\norm{s_3}_{\Banach}=\norm{\frac{1}{2}\left(s_1+s_2\right)}_{\Banach}<\frac{1}{2}\norm{s_1}_{\Banach}+\frac{1}{2}\norm{s_2}_{\Banach}.
$$
Using the convexity and strict increasing of $R$, we obtain the inequality that
\[
R\left(\norm{\frac{1}{2}\left(s_1+s_2\right)}_{\Banach}\right)
<R\left(\frac{1}{2}\norm{s_1}_{\Banach}+\frac{1}{2}\norm{s_2}_{\Banach}\right)
\leq\frac{1}{2}R\left(\norm{s_1}_{\Banach}\right)+\frac{1}{2}R\left(\norm{s_2}_{\Banach}\right).
\]
For $f\in\Banach$, define
\[
\risk(f):=\frac{1}{N}\sum_{k\in\NN_N}L\left(\vx_k,y_k,f(\vx_k)\right).
\]
For any $f_1,f_2\in\Banach$ and any $t\in(0,1)$, using the convexity of $L(\vx,y,\cdot)$,
we check that
\[
\risk\left(tf_1+(1-t)f_2\right)\leq
t\risk\left(f_1\right)+(1-t)\risk\left(f_2\right),
\]
which ensures that $\risk$ is a convex function.
The convexity of $\risk$ together with $\svm\left(s_1\right)=\svm\left(s_2\right)$ then shows that
\begin{align*}
\svm\left(s_3\right)
&=\risk\left(\frac{1}{2}\left(s_1+s_2\right)\right)+R\left(\norm{\frac{1}{2}\left(s_1+s_2\right)}_{\Banach}\right)\\
&<\frac{1}{2}\risk\left(s_1\right)+\frac{1}{2}\risk\left(s_2\right)
+\frac{1}{2}R\left(\norm{s_1}_{\Banach}\right)+\frac{1}{2}R\left(\norm{s_2}_{\Banach}\right)\\
&=\frac{1}{2}\svm\left(s_1\right)+\frac{1}{2}\svm\left(s_2\right)\\
&=\svm\left(s_1\right).
\end{align*}
This means that $s_1$ is not a minimizer of $\svm$. Consequently, the assumption that there are two various minimizers is false.

Now, we verify the existence of the minimizer of $\svm$ over the reflexive Banach space $\Banach$. To this end, we check the convexity and continuity of $\svm$.
Since $R$ is convex and strictly increasing, the map $f\to R\left(\norm{f}_{\Banach}\right)$ is convex and continuous. As the above discussion, we have already known that $\risk$ is convex. Since $L(\vx,y,\cdot)$ is convex, the map $L(\vx,y,\cdot)$ is continuous. By Proposition~\ref{p:weakly-conv-RKBS} about the weak convergence of RKBSs, the weak convergence in $\Banach$ implies the pointwise convergence in $\Banach$. Hence, $\risk$ is continuous.
We then conclude that $\svm$ is a convex and continuous map.
Next, we consider the set
\[
\Eset:=\left\{f\in\Banach: \svm(f)\leq\svm(0)\right\}.
\]
We have that $0\in\Eset$ and
\[
\norm{f}_{\Banach}\leq R^{-1}\left(\svm(0)\right),\quad \text{for }f\in\Eset.
\]
In other words, $\Eset$ is a non-empty and bounded subset and thus the theorem of existence of minimizers (\cite[Proposition~6]{EkelandTurnbull1983})
ensures the existence of the optimal solution $s$.

Finally, Lemma~\ref{l:RKBS-opt-rep} guarantees the finite dimensional representation of the G\^{a}teaux derivative $\GateauxNorm(s)$.
We take any $f\in\Banach$ and let
$$
D_f:=\left\{(\vx_k,f(\vx_k)):k\in\NN_N\right\}.
$$
Since the right-sided RKBS $\Banach$ is reflexive, strictly convex and smooth, Lemma~\ref{l:RKBS-opt-rep} ensures that
there is an element $s_f$, at which the G\^{a}teaux derivative $\GateauxNorm\left(s_f\right)$ of the norm of $\Banach$ belongs to $\Span\left\{K(\vx_k,\cdot):k\in\NN_N\right\}$
such that $s_f$ interpolates the values $\left\{f(\vx_k):k\in\NN_N\right\}$ at the data points $X$ and $\norm{s_f}_{\Banach}\leq\norm{f}_{\Banach}$. This implies that
\[
\svm\left(s_f\right)\leq\svm(f).
\]
Therefore, the G\^{a}teaux derivative $\GateauxNorm(s)$ of the minimizer $s$ of $\svm$ over $\Banach$ belongs to $\Span\left\{K(\vx_k,\cdot):k\in\NN_N\right\}$. Thus, there exist the suitable parameters $\beta_1,\ldots,\beta_N\in\RR$ such that
\[
\GateauxNorm(s)=\sum_{k\in\NN_N}\beta_kK(\vx_k,\cdot).
\]
Using equation~\eqref{eq:Gateaux-Der-norm} and the reproducing properties (i)-(ii), we compute the norm
\[
\norm{s}_{\Banach}=\langle s,\GateauxNorm(s) \rangle_{\Banach}
=\sum_{k\in\NN_N}\beta_k\langle s,K(\vx_k,\cdot) \rangle_{\Banach}
=\sum_{k\in\NN_N}\beta_ks(\vx_k).
\]
\end{proof}

\begin{remark}\label{r:RKBS-opt-rep}
The G\^{a}teaux derivative of the norm of a Hilbert space $\Hilbert$ at $f\neq0$ is given by $\GateauxNorm(f)=\norm{f}_{\Hilbert}^{-1}f$.
Thus, when the regularized empirical risk $\svm$ is defined on a RKHS $\Hilbert$,
Theorem~\ref{t:RKBS-opt-rep} also provides that
$$
s=\norm{s}_{\Hilbert}\GateauxNorm(s)=\sum_{k\in\NN_N}\norm{s}_{\Hilbert}\beta_kK(\vx_k,\cdot).
$$
This means that the representer theorem in RKBSs (Theorem~\ref{t:RKBS-opt-rep}) implies the classical representer theorem in RKHSs \cite[Theorem~5.5]{SteinwartChristmann2008}.
\end{remark}

\subsection*{Convex Programming}

Now we study the convex programming for the regularized empirical risks in RKBSs.
Since the RKBS $\Banach$ is reflexive, strictly convex, and smooth, the G\^{a}teaux derivative $\GateauxNorm$ is a one-to-one map from $S_{\Banach}$ onto $S_{\Banach'}$, where $S_{\Banach}$ and $S_{\Banach'}$ are the unit spheres of $\Banach$ and $\Banach'$, respectively.
Combining this with the linear independence of $\left\{K\left(\vx_k,\cdot\right):k\in\NN_N\right\}$,
we define a one-to-one map $\Gamma:\RR^N\to\Banach$ such that
\[
\norm{\Gamma(\vw)}_{\Banach}\GateauxNorm\left(\Gamma(\vw)\right)=\sum_{k\in\NN_N}w_kK(\vx_k,\cdot),\quad
\text{for }\vw:=\left(w_k:k\in\NN_N\right)\in\RR^N.
\]
(Here, if the RKBS $\Banach$ has the \emph{semi-inner product}, then $\sum_{k\in\NN_N}w_kK(\vx_k,\cdot)$ is
identical to the \emph{dual element} of $\Gamma(\vw)$ given in~\cite{ZhangXuZhang2009,FasshauerHickernellYe2013}.)
Thus, Theorem~\ref{t:RKBS-opt-rep} ensures that the empirical learning solution $s$ can be written as
\[
s=\Gamma\left(\norm{s}_{\Banach}\vbeta\right),
\]
and
\[
\norm{s}_{\Banach}^2=\sum_{k\in\NN_N}\norm{s}_{\Banach}\beta_ks(\vx_k),
\]
where $\vbeta=\left(\beta_k:k\in\NN_N\right)$ are the parameters of $\GateauxNorm(s)$.
Let
\[
\vw_{s}:=\norm{s}_{\Banach}\vbeta.
\]
Then empirical learning problem~\eqref{eq:opt-svm} can be transferred to solving the optimal parameters $\vw_{s}$ of
\[
\min_{\vw\in\RR^N}
\left\{\frac{1}{N}\sum_{k\in\NN_N}L\left(\vx_k,y_k,\Gamma(\vw)(\vx_k)\right)+R\left(\norm{\Gamma(\vw)}_{\Banach}\right)\right\}.
\]
This is equivalent to
\begin{equation}\label{eq:svm-coef-gen}
\min_{\vw\in\RR^N}
\left\{\frac{1}{N}\sum_{k\in\NN_N}L\left(\vx_k,y_k,\langle \Gamma(\vw),K(\vx_k,\cdot) \rangle_{\Banach}\right)+\Theta(\vw)\right\},
\end{equation}
where
\[
\Theta(\vw):=R\left(\left(\sum_{k\in\NN_N}w_k\langle \Gamma(\vw),K(\vx_k,\cdot) \rangle_{\Banach}\right)^{1/2}\right).
\]
By the reproducing properties, we also have that
\[
s(\vx)=\langle \Gamma\left(\vw_{s}\right),K(\vx,\cdot) \rangle_{\Banach},\quad\text{for }\vx\in\Domain.
\]

As an example, we look at the hinge loss
$$
L(\vx,y,t):=\max\left\{0,1-yt\right\}, \ \  \mbox{for}\ \ y\in\left\{-1, 1\right\},
$$
and the regularization function
$$
R(r):=\sigma^2r^2, \ \ \mbox{for}\ \  \sigma>0.
$$
In a manner similar to the convex programming of the binary classification in RKHSs \cite[Example~11.3]{SteinwartChristmann2008},
the convex programming of the optimal parameters $\vw_s$ of optimization problem~\eqref{eq:svm-coef-gen} is therefore given by
\begin{equation}\label{eq:svm-primal-opt}
\begin{split}
&\min_{\vw\in\RR^N}\left\{C\sum_{k\in\NN_N}\xi_k+\frac{1}{2}\norm{\Gamma(\vw)}_{\Banach}^2\right\}\\
&\text{subject to} \ \xi_k\geq1-y_k\langle \Gamma(\vw),K(\vx_k,\cdot) \rangle_{\Banach},\quad \xi_k\geq0,\quad \text{for all }k\in\NN_N,
\end{split}
\end{equation}
where
\[
C:=\frac{1}{2N\sigma^2}.
\]
Its corresponding Lagrangian is represented as
\begin{equation}\label{eq:hinge-Lagrangian}
C\sum_{k\in\NN_N}\xi_k+\frac{1}{2}\norm{\Gamma(\vw)}_{\Banach}^2+\sum_{k\in\NN_N}\alpha_k\left(1-y_k\langle \Gamma(\vw),K(\vx_k,\cdot)\rangle_{\Banach}-\xi_k\right)-\sum_{k\in\NN_N}\gamma_k\xi_k,
\end{equation}
where $\left\{\alpha_k,\gamma_k:k\in\NN_N\right\}\subseteq\RR_+$.
Computing the partial derivatives of Lagrangian~\eqref{eq:hinge-Lagrangian} with respect to $\vw$ and $\vxi$,
we obtain optimality conditions
\begin{equation}\label{eq:svm-diff-c}
\norm{\Gamma(\vw)}_{\Banach}\GateauxNorm\left(\Gamma(\vw)\right)-\sum_{k\in\NN_N}\alpha_ky_kK(\vx_k,\cdot)=0,
\end{equation}
and
\begin{equation}\label{eq:svm-diff-xi}
C-\alpha_k-\gamma_k=0,\quad\text{for all }k\in\NN_N.
\end{equation}
Comparing the definition of $\Gamma$, equation~\eqref{eq:svm-diff-c} yields that
\begin{equation}\label{eq:svm-w-alpha-y}
\vw=\left(\alpha_ky_k:k\in\NN_N\right),
\end{equation}
and
\[
\norm{\Gamma(\vw)}_{\Banach}^2=\norm{\Gamma(\vw)}_{\Banach}\langle \Gamma(\vw), \GateauxNorm\left(\Gamma(\vw)\right) \rangle_{\Banach}
=\sum_{k\in\NN_N}\alpha_ky_k\langle \Gamma(\vw),K(\vx_k,\cdot) \rangle_{\Banach}.
\]
Substituting equations~\eqref{eq:svm-diff-xi} and \eqref{eq:svm-w-alpha-y} into primal problem~\eqref{eq:svm-primal-opt}, we obtain the dual problem
\[
\max_{0\leq\alpha_k\leq C,~k\in\NN_N}
\left\{\sum_{k\in\NN_N}\alpha_k-\frac{1}{2}\norm{\Gamma(\left(\alpha_ky_k:k\in\NN_N\right))}_{\Banach}^2\right\}.
\]
When $\Banach$ is a RKHS, we have that
\[
\norm{\Gamma(\left(\alpha_ky_k:k\in\NN_N\right))}_{\Banach}^2
=\sum_{j,k\in\NN_N}\alpha_j\alpha_ky_jy_kK\left(\vx_j,\vx_k\right).
\]

\subsection*{Regularized Infinite-sample Risks}

In this subsection, we consider an infinite-sample learning problem in the two-sided RKBSs.

For the purpose of solving the infinite-sample optimization problems,
the RKBSs need to have certain stronger geometrical properties. Specifically, we suppose that the function space $\Banach$ is a two-sided RKBS with the two-sided reproducing
kernel $K\in\Leb_0(\Domain\times\Domain')$ such that $\vx\mapsto K(\vx,\vy)\in\Leb_{\infty}(\Domain)$ for all $\vy\in\Domain'$ and the map $\vx\mapsto\norm{K(\vx,\cdot)}_{\Banach'}\in\Leb_{\infty}(\Domain)$.
Moreover, the RKBS $\Banach$ is required not only to be reflexive and strictly convex but also to be Fr\'{e}chet differentiable.
We say that the Banach space $\Banach$ is Fr\'{e}chet differentiable if the normed operator $\normopt:f\mapsto\norm{f}_{\Banach}$ is Fr\'{e}chet differentiable for all $f\in\Banach$.
Clearly, the Fr\'{e}chet differentiability implies the G\^{a}teaux differentiability and their derivatives are the same, that is, $\Frechet\normopt=\Gateaux\normopt$.

We consider minimizing the regularized \emph{infinite-sample} risk
\[
\widetilde{\svm}(f):=\int_{\Domain\times\RR}L(\vx,y,f(\vx))\PP(\ud\vx,\ud y)+R\left(\norm{f}_{\Banach}\right),\quad\text{for }f\in\Banach,
\]
where $\PP$ is a probability measure defined on $\Domain\times\RR$.
Let $C_{\mu}:=\mu(\Domain)$.
If
$$
\PP(\vx,y)=C_{\mu}^{-1}\PP(y|\vx)\mu(\vx),
$$
then the regularized risk $\widetilde{\svm}$ can be rewritten as
\begin{equation}\label{eq:g-svm-RKBS}
\widetilde{\svm}(f)=\int_{\Domain}H(\vx,f(\vx))\mu(\ud\vx)+R\left(\norm{f}_{\Banach}\right),\quad\text{for }f\in\Banach,
\end{equation}
where
\begin{equation}\label{eq:H-Fun}
H(\vx,t):=\frac{1}{C_{\mu}}\int_{\RR}L(\vx,y,t)\PP(\ud y|\vx),\quad\text{for }\vx\in\Domain\text{ and }t\in\RR.
\end{equation}
In this article, we consider only the regularized infinite-sample risks written in a form as equation~\eqref{eq:g-svm-RKBS}.
For the representer theorem in RKBSs based on the infinite-sample risks, we need the loss function $L$
and the regularization function $R$ to satisfy additional conditions which we describe below.

\begin{assumption}[A-GLR]
Suppose that the regularization function $R:[0,\infty)\to[0,\infty)$ is convex, strictly increasing, and continuously differentiable, and the
loss function $L:\Domain\times\RR\times\RR\to[0,\infty)$ satisfies that $t\mapsto L(\vx,y,t)$ is a convex map for each fixed $\vx\in\Domain$ and each fixed $y\in\RR$.
Further suppose that the function $H:\Domain\times\RR\to[0,\infty)$ defined in equation~\eqref{eq:H-Fun} satisfies that the map $t\mapsto H(\vx,t)$ is differentiable for any fixed $\vx\in\Domain$ and $\vx\mapsto H(\vx,f(\vx))\in\Leb_1(\Domain),\vx\mapsto H_t(\vx,f(\vx))\in\Leb_{1}(\Domain)$ whenever $f\in\Leb_{\infty}(\Domain)$.
\end{assumption}\label{a:A-GLR}

The convexity of $L$ ensures that
the map $f\mapsto H(\vx,f(\vx))$ is convex for any fixed $\vx\in\Domain$. Hence,
the loss risk
\[
\widetilde{\risk}(f):=\int_{\Domain}H(\vx,f(\vx))\mu(\ud\vx),\quad\text{for }f\in\Banach
\]
is a convex operator.
Because of the differentiability and the integrality of $H$, Corollary~\ref{c:RKBS-FrechetDerivative} ensures that the Fr\'{e}chet derivative of $\widetilde{\risk}$ at $f\in\Banach$
can be represented as
\begin{equation}\label{eq:Frechet-svm}
\Frechet\widetilde{\risk}(f)
=\int_{\Domain}H_t(\vx,f(\vx))K(\vx,\cdot)\mu(\ud\vx),
\end{equation}
where $H_t(\vx,y,t):=\frac{\ud}{\ud t}H(\vx,y,t)$.
Based on the above assumptions, we introduce the representer theorem of the solution of infinite-sample learning problem in RKBSs.

\begin{theorem}[{Representer Theorem}]\label{t:RKBS-opt-rep-gen}
The regularized infinite-sample risk $\widetilde{\svm}$ is defined as in equation~\eqref{eq:g-svm-RKBS}.
Let $\Banach$ be a two-sided reproducing kernel Banach space with the two-sided reproducing
kernel $K\in\Leb_0(\Domain\times\Domain')$ such that
$\vx\mapsto K(\vx,\vy)\in\Leb_{\infty}(\Domain)$ for all $\vy\in\Domain'$,
the map $\vx\mapsto\norm{K(\vx,\cdot)}_{\Banach'}\in\Leb_{\infty}(\Domain)$,
and $\Banach$ is reflexive, strictly convex, and Fr\'{e}chet differentiable.
If the loss function $L$ and the regularization function $R$ satisfy assumption~(A-GLR),
then the infinite-sample learning problem
\begin{equation}\label{eq:opt-svm-gen}
\min_{f\in\Banach}\widetilde{\svm}(f),
\end{equation}
has a unique solution $s$ such that the G\^{a}teaux derivative $\GateauxNorm(s)$, defined by equation \eqref{eq:Gateaux-norm}, has the representation
\[
\GateauxNorm(s)=\int_{\Domain}\eta(\vx)K(\vx,\cdot)\mu(\ud\vx),
\]
for some suitable function $\eta\in\Leb_1(\Domain)$ and the norm of $s$ can be written as
\[
\norm{s}_{\Banach}=\int_{\Domain}\eta(\vx)s(\vx)\mu(\ud\vx).
\]
\end{theorem}
\begin{proof}
We shall prove this theorem by using techniques similar to those used in the proof of Theorem~\ref{t:RKBS-opt-rep}.

We first verify the uniqueness of the solution of optimization problem~\eqref{eq:opt-svm-gen}.
Assume that there exist two different minimizers $s_1,s_2\in\Banach$ of $\widetilde{\svm}$.
Let $s_3:=\frac{1}{2}\left(s_1+s_2\right)$.
Since $\Banach$ is strictly convex, we have that
$$
\norm{s_3}_{\Banach}=\norm{\frac{1}{2}\left(s_1+s_2\right)}_{\Banach}<\frac{1}{2}\norm{s_1}_{\Banach}+\frac{1}{2}\norm{s_2}_{\Banach}.
$$
According to the convexities and the strict increasing of $\widetilde{\risk}$ and $R$, we obtain that
\begin{align*}
\widetilde{\svm}\left(s_3\right)
&=\widetilde{\risk}\left(\frac{1}{2}\left(s_1+s_2\right)\right)
+R\left(\norm{\frac{1}{2}\left(s_1+s_2\right)}_{\Banach}\right)\\
&<\frac{1}{2}\widetilde{\risk}\left(s_1\right)+\frac{1}{2}\widetilde{\risk}\left(s_2\right)
+\frac{1}{2}R\left(\norm{s_1}_{\Banach}\right)+\frac{1}{2}R\left(\norm{s_2}_{\Banach}\right)\\
&=\frac{1}{2}\widetilde{\svm}\left(s_1\right)+\frac{1}{2}\widetilde{\svm}\left(s_2\right)=\widetilde{\svm}\left(s_1\right).
\end{align*}
This contradicts the assumption that $s_1$ is a minimizer in $\Banach$ of $\widetilde{\svm}$.

Next we prove the existence of the global minimum of $\widetilde{\svm}$.
The convexity of $\widetilde{\risk}$ ensures that $\widetilde{\svm}$ is convex.
The continuity of $R$ ensures that, for any countable sequence $\left\{f_n:n\in\NN\right\}\subseteq\Banach$
and an element $f\in\Banach$ such that $\norm{f_n-f}_{\Banach}\to0$ when $n\to\infty$,
$$
\lim_{n\to\infty}R\left(\norm{f_n}_{\Banach}\right)=R\left(\norm{f}_{\Banach}\right).
$$
By Proposition~\ref{p:weakly-conv-RKBS}, the weak convergence in $\Banach$ implies the pointwise convergence in $\Banach$. Hence,
$$
\lim_{n\to\infty}f_n(\vx)= f(\vx), \ \ \mbox{for all}\ \  \vx\in\Domain.
$$
Since $\vx\mapsto H(\vx,f_n(\vx))\in\Leb_1(\Domain)$ and $\vx\mapsto H(\vx,f(\vx))\in\Leb_1(\Domain)$, we have that
$$
\lim_{n\to\infty}\widetilde{\risk}(f_n)=\widetilde{\risk}(f),
$$
which ensures that
$$
\lim_{n\to\infty}\widetilde{\svm}(f_n)=\widetilde{\svm}(f).
$$
Thus, $\widetilde{\svm}$ is continuous.
Moreover, the set
$$
\widetilde{\Eset}:=\left\{f\in\Banach:\widetilde{\svm}(f)\leq\widetilde{\svm}(0)\right\}
$$
is a non-empty and bounded. Therefore, the existence of the optimal solution $s$ follows from the existence theorem of minimizers.

Finally, we check the representation of $\GateauxNorm(s)$.
Since $\Banach$ is Fr\'{e}chet differentiable, the Fr\'{e}chet and G\^{a}teaux derivatives of the norm of $\Banach$ are the same.
According to equation~\eqref{eq:Frechet-svm}, we obtain the Fr\'{e}chet derivative of $\widetilde{\svm}$ at $f\in\Banach$
\[
\Frechet\widetilde{\svm}(f)=\Frechet\widetilde{\risk}(f)+R_z\left(\norm{f}_{\Banach}\right)\GateauxNorm(f)
=\int_{\Domain}H_t(\vx,f(\vx))K(\vx,\cdot)\mu(\ud\vx)+R_z\left(\norm{f}_{\Banach}\right)\GateauxNorm(f),
\]
where
$$
R_z(z):=\frac{\ud}{\ud z}R(z).
$$
The fact that $s$ is the global minimum solution of $\widetilde{\svm}$ implies that $\Frechet\widetilde{\svm}(s)=0$, because
$$
\widetilde{\svm}(s+h)-\widetilde{\svm}(s)\geq0, \ \  \mbox{for all}\ \ h\in\Banach
$$
and
\[
\widetilde{\svm}(s+h)-\widetilde{\svm}(s)=\langle h,\Frechet \widetilde{\svm} \rangle_{\Banach}+\order(\norm{h}_{\Banach})
\text{ when }\norm{h}_{\Banach}\to0.
\]
Therefore, we have that
\[
\GateauxNorm(s)=\int_{\Domain}\eta(\vx)K(\vx,\cdot)\mu(\ud\vx),
\]
where
\[
\eta(\vx):=-\frac{1}{R_z\left(\norm{s}_{\Banach}\right)}\int_{\Domain}H_t(\vx,s(\vx))K(\vx,\cdot)\mu(\ud\vx),
\quad\text{for }\vx\in\Domain.
\]
Moreover, equation~\eqref{eq:Gateaux-Der-norm}
shows that $\norm{s}_{\Banach}=\langle s,\GateauxNorm(s) \rangle_{\Banach}$;
hence we obtain that
\[
\norm{s}_{\Banach}
=\int_{\Domain}\eta(\vx)\langle s,K(\vx,\cdot) \rangle_{\Banach}\mu(\ud\vx)
=\int_{\Domain}\eta(\vx)s(\vx)\mu(\ud\vx),
\]
by the reproducing properties (i)-(ii).
\end{proof}

To close this section, we comment on the differences of the differentiability conditions of the norms for the empirical and infinite-sample learning problems. The Fr\'{e}chet differentiability of the norms implies the G\^{a}teaux differentiability but the G\^{a}teaux differentiability does not ensure the
Fr\'{e}chet differentiability. Since we need to compute the Fr\'{e}chet derivatives of the infinite-sample regularized risks,
the norms with the stronger conditions of differentiability are required such that the Fr\'{e}chet derivative is well-defined at the global minimizers.

\section{Oracle Inequality}\label{s:OracleInequality}
\sectionmark{Oracle Inequality}

In this section, we investigate the basic statistical analysis of machine learning called
the oracle inequality in RKBSs that relates the minimizations of empirical and infinite-sample risks the same as the argument of RKHSs in \cite[Chapter~6]{SteinwartChristmann2008}.
The oracle inequality shows that the learning ability of RKBSs can be split into the statistical and deterministic parts.
In this section, we focus on the basic oracle inequality by \cite[Proposition~6.22]{SteinwartChristmann2008} the errors of the minimization of empirical and infinite-sample risks over compact sets of the uniform-norm spaces.

In statistical learning, the empirical data $X$ and $Y$
are observations over a probability distribution.
To be more precise, the data $X\times Y\subseteq\Domain\times\Yset$ are seen as
the independent and identically distributed duplications of a probability measure $\PP$ defined on the product $\Domain\times\Yset$ of compact Hausdorff spaces $\Domain$ and $\Yset$,
that is, $\left(\vx_1,y_1\right),\ldots,\left(\vx_N,y_N\right)\sim\text{i.i.d.}\PP(\vx,y)$.
This means that $X$ and $Y$ are random data here.
Hence, we define the product probability measure
$$
\PP^N:=\otimes_{k=1}^N\PP
$$
on the product space
$$
\Domain^N\times\Yset^N:=\otimes_{k=1}^N\Domain\times\Yset.
$$
In order to avoid notational overload, the product space $\Domain^N\times\Yset^N$ is equipped with the universal completion of the product $\sigma$-algebra and the product probability measure $\PP^N$ is the canonical extension.

In this section, we look at the relations of the empirical and infinite-sample risks, that is,
\[
\risk(f):=\frac{1}{N}\sum_{k\in\NN_N}L\left(\vx_k,y_k,f(\vx_k)\right),
\]
and
\[
\widetilde{\risk}(f):=\int_{\Domain\times\Yset}L(\vx,y,f(\vx))\PP(\ud\vx,\ud y),
\]
for $f\in\Linfty(\Domain)$. Differently from Section~\ref{s:RepThm}, $\risk(f)$ is a random element dependent on the random data.

Let $\Banach$ be a right-sided RKBS with the right-sided reproducing kernel $K\in\Leb_0(\Domain\times\Domain')$. Here, we transfer the minimization of regularized empirical risks over the RKBS $\Banach$ to another equivalent minimization problem, more precisely, the minimization of empirical risks over a closed ball $B_M$ of $B$ with the radius $M>0$, that is,
\begin{equation}\label{eq:oracle-empirical-risk-1}
\min_{f\in B_M}\risk(f),
\end{equation}
where
\[
B_M:=\left\{f\in\Banach:\norm{f}_{\Banach}\leq M\right\}.
\]

\cite[Proposition~6.22]{SteinwartChristmann2008} shows the oracle inequality in a compact set of
$\Linfty(\Domain)$. We suppose that the right-sided kernel set $\Kset_{K}'$ is compact in the dual space $\Banach'$ of $\Banach$.
By Proposition~\ref{p:RKBS-compact}, the identity map $I:\Banach\to\Linfty(\Domain)$ is a compact operator; hence $B_M$ is a relatively compact set of $\Linfty(\Domain)$. Let $\Eset_M$ be the closure of $B_M$ with respect to the uniform norm.
Since $\Linfty(\Domain)$ is a Banach space, we have that $\Eset_M\subseteq\Linfty(\Domain)$.
Clearly, $B_M$ is a bounded set in $\Banach$. Thus
$\Eset_M$ is a compact set of $\Linfty(\Domain)$.
Therefore, by \cite[Proposition~6.22]{SteinwartChristmann2008}, we obtain the inequality
\begin{equation}\label{eq:oracle-inequality-0}
\PP^N\left(\widetilde{\risk}(\hat{s})\geq\tilde{r}^{\ast}+\kappa_{\epsilon,\tau}\right)\leq e^{-\tau},
\end{equation}
for any $\epsilon>0$ and $\tau>0$,
where $\hat{s}$ is any minimizer of the empirical risks over $\Eset_M$, that is,
\begin{equation}\label{eq:oracle-empirical-risk-2}
\risk(\hat{s})=\min_{f\in \Eset_M}\risk(f),
\end{equation}
and $\tilde{r}^{\ast}$ is the minimum of infinite-sample risks over $\Eset_M$, that is,
\begin{equation}\label{eq:oracle-infinite-risk}
\tilde{r}^{\ast}:=\inf_{f\in\Eset_M}\widetilde{\risk}(f).
\end{equation}
Moreover, the above error $\kappa_{\epsilon,\tau}$ is given by the formula
\begin{equation}\label{eq:oracle-constant}
\kappa_{\epsilon,\tau}:=\varrho_M\sqrt{\frac{2\tau+2\log N_{\epsilon}}{N}}+4\epsilon\Upsilon_M.
\end{equation}
Here, the positive integer $N_{\epsilon}\in\NN$ is the $\epsilon$-covering number of $\Eset_M$ with respect to the uniform norm, that is, the smallest number of balls with the radius $\epsilon$ of $\linfty(\Domain)$ covering $\Eset_M$ is $N_{\epsilon}$ (see \cite[Definition~6.19]{SteinwartChristmann2008}).
Since $\Eset_M$ is a compact set of $\Linfty(\Domain)$, the $\epsilon$-covering number $N_{\epsilon}$ exists.
Another positive constant $\Upsilon_M$ is the smallest Lipschitz constant of $L(\vx,y,\cdot)$ on $[-C_{\infty},C_{\infty}]$ for the supremum $C_{\infty}$ of the compact set $\Eset_M$ with respect to the uniform norm, that is, $\Upsilon_M$ is the smallest constant such that
\[
\sup_{\vx\in\Domain,y\in\Yset}\abs{L(\vx,y,t_1)-L(\vx,y,t_2)}\leq\Upsilon_M\abs{t_1-t_2}, \ \text{for all} \ t_1,t_2\in[-C_{\infty},C_{\infty}],
\]
(see \cite[Definition~2.18]{SteinwartChristmann2008}).
For the existence of the smallest Lipschitz constant $\Upsilon_M$, we suppose that
$L_t$ is continuous, where
$$
L_t(\vx,y,t):=\frac{\ud}{\ud t}L(\vx,y,t)
$$
is the first derivative of the loss function
$L:\Domain\times\Yset\times\RR\to[0,\infty)$ at the variable $t$.
This means that $L$ is continuously differentiable at $t$.
The compactness of $\Domain\times\Yset\times[-C_{\infty},C_{\infty}]$ and the continuity of $L_t$ ensure that $\Upsilon_M$ exists.
The positive constant $\varrho_M$ is the supremum of $L$ on $\Domain\times\Yset\times[-C_{\infty},C_{\infty}]$.
The existence of $\varrho_M$ is guaranteed by the compactness of $\Domain\times\Yset\times[-C_{\infty},C_{\infty}]$ and the continuity of $L$.
Therefore, we can obtain the oracle inequality in RKBSs.

\begin{proposition}\label{p:RKBS-oracle}
Let $\Banach$ be a right-sided reproducing kernel Banach space with the right-sided reproducing kernel $K\in\Leb_0(\Domain\times\Domain')$ such that $\Banach$ is reflexive and the right-sided kernel set $\Kset_{K}'$ is compact in the dual space $\Banach'$ of $\Banach$, let $B_M$ be a closed ball of $\Banach$ with a radius $M>0$, and let $\Eset_M$ be the closure of $B_M$ with respect to the uniform norm.
If the derivative $L_t$ of the loss function $L$ at $t$ is continuous and the map $t\mapsto L(\vx,y,t)$ is convex for all $\vx\in\Domain$ and all $y\in\Yset$,
then
for any $\epsilon>0$ and any $\tau>0$,
\begin{equation}\label{eq:oracle-inequality}
\PP^N\left(\widetilde{\risk}(s)\geq\tilde{r}^{\ast}+\kappa_{\epsilon,\tau}\right)\leq e^{-\tau},
\end{equation}
where $s$ is the minimizer of empirical risks over $B_M$ solved in optimization problem~\eqref{eq:oracle-empirical-risk-1}, $\tilde{r}^{\ast}$ is the minimum of infinite-sample risks over $\Eset_M$ defined in equation~\eqref{eq:oracle-infinite-risk}, and the error $\kappa_{\epsilon,\tau}$ is defined in equation~\eqref{eq:oracle-constant}.
\end{proposition}
\begin{proof}
The main idea of the proof is to use inequality~\eqref{eq:oracle-inequality-0} which shows the errors of the minimum of empirical and infinite-sample risks over a compact set of $\Linfty(\Domain)$ given in \cite[Proposition~6.22]{SteinwartChristmann2008}.

First we prove that optimization problem~\eqref{eq:oracle-empirical-risk-1} exists a solution $s$.
Same as the proof of Theorem~\ref{t:RKBS-opt-rep}, the convexity of $L(\vx,y,\cdot)$ insures that $\risk$ is a convex and continuous function on $\Banach$. Moreover, since $\Eset_M$ is a closed ball of a reflexive Banach space $\Banach$, there is an optimal solution $s\in B_M$ to minimize the empirical risks over $B_M$ by the theorem of existence of minimizers.

Next, since $B_M$ is dense in $\Eset_M$ with respect to the uniform norm and $\risk$ is continuous on $\Linfty(\Domain)$, the optimal solution $s$ also minimizes the empirical risks over $\Eset_M$, that is,
\begin{equation}\label{eq:empirical-risks-equal}
\risk(s)=\min_{f\in \Eset_M}\risk(f).
\end{equation}

Finally, by Proposition~\ref{p:RKBS-compact}, the set $\Eset_M$ is a compact set of $\Linfty(\Domain)$.
Since $L_t$ is continuous, the parameters $N_{\epsilon},\Upsilon_M$ of $\kappa_{\epsilon,\tau}$ is well-defined as above discussions.
Thus, the compactness of $\Eset_M$ and the existence of parameters $\kappa_{\epsilon,\tau}$ ensure inequality~\eqref{eq:oracle-inequality-0} is obtained by \cite[Proposition~6.22]{SteinwartChristmann2008}.
Combining inequality~\eqref{eq:oracle-inequality-0} and equation~\eqref{eq:empirical-risks-equal}, the proof of oracle inequality~\eqref{eq:oracle-inequality} is complete.
\end{proof}

\begin{remark}
In Proposition~\ref{p:RKBS-oracle}, the empirical data $X$ and $Y$ are different from the data in Theorem~\ref{t:RKBS-opt-rep}. They distribute randomly based on the probability $\PP$. Hence, the solution $s$ is randomly given and inequality~\eqref{eq:oracle-inequality} shows the errors is similar to the Chebyshev inequality in probability theory.
\end{remark}

\section{Universal Approximation}
\sectionmark{Universal Approximation}

Paper \cite{MicchelliXuZhang2006} provides a nice theorem of machine learning in RKHSs, that is, the universal approximation of RKHSs.
In this section, we investigate the universal approximation of RKBSs.
Let the left-sided and right-sided domains $\Domain$ and $\Domain'$ be the compact Hausdorff spaces.
Suppose that $\Banach$ is the two-sided RKBS with the two-sided reproducing kernel $K$.
By the generalization of the universal approximation of RKHSs,
we shall show that the dual space $\Banach'$ of the RKBS $\Banach$
has
the \emph{universal approximation property}, that is, for any $\epsilon>0$ and any $g\in\Cont(\Domain')$,
there exists a function $s\in\Banach'$ such that $\norm{g-s}_{\infty}\leq\epsilon$.
In other words $\Banach'$ is dense in $\Cont(\Domain')$ with respect to the uniform norm. Here, by the compactness of $\Domain'$, the space $\Cont(\Domain')$ endowed with the uniform norm is a Banach space.
In particular, the universal approximation of the dual space $\Banach'$ ensures that the minimization of empirical risks over $\Cont(\Domain')$
can be transferred to the minimization of empirical risks over $\Banach'$, because $\cup_{M>0}\Eset_M'$ is equal to $\Cont(\Domain')$ where $\Eset_M'$ is the closure of the set
\[
B_M':=\left\{g\in\Banach':\norm{g}_{\Banach'}\leq M\right\},
\]
(see the oracle inequality in Section~\ref{s:OracleInequality}).

Before presenting the proof of the universal approximation of the dual space $\Banach'$, we review a classical result of the integral operator $I_K$. Clearly, $I_K$ is also a compact operator from $\Cont(\Domain)$ into $\Cont(\Domain')$ (see \cite[Theorem~2.21]{Kress1989}).
Let $\DualMeasure(\Domain')$ be the collection of all regular signed Borel measures on $\Domain'$ endowed with their variation norms.
\cite[Example~1.10.6]{Megginson1998} (Riesz-Markov-Kakutani representation theorem) illustrates that $\DualMeasure(\Domain')$ is isometrically equivalent to the dual space of $\Cont(\Domain')$ and
\[
\langle g,\nu' \rangle_{\Cont(\Domain')}=\int_{\Domain'}g(\vy)\nu'(\ud\vy),
\quad\text{for all }g\in\Cont(\Domain')\text{ and all }\nu'\in\DualMeasure(\Domain').
\]
By the compactness of the domains and the continuity of the kernel, we define another linear operator $I_K^{\ast}$ from $\DualMeasure(\Domain')$ into $\DualMeasure(\Domain)$ by
\begin{equation}\label{eq:adjoptIntopt}
\left(I_K^{\ast}\nu'\right)(A):=\int_{A}\mu(\ud\vx)\int_{\Domain'}K(\vx,\vy)\nu'(\ud\vy),
\end{equation}
for all Borel set $A$ in $\Domain$.
Thus, we have that
\[
\begin{split}
\langle I_Kf,\nu' \rangle_{\Cont(\Domain')}
&=\int_{\Domain'}(I_Kf)(\vy)\nu'(\ud\vy)\\
&=\int_{\Domain}f(\vx)\mu(\ud\vx)\int_{\Domain'}K(\vx,\vy)\nu'(\ud\vy)\\
&=\langle f,I_K^{\ast}\nu' \rangle_{\Cont(\Domain)},
\end{split}
\]
for all $f\in\Cont(\Domain)$.
This shows that $I_K^{\ast}$ is the adjoint operator of $I_K$.

Next, we gives the theorem of universal approximation of the dual space $\Banach'$.
\begin{proposition}\label{p:RKBS-universial-approx}
Let $\Banach$ be the two-sided reproducing kernel Banach space with the two-sided reproducing kernel $K\in\Cont(\Domain\times\Domain')$ defined on the compact Hausdorff spaces $\Domain$ and $\Domain'$ such that
the map $\vy\mapsto K(\cdot,\vy)$ is continuous on $\Domain'$
and the map $\vx\mapsto\norm{K(\vx,\cdot)}_{\Banach'}\in\Leb_q(\Domain)$.
If the adjoint operator $I_K^{\ast}$ defined in equation~\eqref{eq:adjoptIntopt} is injective, then the dual space $\Banach'$ of $\Banach$ has the universal approximation property.
\end{proposition}
\begin{proof}
Similar to the proof of Proposition~\ref{p:RKBS-continue}, we show that
$\Banach'\subseteq\Cont(\Domain')$.
Thus, if we find a subspace of $\Banach'$ which is dense in $\Cont(\Domain')$ with respect to the uniform norm, then the proof is complete.

Moreover, Proposition~\ref{p:RKBS-imbedding-dual} provides that $I_K\left(\Leb_p(\Domain)\right)\subseteq\Banach'$, because $K$ is continuous on the compact spaces $\Domain$ and $\Domain'$.
Here $1\leq p,q\leq\infty$ and $p^{-1}+q^{-1}=1$.
As above discussion, by \cite[Theorem~3.1.17]{Megginson1998}, the injectivity of the adjoint operator $I_K^{\ast}$ ensures that $I_K\left(\Cont(\Domain)\right)$ is dense in $\Cont(\Domain')$ with respect to the uniform norm.
Since $\Omega$ is compact, we also have $\Cont(\Domain)\subseteq\Leb_p(\Domain)$. This shows that $I_K\left(\Cont(\Domain)\right)\subseteq I_K\left(\Leb_p(\Domain)\right)$; hence $I_K\left(\Cont(\Domain)\right)$ is a subspace of $\Banach'$. Therefore, the space $\Banach'$ is also dense in $\Cont(\Domain')$ with respect to the uniform norm.
\end{proof}

\begin{remark}\label{r:RKBS-universial-approx}
If the RKBS $\Banach$ is reflexive, then the universal approximation property of $\Banach$ can also be checked in the same manner as Proposition~\ref{p:RKBS-universial-approx}.
Thus, we call the reproducing kernel $K$ a left-sided (or right-sided) \emph{universal kernel} if the RKBS $\Banach$ (or its dual space $\Banach'$) has the universal approximation property.
Moreover, if the RKBS $\Banach$ and its dual space $\Banach'$ have the universal approximation property,
then the reproducing kernel $K$ is called a two-sided universal kernel.
Clearly, the definition of the universal kernels of RKHSs is a special case of the universal kernels of RKBSs (see~\cite{MicchelliXuZhang2006}).
In Section~\ref{s:1-RKBS}, we show that the non-reflexive $1$-norm RKBS also has the universal approximation property. But, the dual space of $1$-norm RKBS does not have the universal approximation property. This indicates that the $1$-norm RKBS only has the left-sided universal kernel.
\end{remark}

\chapter{Generalized Mercer Kernels}\label{char-GMK}

This chapter presents the generalized Mercer kernels, the sum of countable symmetric or nonsymmetric expansion terms, which become the reproducing kernels of the $p$-norm RKBSs driven by their given expansion sets.
Comparing with the general RKBSs discussed in Chapter~\ref{char:RKBS}, the $p$-norm RKBSs can be viewed as the generalizations of the Mercer representations of RKHSs. Moreover, the expansion terms of the generalized Mercer kernels can be used to establish the imbedding, compactness, and universal approximation of the $p$-norm RKBSs.

\section{Constructing Generalized Mercer Kernels}\label{s:Constr-GMK}
\sectionmark{Constructing Generalized Mercer Kernels}

In this section, we define the generalized Mercer kernels.
First we review what the classical Mercer kernels are.
If $K$ is a continuous symmetric positive definite kernel defined on a compact Hausdorff space $\Domain$,
then the Mercer theorem (\cite[Theorem~4.49]{SteinwartChristmann2008}) ensures that the kernel $K$ has the absolutely and uniformly convergent representation
\begin{equation}\label{eq:pdk-Mercer}
K(\vx,\vy)=\sum_{n\in\NN}\lambda_ne_n(\vx)e_n(\vy),\quad\text{for }\vx,\vy\in\Domain,
\end{equation}
by the countable positive eigenvalues $\left\{\lambda_n: n\in\NN\right\}$
and continuous eigenfunctions $\left\{e_n:n\in\NN\right\}$
of $K$, that is,
\[
\int_{\Domain}K(\vx,\vy)e_n(\vx)\mu(\ud\vx)=\lambda_ne_n(\vy),\ \text{for all} \ n\in\NN.
\]
A kernel $K$ is called a \emph{Mercer kernel} if $K$ can be written as a sum of its eigenvalues multiplying eigenfunctions in the form of equation~\eqref{eq:pdk-Mercer}. If we let
\begin{equation}\label{eq:base-MerKer}
\phi_n:=\lambda_n^{1/2}e_n,\quad\text{for all }n\in\NN,
\end{equation}
then the Mercer kernel $K$ can be rewritten as
\[
K(\vx,\vy)=\sum_{n\in\NN}\phi_n({\color{black}{\vx}})\phi_n(\vy),\quad\text{for }\vx,\vy\in\Domain.
\]

Based on the classical construction we shall define the generalized Mercer kernel by replacing the symmetric expansion terms with nonsymmetric expansion terms.

\begin{definition}\label{d:GMK}
Let $\Domain$ and $\Domain'$ be two locally compact Hausdorff spaces equipped with regular Borel measures $\mu$ and $\mu'$, respectively.
A kernel $K\in\Leb_0(\Domain\times\Domain')$ is called a \emph{generalized Mercer kernel} induced by the \emph{left-sided} and \emph{right-sided} expansion sets
\begin{equation}\label{eq:GenMerKer-expan}
\Sset_{K}:=\left\{\phi_n:n\in\NN\right\}\subseteq\Leb_0(\Domain),
\quad
\Sset_{K}':=\left\{\adjphi_n:n\in\NN\right\}\subseteq\Leb_0(\Domain'),
\end{equation}
if the kernel $K$ can be written as the pointwise convergent representation
\begin{equation}\label{eq:GenMerKer}
K(\vx,\vy)=\sum_{n\in\NN}\phi_n(\vx)\adjphi_n(\vy),\quad \text{for }\vx\in\Domain\text{ and }\vy\in\Domain'.
\end{equation}
\end{definition}

The adjoint kernel $\adjK$ of the generalized Mercer kernel $K$ is also a generalized Mercer kernel, and $\Sset_{K}$ and $\Sset_{K}'$ are the left-sided and right-sided expansion sets of the adjoint kernel $\adjK$, respectively, that is, $\Sset_{\adjK}=\Sset_{K}'$ and $\Sset_{\adjK}'=\Sset_{K}$.

\begin{remark}\label{r:Gen-Mercer-Ker}
In the theory of linear integral equations, if the expansion sets are finite, then $K$ is a separable (degenerate) kernel (see \cite{ChenMicchelliXu2015} and \cite[Section~11]{Kress1989}). Here, we mainly focus on the infinite countable expansion terms of the kernel $K$.

Actually, the expansion sets of the generalized Mercer kernel may not be unique.
To avoid confusions, the expansion sets $\Sset_K$ and $\Sset_K'$ of the generalized Mercer kernel $K$ are FIXED in this article, and we shall give another symbol $W$ to represent this kernel $K$ if $K$ has another expansion sets $\Sset_W$ and $\Sset_W'$. In the following sections, the expansion sets will be used to construct the $p$-norm RKBSs. Differently from RKHSs, the $p$-norm RKBSs induced by a variety of expansion sets of the generalized Mercer kernels may not be unique (see the discussions of equivalent eigenfunctions of positive definite kernels in Section~\ref{s:RKBS-PDK}).

To reduce the complexity of the notation, the index of the expansion sets is FIXED to be the natural numbers $\NN$ in this article. However, it is not difficult for us to extend all the theorems of the generalized Mercer kernel to another countable index $\mathcal{I}$ the same as \cite[Section~4.5]{SteinwartChristmann2008}. For example, the expansion of the generalized Mercer kernel $K$ can be rewritten as
\[
K(\vx,\vy)=\sum_{n\in\mathcal{I}}\phi_n(\vx)\adjphi_n(\vy),\quad \text{for }\vx\in\Domain\text{ and }\vy\in\Domain'.
\]
\end{remark}

It is obvious that the classical Mercer kernel is a special case of the generalized Mercer kernels.
A generalized Mercer kernel can also be non-symmetric.
As shown by the corollaries of the Stone-Weierstrass theorem, the continuous kernels defined on the compact domains always have the expansion sums the same as in equation~\eqref{eq:GenMerKer} (see the examples in \cite[Section~11]{Kress1989}).
Furthermore, we give another example of an integral-type kernel
\[
K(\vx,\vy):=\int_{\Gamma}\Psi_l(\vx,\vz)\Psi_r(\vz,\vy)\measure(\ud\vz),
\quad\text{for }\vx\in\Domain\text{ and }\vy\in\Domain',
\]
where $\Psi_l(\vx,\cdot),\Psi_r(\cdot,\vy)\in\Leb_2(\Gamma)$ are defined on a compact Hausdorff space $\Gamma$ equipped with a finite Borel measure $\measure$. Let
\[
\phi_n(\vx):=\int_{\Gamma}\Psi_l(\vx,\vz)\varphi_n(\vz)\measure(\ud\vz),
\quad
\adjphi_n(\vy):=\int_{\Gamma}\Psi_r(\vz,\vy)\varphi_n(\vz)\measure(\ud\vz),
\quad \text{for }n\in\NN,
\]
where $\left\{\varphi_n:n\in\NN\right\}$ is an orthonormal basis of $\Leb_2(\Gamma)$.
Since
\[
\Psi_l(\vx,\vz)=\sum_{n\in\NN}\phi_n(\vx)\varphi_n(\vz),
\quad
\Psi_r(\vz,\vy)=\sum_{n\in\NN}\varphi_n(\vz)\adjphi_n(\vy),
\]
we verify that
\[
K(\vx,\vy)=\int_{\Gamma}\sum_{n\in\NN}\phi_n(\vx)\varphi_n(\vz)\sum_{m\in\NN}\varphi_m(\vz)\adjphi_m(\vy)\measure(\ud\vz)
=\sum_{n\in\NN}\phi_n(\vx)\adjphi_n(\vy).
\]
Thus, this integral-type kernel $K$ is a generalized Mercer kernel if the expansion terms $\phi_n$ and $\adjphi_n$ have infinite countable non-zero elements. We shall discuss a special integral-type kernels induced by positive definite kernels in Section~\ref{s:RKBS-PDK}.

In particular, the generalized Mercer kernel $K$ is called \emph{totally symmetric} if $\Sset_{K}=\Sset_{K}'$.
Here, even though the generalized Mercer kernel $K$ is symmetric, we still do not know whether its expansion sets are the same or not.
The total symmetry indicates that its adjoint kernel is equal to itself, that is, $K(\vx,\vy)=K(\vy,\vx)$ for all $\vx,\vy\in\Domain=\Domain'$.
For any
$\vc:=\left(c_k:k\in\NN_N\right)\in\RR^N$ and any pairwise distinct data points $X:=\left\{\vx_k:k\in\NN_N\right\}\subseteq\Domain$,
we compute the quadratic form
\[
\sum_{j,k\in\NN_N}c_jc_kK(\vx_j,\vx_k)=\sum_{n\in\NN}\sum_{j,k\in\NN_N}c_jc_k\phi_n(\vx_j)\phi_n(\vx_k)
=\sum_{n\in\NN}\left(\sum_{k\in\NN_N}c_k\phi_n(\vx_k)\right)^2\geq0.
\]
Therefore, the totally symmetric generalized Mercer kernel $K$ is a positive definite kernel, and the kernel $K$ is strictly positive definite if and only if $\Kset_{K}'=\Kset_{K}$ are linearly independent. If $\Sset_{K}\subseteq\Cont(\Domain)$ and $\sum_{n\in\NN}\abs{\phi_n(\vx)}$ is uniformly convergent on $\Domain$, then $K\in\Cont(\Domain\times\Domain)$ and its expansion is absolutely and uniformly convergent. This indicates that this totally symmetric generalized Mercer kernel is also a classical Mercer kernel.

In the following sections we shall construct the RKBSs by the generalized Mercer kernels based on the idea of the Mercer representations of RKHSs.

\section{Constructing $p$-norm Reproducing Kernel Banach Spaces for $1<p<\infty$}\label{s:p-RKBS}
\sectionmark{Constructing $p$-norm RKBS for $1<p<\infty$}

In this section we mainly focus on how to set up the $p$-norm RKBSs for $1<p<\infty$ such that their reproducing kernels are the given generalized Mercer kernels.
The main idea is to show that these $p$-norm RKBSs and the standard $p$-norm space of countable sequences have the same geometrical structures. Moreover, we verify the imbedding, compactness, and universal approximation of the $p$-norm RKBSs.

The Mercer representation theorem of RKHSs (\cite[Theorem~10.29]{Wendland2005} and \cite[Theorem~4.51]{SteinwartChristmann2008}) guarantees that
the RKHS $\Hilbert_K(\Domain)$ of the classical Mercer kernel
$K$ can be represented by its expansion set $\left\{\phi_n:n\in\NN\right\}$ defined in equation~\eqref{eq:base-MerKer}, that is,
\[
\Hilbert_K(\Domain)
=\left\{f:=\sum_{n\in\NN}a_n\phi_n:~\left(a_n:n\in\NN\right)\in \ltwo\right\},
\]
equipped with the norm
\[
\norm{f}_{\Hilbert_K(\Domain)}=\left(\sum_{n\in\NN}\abs{a_n}^2\right)^{1/2},
\]
where $\ltwo$ is the collection of all countable sequences of scalars with the standard norm $\norm{\cdot}_2$.
The key point of the explicit representation of RKHSs is that $\Hilbert_K(\Domain)$ and $\ltwo$ are isometrically isomorphic and the standard $n$th-coordinate unit vector of countable sequences is the isometrically equivalent element of $\phi_n$ for all $n\in\NN$. This means that the collection of the expansion terms $\phi_n$ can be viewed as a Schauder basis of $\Hilbert_K(\Domain)$ and a biorthogonal system of itself.

To clarify the definitions, we review the Banach space theory of the Schauder basis and its biorthogonal system.
A set $\Eset:=\left\{\phi_n:n\in\NN\right\}$ in a Banach space $\Banach$ is called a Schauder basis of $\Banach$ if for any $f\in\Banach$ there is a unique sequence $\left(a_n:n\in\NN\right)$ of scalars such that $f=\sum_{n\in\NN}a_n\phi_n$.
The Schauder basis $\Eset$ is normalized if $\norm{\phi_n}_{\Banach}=1$ for all $n\in\NN$, and we call the Schauder basis $\Eset$ is unconditional if the expansion $f=\sum_{n\in\NN}a_n\phi_n$ is convergent unconditionally.
A set $\Eset':=\left\{\adjphi_n:n\in\NN\right\}$ is a biorthogonal system of the Schauder basis $\Eset$ if $\langle f,\adjphi_n \rangle_{\Banach}=a_n$ for all $f=\sum_{n\in\NN}a_n\phi_n\in\Banach$, and we call $\adjphi_n$ the $n$th coordinate (biorthogonal) functional. More details can be found in \cite[Sections 4.1 and 4.2]{Megginson1998}.

This provides a new approach to construct the $p$-norm RKBSs and their dual spaces by the given expansion sets of the generalized Mercer kernels such that the $p$-norm RKBSs and the sequence space $\lp$ are isometrically isomorphic. Here, $\lp$ is the collection of all countable sequences of scalars with the standard norm $\norm{\cdot}_{p}$, that is,
\[
\lp:=\left\{\va:=\left(a_n:n\in\NN\right):\left\{a_n:n\in\NN\right\}\subseteq\RR\text{ and }\sum_{n\in\NN}\abs{a_n}^p<\infty\right\},
\]
equipped with the norm
\[
\norm{\va}_{p}:=\left(\sum_{n\in\NN}\abs{a_n}^p\right)^{1/p}.
\]

In addition, the fact that the expansion sets $\left\{\phi_n:n\in\NN\right\}$ of the RKHS $\Hilbert_K(\Domain)$ are linearly independent and satisfy that
\[
\sum_{n\in\NN}\abs{\phi_n(\vx)}^2=K(\vx,\vx)<\infty,\quad\text{for all }\vx\in\Domain.
\]
This indicates that we need additional conditions for completing the setting and proofs of RKBSs.

\begin{assumption}[A-$p$]
Let $1<p,q<\infty$ such that $p^{-1}+q^{-1}=1$.
Suppose that
the left-sided and right-sided expansion sets $\Sset_{K}$ and $\Sset_{K}'$
of
the generalized Mercer kernel $K$
are linearly independent and satisfy that
\[
\tag{C-$p$}
\sum_{n\in\NN}\abs{\phi_n(\vx)}^q<\infty\text{ for all }\vx\in\Domain,
\quad
\sum_{n\in\NN}\abs{\adjphi_n(\vy)}^p<\infty\text{ for all }\vy\in\Domain'.
\]
\end{assumption}\label{a:A-p}
Here, a set $\Eset$ of a linear vector space is called \emph{linearly independent} if, for any $N\in\NN$ and any finite pairwise distinct elements $\phi_1,\ldots,\phi_N\in\Eset$, their linear combination $\sum_{k\in\NN_N}c_k\phi_k=0$ if and only if $c_1=\ldots=c_N=0$.

To simplify the notation, we define the left-sided and right-sided upper-bound functions of the generalized Mercer kernel $K$ as
\[
\Phi_q(\vx):=\sum_{n\in\NN}\abs{\phi_n(\vx)}^q,\quad
\Phi_p'(\vy):=\sum_{n\in\NN}\abs{\adjphi_n(\vy)}^p,
\]
for all $\vx\in\Domain$ and all $\vy\in\Domain'$, respectively. If the expansion sets $\Sset_{K}$ and $\Sset_{K}'$ satisfy conditions (C-$p$), then the functions $\Phi_q$ and $\Phi_p'$ are well-defined pointwise.

\begin{remark}
If the expansion terms of degenerate kernels are linearly dependent, then the finite expansion terms can be reduced. Hence, the assumption of the linear independence of the expansion terms of generalized Mercer kernels is meaningful.
Clearly, the generalized Mercer kernels have many kinds of expansion sets. But, conditions (C-$p$) may not be true for all of them. Thus, the assumption of conditions (C-$p$) is still necessary for the constructions of $p$-norm RKBSs.

Since the expansion sets $\Sset_{K}$ and $\Sset_{K}'$ have linearly independent countable elements,
the domains $\Domain$ and $\Domain'$ can NOT be finite sets.
It is obvious that $p$-norm RKBSs and generalized Mercer kernels can still be set up by finite expansion sets by the following processes. But, people are greatly interested in infinite dimensional Banach spaces.
In applications, the domains $\Domain$ and $\Domain'$ are usually chosen to be a variety of manifolds of $d$-dimensional space $\Rd$.
Therefore, we mainly discuss the infinite dimensional RKBSs in this article.
\end{remark}

\begin{proposition}\label{p:MercerKer-Cp}
If the measurable functions $\phi_n\in\Leb_0(\Domain)$ and $\adjphi_n\in\Leb_0(\Domain')$ for all $n\in\NN$ satisfy conditions~(C-$p$), then
the generalized Mercer kernel $K$ defined in equation~\eqref{eq:GenMerKer}
is well-defined.
\end{proposition}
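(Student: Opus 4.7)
The plan is to show two things: (i) for every fixed pair $(\vx,\vy)\in\Domain\times\Domain'$ the defining series $\sum_{n\in\NN}\phi_n(\vx)\adjphi_n(\vy)$ converges absolutely to a finite real number, and (ii) the resulting function $K$ lies in $\Leb_0(\Domain\times\Domain')$, i.e., it is measurable. Once both are established, $K$ is unambiguously defined by the point-wise convergent representation required in Definition~\ref{d:GMK}.

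For step (i) the main tool is H\"older's inequality for sequences with conjugate exponents $p$ and $q$. Fixing $\vx\in\Domain$ and $\vy\in\Domain'$, I would view $\left(\phi_n(\vx)\right)_{n\in\NN}$ as an element of $\lq$ and $\left(\adjphi_n(\vy)\right)_{n\in\NN}$ as an element of $\lp$ by virtue of the two conditions~(C-$p$). H\"older's inequality then gives
\[
\sum_{n\in\NN}\abs{\phi_n(\vx)\adjphi_n(\vy)}\leq\left(\sum_{n\in\NN}\abs{\phi_n(\vx)}^q\right)^{1/q}\left(\sum_{n\in\NN}\abs{\adjphi_n(\vy)}^p\right)^{1/p}<\infty,
\]
so the series defining $K(\vx,\vy)$ is absolutely convergent and its sum is a finite real number. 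In particular the representation in~\eqref{eq:GenMerKer} makes sense point-wise on all of $\Domain\times\Domain'$.

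For step (ii) I would use the standard fact that a point-wise limit of measurable functions is measurable. Since $\phi_n\in\Leb_0(\Domain)$ and $\adjphi_n\in\Leb_0(\Domain')$, the product $(\vx,\vy)\mapsto\phi_n(\vx)\adjphi_n(\vy)$ is measurable on $\Domain\times\Domain'$ for each $n$, hence so is each partial sum $K_N(\vx,\vy):=\sum_{n=1}^{N}\phi_n(\vx)\adjphi_n(\vy)$. By step~(i) we have $K_N(\vx,\vy)\to K(\vx,\vy)$ for every $(\vx,\vy)$, and therefore $K\in\Leb_0(\Domain\times\Domain')$.

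There is essentially no hard step here: the argument is a one-line application of H\"older's inequality plus the measurability of point-wise limits. The only thing that even warrants attention is making sure the correct pairing of exponents is invoked; since $p^{-1}+q^{-1}=1$, pairing the $\lq$-summability of $\left(\phi_n(\vx)\right)_n$ with the $\lp$-summability of $\left(\adjphi_n(\vy)\right)_n$ is exactly what H\"older needs.
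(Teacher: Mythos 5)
Your proposal is correct and follows essentially the same route as the paper: the paper's proof is exactly the H\"older estimate you give (which it labels as the Cauchy--Schwarz inequality), establishing absolute point-wise convergence of the series. Your additional step~(ii) on measurability of the point-wise limit is a harmless refinement that the paper omits.
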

\begin{proof}
Combining the Cauchy-Schwarz inequality and conditions~(C-$p$), we check that
\[
\sum_{n\in\NN}\abs{\phi_n(\vx)\adjphi_n(\vy)}
\leq
\Phi_q(\vx)^{1/q}\Phi_p'(\vy)^{1/p}
<\infty,
\]
for all $\vx\in\Domain$ and all $\vy\in\Domain'$.
Therefore, the kernel $K$ converges pointwise.
\end{proof}

In particular, if $\Sset_{K}\subseteq\Cont(\Domain)$ and $\Sset_{K}'\subseteq\Cont(\Domain')$ such that $\sum_{n\in\NN}\abs{\phi_n(\vx)}^q$ and $\sum_{n\in\NN}\abs{\adjphi_n(\vy)}^p$ are uniformly convergent on $\Domain$ and $\Domain'$, respectively, then $K$ is continuous and its expansion is absolutely and uniformly convergent.

Now we describe the representations of the $p$-norm RKBSs.
In this section, if there is not any specific illumination, then the expansion sets $\Sset_K$ and $\Sset_K'$ of the generalized Mercer kernel $K$ satisfy assumption (A-$p$).
Using these expansion sets $\Sset_{K}$ and $\Sset_{K}'$,
we construct the spaces $\Banach_K^p(\Domain)$ and $\Banach_{\adjK}^q(\Domain')$ composing of functions defined on the domains $\Domain$ and $\Domain'$, respectively, that is,
\begin{equation}\label{eq:RKBS-p}
\Banach_K^p(\Domain):=\left\{f:=\sum_{n\in\NN}a_n\phi_n:\left(a_n:n\in\NN\right)\in \lp\right\},
\end{equation}
equipped with the semi-norm
\[
\norm{f}_{\Banach_K^p(\Domain)}:=\left(\sum_{n\in\NN}\abs{a_n}^p\right)^{1/p},
\]
and
\begin{equation}\label{eq:RKBS-q}
\Banach_{\adjK}^q(\Domain'):=\left\{g:=\sum_{n\in\NN}b_n\adjphi_n:\left(b_n:n\in\NN\right)\in \lq\right\},
\end{equation}
equipped with the semi-norm
\[
\norm{g}_{\Banach_{\adjK}^q(\Domain')}:=\left(\sum_{n\in\NN}\abs{b_n}^q\right)^{1/q}.
\]
For the reason that $\adjK(\vy,\vx)=K(\vx,\vy)$ for $\vx\in\Domain$ and $\vy\in\Domain'$,
the spaces $\Banach_{\adjK}^q(\Domain')$ and $\Banach_{K}^p(\Domain)$ are also constructed by the expansion sets $\Sset_{\adjK}=\Sset_{K}'$ and $\Sset_{\adjK}'=\Sset_{K}$ of the adjoint kernel $\adjK$, respectively. Roughly speaking $\Banach_{\adjK}^q(\Domain')$ can be thought as the adjoint format of $\Banach_K^p(\Domain)$. Moreover, the linear independence of $\Sset_{K}$ and $\Sset_{K}'$ will guarantee that the $p$-norm and $q$-norm will be well-defined on $\Banach_K^p(\Domain)$ and $\Banach_{\adjK}^q(\Domain')$, respectively.

\begin{remark}\label{r:RKBS-pq}
The generalized Mercer kernel $K$ can have many choices of linearly independent expansion sets.
Obviously, the choice of various linearly independent expansion sets affects the norms of $K(\vx,\cdot)$ and $K(\cdot,\vy)$.
The $p$-norm RKBSs constructed based on various linearly independent expansion sets of $K$ would be different
but they are all isometrically isomorphic by some linear transformations.
In this chapter, we mainly study what kind of kernels can become a two-sided reproducing kernel of some two-sided RKBSs with different normed structures. Same as Remark~\ref{r:Gen-Mercer-Ker}, we think that the expansion sets $\Sset_K$ and $\Sset_K'$ are always fixed when the generalized Mercer kernel $K$ is given.
This means that the spaces $\Banach_{K}^p(\Domain)$ and $\Banach_{\adjK}^q(\Domain')$ are UNIQUE in this article.
In Section~\ref{s:RKBS-PDK}, we look at the relationships of various choices of expansion sets of positive definite kernels.
\end{remark}

Next, we verify that the function spaces $\Banach_K^p(\Domain)$ and $\Banach_{\adjK}^q(\Domain')$ are well-defined on $\Domain$ and $\Domain'$ pointwise by conditions (C-$p$).
For any $\vx\in\Domain$, by the Cauchy Schwarz inequality, there holds
\[
\abs{f(\vx)}
\leq\left(\sum_{n\in\NN}\abs{a_n}^p\right)^{1/p}
\left(\sum_{n\in\NN}\abs{\phi_n(\vx)}^q\right)^{1/q}
=\Phi_q(\vx)^{1/q}\norm{f}_{\Banach_K^p(\Domain)},
\]
for all $f:=\sum_{n\in\NN}a_n\phi_n\in\Banach_K^p(\Domain)$.
In the same way, we prove the pointwise situation of $\Banach_{\adjK}^q(\Domain_2)$ as follows. For each $\vy\in\Domain$, we have that
\[
\abs{g(\vy)}
\leq\left(\sum_{n\in\NN}\abs{b_n}^q\right)^{1/q}
\left(\sum_{n\in\NN}\abs{\adjphi_n(\vy)}^p\right)^{1/p}
=
\Phi_p'(\vy)^{1/p}\norm{g}_{\Banach_{\adjK}^q(\Domain')},
\]
for all $g:=\sum_{n\in\NN}b_n\adjphi_n\in\Banach_{\adjK}^q(\Domain')$.
This indicates that the point evaluation functionals $\delta_{\vx}$ and $\delta_{\vy}$ are continuous linear functionals on $\Banach_K^p(\Domain)$ and $\Banach_{\adjK}^q(\Domain_2)$, respectively.
{\color{black}{Since $\Sset_{K}\subseteq\Leb_0(\Domain)$ and $\Sset_{K}'\subseteq\Leb_0(\Domain')$, the pointwise limits guarantee that $\Banach_K^{p}(\Domain)\subseteq\Leb_0(\Domain)$ and $\Banach_{\adjK}^{q}(\Domain')\subseteq\Leb_0(\Domain')$.}}

In particular, if $\Sset_{K}\subseteq\Cont(\Domain)$ such that $\sum_{n\in\NN}\abs{\phi_n(\vx)}^q$ is uniformly convergent on $\Domain$, then
$\Banach_K^{p}(\Domain)\subseteq\Cont(\Domain)$
{\color{black}{,
and if $\Sset_{K}'\subseteq\Cont(\Domain')$ such that
$\sum_{n\in\NN}\abs{\adjphi_n(\vy)}^p$ is uniformly convergent on $\Domain'$, then
$\Banach_{\adjK}^{q}(\Domain')\subseteq\Cont(\Domain')$.}}

\subsection*{Reproducing Properties}

In the following parts we show that $\Banach_K^{p}(\Domain)$ and $\Banach_{\adjK}^{q}(\Domain')$ are the two-sided RKBSs with the two-sided reproducing kernels $K$ and $\adjK$, respectively.

Before presenting the proof, we introduce a useful lemma.
Let
\[
\ve_n:=\left(0,\cdots,0,1,0,\cdots\right)^T,
\]
be the standard $n$th-coordinate unite vector for any $n\in\NN$, or more precisely, its $n$th-coordinate is equal to $1$ but the other coordinates are all equal to $0$.
According to \cite[Example~4.1.3]{Megginson1998},
$\left\{\ve_n:n\in\NN\right\}$ is the \emph{standard unit vector basis} of $\lp$. This means that
$\left\{\ve_n:n\in\NN\right\}$ is an unconditional Schauder basis, and its biorthogonal system is also equal to $\left\{\ve_n:n\in\NN\right\}$.
Suppose that $\left\{\varphi_n:n\in\NN\right\}$ is a countable sequence of linearly independent functions such that
a function space
\[
\Banach_p^0:=\Span\left\{\varphi_n:n\in\NN\right\},
\]
has a well-defined norm
\[
\norm{f}_{\Banach_p^0}:=\norm{\va}_p=\left(\sum_{k\in\NN_N}\abs{a_k}^p\right)^{1/p},
\]
for $f:=\sum_{k\in\NN_N}a_k\varphi_k\in\Banach_p^0$ and $N\in\NN$.
Let $\Banach_p$ be the completion (closure) of $\Banach_p^0$ by the norm $\norm{\cdot}_{\Banach_p^0}$.
Obviously $\Banach_p$ is a Banach space. When the general basis $\left\{\varphi_n:n\in\NN\right\}$ is not required to satisfy any additional condition, then the element of $\Banach_p$ may not be a function but a distribution.

\begin{lemma}\label{l:p-RKBS}
The Banach space $\Banach_p$ defined above can be represented explicitly by the form
\[
\Banach_p=\left\{f=\sum_{n\in\NN}a_n\varphi_n:\va:=\left(a_n:n\in\NN\right)\in \lp\right\},
\]
equipped with the norm
\[
\norm{f}_{\Banach_p}:=\norm{\va}_p,
\]
and $\Banach_p$ is isometrically equivalent to $\lp$.
\end{lemma}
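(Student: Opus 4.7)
The plan is to construct an isometric isomorphism $T\colon \Banach_p \to \lp$ whose restriction to the spanning set sends $\varphi_n$ to the standard unit vector $\ve_n$, and then read off the explicit series representation from the fact that every $\va \in \lp$ is the $\lp$-limit of its truncations.

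First, I would define $T_0\colon \Banach_p^0 \to \lp$ by sending $f = \sum_{k\in\NN_N} a_k \varphi_k$ to the finitely supported sequence $(a_1,\ldots,a_N,0,0,\ldots)$. Linear independence of $\{\varphi_n:n\in\NN\}$ makes this well-defined, and it is linear by inspection. By the very definition of the norm on $\Banach_p^0$ we have $\norm{T_0(f)}_p = \norm{f}_{\Banach_p^0}$, so $T_0$ is an isometry into $\lp$, and its image is exactly the space $c_{00}$ of eventually-zero sequences.

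Next I would extend $T_0$ by continuity. Since $\Banach_p$ is the completion of $\Banach_p^0$ and $T_0$ is a bounded linear isometry into the Banach space $\lp$, there is a unique continuous extension $T\colon \Banach_p \to \lp$ that is again an isometry (this is the standard bounded linear extension theorem applied to a dense subspace). Because $c_{00}$ is dense in $\lp$ for $1\leq p < \infty$, the image of $T$ is dense in $\lp$; but the image of an isometry defined on a complete space is itself complete, hence closed, so $T(\Banach_p)=\lp$ and $T$ is an isometric isomorphism $\Banach_p \cong \lp$.

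Finally, I would unpack the explicit representation. Given $f\in\Banach_p$, set $\va=(a_n)_{n\in\NN}:=T(f)\in\lp$, and define the partial sums $f_N := \sum_{k\in\NN_N} a_k\varphi_k \in \Banach_p^0 \subseteq \Banach_p$. Then $T(f_N)$ is the $N$-truncation of $\va$, which tends to $\va$ in $\lp$; the isometry gives $\norm{f-f_N}_{\Banach_p}=\norm{\va-T(f_N)}_p \to 0$, so $f=\sum_{n\in\NN} a_n \varphi_n$ with convergence in the $\Banach_p$-norm, and $\norm{f}_{\Banach_p}=\norm{\va}_p$. Conversely, for any $\va\in\lp$ the same truncation argument shows $\sum_{k\in\NN_N}a_k\varphi_k$ is Cauchy in $\Banach_p^0$, hence converges in $\Banach_p$ to the unique element $T^{-1}(\va)$. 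This yields both inclusions of the claimed description and the norm identity.

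The only mildly delicate point will be being careful that $\Banach_p$, as an abstract completion, a priori contains equivalence classes of Cauchy sequences rather than concrete functions; the whole force of the lemma is that the isometry $T$ lets us identify each such class unambiguously with a genuine $\lp$-convergent series $\sum a_n\varphi_n$. Once the extension step is in place, everything else is routine manipulation of the isometry together with density of $c_{00}$ in $\lp$.
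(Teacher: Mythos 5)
Your proposal is correct and follows essentially the same route as the paper: both establish the isometry between $\Banach_p^0$ and the finitely supported sequences, then pass to completions to identify $\Banach_p$ with $\lp$ and read off the series representation. You simply fill in more of the details (the continuous extension, surjectivity via density of the eventually-zero sequences, and the truncation argument) that the paper delegates to a citation of the Schauder basis result in Megginson.
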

\begin{proof}
The proof will be completed if we verify that $\Banach_p^0$ is isometrically equivalent to $\lspace_p^0:=\Span\left\{\ve_n:n\in\NN\right\}$ because $\lp$ is the completion of $\lspace_p^0$ by the norm $\norm{\cdot}_{p}$.

Since $\left\{\varphi_n:n\in\NN\right\}$ is linearly independent, we set up an one-to-one linear map $T$ from $\Banach_p^0$ into $\lspace_p^0$ in the form
\[
T:~f:=\sum_{k\in\NN_N}a_k\varphi_n\mapsto\va:=\left(a_k:k\in\NN_N\in\NN\right)=\sum_{k\in\NN_N}a_k\ve_k.
\]
Since
\[
\norm{T\left(f\right)}_{p}=\norm{\va}_{p}
=\norm{f}_{\Banach_p^0},~\text{for all }f:=\sum_{k\in\NN_N}a_n\varphi_k\in\Banach_p^0,
\]
we know that $T$ is an isometrical isomorphism. For any $\va:=\left(a_k:k\in\NN_N\right)\in \lspace_p^0$ there exists an element $f:=\sum_{k\in\NN_N}a_k\varphi_k\in\Banach_p^0$ such that $T\left(f\right)=\sum_{k\in\NN_N}a_k\ve_k=\va$.
Thus, the isometrical isomorphism $T$ is also surjective, which ensures that $\Banach_p^0$ is isometrically equivalent to $\lspace_p^0$.

According to the isometrical isomorphism, $\left\{\varphi_n:n\in\NN\right\}$ is the equivalent element of $\left\{\ve_n:n\in\NN\right\}$. Since $\left\{\ve_n:n\in\NN\right\}$ is the standard unit vector basis of $\lp$, $\left\{\varphi_n:n\in\NN\right\}$ is a Schauder basis of $\Banach_p$ stated in \cite[Propositions~4.1.7 and 4.1.8]{Megginson1998}. The proof is complete.
\end{proof}

Replacing the basis $\left\{\varphi_n:n\in\NN\right\}$ of $\Banach_p$ by the expansion sets $\Sset_K$ and $\Sset_K'$, we complete the proof of $\Banach_K^p(\Domain)$ and $\Banach_{\adjK}^q(\Domain')$ by Lemma~\ref{l:p-RKBS}.

\begin{proposition}\label{p:RKBS-MecerKer-pq}
Let $1<p,q<\infty$ such that $p^{-1}+q^{-1}=1$.
If a kernel $K\in\Leb_0(\Domain\times\Domain')$ is a generalized Mercer kernel such that
the expansion sets $\Sset_K$ and $\Sset_K'$ of $K$ satisfy assumption (A-$p$),
then $\Banach_K^p(\Domain)$ is a reflexive Banach space and the dual space of $\Banach_K^p(\Domain)$ is isometrically equivalent to the space $\Banach_{\adjK}^q(\Domain')$.
\end{proposition}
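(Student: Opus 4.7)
The plan is to reduce everything to the classical fact that $\lp$ is reflexive and its dual is isometrically isomorphic to $\lq$, by transporting these properties through the isometric isomorphisms supplied by Lemma~\ref{l:p-RKBS}.

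First I would apply Lemma~\ref{l:p-RKBS} twice. Taking $\varphi_n=\phi_n$, the linear independence of $\Sset_K$ in assumption (A-$p$) and the definition \eqref{eq:RKBS-p} give that $\Banach_K^p(\Domain)$ is isometrically equivalent to $\lp$ via the coordinate map
\[
T_p:~f=\sum_{n\in\NN}a_n\phi_n\mapsto \va=(a_n:n\in\NN).
\]
In the same way, taking $\varphi_n=\adjphi_n$ and using the linear independence of $\Sset_K'$, definition \eqref{eq:RKBS-q} yields an isometric isomorphism
\[
T_q:~g=\sum_{n\in\NN}b_n\adjphi_n\mapsto \vb=(b_n:n\in\NN),
\]
from $\Banach_{\adjK}^q(\Domain')$ onto $\lq$. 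In particular both spaces are Banach spaces.

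Next I would invoke the classical fact that $\lp$ is reflexive when $1<p<\infty$ and that $(\lp)'\cong\lq$ under the pairing $\langle\va,\vb\rangle=\sum_{n\in\NN}a_nb_n$. Since reflexivity and duality are preserved by isometric isomorphism, composing with $T_p$ and $T_q$ yields
\[
\Banach_K^p(\Domain)\cong\lp\cong\lp{}''\cong\Banach_K^p(\Domain)'',
\]
so $\Banach_K^p(\Domain)$ is reflexive, and also $(\Banach_K^p(\Domain))'\cong(\lp)'\cong\lq\cong\Banach_{\adjK}^q(\Domain')$. Concretely, to each $g=\sum_{n\in\NN}b_n\adjphi_n\in\Banach_{\adjK}^q(\Domain')$ I would associate the bounded linear functional $G_g$ on $\Banach_K^p(\Domain)$ defined through the coordinates by
\[
G_g(f):=\sum_{n\in\NN}a_nb_n,\quad f=\sum_{n\in\NN}a_n\phi_n,
\]
which converges absolutely by H\"{o}lder's inequality, and then check that $g\mapsto G_g$ is linear, injective, surjective and norm-preserving using the $\lp$--$\lq$ duality.

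The main step I expect to require care is the verification that this identification is genuinely the correct notion of ``isometrically equivalent'' demanded by Definition~\ref{d:RKBS}: namely, that each $G\in(\Banach_K^p(\Domain))'$ really does correspond to a \emph{function} in $\Leb_0(\Domain')$ of the form $\sum_{n\in\NN}b_n\adjphi_n$, and that pointwise evaluation of this function on $\Domain'$ is consistent with the functional pairing. This is where assumption (C-$p$) is crucial, since it guarantees both that the series $\sum_{n\in\NN}b_n\adjphi_n(\vy)$ converges pointwise for every $\vy\in\Domain'$ whenever $(b_n)\in\lq$, and that $\Banach_{\adjK}^q(\Domain')$ really does sit inside $\Leb_0(\Domain')$ as a concrete space of functions. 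Once this point is settled, the reflexivity and the dual-space identification follow directly from the Lemma~\ref{l:p-RKBS} reduction and the standard duality for sequence spaces.
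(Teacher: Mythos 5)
Your proposal is correct and follows essentially the same route as the paper: both apply Lemma~\ref{l:p-RKBS} to each expansion set to obtain the isometric isomorphisms $\Banach_K^p(\Domain)\cong\lp$ and $\Banach_{\adjK}^q(\Domain')\cong\lq$, and then transport the reflexivity of $\lp$ and the classical duality $(\lp)'\cong\lq$ back to the function spaces. Your added remark on using the conditions (C-$p$) to ensure the coordinate functionals correspond to genuine pointwise-defined functions in $\Leb_0(\Domain')$ is a point the paper handles in the discussion preceding the proposition rather than inside the proof, but it is the same underlying argument.
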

\begin{proof}
According to the definitions of $\Banach_K^p(\Domain)$ and $\Banach_{\adjK}^q(\Domain')$ defined in equations~\eqref{eq:RKBS-p} and~\eqref{eq:RKBS-q},
we complete the proof by using
Lemma~\ref{l:p-RKBS} that provides the isometrically geometrical structures of function spaces and sequence spaces. Hence $\Banach_K^p(\Domain)$ and $\Banach_{\adjK}^q(\Domain')$ are Banach spaces. Moreover $\Banach_K^p(\Domain)$ and $\Banach_{\adjK}^q(\Domain')$ are isometrically equivalent to $\lp$ and $\lq$, respectively.

Since $\lp$ is reflexive, the isometrical isomorphism guarantees the reflexivity of $\Banach_K^p(\Domain)$.
Moreover, since $1<p,q,<\infty$ and $p^{-1}+q^{-1}=1$, we have that $\left(\lp\right)'\cong \lq$; hence $\Banach_{\adjK}^q(\Domain')$ is isometrically equivalent to the dual space of $\Banach_K^p(\Domain)$.
\end{proof}

The explicit forms of the dual bilinear products can be used to prove their two-sided reproducing properties.

\begin{theorem}\label{t:RKBS-MercerKer-p}
Let $1<p,q<\infty$ such that $p^{-1}+q^{-1}=1$.
If a kernel $K\in\Leb_0(\Domain\times\Domain')$ is a generalized Mercer kernel such that
the expansion sets $\Sset_K$ and $\Sset_K'$ of $K$ satisfy assumption (A-$p$),
then $\Banach_K^p(\Domain)$ is a two-sided reproducing kernel Banach space with the two-sided reproducing kernels $K$.
\end{theorem}
\begin{proof}
We shall prove the reproducing properties of $\Banach_K^p(\Domain)$ and $\Banach_{\adjK}^q(\Domain')$ by their explicit structures.
The representation
$$
K(\vx,\vy)=\sum_{n\in\NN}\phi_n(\vx)\adjphi_n(\vy)
$$
is used in the following proof.

Proposition~\ref{p:RKBS-MecerKer-pq} already shows that $\Banach_K^p(\Domain)$ is a Banach space such that the dual space of $\Banach_K^p(\Domain)$ can be viewed as $\Banach_{\adjK}^q(\Domain')$. Moreover,
$\Banach_K^p(\Domain)\cong \lp$ and $\Banach_{\adjK}^q(\Domain')\cong \lq$. Thus,
each $f:=\sum_{n\in\NN}a_{n}\phi_n\in\Banach_K^p(\Domain)$ and each $g:=\sum_{n\in\NN}b_{n}\adjphi_n\in\Banach_{\adjK}^q(\Domain')$
are the identical elements of $\va:=\left(a_n:n\in\NN\right)\in \lp$ and $\vb:=\left(b_n:n\in\NN\right)\in \lq$, respectively.
Following the isometrical isomorphisms, the dual bilinear product defined on $\Banach_K^p(\Domain)$ and $\Banach_{\adjK}^q(\Domain')$ can be represented in the form
\begin{equation}\label{eq:RKBS-lp-lq-dual}
\langle f,g \rangle_{\Banach_K^p(\Domain)}=\langle \va,\vb \rangle_{\lp}=\sum_{n\in\NN}a_{n}b_{n}.
\end{equation}

We first show the right-sided reproducing properties of $\Banach_K^p(\Domain)$. Take any $\vx\in\Domain$ and any $f\in\Banach_K^p(\Domain)$.
Since conditions (C-$p$) provide that $\sum_{n\in\NN}\abs{\phi_n(\vx)}^q<\infty$, we have that $\left(\phi_n(\vx):n\in\NN\right)\in \lq$; hence
$$
K(\vx,\cdot)=\sum_{n\in\NN}\phi_n(\vx)\adjphi_n\in\Banach_{\adjK}^q(\Domain').
$$
Putting $f$ and $K(\vx,\cdot)$ into equation~\eqref{eq:RKBS-lp-lq-dual}, we obtain that
\[
\langle f,K(\vx,\cdot) \rangle_{\Banach_K^p(\Domain)}
=\sum_{n\in\NN}a_{n}\phi_n(\vx)=f(\vx).
\]
In addition, the left-sided reproducing properties of $\Banach_K^p(\Domain)$ can be proved by the same techniques.
Let $\vy\in\Domain'$ and $g\in\Banach_{\adjK}^q(\Domain')$.
According to conditions (C-$p$) of $\sum_{n\in\NN}\abs{\adjphi_n(\vy)}^p<\infty$ , we find that
$$
K(\cdot,\vy)=\sum_{n\in\NN}\adjphi_n(\vy)\phi_n\in\Banach_K^p(\Domain).
$$
Hence, by equation~\eqref{eq:RKBS-lp-lq-dual}, we have that
\[
\langle K(\cdot,\vy),g \rangle_{\Banach_K^p(\Domain)}
=\sum_{n\in\NN}b_{n}\adjphi_n(\vy)=g(\vy).
\]
Therefore, the Banach space $\Banach_K^p(\Domain)$ has the two-sided reproducing kernel $K$.
\end{proof}

\begin{remark}\label{r:RKBS-MercerKer-pq}
In~\cite{FasshauerHickernellYe2013,YeThesis2012},
the RKBS $\Banach_{\Phi}^p(\Rd)$ induced by the positive definite function $\Phi$ is isometrically equivalent to $\Leb_q(\Rd;\mu)$, because we would like to compare $\Banach_{\Phi}^p(\Rd)$ and the classical Sobolev space in a straightforward view. In this article we let the RKBS $\Banach_K^p(\Domain)$ be isometrically equivalent to $\lp$ for a convenient way to solve the support vector machines in the $1$-norm Banach spaces.
\end{remark}

Because of the reflexivity of the RKBS $\Banach_K^p(\Domain)$, we also obtain the two-sided reproducing properties of its dual space $\Banach_{\adjK}^q(\Domain')$ by the adjoint kernel $\adjK$ of $K$. Therefore, we have the following result.

\begin{theorem}\label{t:RKBS-MercerKer-q}
Let $1<p,q<\infty$ such that $p^{-1}+q^{-1}=1$.
If a kernel $K\in\Leb_0(\Domain\times\Domain')$ is a generalized Mercer kernel such that
the expansion sets $\Sset_K$ and $\Sset_K'$ of $K$ satisfy assumption (A-$p$),
then $\Banach_{\adjK}^q(\Domain')$ is a two-sided reproducing kernel Banach space with the two-sided reproducing kernel $\adjK$, the adjoint kernel of $K$.
\end{theorem}

\emph{Comparisons:}
Since $\Banach_K^p(\Domain)$ and $\Banach_{\adjK}^q(\Domain')$ are reflexive, they further satisfy the stronger definition of RKBSs given in~\cite{ZhangXuZhang2009}.
Comparing the norms of $\Banach_K^p(\Domain)$ and $\Hilbert_K(\Domain)$, we find that the $p$-norm two-sided RKBS is an extension of the Mercer representations of RKHSs.
Since $\lp$ is uniformly convex and smooth, the isometrical isomorphism ensures that the RKBS $\Banach_K^p(\Domain)$ is also uniformly convex and smooth which indicates that $p$-norm RKBSs have the same geometrical properties as the RKHSs.
The expansion sets $\Sset_{K}$ and $\Sset_{K}'$ are both equivalent to the standard unit vector basis of $\lp$ and $\lq$, respectively.
Hence the left-sided expansion set $\Sset_{K}$ can be viewed as the normalized Schauder basis of $\Banach_K^p(\Domain)$,
which is also unconditional and shrinking.
Moreover, the right-sided expansion set $\Sset_{K}'$ can be seen as the biorthogonal system of $\Sset_{K}$ because
\[
\langle \phi_n,\adjphi_m \rangle_{\Banach_K^p(\Domain)}=\delta_{mn},\quad\text{for all }m,n\in\NN,
\]
where $\delta_{mn}$ is the standard Kronecker delta.
\cite[Theorem~4.1.24]{Megginson1998} guarantees that
\begin{align*}
&f=\sum_{n\in\NN}\langle f,\adjphi_n \rangle_{\Banach_K^p(\Domain)}\phi_n,\quad\text{for }f\in\Banach_K^p(\Domain),
\end{align*}
and
\begin{align*}
&g=\sum_{n\in\NN}\langle g,\phi_n \rangle_{\Banach_{\adjK}^q(\Domain')}\adjphi_n=\sum_{n\in\NN}\langle \phi_n,g \rangle_{\Banach_K^p(\Domain)}\adjphi_n,\quad\text{for }g\in\Banach_{\adjK}^q(\Domain').
\end{align*}
We roughly think that the reproducing properties of RKBSs come from the Schauder biorthogonal system.
If conditions (C-$p$) do not claim, then the function spaces $\Banach_K^p(\Domain)$ and $\Banach_{\adjK}^q(\Domain')$ may still be well-defined pointwise and their isometrical isomorphisms onto $\lp$ and $\lq$ may also be true. But, we can not confirm that the point evaluation functionals belong to the dual spaces of $\Banach_K^p(\Domain)$ or $\Banach_{\adjK}^q(\Domain')$ without conditions (C-$p$) because $K(\vx,\cdot)$ or $K(\cdot,\vy)$ may not be an element of $\Banach_{\adjK}^q(\Domain')$ or $\Banach_K^p(\Domain)$. This shows that the Banach space $\Banach$ may not be a RKBS even though $\Banach$ isometrically equivalent to $\lp$.

\begin{corollary}\label{c:RKHS-MercerKer}
If a kernel $K\in\Leb_0(\Domain\times\Domain)$ is a totally symmetric generalized Mercer kernel such that
the expansion sets $\Sset_K=\Sset_K'$ of $K$ satisfy assumption (A-$2$),
then $\Banach_K^2(\Domain)$ is a reproducing kernel Hilbert space with the reproducing kernel $K$.
\end{corollary}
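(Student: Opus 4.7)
The plan is to recognize that this corollary is essentially the specialization $p=q=2$ of Theorem~\ref{t:RKBS-MercerKer-p} together with the observation that total symmetry collapses the two-sided structure into a genuine Hilbert structure. First, I would apply Theorem~\ref{t:RKBS-MercerKer-p} with $p=q=2$: since the assumption (A-2) is exactly the hypothesis needed there, $\Banach_K^2(\Domain)$ is already a two-sided RKBS with two-sided reproducing kernel $K$, and its dual space is isometrically equivalent to $\Banach_{\adjK}^2(\Domain)$. The total symmetry $\Sset_K=\Sset_K'$ forces $\adjK=K$ and $\Banach_{\adjK}^2(\Domain)=\Banach_K^2(\Domain)$, so $\Banach_K^2(\Domain)$ is isometrically isomorphic to its own dual.

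Next, I would promote the $2$-norm to an inner product. Because $\Banach_K^2(\Domain)\cong\ltwo$ via the identification $f=\sum_n a_n\phi_n\mapsto\va=(a_n:n\in\NN)$, and the standard $\ltwo$ norm is induced by the inner product $(\va,\vb)_{\ltwo}:=\sum_n a_n b_n$, I define
\[
(f,g)_{\Banach_K^2(\Domain)}:=\sum_{n\in\NN}a_n b_n,\quad\text{for }f=\sum_{n\in\NN}a_n\phi_n,~g=\sum_{n\in\NN}b_n\phi_n.
\]
The conditions (C-2) and the Cauchy--Schwarz inequality in $\ltwo$ ensure absolute convergence, and the standard properties (bilinearity, symmetry, positive-definiteness) transfer from $\ltwo$ by the isometric isomorphism. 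This inner product induces the norm $\norm{\cdot}_{\Banach_K^2(\Domain)}$, so $\Banach_K^2(\Domain)$ is a Hilbert space.

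Finally, I would verify that the dual-bilinear product coincides with this inner product on $\Banach_K^2(\Domain)\times\Banach_K^2(\Domain)$. By the proof of Theorem~\ref{t:RKBS-MercerKer-p}, the dual-bilinear product on $\Banach_K^2(\Domain)\times\Banach_{\adjK}^2(\Domain)$ has the explicit form $\langle f,g\rangle_{\Banach_K^2(\Domain)}=\sum_n a_n b_n$; after the identification $\Banach_{\adjK}^2(\Domain)=\Banach_K^2(\Domain)$ from total symmetry, this is exactly $(f,g)_{\Banach_K^2(\Domain)}$. Consequently, the right-sided reproducing property of Theorem~\ref{t:RKBS-MercerKer-p} reads
\[
(f,K(\vx,\cdot))_{\Banach_K^2(\Domain)}=\langle f,K(\vx,\cdot)\rangle_{\Banach_K^2(\Domain)}=f(\vx),\quad\text{for all }\vx\in\Domain,
\]
which together with $K(\vx,\cdot)\in\Banach_K^2(\Domain)$ is precisely the defining property of a reproducing kernel of a RKHS.

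I do not expect any genuine obstacle here; the only subtlety is making clear that total symmetry is what identifies the two ``sides'' $\Banach_K^2$ and $\Banach_{\adjK}^2$ as the same space, so that the dual-bilinear product becomes a legitimate inner product rather than merely a bilinear pairing between distinct spaces. Everything else is a direct specialization of the already-established $p$-norm theory.
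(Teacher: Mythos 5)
Your proposal is correct and follows essentially the same route as the paper: the paper's own proof simply notes that total symmetry gives $K(\vx,\vy)=K(\vy,\vx)$ and that $\Banach_K^2(\Domain)\cong\ltwo$ makes it a Hilbert space, with the reproducing properties inherited from Theorem~\ref{t:RKBS-MercerKer-p}. Your additional care in checking that the dual-bilinear product becomes a genuine inner product only after the identification $\Banach_{\adjK}^2(\Domain)=\Banach_K^2(\Domain)$ is exactly the right subtlety — it is the point the paper itself emphasizes in the remark following the corollary about non-symmetric kernels.
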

\begin{proof}
It is clear that $K(\vx,\vy)=K(\vy,\vx)$ for all $\vx,\vy\in\Domain=\Domain'$.
Since $\Banach_K^2(\Domain)\cong \ltwo$ and $\ltwo$ is a Hilbert space, the RKBS $\Banach_K^2(\Domain)$ is also a Hilbert space.
\end{proof}

We employ the dual bilinear product to define the reproducing properties and the inner product is just its special case.
When the generalized Mercer kernel $K$ is not totally symmetric, $\Banach_K^2(\Domain)$ is still a Hilbert space; but it may not be a classical RKHS with the reproducing kernel $K$ and we need to introduce another kernel $K_H(\vx,\vy):=\sum_{n\in\NN}\phi_n(\vx)\phi_n(\vy)$ for the inner-product reproducing properties.

\subsection*{Imbedding}

Now we look at the imbedding of the $p$-norm RKBSs.
Define the \emph{left-sided} and \emph{right-sided} sequence sets of the generalized Mercer kernel $K$, respectively, by
\begin{equation}\label{eq:A-K-phi-psi}
\Aset_{K}:=\left\{\left(\phi_n(\vx):n\in\NN\right):\vx\in\Domain\right\},
\quad
\Aset_{K}':=\left\{\left(\adjphi_n(\vy):n\in\NN\right):\vy\in\Domain'\right\}.
\end{equation}
Clearly $\Aset_{K}'=\Aset_{K'}$.
Conditions (C-$p$) ensure that $\Aset_{K}\subseteq \lq$ and $\Aset_{K}'\subseteq \lp$.

\begin{proposition}\label{p:imbedding-RKBS-pq}
Let $1<p,q<\infty$ such that $p^{-1}+q^{-1}=1$
and let $K\in\Leb_0(\Domain\times\Domain')$ be a generalized Mercer kernel such that
the expansion sets $\Sset_K$ and $\Sset_K'$ of $K$ satisfy assumption (A-$p$). If
the measure $\mu(\Domain)$ is finite and the left-sided sequence set $\Aset_{K}$ of $K$ is bounded in $\lq$, then the identity map from $\Banach_K^p(\Domain)$ into $\Leb_q(\Domain)$ is continuous.
In particular, if $\sum_{n\in\NN}\abs{\phi(\vx)}^q$ is uniformly convergent on $\Domain$ and the support of $\mu$ is equal to $\Domain$, then
$\Banach_K^p(\Domain)$ is imbedded into $\Leb_q(\Domain)$.
\end{proposition}
\begin{proof}
According to Theorem~\ref{t:RKBS-MercerKer-p}, the space $\Banach_K^p(\Domain)$ is a two-sided RKBS with the two-sided reproducing kernel $K$.
According to Proposition~\ref{p:RKBS-imbedding}, we shall check the condition of $\vx\mapsto\norm{K(\vx,\cdot)}_{\Banach_{\adjK}^q(\Domain')}\in\Leb_q(\Domain)$ to complete the proof.

Next, we compute that
\[
\norm{K(\vx,\cdot)}_{\Banach_{\adjK}^q(\Domain')}
=\Phi_q(\vx)^{1/q},\quad
\text{for }\vx\in\Domain.
\]
Hence, it suffices to verify that $\Phi_q^{1/q}\in\Leb_q(\Domain)$.
Since $\Aset_K$ is bounded in $\lq$,
we obtain the positive constant
\[
\norm{\Phi_q}_{\infty}=\sup_{\vx\in\Domain}\Phi_q(\vx)<\infty.
\]
Thus, the finite measure $\mu(\Domain)$ ensures that
\[
\int_{\Domain}\Phi_q(\vx)\mu(\ud\vx)
\leq \norm{\Phi_q}_{\infty}\mu\left(\Domain\right)<\infty.
\]
Therefore, we conclude that the identity map from $\Banach_K^p(\Domain)$ into $\Leb_q(\Domain)$ is continuous.

Finally, the uniform convergence of $\sum_{n\in\NN}\abs{\phi(\vx)}^q$ ensures that $\Banach_{K}^p(\Domain)\subseteq\Cont(\Domain)$.
Moreover, since $\supp(\mu)=\Domain$, the $p$-norm RKBS $\Banach_K^p(\Domain)$ satisfies the $\mu$-measure zero condition.
Thus, the imbedding of $\Banach_K^p(\Domain)$ into $\Leb_q(\Domain)$ is verified by Corollary~\ref{c:RKBS-imbedding}.
\end{proof}

We look at the right-sided integral operator
\[
I_{K}'(\xi)(\vx):=\int_{\Domain'}K(\vx,\vy)\xi(\vy)\mu'(\ud\vy),
\]
for $\xi\in\Leb_q(\Domain')$ and $\vx\in\Domain$. Clearly $I_K'=I_{\adjK}$.
In the following theorem we shall check that $I_{K}'$ is also a continuous operator from $\Leb_q(\Domain')$ into $\Banach_K^p(\Domain)$.

\begin{proposition}\label{p:imbedding-intopt-RKBS-pq}
Let $1<p,q<\infty$ such that $p^{-1}+q^{-1}=1$ and let
$K\in\Leb_0(\Domain\times\Domain')$ be a generalized Mercer kernel such that
the expansion sets $\Sset_K$ and $\Sset_K'$ of $K$ satisfy assumption (A-$p$).
If the measure $\mu'(\Domain')$ is finite and the right-sided sequence set $\Aset_K'$ of $K$ is bounded in $\lp$, then
the right-sided integral operator $I_{K}'$ maps $\Leb_q(\Domain')$ into $\Banach_K^p(\Domain)$ continuously and
\[
\int_{\Domain}g(\vy)\xi(\vy)\mu'(\ud\vy)=
\langle I_{K}'(\xi),g \rangle_{\Banach_K^p(\Domain)},
\]
for all $\xi\in\Leb_q(\Domain')$ and all $g\in\Banach_{\adjK}^q(\Domain')$.
In particular, if $\sum_{n\in\NN}\abs{\phi(\vy)}^p$ is uniformly convergent on $\Domain'$ and the support of $\mu'$ is equal to $\Domain'$,
then the range of $I_K'$ is dense in $\Banach_K^p(\Domain)$.
\end{proposition}
\begin{proof}
The main technique of the proof is to view $\Banach_K^p(\Domain)$ as the dual space of a two-sided RKBS so that
Proposition~\ref{p:RKBS-imbedding-dual} about the imbedding of RKBSs can be applied.

Proposition~\ref{p:RKBS-MecerKer-pq} and
Theorem~\ref{t:RKBS-MercerKer-q} ensure that $\Banach_{\adjK}^q(\Domain')$ is the two-sided RKBS with the two-sided reproducing kernel $\adjK$ and $\left(\Banach_{\adjK}^q(\Domain')\right)'\cong\Banach_{K}^p(\Domain)$.
Because of the bounded conditions of $\Aset_K'$ in $\lp$, the positive constant
\[
\norm{\Phi_p'}_{\infty}=\sup_{\vy\in\Domain'}\Phi_p'(\vy)<\infty,
\]
is well-defined.
Since $\mu'(\Domain')<\infty$,
we have that
\[
\int_{\Domain'}\Phi_p'(\vy)\mu'(\ud\vy)
\leq \norm{\Phi_p'}_{\infty}\mu'(\Domain')<\infty.
\]
Hence, $\Phi_p^{\prime1/p}\in\Leb_p(\Domain')$.
We already know that
\[
\abs{K(\vx,\vy)}
\leq\sum_{n\in\NN}\abs{\phi_n(\vx)\phi_n'(\vy)}
\leq\Phi_q(\vx)^{1/q}\Phi_p'(\vy)^{1/p},
\]
for all $\vx\in\Domain$ and all $\vy\in\Domain'$.
Thus,
\[
\int_{\Domain'}\abs{K(\vx,\vy)}^p\mu'(\ud\vy)
\leq\Phi_q(\vx)^{p/q}\int_{\Domain'}\Phi_p'(\vy)\mu'(\ud\vy)
<\infty.
\]
This shows that $K(\vx,\cdot)\in\Leb_p(\Domain')$ for all $\vx\in\Domain$.
In addition, since
\[
\norm{K(\cdot,\vy)}_{\Banach_K^{p}(\Domain)}
=\Phi_p'(\vy)^{1/p},\quad
\text{for }\vy\in\Domain',
\]
we determine that $\vy\mapsto\norm{K(\cdot,\vy)}_{\Banach_K^{p}(\Domain)}\in\Leb_p(\Domain')$.
Therefore, by Proposition~\ref{p:RKBS-imbedding-dual}, the integral operator $I_{K}'$ mapping $\Leb_q(\Domain')$ into $\Banach_{K}^p(\Domain)$ is continuous, and
\[
\int_{\Domain'}g(\vy)\xi(\vy)\mu'(\ud\vy)
=\langle g,I_{K}'(\xi) \rangle_{\Banach_{\adjK}^q(\Domain')}
=\langle I_{K}'(\xi),g \rangle_{\Banach_{K}^p(\Domain)},
\]
for all $\xi\in\Leb_q(\Domain')$ and all $g\in\Banach_{\adjK}^q(\Domain')$.

Finally, the uniform convergence of $\sum_{n\in\NN}\abs{\adjphi(\vy)}^p$ ensures that $\Banach_{\adjK}^q(\Domain')\subseteq\Cont(\Domain')$.
Combining this continuity with the equality of $\supp(\mu')=\Domain'$, we have that the $q$-norm RKBS $\Banach_{\adjK}^q(\Domain')$ satisfies the $\mu'$-measure zero condition. Moreover, since $\Banach_{\adjK}^q(\Domain')$ and $\Leb_p(\Domain)$ are reflexive,
Corollary~\ref{c:RKBS-imbedding-dual} yields the density of the range of $I_{K}'$ in $\Banach_K^p(\Domain)$.
\end{proof}

When $\Domain=\Domain'$, we could find the relationships between the two-sided RKBS $\Banach_{K}^p(\Domain)$ and the Lebesgue space $\Leb_q(\Domain)$ for the conjugate exponents of $p$ and $q$.

\subsection*{Compactness}

Moreover, we check whether the identity map from the RKBS $\Banach_{K}^p(\Domain)$ into the sup-norm space $\Linfty(\Domain)$ is a compact operator. In other words, the unit ball of $\Banach_{K}^p(\Domain)$
\[
B_{\Banach_{K}^p(\Domain)}:=\left\{f\in\Banach_{K}^p(\Domain):\norm{f}_{\Banach_{K}^p(\Domain)}\leq1\right\},
\]
is a relatively compact set of $\Linfty(\Domain)$.
However, the compactness may not be true for a $p$-norm RKBS. We require another sufficient conditions of the generalized Mercer kernel $K$ to guarantee the compactness.
We search these conditions by the compactness of RKBSs stated in Proposition~\ref{p:RKBS-compact}.

\begin{proposition}\label{p:compact-RKBS-pq}
Let $1<p,q<\infty$ such that $p^{-1}+q^{-1}=1$
and let $K\in\Leb_0(\Domain\times\Domain')$ be a generalized Mercer kernel such that
the expansion sets $\Sset_K$ and $\Sset_K'$ of $K$ satisfy assumption (A-$p$).
If the left-sided sequence set $\Aset_{K}$ of $K$ is compact in the space $\lq$, then the identity map from $\Banach_{K}^p(\Domain)$ into $\Linfty(\Domain)$ is compact.
\end{proposition}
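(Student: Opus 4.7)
The plan is to reduce this statement to the general compactness result for RKBSs, Proposition~\ref{p:RKBS-compact}, which says that if the right-sided kernel set $\Kset_K'$ is compact in the dual space $\Banach'$, then the identity map from the RKBS $\Banach$ into $\linfty(\Domain)$ is compact. Since Theorem~\ref{t:RKBS-MercerKer-p} already tells us that $\Banach_K^p(\Domain)$ is a two-sided RKBS with two-sided reproducing kernel $K$, and Proposition~\ref{p:RKBS-MecerKer-pq} identifies its dual space with $\Banach_{\adjK}^q(\Domain')$, the only thing left is to translate the hypothesis on $\Aset_K$ into compactness of $\Kset_K'$ in $\Banach_{\adjK}^q(\Domain')$.

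The bridge is the isometric isomorphism $T:\Banach_{\adjK}^q(\Domain')\to\lq$ that maps $g=\sum_{n\in\NN}b_n\adjphi_n$ to $(b_n:n\in\NN)$; this was constructed in the proof of Proposition~\ref{p:RKBS-MecerKer-pq} via Lemma~\ref{l:p-RKBS}. Under the expansion $K(\vx,\cdot)=\sum_{n\in\NN}\phi_n(\vx)\adjphi_n$, which is valid since $(\phi_n(\vx):n\in\NN)\in\lq$ by condition (C-$p$), the element $K(\vx,\cdot)\in\Banach_{\adjK}^q(\Domain')$ corresponds under $T$ exactly to the sequence $(\phi_n(\vx):n\in\NN)$. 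Therefore $T\left(\Kset_K'\right)=\Aset_K$, and since isometric isomorphisms preserve compactness, $\Kset_K'$ is compact in $\Banach_{\adjK}^q(\Domain')$ if and only if $\Aset_K$ is compact in $\lq$.

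With this identification in hand, the hypothesis of the proposition guarantees that $\Kset_K'$ is compact in the dual of $\Banach_K^p(\Domain)$, so Proposition~\ref{p:RKBS-compact} applies directly to yield compactness of the identity map $\Banach_K^p(\Domain)\hookrightarrow\linfty(\Domain)$. No additional obstacle arises: the only subtlety is making sure the representation $K(\vx,\cdot)=\sum_{n\in\NN}\phi_n(\vx)\adjphi_n$ genuinely lies in the $\lq$-realization of $\Banach_{\adjK}^q(\Domain')$ (which the assumption (A-$p$) guarantees), and invoking that isometries carry compact sets to compact sets. The proof will therefore be quite short, consisting of a single identification followed by a citation of Proposition~\ref{p:RKBS-compact}.
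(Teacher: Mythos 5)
Your proposal is correct and follows essentially the same route as the paper: both arguments identify $\Kset_K'$ with $\Aset_K$ under the isometric isomorphism $\Banach_{\adjK}^q(\Domain')\cong\lq$ from Proposition~\ref{p:RKBS-MecerKer-pq}, so that compactness of $\Aset_K$ in $\lq$ transfers to compactness of $\Kset_K'$ in the dual space, and then invoke Proposition~\ref{p:RKBS-compact}. Your write-up merely makes the correspondence $T\bigl(K(\vx,\cdot)\bigr)=(\phi_n(\vx):n\in\NN)$ a bit more explicit than the paper does.
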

\begin{proof}
Using Proposition~\ref{p:RKBS-compact}, the proof will be completed if we verify that the right-sided kernel set $\Kset_K'$ of $K$ is compact in the dual space $\Banach_{\adjK}^q(\Domain')$ of the RKBS $\Banach_{K}^p(\Domain)$.

By the proof of Proposition~\ref{p:RKBS-MecerKer-pq}, we have that $\Banach_{\adjK}^q(\Domain')\cong \lq$; hence $\Aset_K$ in $\lq$ is the identical element of $\Kset_K'$ in $\Banach_{\adjK}^q(\Domain')$.
Therefore, the compactness of $\Aset_K$ in $\lq$ implies the compactness of $\Kset_K'$ in $\Banach_{\adjK}^q(\Domain')$.
\end{proof}

We know that any compact set of a normed space is closed and bounded.
The bounded condition of $\Aset_K$ can be checked by $\norm{\Phi_q}_{\infty}<\infty$.
In other hands, it is not a big deal to let $\Aset_K$ become closed.
If $\Aset_K$ is not closed, then we extend the domain $\Domain$ to include the limit elements of the closure of the complement of $\Aset_K$.
For example, suppose that there exists a Cauchy sequence $\left\{\left(\phi_n(\vx_m):n\in\NN\right):m\in\NN\right\}$ in $\lp$ but
its limit $\left(\gamma_n:n\in\NN\right)$ is not in $\Aset_K$. We add a new point $\tilde{\vx}$ to $\Domain$, that is,
$\tilde{\Domain}:=\Domain\cup\left\{\tilde{\vx}\right\}$, such that the left-sided expansion set $\Sset_K$ can be extended onto $\tilde{\Domain}$ by the form
\[
\phi_n(\tilde{\vx}):=\gamma_n,\quad\text{for all }n\in\NN.
\]
Therefore, $\Aset_K$ extended onto $\tilde{\Domain}$ includes the limit $\left(\phi_n(\tilde{\vx}):n\in\NN\right)$.
Obviously $\Sset_K$ extended onto $\tilde{\Domain}$ preserves its linear independence;
hence $\Sset_K$ extended onto $\tilde{\Domain}$ still satisfies assumption (A-$p$).
Since
\[
\sum_{n\in\NN}\abs{\phi_n(\tilde{\vx})}^q
=\sum_{n\in\NN}\abs{\gamma_n}^q
=\lim_{m\to\infty}\sum_{n\in\NN}\abs{\phi_n(\vx_m)}^q
\leq \norm{\Phi_q}_{\infty},
\]
the bounded condition still preserves for the extended case.
This means that we always let $\Aset_K$ become closed and bounded. In particular, if the expansion set $\Sset_K\subseteq\Cont(\Domain)$ and $\sum_{n\in\NN}\abs{\phi_n(\vx)}^q$ is uniformly convergent on $\Domain$, which ensures that the map $\Phi_q$ is continuous on $\Domain$, then $\Aset_K$ is always closed whenever $\Domain$ is compact.

But a closed and bounded set of an infinite dimensional normed space may not be compact, for example, the unit sphere of an infinite dimensional normed space is not compact.
Even though $\Aset_K$ may not be compact,
we employ many techniques to check the compactness of $\Aset_K$ with additional conditions.
If there exist a continuous linear operator $T: \lr\to \lq$ and a bounded set $\Eset$ in $\lr$ such that
$T(\Eset)=\Aset_K$ where $1<q<r<\infty$, then Pitt's theorem provides that $\Aset_K$ is relatively compact.
In another way, a special compact linear operator $T:\lq\to \lq$ induced by a sequence $\left(\alpha_n:n\in\NN\right)\in \czero$, which indicates that
\[
T(\vb):=\left(\alpha_nb_n:n\in\NN\right),\quad\text{for }\vb=\left(b_n:n\in\NN\right)\in \lq,
\]
can be used to check the relative compactness of $\Aset_K$.
If we find a positive sequence $\left(\alpha_n:n\in\NN\right)\in \czero$ such that
\[
\left(\alpha_n^{-1}\phi_n(\vx):n\in\NN\right)\in \lq,
\]
and
\[
\sup_{\vx\in\Domain}\sum_{n\in\NN}\frac{\abs{\phi_n(\vx)}^q}{\alpha_n^q}<\infty,
\]
then $\Aset_K$ is relatively compact,
because $\left\{\left(\alpha_n^{-1}\phi_n(\vx):n\in\NN\right):\vx\in\Domain\right\}$ is bounded in $\lq$ and
\[
\left\{T\left(\left(\alpha_n^{-1}\phi_n(\vx):n\in\NN\right)\right):\vx\in\Domain\right\}=\Aset_K.
\]

\subsection*{Universal Approximation}
By the same method of \cite[Corollary~5]{MicchelliXuZhang2006},
the universal approximation of the $p$-norm RKBSs and $q$-norm RKBSs can be shown by the density of $\Span\left\{\Sset_K\right\}$ and $\Span\left\{\Sset_K'\right\}$ directly.

\begin{proposition}\label{p:universial-approx-RKBS-pq}
Let $1<p,q<\infty$ such that $p^{-1}+q^{-1}=1$
and let $K\in\Leb_0(\Domain\times\Domain')$ be a generalized Mercer kernel such that
the expansion sets $\Sset_K$ and $\Sset_K'$ of $K$ satisfy assumption (A-$p$).
If $\sum_{n\in\NN}\abs{\phi_n(\vx)}^q$ is uniformly convergent on $\Domain$, the left-sided domain $\Domain$ is a compact Hausdorff space,
and $\Span\left\{\Sset_K\right\}$ is dense in $\Cont(\Domain)$,
then the reproducing kernel Banach spaces $\Banach_{K}^p(\Domain)$ has the universal approximation property.
\end{proposition}
\begin{proof}
Now we mainly verify that $\Banach_{K}^p(\Domain)$ is dense in $\Cont(\Domain)$.

By the continuity of $\Sset_K$, the uniform convergence of $\sum_{n\in\NN}\abs{\phi_n(\vx)}^q$ provides that $\Banach_{K}^p(\Domain)\subseteq\Cont(\Domain)$.
It is clear that $\Span\left\{\Sset_K\right\}\subseteq\Banach_{K}^p(\Domain)$.
Therefore, the density of $\Span\left\{\Sset_K\right\}$ in $\Cont(\Domain)$ ensures the density of $\Banach_{K}^p(\Domain)$ in $\Cont(\Domain)$.
\end{proof}

By Proposition~\ref{p:universial-approx-RKBS-pq}, the generalized Mercer kernel $K$ can be a right-sided, left-sided, or two-side universal kernel.

\subsection*{Uniqueness}

Based on the theoretical results of the $p$-norm RKBSs, we discuss the uniqueness of reproducing kernels and RKBSs. In the first step, we take a reproducing kernel to look at its RKBS.
It is well-known that the RKHS induced by a given positive definite kernel is unique.
But, if the expansion sets set up by the eigenvalues and eigenfunctions of a positive definite kernel satisfy conditions (C-$p$) for all $1<p<\infty$, then this positive definite kernel can be a reproducing kernel of various $p$-norm RKBSs (see the examples of Min Kernel, Gaussian Kernel, and power series kernels given in Chapter~\ref{char-PDK}).

Next, we consider the reproducing kernel of a given Banach space.
There could be many different kinds of function spaces which are isometrically equivalent to the dual spaces of the $p$-norm RKBS,
because our reproducing properties are defined by the dual bilinear products of the Banach space depending on its dual space.
This indicates that the reproducing kernel can be changed when the isometrically isomorphic function space of the dual space of the $p$-norm RKBS is chosen differently.

An example, we look at two generalized Mercer kernels
\[
K(x,y):=\sum_{n\in\NN}\frac{2}{n^4\pi^4}\sin(n\pi x)\sin(n\pi y),\quad\text{for }x,y\in[0,1],
\]
and
\[
W(x,y):=\sum_{n\in\NN}\frac{2}{n^4\pi^4}\sin(n\pi x)\cos(n\pi y),\quad\text{for }x,y\in[0,1].
\]
It is obvious that $K\neq W$. Moreover, their expansion sets $\Sset_{K}=\left\{\phi_n:n\in\NN\right\},\Sset_K'=\left\{\adjphi_n:n\in\NN\right\}$ and $\Sset_{W}=\left\{\varphi_n:n\in\NN\right\},\Sset_W'=\left\{\adjvarphi_n:n\in\NN\right\}$ can be defined as
\[
\phi_n(x):=\frac{\sqrt{2}}{n^2\pi^2}\sin(n\pi x),
\quad
\adjphi_n(y):=\frac{\sqrt{2}}{n^2\pi^2}\sin(n\pi y),
\quad\text{for }n\in\NN,
\]
and
\[
\varphi_n(x):=\frac{\sqrt{2}}{n^2\pi^2}\sin(n\pi x),
\quad
\adjvarphi_n(y):=\frac{\sqrt{2}}{n^2\pi^2}\cos(n\pi y),
\quad\text{for }n\in\NN.
\]
Since
\[
\sum_{n\in\NN}\frac{2^{p/2}}{n^{2p}\pi^{2p}}\abs{\sin(n\pi x)}^p
\leq\left(\frac{\sqrt{2}}{n^2\pi^2}\abs{\sin(n\pi x)}\right)^{p}
\leq\frac{2^{(p+2)/2}}{\pi^{2p}}<\infty,
\]
and
\[
\sum_{n\in\NN}\frac{2^{q/2}}{n^{2q}\pi^{2q}}\abs{\cos(n\pi y)}^q
\leq\left(\frac{\sqrt{2}}{n^2\pi^2}\abs{\cos(n\pi y)}\right)^{q}
\leq\frac{2^{(q+2)/2}}{\pi^{2q}}<\infty,
\]
for all $x,y\in[0,1]$,
we determine that the expansion sets $\Sset_{K},\Sset_K'$ and $\Sset_{W},\Sset_W'$ are linearly independent and both satisfy conditions (C-$p$) for all $1<p<\infty$.
By Theorem~\ref{t:RKBS-MercerKer-p}, the spaces $\Banach_K^p([0,1])$ and $\Banach_W^p([0,1])$ induced by $\Sset_K$ and $\Sset_W$ are the two-sided RKBSs with the two-sided reproducing kernels $K$ and $W$, respectively.
Since $\Sset_K=\Sset_W$, we have that $\Banach_K^p([0,1])\equiv\Banach_W^p([0,1])$.
Furthermore, we find that
$$
\Banach_{\adjK}^q([0,1])\cap\Banach_{W'}^q([0,1])=\{0\}
$$
even though
$$
\Banach_{\adjK}^q([0,1])\cong\left(\Banach_K^p([0,1])\right)'=\left(\Banach_W^p([0,1])\right)'\cong\Banach_{W'}^q([0,1]),
$$
where $q$ is the conjugate exponent of $p$. This example shows that the $p$-norm RKBS $\Banach_K^p([0,1])=\Banach_W^p([0,1])$ has two different reproducing kernels $K$ and $W$
associated with different choices of isometrically equivalent function spaces $\Banach_{\adjK}^q([0,1])$ and $\Banach_{W'}^q([0,1])$ of the its dual space.

\begin{remark}\label{r:RKBS-pq-unique}
Since the generalized Mercer kernel $K$ is the two-sided reproducing kernel of $\Banach_{K}^p(\Domain)$,
Proposition~\ref{p:RKBS-uniquess-1} guarantees that the isometrically isomorphic function space $\Fun$ of the dual space $\left(\Banach_{K}^p(\Domain)\right)'$ is uniquely equal to $\Banach_{\adjK}^q(\Domain')$. Same as Remark~\ref{r:RKBS-Fun},
the dual space $\left(\Banach_{K}^p(\Domain)\right)'$ and the function space $\Banach_{\adjK}^q(\Domain')$ can be viewed as the same.
\end{remark}

\subsection*{Generalization of $p$-norm Reproducing Kernel Banach Spaces}

In the above discussion, we have already shown the constructions of the $p$-norm RKBSs of which the reproducing kernels are the given generalized Mercer kernels. But
people may have another issue whether there exists a generalized Mercer kernel which is a reproducing kernel for the given Banach space. Finally, we show what kinds of Banach spaces would possess the reproducing properties and their reproducing kernels are the generalized Mercer kernels.

Here, we consider a general Banach space $\Banach$ composed of functions
$f\in\Leb_0(\Domain)$ such that the dual space $\Banach'$ of $\Banach$ is isometrically equivalent to a normed space $\Fun$ consisting of functions $g\in\Leb_0(\Domain')$. Then the dual space $\Banach'\cong\Fun$ can be seen as a function space defined on $\Domain'$.
We hope that this Banach space $\Banach$ has similar structures of the $p$-norm RKBS $\Banach_K^p(\Domain)$.
So we suppose that $\Banach$ is reflexive, and $\Banach$ has an unconditionally normalized Schauder basis $\Eset:=\left\{\phi_n:n\in\NN\right\}$.
Then the biorthogonal system $\Eset':=\left\{\adjphi_n:n\in\NN\right\}$ of the Schauder basis $\Eset$ is well-defined.
The reflexivity of $\Banach$ also guarantees that its biorthogonal system $\Eset'$ is also an unconditionally Schauder basis of the dual space $\Banach'$ (see \cite[Theorem~4.4.1 and Corollary~4.4.16]{Megginson1998}).

Let $P_n$ be the $n$th natural projection depending on the Schauder basis $\Eset$, that is, $P_n(f):=\sum_{k\in\NN_n}a_k\phi_k$ for all $f:=\sum_{n\in\NN}a_n\phi_n\in\Banach$. Then \cite[Corollary~4.1.17]{Megginson1998} ensures that $\sup_{n\in\NN}\norm{P_n}<\infty$. Using the normalization of $\Eset$, we obtain the upper bounds of the Schauder basis $\Eset$ and the biorthogonal system $\Eset'$ as
\begin{equation}\label{eq:RKBS-MercerKer-Gen-2}
\begin{split}
&\sup_{n\in\NN}\norm{\phi_n}_{\Banach}=1<\infty,\\
&\sup_{n\in\NN}\norm{\adjphi_n}_{\Banach'}\leq 2\sup_{n\in\NN}\norm{P_n}<\infty,
\end{split}
\end{equation}
because
\[
\abs{\langle f,\adjphi_n \rangle_{\Banach}}\norm{\phi_n}_{\Banach}
=\norm{a_n\phi_n}_{\Banach}
=\norm{P_{n}(f)-P_{n-1}(f)}_{\Banach}
\leq2\sup_{n\in\NN}\norm{P_n}\norm{f}_{\Banach},
\]
for all $f\in\Banach$.
By inequality~\eqref{eq:RKBS-MercerKer-Gen-2}, we conclude that
\begin{equation}\label{eq:RKBS-MercerKer-Gen-4}
\begin{split}
&\sup_{n\in\NN}\abs{a_n}=\sup_{n\in\NN}\abs{\langle f,\adjphi_n\rangle_{\Banach}}
\leq C\norm{f}_{\Banach},\\
&\sup_{n\in\NN}\abs{b_n}=\sup_{n\in\NN}\abs{\langle \phi_n,g \rangle_{\Banach}}
\leq \norm{g}_{\Banach'}
\end{split}
\end{equation}
for all $f:=\sum_{n\in\NN}a_{n}\phi_n\in\Banach$ and all
$g:=\sum_{n\in\NN}b_{n}\adjphi_n\in\Banach'$, where the positive constant $C:=2\sup_{n\in\NN}\norm{P_n}$.
Analogous to conditions (C-$p$), we further require that the sets $\Eset$ and $\Eset'$ satisfy that
\[
\tag{C-GEM}
\sum_{n\in\NN}\abs{\phi_n(\vx)}<\infty\text{ for all }\vx\in\Domain,
\quad
\sum_{n\in\NN}\abs{\adjphi_n(\vy)}<\infty\text{ for all }\vy\in\Domain'.
\]
Conditions (C-GEN) ensure that
\[
\sum_{n\in\NN}\abs{\phi_n(\vx)\adjphi_n(\vy)}
\leq\left(\sum_{n\in\NN}\abs{\phi_n(\vx)}\right)\left(\sum_{n\in\NN}\abs{\adjphi_n(\vy)}\right)<\infty,
\]
for all $\vx\in\Domain$ and all $\vy\in\Domain'$; hence
the generalized Mercer kernel
\begin{equation}\label{eq:RKBS-MercerKer-Gen-5}
K(\vx,\vy):=\sum_{n\in\NN}\phi_n(\vx)\adjphi_n(\vy),
\end{equation}
is well-defined pointwise on the domain $\Domain\times\Domain'$. This means that
the sets $\Eset$ and $\Eset'$ can be viewed as the left-sided and right-sided expansion sets of $K$, respectively.
For any $\vx\in\Domain$ and $\vy\in\Domain'$,
by conditions (C-GEN) and inequality~\eqref{eq:RKBS-MercerKer-Gen-4}, we have that
\[
\abs{f(\vx)}\leq
\sup_{n\in\NN}\abs{a_n}\sum_{n\in\NN}\abs{\phi_n(\vx)}
\leq C\sum_{n\in\NN}\abs{\phi_n(\vx)}\norm{f}_{\Banach},
\]
and
\[
\abs{g(\vy)}\leq
\sup_{n\in\NN}\abs{b_n}\sum_{n\in\NN}\abs{\adjphi_n(\vy)}
\leq \sum_{n\in\NN}\abs{\adjphi_n(\vy)}\norm{g}_{\Banach'},
\]
for all $f\in\Banach$ and all
$g\in\Banach'$.
Therefore, the point evaluation functionals $\delta_{\vx}$ and $\delta_{\vy}$ are continuous linear functionals on $\Banach$ and $\Banach'$, respectively.
Next, we verify that this Banach space $\Banach$ is a two-sided RKBS and the generalized Mercer kernel $K$ is the two-sided reproducing kernel of $\Banach$.

\begin{theorem}\label{t:RKBS-MercerKer-Gen}
Let $\Banach$ be a reflexive Banach space composed of functions
$f\in\Leb_0(\Domain)$ such that
$\Banach$ has an unconditionally normalized Schauder basis $\Eset$,
the dual space $\Banach'$ of $\Banach$ is isometrically equivalent to a normed space $\Fun$ consisting of functions $g\in\Leb_0(\Domain')$,
and the biorthogonal system $\Eset'$ of $\Eset$ is chosen from $\Fun$. If the sets $\Eset$ and $\Eset'$ satisfy conditions (C-GEN),
then $\Banach$ is a two-sided reproducing kernel Banach space and the two-sided reproducing kernel of $\Banach$ is a generalized Mercer kernel $K$ set up by $\Eset$ and $\Eset'$ defined in equation~\eqref{eq:RKBS-MercerKer-Gen-5}.
\end{theorem}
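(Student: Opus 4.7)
The plan is to identify the candidate reproducing kernel $K$ with the representatives in $\Fun\cong\Banach'$ and $\Banach\cong\Banach''$ of the point evaluation functionals $\delta_{\vx}$ and $\delta_{\vy}$. Since $\Eset=\{\phi_n\}$ is an unconditionally normalized Schauder basis of the reflexive Banach space $\Banach$, the cited results \cite[Theorem~4.4.1 and Corollary~4.4.16]{Megginson1998} from the pre-theorem discussion guarantee that $\Eset'=\{\adjphi_n\}$ is itself an unconditional Schauder basis of $\Banach'$. Consequently every $G\in\Banach'$ admits a norm-convergent expansion $G=\sum_{n\in\NN}\langle\phi_n,G\rangle_{\Banach}\adjphi_n$, and every element of $\Banach\cong\Banach''$ admits the analogous expansion in $\Eset$. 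All of the subsequent steps flow from applying these canonical expansions to the point evaluation functionals.

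For the right-sided properties (i) and (ii), I first record that the pre-theorem bound $\abs{f(\vx)}\leq C\sum_{n\in\NN}\abs{\phi_n(\vx)}\norm{f}_{\Banach}$ already proves $\delta_{\vx}\in\Banach'$. Expanding $\delta_{\vx}$ in the Schauder basis $\Eset'$ of $\Banach'$ gives
\[
\delta_{\vx}=\sum_{n\in\NN}\langle\phi_n,\delta_{\vx}\rangle_{\Banach}\adjphi_n=\sum_{n\in\NN}\phi_n(\vx)\adjphi_n,
\]
with convergence in $\Banach'$-norm. Passing through the isometric isomorphism $\Banach'\cong\Fun$, whose continuity lets pointwise evaluation at any $\vy\in\Domain'$ commute with the norm-convergent sum, the element of $\Fun$ that represents $\delta_{\vx}$ is precisely the function $\vy\mapsto\sum_{n\in\NN}\phi_n(\vx)\adjphi_n(\vy)=K(\vx,\vy)$. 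Hence $K(\vx,\cdot)\in\Fun\cong\Banach'$ is the representative of $\delta_{\vx}$, and $\langle f,K(\vx,\cdot)\rangle_{\Banach}=\delta_{\vx}(f)=f(\vx)$ follows at once.

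For the left-sided properties (iii) and (iv), I run the symmetric argument using reflexivity. The pre-theorem bound $\abs{g(\vy)}\leq\sum_{n\in\NN}\abs{\adjphi_n(\vy)}\norm{g}_{\Banach'}$ places $\delta_{\vy}\in\Banach''$, and reflexivity delivers a unique $K_{\vy}\in\Banach$ with $\langle K_{\vy},g\rangle_{\Banach}=g(\vy)$ for every $g\in\Banach'\cong\Fun$. Expanding $K_{\vy}$ in the Schauder basis $\Eset$ and using this identity with $g=\adjphi_n$ yields
\[
K_{\vy}=\sum_{n\in\NN}\langle K_{\vy},\adjphi_n\rangle_{\Banach}\phi_n=\sum_{n\in\NN}\adjphi_n(\vy)\phi_n,
\]
now in $\Banach$-norm. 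Since $\delta_{\vx}\in\Banach'$, pointwise evaluation at $\vx\in\Domain$ commutes with the sum, so $K_{\vy}(\vx)=\sum_{n\in\NN}\adjphi_n(\vy)\phi_n(\vx)=K(\vx,\vy)$. This identifies $K(\cdot,\vy)=K_{\vy}\in\Banach$ and gives $\langle K(\cdot,\vy),g\rangle_{\Banach}=g(\vy)$.

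The main obstacle I foresee is bookkeeping the two identifications $\Banach'\cong\Fun$ and $\Banach\cong\Banach''$ carefully enough that the norm-convergent biorthogonal expansions of $\delta_{\vx}$ and $\delta_{\vy}$ really translate into the pointwise identity defining $K$. The conditions (C-GEN) are exactly what makes the double series defining $K(\vx,\vy)$ absolutely convergent at every point, which in turn justifies swapping the norm convergence of the functional expansions with pointwise evaluation.
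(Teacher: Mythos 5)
Your proof is correct and rests on the same essential ingredients as the paper's: the unconditional normalized Schauder basis $\Eset$, the fact (via reflexivity and \cite[Theorem~4.4.1 and Corollary~4.4.16]{Megginson1998}) that $\Eset'$ is a Schauder basis of $\Banach'$ with biorthogonal system $\Eset$, and the (C-GEN) bounds that make the point evaluations continuous so that norm-convergent expansions can be evaluated pointwise. The only difference is organizational: the paper defines $K(\vx,\cdot)=\sum_{n\in\NN}\phi_n(\vx)\adjphi_n$ and $K(\cdot,\vy)=\sum_{n\in\NN}\adjphi_n(\vy)\phi_n$ directly as absolutely convergent series in $\Banach'$ and $\Banach$ and then verifies the reproducing identities term by term against the basis expansions of $f$ and $g$, whereas you expand the functionals $\delta_{\vx}$ and $\delta_{\vy}$ themselves in the respective bases and identify those expansions with the kernel sections --- two equivalent packagings of the same argument.
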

\begin{proof}
We shall prove the two-sided reproducing properties of $\Banach$ by the characterization of the Schauder basis $\Eset$ and its biorthogonal system $\Eset'$.

First we check its right-sided reproducing properties. Take any $f\in\Banach$ and any $\vx\in\Domain$.
By inequality~\eqref{eq:RKBS-MercerKer-Gen-2} and conditions (C-GEN), we have that
\[
\sum_{n\in\NN}\norm{\phi_n(\vx)\adjphi_n}_{\Banach'}
=\sum_{n\in\NN}\abs{\phi_n(\vx)}\norm{\adjphi_n}_{\Banach'}
\leq\left(\sup_{n\in\NN}\norm{\adjphi_n}_{\Banach'}\right)
\left(\sum_{n\in\NN}\abs{\phi_n(\vx)}\right)<\infty;
\]
hence
\[
\sum_{n\in\NN}\phi_n(\vx)\adjphi_n\text{ converges in }\Banach',
\]
which ensures that $K(\vx,\cdot)\in\Banach'$.
Since $\Eset$ is the Schauder basis of $\Banach$, there exists a unique sequence $\left(a_n:n\in\NN\right)$ such that $f=\sum_{n\in\NN}a_n\phi_n$.
This ensures that
\[
\langle f,K(\vx,\cdot) \rangle_{\Banach}
=\sum_{n\in\NN}\phi_n(\vx)\langle f,\adjphi_n \rangle_{\Banach}
=\sum_{n\in\NN}a_n\phi_n(\vx)=f(\vx).
\]

In the same manner, we verify the left-sided reproducing properties. We pick any $g\in\Banach'$ and any $\vy\in\Domain'$.
Because of conditions (C-GEN), we have that
\[
\sum_{n\in\NN}\norm{\adjphi_n(\vy)\phi_n}_{\Banach}
=\sum_{n\in\NN}\abs{\adjphi_n(\vy)}\norm{\phi_n}_{\Banach}
=\sum_{n\in\NN}\abs{\adjphi_n(\vy)}<\infty,
\]
which ensures that $K(\cdot,\vy)\in\Banach$. Moreover, since $\Banach$ is reflexive, the biorthogonal system $\Eset'$ is also the Schauder basis of $\Banach'$; hence $g$ has the expansion $g=\sum_{n\in\NN}b_n\adjphi_n$.
Therefore, we compute
\[
\langle K(\cdot,\vy),g \rangle_{\Banach}
=\sum_{n\in\NN}b_n\langle K(\cdot,\vy),\adjphi_n \rangle_{\Banach}
=\sum_{n\in\NN}b_n\adjphi_n(\vy)=g(\vy).
\]
\end{proof}

\section{Constructing $1$-norm Reproducing Kernel Banach Spaces}\label{s:1-RKBS}
\sectionmark{Constructing $1$-norm RKBS}

Following the previous discussions, we introduce Banach spaces and their dual spaces by generalized Mercer kernels in the traditional $1$-norm framework. Even though the infinite dimensional $1$-norm Banach space is always non-reflexive, we also obtain the reproducing properties, imbedding, compactness, and universal approximation of these infinite dimensional $1$-norm Banach spaces.

In the same way of Section~\ref{s:p-RKBS}, the following additional conditions of the expansion sets are needed for the constructions of the $1$-norm RKBSs.

\begin{assumption}[A-$1*$]
Suppose that the left-sided and right-sided expansion sets $\Sset_{K}$ and $\Sset_{K}'$
of
the generalized Mercer kernel $K$
are linearly independent and satisfy that
\[
\tag{C-$1*$}
\sum_{n\in\NN}\abs{\phi_n(\vx)}<\infty\text{ for all }\vx\in\Domain,
\quad
\sum_{n\in\NN}\abs{\adjphi_n(\vy)}<\infty\text{ for all }\vy\in\Domain'.
\]
\end{assumption}\label{a:A-1*}
Similar to the definitions of $\Phi_p$ and $\Phi_q'$ in Section~\ref{s:p-RKBS}, we define
another left-sided and right-sided upper-bound functions of the generalized Mercer kernel $K$
\[
\Phi_1(\vx):=\sum_{n\in\NN}\abs{\phi_n(\vx)},\quad
\Phi_1'(\vy):=\sum_{n\in\NN}\abs{\adjphi_n(\vy)},
\]
for all $\vx\in\Domain$ and all $\vy\in\Domain'$, respectively.

Now we compare conditions (C-$1*$) and conditions (C-$p$) for $1<p<\infty$.
Conditions (C-$1*$) implies conditions (C-$p$)
because
\begin{equation}\label{A1-to-Ap-1}
\sum_{n\in\NN}\abs{\phi_n(\vx)}^q\leq
\Phi_1(\vx)^q<\infty,
\quad\text{for }\vx\in\Domain,
\end{equation}
and
\begin{equation}\label{A1-to-Ap-2}
\sum_{n\in\NN}\abs{\adjphi_n(\vy)}^p\leq
\Phi_1'(\vy)^p<\infty,
\quad
\text{for }
\vy\in\Domain',
\end{equation}
for all $1<p,q<\infty$.
This indicates that conditions
(C-$1*$) is stronger than conditions (C-$p$). But, conditions (C-$p_1$) and (C-$p_2$) do not have any relationships when $1<p_1<p_2<\infty$.

We study the constructions of the 1-norm and $\infty$-norm RKBSs.
In this section we suppose that the expansion sets $\Sset_{K}$ and $\Sset_{K}'$
of the generalized Mercer kernel $K$ always satisfy assumption~(A-$1*$).
We employ the expansion sets $\Sset_{K}$ and $\Sset_{K}'$ to define the spaces $\Banach_K^1(\Domain)$ and $\Banach_{\adjK}^{\infty}(\Domain')$ consisting of functions on the domains $\Domain$ and $\Domain'$, respectively. Then we define
\begin{equation}\label{eq:RKBS-1}
\Banach_K^1(\Domain):=\left\{f:=\sum_{n\in\NN}a_n\phi_n:\left(a_n:n\in\NN\right)\in \lone\right\},
\end{equation}
equipped with the semi-norm
\[
\norm{f}_{\Banach_K^1(\Domain)}:=\sum_{n\in\NN}\abs{a_n},
\]
and
\begin{equation}\label{eq:RKBS-infty}
\Banach_{\adjK}^{\infty}(\Domain'):=\left\{g:=\sum_{n\in\NN}b_n\adjphi_n:\left(b_n:n\in\NN\right)\in \czero\right\},
\end{equation}
equipped with the semi-norm
\[
\norm{g}_{\Banach_{\adjK}^{\infty}(\Domain')}:=\sup_{n\in\NN}\abs{b_n}.
\]
Here, the spaces $\lone$ and $\linfty$ are the collection of all countable sequences of scalars with the standard norms $\norm{\cdot}_{1}$ and $\norm{\cdot}_{\infty}$, respectively, and the space $\czero$ is the subspace of $\linfty$ with all countable sequence of scalars that converge to $0$, that is,
\[
\czero:=\left\{\va:=\left(a_n:n\in\NN\right):\left\{a_n:n\in\NN\right\}\subseteq\RR\text{ and }\lim_{n\to\infty}\abs{a_n}=0\right\},
\]
equipped with the norm
\[
\norm{\va}_{\infty}:=\sup_{n\in\NN}\abs{a_n}.
\]
Moreover, the linear independence of $\Sset_{K}$ and $\Sset_{K}'$ will guarantee that the $1$-norm and $\infty$-norm will be well-defined on $\Banach_K^1(\Domain)$ and $\Banach_{\adjK}^{\infty}(\Domain')$, respectively.

We continue to check that the function spaces $\Banach_K^1(\Domain)$ and $\Banach_{\adjK}^{\infty}(\Domain')$ are both well-defined pointwise. For each $f:=\sum_{n\in\NN}a_n\phi_n\in\Banach_K^1(\Domain)$ and each $g:=\sum_{n\in\NN}b_n\adjphi_n\in\Banach_{\adjK}^{\infty}(\Domain')$, we prove, by the Cauchy Schwarz inequality, that
\[
\abs{f(\vx)}\leq\sum_{n\in\NN}\abs{a_n\phi_n(\vx)}
\leq\Phi_1(\vx)\norm{f}_{\Banach_K^{1}(\Domain)},
\]
for all $\vx\in\Domain$,
and
\[
\abs{g(\vy)}\leq\sum_{n\in\NN}\abs{b_n\adjphi_n(\vy)}
\leq\Phi_1'(\vy)\norm{g}_{\Banach_{\adjK}^{\infty}(\Domain')},
\]
for all $\vy\in\Domain'$.
{\color{black}{Since $\Sset_{K}\subseteq\Leb_0(\Domain)$ and $\Sset_{K}'\subseteq\Leb_0(\Domain')$, the pointwise limits guarantee that $\Banach_K^{1}(\Domain)\subseteq\Leb_0(\Domain)$ and $\Banach_{\adjK}^{\infty}(\Domain')\subseteq\Leb_0(\Domain')$.}}
In particular, if $\Sset_{K}\subseteq\Cont(\Domain)$ such that $\sum_{n\in\NN}\abs{\adjphi_n(\vy)}$ is uniformly convergent on $\Domain$, then
$\Banach_K^{1}(\Domain)\subseteq\Cont(\Domain)$,
and if $\Sset_{K}'\subseteq\Cont(\Domain')$ such that
$\sum_{n\in\NN}\abs{\adjphi_n(\vy)}$ is uniformly convergent on $\Domain'$, then
$\Banach_{\adjK}^{\infty}(\Domain')\subseteq\Cont(\Domain')$.

Assumption (A-$1*$) ensures that the spaces
$\Banach_K^p(\Domain)$ defined in equations~\eqref{eq:RKBS-p} is well-defined for all $1<p<\infty$.
Since $\lone$ is imbedded into $\lp$ by the identity map, we have that
$\Banach_K^1(\Domain)$ is imbedded into $\Banach_K^p(\Domain)$ by the identity map, that is,
\[
\Banach_K^1(\Domain)
\subseteq\Banach_K^p(\Domain),
\]
and
\[
\norm{f}_{\Banach_K^p(\Domain)}\leq\norm{f}_{\Banach_K^1(\Domain)},
\]
for all $f\in\Banach_K^1(\Domain)$.
By the same way, the space $\Banach_{\adjK}^q(\Domain')$ defined in equations~\eqref{eq:RKBS-q} is also well-defined for all $1<q<\infty$.
Since $\lq$ is imbedded into $\czero$ by the identity map, we have that $\Banach_{\adjK}^q(\Domain')$ is imbedded into $\Banach_{\adjK}^{\infty}(\Domain')$ by the identity map, that is,
\[
\Banach_{\adjK}^q(\Domain')\subseteq\Banach_{\adjK}^{\infty}(\Domain'),
\]
and
\[
\norm{g}_{\Banach_{\adjK}^{\infty}(\Domain')}\leq\norm{g}_{\Banach_{\adjK}^q(\Domain')},
\]
for all $g\in\Banach_{\adjK}^{\infty}(\Domain')$.
Stronger conditions (C-$1*$) guarantee the imbedding properties of $\Banach_K^1(\Domain)$ and $\Banach_{\adjK}^q(\Domain')$ which will be used to solve the $1$-norm support vector machines.

\subsection*{Reproducing Properties}

It is easy to check that Lemma~\ref{l:p-RKBS} also covers the case of $\Banach_1$. In the following proof, we also need another lemma of $\Banach_{\infty}$. By the same techniques of Lemma~\ref{l:p-RKBS}, we use the linearly independent functions $\left\{\varphi_n:n\in\NN\right\}$ to set up the function space
\[
\Banach_{\infty}^0:=\Span\left\{\varphi_n:n\in\NN\right\},
\]
equipped with the
norm
\[
\norm{f}_{\Banach_{\infty}^0}:=\norm{\va}_{\infty}=\sup_{k\in\NN_N}\abs{a_k},
\]
for $f:=\sum_{k\in\NN_N}a_k\varphi_k\in\Banach_{\infty}^0$ and $N\in\NN$,
and let $\Banach_{\infty}$ be the completion (closure) of $\Banach_{\infty}^0$ by the norm $\norm{\cdot}_{\Banach_{\infty}^0}$.

The space $\lspace_{\infty,0}$ is the completion of $\czero:=\Span\left\{\ve_n:n\in\NN\right\}$ by the norm $\norm{\cdot}_{\infty}$; hence
we conclude that $\Banach_{\infty}^0$ and $\czero$ are isometrically isomorphisms using the equivalent norms between $\Banach_{\infty}^0$ and $\czero$,
which indicates that $\left\{\varphi_n:n\in\NN\right\}$ is a Schauder basis of $\Banach_{\infty}$.
Therefore, we obtain the following lemma.
\begin{lemma}\label{l:infty-RKBS}
The Banach space $\Banach_{\infty}$ defined above can be represented explicitly by the form
\[
\Banach_{\infty}=\left\{f=\sum_{n\in\NN}a_n\varphi_n:\va:=\left(a_n:n\in\NN\right)\in \czero\right\},
\]
equipped with the norm
\[
\norm{f}_{\Banach_{\infty}}:=\norm{\va}_{\infty},
\]
and $\Banach_{\infty}$ is isometrically equivalent to $\czero$.
\end{lemma}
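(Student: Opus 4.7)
The plan is to mirror the proof of Lemma~\ref{l:p-RKBS}, with $\lp$ replaced by $\czero$ and the Schauder basis argument adjusted to the sup-norm setting. The key structural fact is already signaled in the paragraph preceding the lemma: the standard unit vector sequence $\{\ve_n : n\in\NN\}$ is a Schauder basis of $\czero$ (and $\czero$ is the completion of $\Span\{\ve_n : n\in\NN\}$ under the $\norm{\cdot}_{\infty}$-norm), which is the analogue of the role $\{\ve_n\}$ plays in $\lp$.

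First I would define the linear map $T : \Banach_{\infty}^0 \to \lspace_{\infty,0} := \Span\{\ve_n : n\in\NN\}$ by
\[
T\left(\sum_{k\in\NN_N} a_k \varphi_k\right) := \sum_{k\in\NN_N} a_k \ve_k.
\]
The linear independence of $\{\varphi_n : n\in\NN\}$ makes $T$ well-defined and injective, and surjectivity onto $\lspace_{\infty,0}$ is immediate from the definition. The norm identity
\[
\norm{T(f)}_{\infty} = \sup_{k\in\NN_N} \abs{a_k} = \norm{f}_{\Banach_{\infty}^0}
\]
makes $T$ an isometric isomorphism between the two pre-Banach spaces.

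Next I would pass to completions. Since isometric isomorphisms between normed spaces extend uniquely to isometric isomorphisms between their completions, and since $\Banach_{\infty}$ (resp.\ $\czero$) is, by definition (resp.\ by the remark above the lemma), the completion of $\Banach_{\infty}^0$ (resp.\ $\lspace_{\infty,0}$) under the relevant norm, the map $T$ extends to an isometric isomorphism $\overline{T} : \Banach_{\infty} \to \czero$. Given any $\va = (a_n : n \in \NN) \in \czero$, we have $\va = \sum_{n\in\NN} a_n \ve_n$ with convergence in $\czero$ (this is where we use that $\lim_n a_n = 0$ when $p = \infty$, which is precisely why $\czero$, not $\linfty$, is the correct completion). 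Applying $\overline{T}^{-1}$ termwise and using continuity yields $\overline{T}^{-1}(\va) = \sum_{n\in\NN} a_n \varphi_n$ with convergence in $\Banach_{\infty}$, giving the explicit representation in the statement together with $\norm{f}_{\Banach_{\infty}} = \norm{\va}_{\infty}$.

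The only subtlety, and the step I would be most careful about, is the uniqueness of the coefficient sequence $(a_n)$: this is equivalent to $\{\varphi_n\}$ being a Schauder basis of $\Banach_{\infty}$, which by \cite[Proposition~4.1.7]{Megginson1998} follows automatically from the isometric isomorphism $\overline{T}$ together with the fact that $\{\ve_n\}$ is a Schauder basis of $\czero$. The argument is otherwise routine; no analogue of the condition $\lim_n a_n = 0$ for $\lp$ ever arose in Lemma~\ref{l:p-RKBS}, and tracking that the correct target space is $\czero$ rather than $\linfty$ is the one place where the present lemma differs from its $p < \infty$ counterpart.
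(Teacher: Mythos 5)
Your proposal is correct and follows essentially the same route as the paper, which itself simply observes that the argument of Lemma~\ref{l:p-RKBS} carries over once one replaces $\lp$ by the completion of $\Span\{\ve_n:n\in\NN\}$ under the sup-norm, namely $\czero$. You correctly isolate the one genuine subtlety — that the completion is $\czero$ rather than $\linfty$, so $\Banach_{\infty}$ only imbeds isometrically into $\linfty$ — which is exactly the point the paper emphasizes after the lemma.
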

Here we notice that $\Banach_{\infty}$ is just isometrically imbedded into $\linfty$ but they can not be equivalent.
Next, we consider their reproducing properties
by the same techniques used in the proof of Proposition~\ref{p:RKBS-MecerKer-pq} and Theorem~\ref{t:RKBS-MercerKer-p}.

\begin{proposition}\label{p:RKBS-MecerKer-1}
If a kernel $K\in\Leb_0(\Domain\times\Domain')$ is a generalized Mercer kernel such that
the expansion sets $\Sset_K$ and $\Sset_K'$ of $K$ satisfy assumption (A-$1*$),
then
$\Banach_K^1(\Domain)$ and $\Banach_{\adjK}^{\infty}(\Domain')$ are Banach spaces and $\Banach_K^1(\Domain)$ is isometrically equivalent to the dual space of $\Banach_{\adjK}^{\infty}(\Domain')$.
\end{proposition}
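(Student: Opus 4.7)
The plan is to mimic the strategy of Proposition~\ref{p:RKBS-MecerKer-pq}, reducing the claim to classical facts about the sequence spaces $\lone$, $\czero$, and their duality. The linear independence of $\Sset_K$ and $\Sset_K'$ built into the assumption (A-$1*$) is precisely what is needed to identify $\Banach_K^1(\Domain)$ and $\Banach_{\adjK}^\infty(\Domain')$ with concrete sequence spaces through the coefficient map.

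First, I would invoke Lemma~\ref{l:p-RKBS} (applied with $p=1$ and with the basis $\Sset_K = \{\phi_n\}$, whose linear independence is part of (A-$1*$)) to conclude that $\Banach_K^1(\Domain)$, as defined in \eqref{eq:RKBS-1}, is a Banach space that is isometrically isomorphic to $\lone$ via the coefficient map $T\colon f = \sum_n a_n \phi_n \mapsto (a_n)_{n\in\NN}$. In parallel, I would apply Lemma~\ref{l:infty-RKBS} with $\Sset_K' = \{\adjphi_n\}$ to obtain that $\Banach_{\adjK}^\infty(\Domain')$, as defined in \eqref{eq:RKBS-infty}, is a Banach space that is isometrically isomorphic to $\czero$ via the coefficient map $T'\colon g = \sum_n b_n \adjphi_n \mapsto (b_n)_{n\in\NN}$. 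These two identifications reduce the question entirely to a statement about sequence spaces.

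Next, I would invoke the classical duality $\czero' \cong \lone$ (see, e.g.,~\cite[Example~1.10.3]{Megginson1998}), where the pairing is the natural one $\langle (b_n),(a_n)\rangle = \sum_{n\in\NN} a_n b_n$ and the isometry sends $(a_n) \in \lone$ to the bounded linear functional $(b_n)\mapsto \sum_n a_n b_n$ on $\czero$. Composing the three isometric isomorphisms
\[
\Banach_K^1(\Domain)\;\overset{T}{\cong}\;\lone\;\cong\;\left(\czero\right)'\;\overset{(T')^*}{\cong}\;\left(\Banach_{\adjK}^\infty(\Domain')\right)',
\]
yields the desired isometric equivalence. Under the composite isomorphism, an element $f = \sum_n a_n\phi_n \in \Banach_K^1(\Domain)$ is identified with the bounded linear functional $g = \sum_n b_n\adjphi_n \mapsto \sum_n a_n b_n$ on $\Banach_{\adjK}^\infty(\Domain')$, which will be exactly the dual bilinear product needed in the subsequent reproducing-kernel results.

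The only genuinely substantive step is checking that Lemma~\ref{l:p-RKBS} still applies at the endpoint $p=1$; inspecting its proof shows that the argument (building the linear bijection $T$ onto $\Span\{\ve_n\}$ and completing) is indifferent to whether $1\leq p<\infty$, so no new work is required there. Likewise, Lemma~\ref{l:infty-RKBS} is already stated precisely for the $\czero$-completion. The only notational care required is confirming that $\Banach_K^1(\Domain)$ is already complete (i.e.\ equals its own completion) because the coefficient map is an isometric bijection onto the complete space $\lone$, so no separate completion argument is needed. Thus the proof should be brief, with the bulk of the content consisting of citing the two lemmas and the standard $\czero' \cong \lone$ duality.
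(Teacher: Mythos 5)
Your proposal is correct and follows essentially the same route as the paper: identify $\Banach_K^1(\Domain)\cong\lone$ and $\Banach_{\adjK}^{\infty}(\Domain')\cong\czero$ via Lemmas~\ref{l:p-RKBS} and~\ref{l:infty-RKBS}, then invoke the classical duality $(\czero)'\cong\lone$. Your added remarks on why Lemma~\ref{l:p-RKBS} covers the endpoint $p=1$ and on the explicit form of the induced pairing are consistent with what the paper states elsewhere and do not change the argument.
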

\begin{proof}
We employ the same techniques used in Proposition~\ref{p:RKBS-MecerKer-pq} to complete the proof.
The isometrical isomorphisms of function spaces and sequence spaces given in Lemmas~\ref{l:p-RKBS} and~\ref{l:infty-RKBS} imply that
$\Banach_K^1(\Domain)\cong\lone$ and
$\Banach_{\adjK}^{\infty}(\Domain')\cong\czero$ by combining equations~\eqref{eq:RKBS-1} and~\eqref{eq:RKBS-infty}.

Since $\lone$ and $\linfty$ are Banach spaces with the dual property such as $\left(\czero\right)'\cong \lone$,
the conclusions are proved by the isometrical isomorphisms.
\end{proof}

\begin{theorem}\label{t:RKBS-MercerKer-1}
If a kernel $K\in\Leb_0(\Domain\times\Domain')$ is a generalized Mercer kernel such that
the expansion sets $\Sset_K$ and $\Sset_K'$ of $K$ satisfy assumption (A-$1*$),
then $\Banach_{K}^{1}(\Domain)$ is a right-sided reproducing kernel Banach space with the right-sided reproducing kernel $K$.
\end{theorem}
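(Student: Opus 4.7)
The plan is to mirror the proof of Theorem~\ref{t:RKBS-MercerKer-p}, using the isometric identifications already supplied by Proposition~\ref{p:RKBS-MecerKer-1} and Lemmas~\ref{l:p-RKBS} and~\ref{l:infty-RKBS}, and then verifying the two right-sided reproducing conditions by a direct computation in the explicit coordinate representation.

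First I would record the duality pairing in coordinates. By Proposition~\ref{p:RKBS-MecerKer-1} together with Lemmas~\ref{l:p-RKBS} and~\ref{l:infty-RKBS}, the maps $f=\sum_{n\in\NN}a_n\phi_n\mapsto\va=(a_n:n\in\NN)$ and $g=\sum_{n\in\NN}b_n\adjphi_n\mapsto\vb=(b_n:n\in\NN)$ furnish isometric isomorphisms $\Banach_K^1(\Domain)\cong\lone$ and $\Banach_{\adjK}^\infty(\Domain')\cong\czero$; since $(\czero)'\cong\lone$, the Banach space $\Banach_K^1(\Domain)$ may be identified with the dual of $\Banach_{\adjK}^\infty(\Domain')$, and the dual-bilinear product takes the explicit form
\[
\langle f,g\rangle_{\Banach_K^1(\Domain)}=\sum_{n\in\NN}a_nb_n,
\]
for every $f=\sum_{n\in\NN}a_n\phi_n\in\Banach_K^1(\Domain)$ and $g=\sum_{n\in\NN}b_n\adjphi_n\in\Banach_{\adjK}^\infty(\Domain')$.

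Next I would check condition (i), namely that $K(\vx,\cdot)$ belongs to $\Banach_{\adjK}^\infty(\Domain')$ for each $\vx\in\Domain$. The expansion $K(\vx,\cdot)=\sum_{n\in\NN}\phi_n(\vx)\adjphi_n$ shows that the coordinate sequence of $K(\vx,\cdot)$ with respect to $\Sset_K'$ is precisely $(\phi_n(\vx):n\in\NN)$. Here the stronger assumption (A-$1*$) is crucial: condition (C-$1*$) asserts $\sum_{n\in\NN}\abs{\phi_n(\vx)}<\infty$, which forces $\phi_n(\vx)\to 0$ and hence $(\phi_n(\vx):n\in\NN)\in\czero$, giving $K(\vx,\cdot)\in\Banach_{\adjK}^\infty(\Domain')$. (Note that under the weaker condition (C-$p$) with $p>1$ the sequence would only lie in $\lq$, not in $\czero$, which is exactly why we must strengthen the hypothesis in the $1$-norm setting.)

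Finally I would verify condition (ii) by substitution into the pairing above: taking any $f=\sum_{n\in\NN}a_n\phi_n\in\Banach_K^1(\Domain)$ and any $\vx\in\Domain$, the identifications of the previous paragraph give
\[
\langle f,K(\vx,\cdot)\rangle_{\Banach_K^1(\Domain)}
=\sum_{n\in\NN}a_n\phi_n(\vx)=f(\vx),
\]
where the series converges absolutely by H\"older's inequality for the $\lone$\,--\,$\linfty$ pairing (using $(a_n)\in\lone$ and $(\phi_n(\vx))\in\czero\subseteq\linfty$). The main obstacle is precisely the step that produces membership $(\phi_n(\vx))\in\czero$, which is why the asymmetric-looking conditions (C-$1*$) (summability on \emph{both} sides) are imposed: they are strictly stronger than the (C-$p$) conditions, but they are exactly what is needed so that the reproducing element $K(\vx,\cdot)$ lives in the predefined space $\Banach_{\adjK}^\infty(\Domain')\cong\czero$ rather than only in the larger dual $(\Banach_K^1(\Domain))'\cong\linfty$. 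Everything else is routine unwinding of the sequence-space identifications established earlier in the chapter.
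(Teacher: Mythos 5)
Your proposal is correct and follows essentially the same route as the paper's proof: both pass through the isometric identifications $\Banach_K^1(\Domain)\cong\lone$ and $\Banach_{\adjK}^{\infty}(\Domain')\cong\czero$ from Proposition~\ref{p:RKBS-MecerKer-1}, write the dual-bilinear product in coordinates as $\sum_{n\in\NN}a_nb_n$, use the condition (C-$1*$) to place the coordinate sequence $(\phi_n(\vx):n\in\NN)$ in $\lone\subseteq\czero$ so that $K(\vx,\cdot)\in\Banach_{\adjK}^{\infty}(\Domain')$ (which sits inside $(\Banach_K^1(\Domain))'\cong\linfty$), and then verify the reproducing identity by direct substitution. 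Your added remark on why (C-$p$) alone would fail here is a fair observation but does not change the argument.
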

\begin{proof}
Proposition~\ref{p:RKBS-MecerKer-1} has concluded that $\Banach_{K}^{1}(\Domain)$ is a Banach space. Now we verify the right-sided reproducing properties of $\Banach_{K}^{1}(\Domain)$ by the kernel $K$.

By the proof of Proposition~\ref{p:RKBS-MecerKer-1}, we have that $\left(\Banach_K^1(\Domain)\right)'\cong\left(\lone\right)'\cong \linfty$ and $\Banach_{\adjK}^{\infty}(\Domain')\cong \czero$. Since $\czero$ is isometrically imbedded into $\linfty$, the normed space $\Banach_{\adjK}^{\infty}(\Domain')$ is also isometrically imbedded into the dual space of $\Banach_{K}^{1}(\Domain)$. This ensures that the dual bilinear product defined on $\Banach_{K}^{1}(\Domain)$ and $\Banach_{\adjK}^{\infty}(\Domain')$ can be represented as
\[
\langle f,g \rangle_{\Banach_K^1(\Domain)}
=\langle \va,\vb \rangle_{\lone}
=\sum_{n\in\NN}a_{n}b_{n},
\]
for each $f:=\sum_{n\in\NN}a_{n}\phi_n\in\Banach_K^1(\Domain)$ and each
$g:=\sum_{n\in\NN}b_{n}\adjphi_n\in\Banach_{\adjK}^{\infty}(\Domain')$.

Let $\vx\in\Domain$ and $f\in\Banach_K^1(\Domain)$. Conditions~(C-$1*$) ensure $\sum_{n\in\NN}\abs{\phi_n(\vx)}<\infty$; hence $\lim_{n\to\infty}\abs{\phi_n(\vx)}=0$. Therefore, by the expansion of $K$, we have that
\[
K(\vx,\cdot)=\sum_{n\in\NN}\phi_n(\vx)\adjphi_n\in\Banach_{\adjK}^{\infty}(\Domain'),
\]
and
\[
\langle f,K(\vx,\cdot) \rangle_{\Banach_{K}^1(\Domain)}
=\sum_{n\in\NN}a_n\phi_n(\vx)=f(\vx).
\]
\end{proof}

Even though $\Banach_{K}^{1}(\Domain)$ is the right-sided RKBS, its dual space $\Banach_{\adjK}^{\infty}(\Domain')$ can be viewed as the two-sided RKBS.

\begin{theorem}\label{t:RKBS-MercerKer-infty}
If a kernel $K\in\Leb_0(\Domain\times\Domain')$ is a generalized Mercer kernel such that
the expansion sets $\Sset_K$ and $\Sset_K'$ of $K$ satisfy assumption (A-$1*$),
then
$\Banach_{\adjK}^{\infty}(\Domain')$ is a two-sided reproducing kernel Banach space with the two-sided reproducing kernel $\adjK$, the adjoint kernel of $K$.
\end{theorem}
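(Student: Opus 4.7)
The plan is to mirror the argument used for Theorem~\ref{t:RKBS-MercerKer-1} (and its proof of Proposition~\ref{p:RKBS-MecerKer-1}), but now with the roles of the two spaces and of $K$ and $\adjK$ swapped. The adjoint kernel $\adjK\in\Leb_0(\Domain'\times\Domain)$ is itself a generalized Mercer kernel whose left-sided expansion set is $\Sset_{\adjK}=\Sset_K'=\{\adjphi_n\}$ and whose right-sided expansion set is $\Sset_{\adjK}'=\Sset_K=\{\phi_n\}$, and these sets still satisfy the assumption~(A-$1*$). So conceptually I just need to check that both sides of the reproducing property of $\Banach_{\adjK}^{\infty}(\Domain')$ make sense and that the dual-bilinear product reproduces point evaluations.

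First I would record the pairing. By Proposition~\ref{p:RKBS-MecerKer-1} we have $\Banach_{\adjK}^{\infty}(\Domain')\cong\czero$ and $\Banach_K^{1}(\Domain)\cong\lone$, with $\Banach_K^{1}(\Domain)$ isometrically equivalent to the dual space $\left(\Banach_{\adjK}^{\infty}(\Domain')\right)'$ via the natural identification of $(\czero)'$ with $\lone$. Thus, writing $g=\sum_{n\in\NN}b_n\adjphi_n\in\Banach_{\adjK}^{\infty}(\Domain')$ (with $\vb=(b_n)\in\czero$) and $f=\sum_{n\in\NN}a_n\phi_n\in\Banach_K^{1}(\Domain)$ (with $\va=(a_n)\in\lone$), the dual-bilinear product takes the explicit form
\[
\langle g,f\rangle_{\Banach_{\adjK}^{\infty}(\Domain')}=\langle\vb,\va\rangle_{\czero}=\sum_{n\in\NN}a_nb_n,
\]
which is the computational engine of the proof.

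Next I would verify the right-sided reproducing property of $\Banach_{\adjK}^{\infty}(\Domain')$ with kernel $\adjK$. Fix $\vy\in\Domain'$. Since $\adjK(\vy,\cdot)=K(\cdot,\vy)=\sum_{n\in\NN}\adjphi_n(\vy)\phi_n$ and the condition~(C-$1*$) gives $\sum_{n\in\NN}|\adjphi_n(\vy)|<\infty$, the sequence $(\adjphi_n(\vy):n\in\NN)$ lies in $\lone$, so $\adjK(\vy,\cdot)\in\Banach_K^{1}(\Domain)\cong\left(\Banach_{\adjK}^{\infty}(\Domain')\right)'$. Applying the pairing formula with this element gives
\[
\langle g,\adjK(\vy,\cdot)\rangle_{\Banach_{\adjK}^{\infty}(\Domain')}=\sum_{n\in\NN}b_n\adjphi_n(\vy)=g(\vy).
\]

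For the left-sided reproducing property I fix $\vx\in\Domain$ and observe that $\adjK(\cdot,\vx)=K(\vx,\cdot)=\sum_{n\in\NN}\phi_n(\vx)\adjphi_n$. The key small point, and the only place where one must be a bit careful because $\Banach_{\adjK}^{\infty}(\Domain')$ is built on $\czero$ rather than $\linfty$, is that the coefficient sequence $(\phi_n(\vx):n\in\NN)$ must converge to $0$. This follows from~(C-$1*$): $\sum_{n\in\NN}|\phi_n(\vx)|<\infty$ forces $\phi_n(\vx)\to 0$, so $(\phi_n(\vx):n\in\NN)\in\czero$ and $\adjK(\cdot,\vx)\in\Banach_{\adjK}^{\infty}(\Domain')$. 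Then with $f=\sum_{n\in\NN}a_n\phi_n\in\Banach_K^{1}(\Domain)\cong\left(\Banach_{\adjK}^{\infty}(\Domain')\right)'$,
\[
\langle\adjK(\cdot,\vx),f\rangle_{\Banach_{\adjK}^{\infty}(\Domain')}=\sum_{n\in\NN}a_n\phi_n(\vx)=f(\vx).
\]
This gives both-sided reproducing properties, completing the proof. The only conceptual obstacle is the non-reflexivity of $\Banach_{\adjK}^{\infty}(\Domain')$: the dual $(\czero)'=\lone$ is smaller than $(\lone)'=\linfty$, so one cannot simply dualize the statement of Theorem~\ref{t:RKBS-MercerKer-1}; instead the $\czero$-valued side must be checked directly, and this is precisely what the summability part of~(C-$1*$) (via $\phi_n(\vx)\to0$) provides.
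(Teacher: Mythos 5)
Your proposal is correct and follows essentially the same route as the paper's proof: identify the pairing $\langle g,f\rangle_{\Banach_{\adjK}^{\infty}(\Domain')}=\sum_{n\in\NN}b_na_n$ via $\Banach_{\adjK}^{\infty}(\Domain')\cong\czero$ and $\left(\czero\right)'\cong\lone\cong\Banach_K^1(\Domain)$, then check membership of $\adjK(\vy,\cdot)$ and $\adjK(\cdot,\vx)$ in the respective spaces using the conditions~(C-$1*$) and read off both reproducing identities. Your explicit remark that $\sum_{n\in\NN}\abs{\phi_n(\vx)}<\infty$ forces $\left(\phi_n(\vx):n\in\NN\right)\in\czero$ is a small point the paper leaves implicit, and it is exactly the right justification for $\adjK(\cdot,\vx)\in\Banach_{\adjK}^{\infty}(\Domain')$.
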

\begin{proof}
The reproducing properties will be proved by using the representation
\[
\adjK(\vy,\vx)=K(\vx,\vy)=\sum_{n\in\NN}\phi_n(\vx)\adjphi_n(\vy),
\]
and the dual bilinear product defined on $\Banach_{\adjK}^\infty(\Domain')$ and $\Banach_{K}^1(\Domain)$ in the form
\begin{equation}\label{eq:dual-RKBS-infty}
\langle g,f \rangle_{\Banach_{\adjK}^\infty(\Domain')}
=\langle \vb,\va \rangle_{l_{\infty,0}}
=\sum_{n\in\NN}b_{n}a_{n},
\end{equation}
for each $g:=\sum_{n\in\NN}b_n\adjphi_n\in\Banach_{\adjK}^\infty(\Domain')$ and each
$f:=\sum_{n\in\NN}a_n\phi_n\in\Banach_{K}^1(\Domain)$.

Since $\Banach_{\adjK}^\infty(\Domain')\cong \czero$ and $\left(\czero\right)'\cong \lone\cong\Banach_{K}^1(\Domain)$ stated in the proof of Proposition~\ref{p:RKBS-MecerKer-1}, the dual bilinear product given in equation~\eqref{eq:dual-RKBS-infty} is well-defined.
For any $\vy\in\Domain'$ and any $\vx\in\Domain$,
Conditions~(C-$1*$) ensure that $\sum_{n\in\NN}\abs{\adjphi(\vy)}<\infty$ and
$\sum_{n\in\NN}\abs{\phi(\vx)}<\infty$; hence
\[
\adjK(\vy,\cdot)=K(\cdot,\vy)\in\Banach_K^1(\Domain),
\quad
\adjK(\cdot,\vx)=K(\vx,\cdot)\in\Banach_{\adjK}^\infty(\Domain').
\]
Based on the dual bilinear product given in equation~\eqref{eq:dual-RKBS-infty}, we check that
\[
\langle g,\adjK(\vy,\cdot) \rangle_{\Banach_{\adjK}^\infty(\Domain')}
=\sum_{n\in\NN}b_n\adjphi_n(\vy)=g(\vy),
\]
and
\[
\langle \adjK(\cdot,\vx),f \rangle_{\Banach_{\adjK}^\infty(\Domain')}
=\sum_{n\in\NN}a_n\phi_n(\vx)=f(\vx).
\]
for all $g\in\Banach_{\adjK}^\infty(\Domain')$ and all
$f\in\Banach_{K}^1(\Domain)$.
The proof is complete.
\end{proof}

\emph{Comparison:}
Now we compare the geometrical structures $\Banach_K^{1}(\Domain)$ and $\Banach_K^{p}(\Domain)$ for $1<p<\infty$.
Since $\Banach_K^{1}(\Domain)$ and $\Banach_{\adjK}^{\infty}(\Domain')$ are NOT reflexive, they do not fit the strong definition of RKBSs given in~\cite{ZhangXuZhang2009}.
Even though $\Banach_{\adjK}^{\infty}(\Domain')$ is a two-sided RKBS, its dual space $\Banach_K^{1}(\Domain)$ is just a right-sided RKBS.
The expansion sets $\Sset_{K}$ and $\Sset_{K}'$ are the Schauder bases of $\Banach_K^{1}(\Domain)$ and $\Banach_{\adjK}^{\infty}(\Domain')$, respectively.
The reproducing properties come from these Schauder bases and their biorthogonal systems.
Actually $\linfty$ is isometrically imbedded into $\Leb_{\infty}(\Domain')$ when the support of $\mu'$ is equal to $\Domain'$. This indicates that the dual space of $\Banach_K^{1}(\Domain)$ is also isometrically imbedded into $\Leb_{\infty}(\Domain')$.
However, $\Sset_{K}'$ is not a Schauder basis of the whole dual space $\left(\Banach_K^{1}(\Domain)\right)'$ and $\Sset_{K}'$ is just a basis sequence of $\left(\Banach_K^{1}(\Domain)\right)'$ because $\left(\Banach_K^{1}(\Domain)\right)'\cong \linfty$ and there is no Schauder basis of the whole space $\linfty$.
This guarantees that there exists a function $g\in\left(\Banach_K^{1}(\Domain)\right)'$ which can not be represented as any expansion $\sum_{n\in\NN}b_n\adjphi_n$.
Thus this function $g$ becomes a counter example of the left-sided reproducing properties of $\Banach_K^{1}(\Domain)$, or more precisely, there is a $\vy\in\Domain'$ such that $\langle K(\cdot,\vy),g \rangle_{\Banach_K^{1}(\Domain)}\neq g(\vy)$.
More details of the relationships between the Schauder bases and the $\infty$-norm spaces can be found in~\cite[Chapter~4]{Megginson1998}.

\subsection*{Imbedding}

Now we discuss the imbedding of $\Banach_K^{1}(\Domain)$.
Conditions (C-$1*$) ensure that the sequence sets $\Aset_K$ and $\Aset_K'$ defined in equation~\eqref{eq:A-K-phi-psi} can be well endowed with the $\lone$-norm.
Let
\[
\Phi_{\infty}(\vx):=\sup_{n\in\NN}\abs{\phi_n(\vx)},\quad
\text{for }\vx\in\Domain.
\]

\begin{proposition}\label{p:imbedding-RKBS-1}
Let $1\leq q\leq\infty$ and let $K\in\Leb_0(\Domain\times\Domain')$ be a generalized Mercer kernel such that
the expansion sets $\Sset_K$ and $\Sset_K'$ of $K$ satisfy assumption (A-$1*$).
If the left-sided domain $\Domain$ is compact and the left-sided sequence set $\Aset_{K}$ of $K$ is bounded in $\lone$, then the identity map from $\Banach_{K}^{1}(\Domain)$ into $\Leb_q(\Domain)$ is continuous.
In particular, if $\sum_{n\in\NN}\abs{\phi(\vx)}$ is uniformly convergent on $\Domain$ and the support of $\mu$ is equal to $\Domain$,
then $\Banach_{K}^{1}(\Domain)$ is imbedded into $\Leb_q(\Domain)$.
\end{proposition}
\begin{proof}
According to Theorem~\ref{t:RKBS-MercerKer-1}, the space $\Banach_{K}^{1}(\Domain)$ is the right-sided RKBS with the right-sided reproducing kernel $K$.
Using the bounded condition of $\Aset_K$, we obtain a positive constant
\[
\norm{\Phi_1}_{\infty}=\sup_{\vx\in\Domain}\Phi_1(\vx)<\infty.
\]
Since $\Domain$ is compact, we have that
\[
\int_{\Domain}\abs{\Phi_{\infty}(\vx)}^q\mu(\ud\vx)
\leq\int_{\Domain}\left(\Phi_1(\vx)\right)^q\mu(\ud\vx)
\leq\norm{\Phi_1}_{\infty}^q\mu(\Domain)<\infty,
\]
when $1\leq q<\infty$,
and
\[
\sup_{\vx\in\Domain}\Phi_{\infty}(\vx)
\leq\sup_{\vx\in\Domain}\Phi_1(\vx)
\leq\norm{\Phi_1}_{\infty}<\infty,
\]
which ensures that $\Phi_{\infty}\in\Leb_q(\Domain)$.
Moreover, since
\[
\norm{K(\vx,\cdot)}_{\Banach_{\adjK}^{\infty}(\Domain')}
=\Phi_{\infty}(\vx),\quad
\text{for }\vx\in\Domain,
\]
we conclude that $\vx\mapsto\norm{K(\vx,\cdot)}_{\Banach_{\adjK}^{\infty}(\Domain')}\in\Leb_q(\Domain)$.
Therefore, Proposition~\ref{p:RKBS-imbedding} assures that the identity map is a continuous linear operator from $\Banach_{K}^{1}(\Domain)$ into $\Leb_q(\Domain)$.

Finally, the uniform convergence of $\sum_{n\in\NN}\abs{\phi(\vx)}$ ensures that $\Banach_{K}^1(\Domain)\subseteq\Cont(\Domain)$.
Moreover, since $\supp(\mu)=\Domain$, the $1$-norm RKBS $\Banach_K^1(\Domain)$ satisfies the $\mu$-measure zero condition.
Thus, the imbedding of $\Banach_K^1(\Domain)$ into $\Leb_q(\Domain)$ is verified by Corollary~\ref{c:RKBS-imbedding}.
\end{proof}

\begin{proposition}\label{p:imbedding-intopt-RKBS-1-right}
Let $1\leq q\leq\infty$
and let $K\in\Leb_0(\Domain\times\Domain')$ be a generalized Mercer kernel such that
the expansion sets $\Sset_K$ and $\Sset_K'$ of $K$ satisfy assumption (A-$1*$).
If the measure $\mu'(\Domain')$ is finite and the right-sided sequence set $\Aset_K'$ of $K$ is bounded in $\lone$, then the right-sided integral operator $I_{K}'$ maps $\Leb_q(\Domain')$ into $\Banach_K^1(\Domain)$ continuously
and
\[
\int_{\Domain'}g(\vy)\xi(\vy)\mu'(\ud\vy)
=\langle I_{K}'(\xi),g \rangle_{\Banach_K^1(\Domain)},
\]
for all $\xi\in\Leb_q(\Domain')$ and all $g\in\Banach_{\adjK}^{\infty}(\Domain')$.
In particular, if $\sum_{n\in\NN}\abs{\phi(\vy)}$ is uniformly convergent on $\Domain'$ and the support of $\mu'$ is equal to $\Domain'$,
then the range of $I_K'$ is weakly* dense in $\Banach_K^1(\Domain)$.
\end{proposition}
\begin{proof}
The key point of the proof is to consider $\Banach_K^1(\Domain)$ as the dual space of a two-sided RKBS so that we can use
Proposition~\ref{p:RKBS-imbedding-dual} to draw the conclusion.

Let $p$ be the conjugate exponent of $q$.
Theorem~\ref{t:RKBS-MercerKer-infty} ensures that $\Banach_{\adjK}^{\infty}(\Domain')$ is the two-sided RKBS with the two-sided reproducing kernel $\adjK$ and $\left(\Banach_{K}^{\infty}(\Domain')\right)'\cong\Banach_{K}^{1}(\Domain)$.

Now we show that $\vy\mapsto\norm{K(\cdot,\vy)}_{\Banach_K^{1}(\Domain)}$ belongs to $\Leb_p(\Domain')$.
Since $\Aset_K'$ is bounded in $\lone$, we obtain that
\[
\norm{\Phi_1'}_{\infty}=\sup_{\vy\in\Domain'}\Phi_1'(\vy)
<\infty.
\]
Moreover, since $\mu'(\Domain')<\infty$,
we have that
\[
\int_{\Domain'}\Phi_1'(\vy)^p\mu'(\ud\vy)
\leq\norm{\Phi_1'}_{\infty}^p\mu'(\Domain')<\infty,
\]
when $1\leq p<\infty$.
Thus $\Phi_1'\in\Leb_p(\Domain')$.
This shows that
\[
\int_{\Domain'}\abs{K(\vx,\vy)}\mu'(\ud\vy)
\leq\Phi_1(\vx)\int_{\Domain'}\Phi_1'(\vy)\mu'(\ud\vy)
<\infty;
\]
hence $K(\vx,\cdot)\in\Leb_1(\Domain')$ for all $\vx\in\Domain$.
In another way, we also have that $\vy\mapsto\norm{K(\cdot,\vy)}_{\Banach_K^{1}(\Domain)}\in\Leb_p(\Domain')$, because
\[
\norm{K(\cdot,\vy)}_{\Banach_K^{1}(\Domain)}
=\Phi_1'(\vy),\quad
\text{for }\vy\in\Domain'.
\]

In conclusion, Proposition~\ref{p:RKBS-imbedding-dual} ensures that
the integral operator $I_{K}'$ maps $\Leb_q(\Domain')$ into $\Banach_{K}^{1}(\Domain)$ continuously, and
\[
\int_{\Domain'}g(\vy)\xi(\vy)\mu'(\ud\vy)=\langle g,I_{K}'(\xi) \rangle_{\Banach_{\adjK}^{\infty}(\Domain')},
=\langle I_{K}'(\xi),g \rangle_{\Banach_{K}^{1}(\Domain)},
\]
for all $\xi\in\Leb_q(\Domain')$ and all $g\in\Banach_{\adjK}^{\infty}(\Domain')$.

Finally, the uniform convergence of $\sum_{n\in\NN}\abs{\adjphi(\vy)}$ ensures that $\Banach_{\adjK}^{\infty}(\Domain')\subseteq\Cont(\Domain')$.
Combining this continuity with the equality $\supp(\mu')=\Domain'$, we have that $\Banach_{\adjK}^{\infty}(\Domain')$ satisfies the $\mu'$-measure zero condition;
hence the weak* density of the range of $I_{K}'$ in $\Banach_K^1(\Domain)$ is verified by Corollary~\ref{c:RKBS-imbedding-dual}.
\end{proof}

In Section~\ref{s:p-RKBS} we only check the imbedding relationships for fixed $1<p,q<\infty$. However, we obtain all the imbedding in the $1$-norm RKBSs for all $1\leq p,q\leq\infty$.

\subsection*{Compactness}

Because $\Banach_{\adjK}^{\infty}(\Domain')\cong \czero$, we determine that $\Aset_{K}$ in $\czero$ is the identical element of $\Kset_K'$ in $\Banach_{\adjK}^{\infty}(\Domain')$.
Then, by the same techniques of Proposition~\ref{p:compact-RKBS-pq} we check the compactness of $\Banach_{K}^1(\Domain)$ in $\Linfty(\Domain)$ using Proposition~\ref{p:RKBS-compact}.

\begin{proposition}\label{p:compact-RKBS-1}
Let $K\in\Leb_0(\Domain\times\Domain')$ be a generalized Mercer kernel such that
the expansion sets $\Sset_K$ and $\Sset_K'$ of $K$ satisfy assumption (A-$1*$).
If the left-sided sequence set $\Aset_{K}$ is compact in $\linfty$, then the identity map from  $\Banach_{K}^1(\Domain)$ into $\Linfty(\Domain)$ is compact.
\end{proposition}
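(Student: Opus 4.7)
The plan is to reduce the claim directly to the abstract compactness criterion of Proposition~\ref{p:RKBS-compact}, exploiting the explicit isometric description of the dual space $\Banach_{\adjK}^{\infty}(\Domain')\cong\czero$ established in Proposition~\ref{p:RKBS-MecerKer-1}. Concretely, I would proceed in three steps.

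First, I would invoke Theorem~\ref{t:RKBS-MercerKer-1} to assert that $\Banach_K^1(\Domain)$ is a right-sided RKBS with right-sided reproducing kernel $K$, whose dual (via the fixed isometric identification) is $\Banach_{\adjK}^{\infty}(\Domain')$. Proposition~\ref{p:RKBS-compact} then says that it suffices to check that the right-sided kernel set $\Kset_K'=\{K(\vx,\cdot):\vx\in\Domain\}$ is compact in $\Banach_{\adjK}^{\infty}(\Domain')$.

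Second, I would transfer compactness through the isometric isomorphism $\Banach_{\adjK}^{\infty}(\Domain')\cong\czero$. Since $K(\vx,\cdot)=\sum_{n\in\NN}\phi_n(\vx)\adjphi_n$, and since the Schauder-basis identification sends $\adjphi_n$ to the $n$th standard unit vector, the element $K(\vx,\cdot)\in\Banach_{\adjK}^{\infty}(\Domain')$ corresponds exactly to the sequence $(\phi_n(\vx):n\in\NN)\in\czero$; note that $(\phi_n(\vx):n\in\NN)$ indeed lies in $\czero$ because (C-$1*$) forces $\phi_n(\vx)\to 0$. Hence the set $\Kset_K'$ is isometrically identified with $\Aset_K$ viewed as a subset of $\czero$ endowed with the $\linfty$-norm. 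By hypothesis $\Aset_K$ is compact in $\linfty$, and because $\Aset_K\subseteq\czero$ and the embedding $\czero\hookrightarrow\linfty$ is isometric, this is the same as compactness in $\czero$.

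Third, combining the previous two steps, $\Kset_K'$ is compact in $\Banach_{\adjK}^{\infty}(\Domain')$, so Proposition~\ref{p:RKBS-compact} yields the desired compactness of the identity map $\Banach_K^1(\Domain)\to\linfty(\Domain)$. The only mildly subtle point—more of a bookkeeping issue than a real obstacle—is verifying the above identification of $\Kset_K'$ with $\Aset_K$ at the level of norms, i.e., making sure the isometric isomorphism provided by Lemma~\ref{l:infty-RKBS} matches $K(\vx,\cdot)$ with the sequence $(\phi_n(\vx):n\in\NN)$; this is immediate from the defining expansion of $K$ together with the Schauder-basis property of $\Sset_K'$ in $\Banach_{\adjK}^{\infty}(\Domain')$. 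Otherwise the argument is a direct analogue of Proposition~\ref{p:compact-RKBS-pq}, with $\lq$ replaced by $\czero$.
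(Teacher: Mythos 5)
Your proof is correct and follows essentially the same route as the paper: identify $\Kset_K'$ with $\Aset_K$ under the isometry $\Banach_{\adjK}^{\infty}(\Domain')\cong\czero$ and then invoke Proposition~\ref{p:RKBS-compact}, exactly as in the $p$-norm case. The only nit is that $\Banach_{\adjK}^{\infty}(\Domain')$ is merely isometrically imbedded into $\left(\Banach_{K}^{1}(\Domain)\right)'\cong\linfty$ rather than equal to it, but since that imbedding is isometric, compactness of $\Kset_K'$ in $\Banach_{\adjK}^{\infty}(\Domain')$ still yields compactness in the full dual, so your application of Proposition~\ref{p:RKBS-compact} is valid.
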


Same as the discussions at the end of Section~\ref{s:p-RKBS}, the set $\Aset_{K}$ can become closed when the domain $\Domain$ is complemented by the convergence of $\Aset_{K}$,
and the set $\Aset_{K}$ is relatively compact if there is
a sequence $\left(\alpha_n:n\in\NN\right)\in \czero$ such that
\[
\left(\alpha_n^{-1}\phi_n(\vx):n\in\NN\right)\in \linfty,
\]
and
\[
\sup_{\vx\in\Domain,n\in\NN}\frac{\abs{\phi_n(\vx)}}{\alpha_n}<\infty.
\]

\subsection*{Universal Approximation}
By Theorem~\ref{t:RKBS-MercerKer-1}, we know that $\Banach_{K}^1(\Domain)$ is only a right-sided RKBS.
But, we still show that $\Banach_{K}^1(\Domain)$ has the universal approximation the same as Proposition~\ref{p:universial-approx-RKBS-pq}.

\begin{proposition}\label{p:universial-approx-RKBS-1}
Let $K\in\Leb_0(\Domain\times\Domain')$ be a generalized Mercer kernel such that
the expansion sets $\Sset_K$ and $\Sset_K'$ of $K$ satisfy assumption (A-$1*$).
If $\sum_{n\in\NN}\abs{\phi_n(\vx)}$ is uniformly convergent on $\Domain$, the left-sided domain $\Domain$ is a compact Hausdorff space,
and $\Span\left\{\Sset_K\right\}$ is dense in $\Cont(\Domain)$,
then the reproducing kernel Banach spaces $\Banach_{K}^1(\Domain)$ has the universal approximation property.
\end{proposition}
\begin{proof}
First we show the continuity.
Since $\sum_{n\in\NN}\abs{\phi_n(\vx)}$ is uniformly convergent,
the continuity of $\Sset_K$ provides that $\Banach_{K}^1(\Domain)\subseteq\Cont(\Domain)$.

Next, since $\Span\left\{\Sset_K\right\}\subseteq\Banach_{K}^1(\Domain)$,
the density of $\Span\left\{\Sset_K\right\}$ in $\Cont(\Domain)$ ensures the density of $\Banach_{K}^1(\Domain)$ in $\Cont(\Domain)$.
\end{proof}

\begin{remark}\label{r:universial-approx-RKBS-1}
By the same method of Proposition~\ref{p:universial-approx-RKBS-1}, if $\sum_{n\in\NN}\abs{\adjphi_n(\vy)}$ is uniformly convergent on $\Domain'$, the right-sided domain $\Domain'$ is a compact Hausdorff space, and $\Span\left\{\Sset_K'\right\}$ is dense in $\Cont(\Domain')$, then the $\infty$-norm RKBS $\Banach_{\adjK}^{\infty}(\Domain')$ has the universal approximation property.
We already know that the two-sided RKBS $\Banach_{\adjK}^{\infty}(\Domain')$ is not reflexive.
This shows that the adjoint kernel $\adjK$ can be a two-sided universal kernel.
But, the dual space of $\Banach_{K}^1(\Domain)$ is not isometrically equivalent to any subspace of $\Cont(\Domain)$.
Hence, the dual space of $\Banach_{K}^1(\Domain)$ does not have the universal approximation property. This indicates that the $1$-norm RKBS $\Banach_{K}^1(\Domain)$ has a left-sided universal kernel only while there exists no right-sided universal kernel.
\end{remark}

\chapter{Positive Definite Kernels}\label{char-PDK}

In this chapter we present special results of positive definite kernels. Specifically, we demonstrate that the generalized Mercer kernels can cover many important examples of positive definite kernels defined on regular domains or manifold surfaces. They include min kernels, Gaussian kernels, and power series kernels.

\section{Definition of Positive Definite Kernels}\label{Def_PDK}

We first recall the definition of the positive definite kernels.

\begin{definition}\label{d:PDK}
A kernel $K:\Domain\times\Domain\to\RR$ is called \emph{positive definite} on a domain $\Domain$ if, for all $N\in\NN$ and
all sets of pairwise distinct points $X:=\left\{\vx_k:k\in\NN_N\right\}\subseteq\Domain$, the quadratic form
\[
\sum_{j,k\in\NN_N}c_jc_kK(\vx_j,\vx_k)\geq0,\quad \text{for all }\vc:=\left(c_k:k\in\NN_N\right)\in\RR^N.
\]
Moreover, the kernel $K$ is called strictly positive definite if the quadratic form is positive when $\vc\neq\v0$.
\end{definition}

This definition of the positive definite kernel is used through out this chapter. The positive {\it semi-definite} kernels given in the text books~\cite{Wendland2005,Fasshauer2007} have the same meaning as the positive definite kernel given Definition~\ref{d:PDK}.

\section{Constructing Reproducing Kernel Banach Spaces by Eigenvalues and Eigenfunctions}\label{s:RKBS-PDK}
\sectionmark{Constructing RKBS by PDK}

It has been shown in \cite{YeThesis2012,FasshauerHickernellYe2013} that the positive definite functions (translation invariant positive definite kernels) can become reproducing kernels of RKBSs. However, the question whether a positive definite kernel can be viewed as a reproducing kernel of a RKBS without an inner product needs to be answered. In this section we shall answer this question.

We suppose that the domain $\Domain$ is always a \emph{compact Hausdorff space} and the support of the regular Borel measure $\mu$ is $\Domain$ in this section.
Further suppose that $K\in\Cont(\Domain\times\Domain)$ is the symmetric positive definite kernel.
Therefore, the Mercer theorem guarantees that $K$ has countable eigenvalues $\Lambda_K:=\left\{\lambda_n:n\in\NN\right\}\subseteq\RR_+$ and
eigenfunctions $\Eset_{K}:=\left\{e_n:n\in\NN\right\}\subseteq\Cont(\Domain)$, that is, $I_Ke_n=\lambda_ne_n$ for $n\in\NN$.
Furthermore, $\Eset_{K}$ is an orthonormal set of $\Leb_2(\Domain)$ and
$K$ possesses
the absolutely and uniformly convergent representation
\begin{equation}\label{eq:PDK-expansion}
K(\vx,\vy)=\sum_{n\in\NN}\lambda_ne_n(\vx)e_n(\vy),\quad\text{for }\vx,\vy\in\Domain.
\end{equation}
For convenience, the elements of $\Eset_{K}$ are always chosen to be orthonormal in $\Leb_2(\Domain)$.
This means that $K$ is a (generalized) Mercer kernel.

\begin{remark}
By the compactness of the domain $\Domain$, we can confirm that the symmetric positive definite kernels $K$ always has the countable eigenvalues and eigenfunctions.
In this section, we use the infinitely countable eigenvalues and eigenfunctions to construct infinite dimensional RKBSs. To this end, we suppose that
the eigenvalues $\Lambda_K$ and eigenfunctions $\Eset_{K}$ of $K$ always compose of \emph{infinitely countable} elements.
If $K$ is strictly positive definite, then $\Eset_K$ is an orthonormal basis of $\Leb_2(\Domain)$.
In particular, it is easy to check that all the following results of RKBSs driven by positive definite kernels are also true for finite eigenvalues and eigenfunctions.
\end{remark}

By the symmetric condition, the adjoint kernel $\adjK$ of $K$ is equal to $K$ itself and its right-sided and left-sided kernel sets are the same, that is,
\[
\Kset_K'=\Kset_K=\left\{K(\vx,\cdot):\vx\in\Domain\right\}.
\]
If we choose the left-sided and right-sided expansion sets $\Sset_K$ and $\Sset_K'$ of $K$ as
\[
\phi_n=\adjphi_n:=\lambda_n^{1/2}e_n,\quad\text{for }n\in\NN,
\]
then $\Sset_K=\Sset_K'$ are linearly independent, because the eigenfunctions $\Eset_K$ is an orthonormal set of $\Leb_2(\Domain)$.
This means that we only need to check conditions~(C-$1*$) of $\Sset_K=\Sset_K'$ for assumption~(A-$1*$). For this purpose, we transfer the assumption~(A-$1*$) to an equivalent assumption for positive definite kernels as follows.

\begin{assumption}[A-PDK]
Suppose that the positive eigenvalues $\Lambda_K$ and the continuous eigenfunctions $\Eset_{K}$ of
the positive definite kernel $K$ satisfy that
\[\tag{C-PDK}
\sum_{n\in\NN}\lambda_n^{1/2}\abs{e_n(\vx)}<\infty,\quad\text{for all }\vx\in\Domain.
\]
\end{assumption}\label{a:A-PDK}

In this section, we always assume that the eigenvalues $\Lambda_K$ and the eigenfunctions $\Eset_{K}$ of $K$ satisfy assumption (A-PDK).
As the discussions presented in Sections~\ref{s:p-RKBS} and~\ref{s:1-RKBS}, if the eigenvalues $\Lambda_K$ and the eigenfunctions $\Eset_{K}$ satisfy conditions~(C-PDK),
then they gratify conditions (C-$1*$) and (C-$p$) for all $1<p<\infty$.
Using these eigenvalues $\Lambda_K$ and the eigenfunctions $\Eset_{K}$ of $K$, we set up the normed spaces
\[
\Banach_K^p(\Domain):=\left\{f:=\sum_{n\in\NN}a_n\lambda_n^{1/2}e_n:\left(a_n:n\in\NN\right)\in \lp\right\},\quad
\text{when }1\leq p<\infty,
\]
equipped with the norm
\[
\norm{f}_{\Banach_K^p(\Domain)}:=\left(\sum_{n\in\NN}\abs{a_n}^p\right)^{1/p},
\]
and
\[
\Banach_K^{\infty}(\Domain):=\left\{f:=\sum_{n\in\NN}a_n\lambda_n^{1/2}e_n:\left(a_n:n\in\NN\right)\in l_{\infty,0}\right\},
\]
equipped with the norm
\[
\norm{f}_{\Banach_K^{\infty}(\Domain)}:=\sup_{n\in\NN}\abs{a_n}.
\]
Because of the imbedding of the sequence spaces, we have that
\[
\Banach_K^{p_1}(\Domain)\subseteq\Banach_K^{p_2}(\Domain),
\]
and
\[
\norm{f}_{\Banach_K^{p_2}(\Domain)}\leq\norm{f}_{\Banach_K^{p_1}(\Domain)},\quad
\text{for }f\in\Banach_K^{p_1}(\Domain),
\]
when $1\leq p_1\leq p_2\leq\infty$. According to Propositions~\ref{p:RKBS-MecerKer-pq} and~\ref{p:RKBS-MecerKer-1}, we know that
the dual space of $\Banach_K^{p}(\Domain)$ is isometrically equivalent to $\Banach_K^{q}(\Domain)$ when $1<p,q<\infty$ and $p^{-1}+q^{-1}=1$, and the dual space of $\Banach_K^{\infty}(\Domain)$ is isometrically equivalent to $\Banach_K^{1}(\Domain)$. However, $\Banach_K^{\infty}(\Domain)$ is isometrically imbedded into the dual space of $\Banach_K^{1}(\Domain)$.

\begin{theorem}\label{t-RKBS-PDK}
If a kernel $K\in\Cont(\Domain\times\Domain)$ is a symmetric positive definite kernel such that
the eigenvalues $\Lambda_K$ and eigenfunctions $\Eset_{K}$ of $K$ satisfy
assumption (A-PDK),
then $\Banach_K^1(\Domain)$
is a right-sided reproducing kernel Banach space with the right-sided reproducing kernel $K$ and
$\Banach_K^p(\Domain)$ for $1<p\leq\infty$
is a two-sided reproducing kernel Banach space with the two-sided reproducing kernel $K$.
\end{theorem}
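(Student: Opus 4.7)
The plan is to reduce the result to a direct application of the theorems already established for generalized Mercer kernels in Chapter~\ref{char-GMK}. Writing $\phi_n:=\adjphi_n:=\lambda_n^{1/2}e_n$ for $n\in\NN$, Mercer's representation~\eqref{eq:PDK-expansion} shows that $K$ is a totally symmetric generalized Mercer kernel with expansion sets $\Sset_K=\Sset_K'=\{\phi_n:n\in\NN\}$. So the task is really to show that these particular expansion sets satisfy the assumption (A-$1*$), and then invoke Theorems~\ref{t:RKBS-MercerKer-1}, \ref{t:RKBS-MercerKer-infty}, \ref{t:RKBS-MercerKer-p}, and~\ref{t:RKBS-MercerKer-q}.

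First I would verify linear independence of $\Sset_K$. Since $\Eset_K$ is an orthonormal set of $\Leb_2(\Domain)$ (and, being Mercer eigenfunctions, are continuous) and all eigenvalues $\lambda_n$ are strictly positive, any finite linear combination $\sum_{k\in\NN_N}c_k\phi_k=\sum_{k\in\NN_N}c_k\lambda_k^{1/2}e_k$ vanishes only when each $c_k\lambda_k^{1/2}=0$, i.e.\ $c_k=0$. Thus $\Sset_K=\Sset_K'$ is linearly independent.

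Next I would check the conditions (C-$1*$) for $\Sset_K$ and $\Sset_K'$. The identity $\phi_n(\vx)=\adjphi_n(\vx)=\lambda_n^{1/2}e_n(\vx)$ together with the assumption (C-PDK) gives
\[
\sum_{n\in\NN}\abs{\phi_n(\vx)}=\sum_{n\in\NN}\abs{\adjphi_n(\vx)}=\sum_{n\in\NN}\lambda_n^{1/2}\abs{e_n(\vx)}<\infty,
\quad \text{for all } \vx\in\Domain,
\]
which is precisely the hypothesis required by the assumption (A-$1*$). As already noted in the paper just after the definition of (C-$1*$) via the inequalities~\eqref{A1-to-Ap-1} and~\eqref{A1-to-Ap-2}, this in turn implies that $\Sset_K$ and $\Sset_K'$ satisfy (C-$p$) for every $1<p<\infty$; hence the assumption (A-$p$) is also fulfilled for all $1<p<\infty$.

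Finally, having established both (A-$1*$) and (A-$p$) for all $1<p<\infty$, the conclusions are immediate: Theorem~\ref{t:RKBS-MercerKer-1} gives that $\Banach_K^1(\Domain)$ is a right-sided RKBS with right-sided reproducing kernel $K$; Theorem~\ref{t:RKBS-MercerKer-p} gives that $\Banach_K^p(\Domain)$ is a two-sided RKBS with two-sided reproducing kernel $K$ for $1<p<\infty$; and, since $K$ is symmetric so that $\adjK=K$ and $\Banach_{\adjK}^\infty(\Domain)=\Banach_K^\infty(\Domain)$, Theorem~\ref{t:RKBS-MercerKer-infty} gives the two-sided RKBS property of $\Banach_K^\infty(\Domain)$ with two-sided reproducing kernel $K$. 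There is no real obstacle here; the only mild point is remembering that the total symmetry $\Sset_K=\Sset_K'$ forces $\adjK=K$ and identifies $\Banach_{\adjK}^q(\Domain')$ with $\Banach_K^q(\Domain)$, so that the ``two-sided'' conclusions of the Mercer-kernel theorems apply to the single space $\Banach_K^p(\Domain)$ with kernel $K$ itself.
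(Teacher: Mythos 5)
Your proposal is correct and follows essentially the same route as the paper: identify $K$ as a totally symmetric generalized Mercer kernel with $\phi_n=\adjphi_n=\lambda_n^{1/2}e_n$, note that linear independence follows from orthonormality of $\Eset_K$ and that (C-PDK) is exactly (C-$1*$) for these expansion sets (hence (A-$1*$) and therefore (A-$p$) hold), and then invoke Theorems~\ref{t:RKBS-MercerKer-1}, \ref{t:RKBS-MercerKer-p}, and~\ref{t:RKBS-MercerKer-infty}. You simply spell out the details that the paper relegates to the discussion preceding the theorem, including the identification $\adjK=K$ and $\Banach_{\adjK}^q(\Domain')=\Banach_K^q(\Domain)$ coming from total symmetry.
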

\begin{proof}
As the discussion given above, assumption (A-PDK) is equivalent to assumption (A-$1*$) for positive definite kernels.
Therefore, Theorems~\ref{t:RKBS-MercerKer-p}, \ref{t:RKBS-MercerKer-1}, and \ref{t:RKBS-MercerKer-infty}
ensure the reproducing conclusions.
\end{proof}

If $\sum_{n\in\NN}\lambda_n^{1/2}\abs{e_n(\vx)}$ is uniformly convergent on $\Domain$, then
$\Banach_K^p(\Domain)\subseteq\Cont(\Domain)$ for $1\leq p\leq\infty$ because $\Eset_K\subseteq\Cont(\Cont)$.

Comparing the representations of RKHSs, we find that
$\Banach_K^2(\Domain)=\Hilbert_K(\Domain)$.
It is well-known that there exists a unique RKHS for the given positive definite kernel.
Theorem~\ref{t-RKBS-PDK} shows, however, that a positive definite kernel can be a reproducing kernel of a variety of RKBSs endowed with different norms.
Since the RKHSs are unique, the support vector machine solutions in RKHSs are unique.
However, the support vector machine solutions in RKBSs will have more colorful formats because of various geometrical structures of RKBSs (see Theorem~\ref{t:RKBS-MercerKer-svm-rep-pq}).

Next, we consider two types of positive definite kernels which can always be the reproducing kernels of the $p$-norm RKBSs for all $1\leq p\leq\infty$. The first type of kernels are identified by their eigenvalues and
eigenfunctions.

\begin{proposition}\label{p:PDK-EigenValVec}
If a kernel $K\in\Cont(\Domain\times\Domain)$ is a symmetric positive definite kernel
such that
the eigenvalues $\Lambda_K$ and eigenfunctions $\Eset_{K}$ of $K$ satisfy that
\[
\sum_{n\in\NN}\lambda_n^{1/2}<\infty,\quad \sup_{n\in\NN}\norm{e_n}_{\infty}<\infty,
\]
then $\Banach_K^1(\Domain)$
is a right-sided reproducing kernel Banach space with the right-sided reproducing kernel $K$ and
$\Banach_K^p(\Domain)$ for $1<p\leq\infty$
is a two-sided reproducing kernel Banach space with the two-sided reproducing kernel $K$. Moreover, $\Banach_K^p(\Domain)$ for $1\leq p\leq\infty$ is included in $\Cont(\Domain)$.
\end{proposition}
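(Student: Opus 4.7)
The plan is to reduce this proposition directly to Theorem~\ref{t-RKBS-PDK} by verifying the hypothesis (A-PDK), and then to handle the continuity claim by a uniform-convergence argument using the uniform bound on the eigenfunctions.

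\textbf{Step 1: verify (A-PDK).} Let $M:=\sup_{n\in\NN}\norm{e_n}_{\Leb_{\infty}(\Domain)}<\infty$. For any $\vx\in\Domain$,
\[
\sum_{n\in\NN}\lambda_n^{1/2}\abs{e_n(\vx)}\leq M\sum_{n\in\NN}\lambda_n^{1/2}<\infty,
\]
so the condition (C-PDK) is satisfied (in fact uniformly in $\vx$). Hence the assumption (A-PDK) holds, and Theorem~\ref{t-RKBS-PDK} gives at once that $\Banach_K^1(\Domain)$ is a right-sided RKBS with right-sided reproducing kernel $K$, and that $\Banach_K^p(\Domain)$ for $1<p\leq\infty$ is a two-sided RKBS with two-sided reproducing kernel $K$.

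\textbf{Step 2: continuous embedding $\Banach_K^p(\Domain)\subseteq\Cont(\Domain)$.} The expansion functions $\phi_n=\lambda_n^{1/2}e_n$ are continuous on $\Domain$ (since $e_n\in\Cont(\Domain)$ by Mercer's theorem), and from Step~1 the majorant $\sum_{n}\lambda_n^{1/2}\abs{e_n(\vx)}$ is bounded by $M\sum_n\lambda_n^{1/2}$ uniformly on $\Domain$. I would then take any $f=\sum_{n\in\NN}a_n\phi_n\in\Banach_K^p(\Domain)$ with $1\leq p\leq\infty$ and note the elementary coordinate bound
\[
\sup_{n\in\NN}\abs{a_n}\leq\norm{f}_{\Banach_K^p(\Domain)},
\]
which holds because $\norm{\cdot}_{\infty}\leq\norm{\cdot}_p$ on sequence spaces. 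Combined with $|\phi_n(\vx)|\leq M\lambda_n^{1/2}$ this gives the uniform bound
\[
\sup_{\vx\in\Domain}\abs{a_n\phi_n(\vx)}\leq M\norm{f}_{\Banach_K^p(\Domain)}\lambda_n^{1/2},
\]
so by the Weierstrass M-test $\sum_{n\in\NN}a_n\phi_n$ converges uniformly on $\Domain$. As a uniform limit of continuous functions, $f\in\Cont(\Domain)$.

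There is no real obstacle here: the two hypotheses decouple cleanly, with $\sum_n\lambda_n^{1/2}<\infty$ supplying the summability of the majorant and $\sup_n\norm{e_n}_{\infty}<\infty$ providing the pointwise-to-uniform upgrade. The only mildly delicate point is the passage from $\ell^p$-coefficients to the uniform coefficient bound $\sup_n|a_n|\leq\norm{f}_{\Banach_K^p(\Domain)}$, which is however standard and uniform across $1\leq p\leq\infty$, so the continuity argument applies simultaneously to all the spaces $\Banach_K^p(\Domain)$ without case splits.
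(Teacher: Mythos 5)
Your proposal is correct and follows essentially the same route as the paper: the hypotheses give the pointwise bound $\sum_n\lambda_n^{1/2}\abs{e_n(\vx)}\leq M\sum_n\lambda_n^{1/2}$, which verifies (C-PDK) so that Theorem~\ref{t-RKBS-PDK} yields the reproducing properties, and the continuity claim is settled by the uniform convergence of the expansion. The only cosmetic difference is that you apply the Weierstrass M-test directly to $\sum_n a_n\phi_n$ using the coordinate bound $\sup_n\abs{a_n}\leq\norm{f}_{\Banach_K^p(\Domain)}$, whereas the paper establishes uniform convergence of the majorant series $\sum_n\lambda_n^{1/2}\abs{e_n(\vx)}$ via the Abel test and then invokes its general imbedding remark; both arguments are sound.
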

\begin{proof}
If conditions~(C-PDK) of $\Lambda_K$ and $\Eset_K$ are verified, then we obtain the reproducing properties of
$\Banach_K^p(\Domain)$ for all $1\leq p\leq\infty$ using Theorem~\ref{t-RKBS-PDK}.
Actually, we have that
\[
\sum_{n\in\NN}\lambda_n^{1/2}\abs{e_n(\vx)}\leq
\sup_{n\in\NN}\norm{e_n}_{\infty}\sum_{n\in\NN}\lambda_n^{1/2}
<\infty,\quad
\text{for }\vx\in\Domain.
\]

For the continuity of $\Banach_K^p(\Domain)$, we only need to check the uniform convergence of series $\sum_{n\in\NN}\lambda_n^{1/2}\abs{e_n(\vx)}$ on $\Domain$. Because of the convergence of $\sum_{n=1}\lambda_n^{1/2}$ and the uniform boundedness of $\left\{\abs{e_n(\vx)}:n\in\NN\right\}$ on $\Domain$,
the Abel uniform convergence test provides that $\sum_{n\in\NN}\lambda_n^{1/2}\abs{e_n(\vx)}$ is uniformly convergent on $\Domain$.
\end{proof}

The second type of kernels are defined by an integral of a symmetric positive definite kernel.

\begin{proposition}\label{p:integral-typeKer}
Let $\Psi\in\Cont(\Domain\times\Domain)$ be a symmetric positive definite kernel.
If a kernel $K$ can be represented in the integral form of the kernel $\Psi$ such as
\[
K(\vx,\vy)=\int_{\Domain}\Psi(\vx,\vz)\Psi(\vz,\vy)\mu(\ud\vz),\quad \text{for }\vx,\vy\in\Domain,
\]
then
$\Banach_K^1(\Domain)$
is a right-sided reproducing kernel Banach space with the right-sided reproducing kernel $K$ and
$\Banach_K^p(\Domain)$ for $1<p\leq\infty$
is a two-sided reproducing kernel Banach space with the two-sided reproducing kernel $K$. Moreover, $\Banach_K^p(\Domain)$ for $1\leq p\leq\infty$ is included in $\Cont(\Domain)$.
\end{proposition}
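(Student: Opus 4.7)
The plan is to reduce the statement to Theorem~\ref{t-RKBS-PDK} (and hence to Proposition~\ref{p:PDK-EigenValVec}) by computing an explicit Mercer-type expansion for $K$ in terms of the spectral data of $\Phi$, and then verifying the condition (C-PDK) for this expansion together with uniform convergence on the compact domain $\Domain$.

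First, since $\Phi\in\Cont(\Domain\times\Domain)$ is a symmetric positive definite kernel on the compact space $\Domain$, the classical Mercer theorem supplies a countable family of positive eigenvalues $\{\sigma_n:n\in\NN\}$ and continuous eigenfunctions $\{e_n:n\in\NN\}\subseteq\Cont(\Domain)$, orthonormal in $\Leb_2(\Domain)$, such that the expansion
\[
\Phi(\vx,\vz)=\sum_{n\in\NN}\sigma_n e_n(\vx)e_n(\vz)
\]
converges absolutely and uniformly on $\Domain\times\Domain$. Substituting this expansion into the integral defining $K$ and interchanging summation and integration (justified by the uniform convergence of the Mercer series and the finiteness of $\mu$), the orthonormality of $\{e_n\}$ in $\Leb_2(\Domain)$ collapses the double sum onto its diagonal and yields
\[
K(\vx,\vy)=\sum_{n\in\NN}\sigma_n^2\,e_n(\vx)e_n(\vy),\qquad \vx,\vy\in\Domain.
\]
This shows that $K$ is a symmetric positive definite Mercer kernel with eigenvalues $\lambda_n=\sigma_n^2$ and eigenfunctions equal to the same $e_n$.

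Next I would verify the hypothesis (C-PDK) of the assumption (A-PDK), namely that $\sum_{n\in\NN}\lambda_n^{1/2}|e_n(\vx)|=\sum_{n\in\NN}\sigma_n|e_n(\vx)|<\infty$ uniformly in $\vx\in\Domain$. The key observation is that, by Mercer's theorem applied to $\Phi$, the trace identity
\[
\sum_{n\in\NN}\sigma_n=\int_{\Domain}\Phi(\vz,\vz)\,\mu(\ud\vz)\leq\mu(\Domain)\max_{\vz\in\Domain}\Phi(\vz,\vz)<\infty
\]
holds since $\Domain$ is compact, $\mu$ is finite, and $\Phi$ is continuous. A Cauchy-Schwarz estimate then gives, for any $\vx\in\Domain$ and any tail index $N$,
\[
\sum_{n>N}\sigma_n|e_n(\vx)|\leq\Bigl(\sum_{n>N}\sigma_n\Bigr)^{1/2}\Bigl(\sum_{n>N}\sigma_n e_n(\vx)^2\Bigr)^{1/2}\leq\Bigl(\sum_{n>N}\sigma_n\Bigr)^{1/2}\Phi(\vx,\vx)^{1/2}.
\]
The second factor is bounded uniformly on $\Domain$ by $\|\Phi(\cdot,\cdot)\|_{\infty}^{1/2}$, while the first factor tends to zero as $N\to\infty$ because $\sum\sigma_n<\infty$. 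Hence the series $\sum_n\sigma_n|e_n(\vx)|$ is finite pointwise and, in fact, uniformly convergent on $\Domain$.

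Finally, with (C-PDK) and the uniform convergence of $\sum_n\lambda_n^{1/2}|e_n(\vx)|$ established, Theorem~\ref{t-RKBS-PDK} directly yields that $\Banach_K^1(\Domain)$ is a right-sided RKBS with right-sided reproducing kernel $K$ and that $\Banach_K^p(\Domain)$ is a two-sided RKBS with two-sided reproducing kernel $K$ for every $1<p\leq\infty$. The uniform convergence on $\Domain$ of the defining series, combined with $\Eset_K\subseteq\Cont(\Domain)$, further guarantees $\Banach_K^p(\Domain)\subseteq\Cont(\Domain)$ for all $1\leq p\leq\infty$, as noted immediately after Theorem~\ref{t-RKBS-PDK}. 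The main obstacle I anticipate is the rigorous justification of the termwise integration used to obtain the eigendecomposition of $K$, but uniform convergence of the Mercer expansion of $\Phi$ on the compact product domain makes this routine via dominated convergence; no deeper input is required.
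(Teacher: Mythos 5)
Your proposal is correct and follows essentially the same route as the paper's own proof: expand $\Phi$ by Mercer's theorem, use orthonormality to obtain $K(\vx,\vy)=\sum_n\sigma_n^2e_n(\vx)e_n(\vy)$, verify (C-PDK) via the trace bound $\sum_n\sigma_n<\infty$ and a Cauchy--Schwarz estimate against $\Phi(\vx,\vx)$, and then invoke Theorem~\ref{t-RKBS-PDK}. Your tail-sum version of the Cauchy--Schwarz estimate is a slightly more explicit justification of the uniform convergence than the paper gives, but the substance is identical.
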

\begin{proof}
The key technique used in the proof of this result is the computation of the eigenvalues $\Lambda_K$ and eigenfunctions $\Eset_K$ of $K$ by
the positive eigenvalues $\Lambda_{\Psi}:=\left\{\rho_n:n\in\NN\right\}$ and the continuous eigenfunctions $\Eset_{\Psi}:=\left\{\varphi_n:n\in\NN\right\}$ of $\Psi$.

The collection $\Eset_{\Psi}$ of eigenfunctions is an orthonormal set of $\Leb_2(\Domain)$ and $\Psi$ has
the uniformly convergent representation
\begin{equation}\label{eq:integral-typeKer-0}
\Psi(\vx,\vy)=\sum_{n\in\NN}\rho_n\varphi_n(\vx)\varphi_n(\vy),\quad\text{for }\vx,\vy\in\Domain.
\end{equation}
This ensures that the integral-type kernel $K$ has the uniformly convergent representation
\begin{align*}
K(\vx,\vy)=&
\int_{\Domain}\Psi(\vx,\vz)\Psi(\vz,\vy)\mu(\ud\vz)\\
=&\int_{\Domain}\left(\sum_{m\in\NN}\rho_m\varphi_m(\vx)\varphi_n(\vz)\right)
\left(\sum_{n\in\NN}\rho_n\varphi_n(\vz)\varphi_n(\vy)\right)\mu(\ud\vz)\\
=&\sum_{m,n\in\NN}\rho_m\rho_n\varphi_m(\vx)\varphi_n(\vy)\int_{\Domain}\varphi_m(\vz)\varphi_m(\vz)\mu(\ud\vz)\\
=&\sum_{n\in\NN}\rho_n^2\varphi_n(\vx)\varphi_n(\vy),
\end{align*}
for $\vx,\vy\in\Domain$.
Hence, $K$ is a positive definite kernel and its eigenvalues $\lambda_n$ and eigenfunctions $e_n$ can be computed by $\rho_n$ and $\varphi_n$, that is,
\[
\lambda_n=\rho_n^2,\quad e_n=\varphi_n,\quad\text{for all }n\in\NN.
\]

Next, we verify conditions (C-PDK) of $\Lambda_K$ and $\Eset_K$.
Since the symmetric kernel $\Psi$ defined on the compact Hausdorff space $\Domain$ is continuous and positive definite, we have that
\[
\sum_{n\in\NN}\rho_n<\infty,\quad \norm{\Psi}_{\infty}<\infty.
\]
In addition,
by the Cauchy Schwarz inequality, we find for each $\vx\in\Domain$ that
\begin{equation}\label{eq:integral-typeKer-1}
\sum_{n\in\NN}\lambda_n^{1/2}\abs{e_n(\vx)}
=\sum_{n\in\NN}\rho_n\abs{\varphi_n(\vx)}
\leq\left(\sum_{n\in\NN}\rho_n\right)^{1/2}\left(\sum_{n\in\NN}\rho_n\abs{\varphi_n(\vx)}^2\right)^{1/2}.
\end{equation}
Moreover, we have that
\begin{equation}\label{eq:integral-typeKer-2}
\left(\sum_{n\in\NN}\rho_n\right)^{1/2}\sqrt{\Psi(\vx,\vx)}
\leq\left(\sum_{n\in\NN}\rho_n\right)^{1/2}\norm{\Psi}_{\infty}^{1/2}.
\end{equation}
Combining inequalities~\eqref{eq:integral-typeKer-0},~\eqref{eq:integral-typeKer-1}, and~\eqref{eq:integral-typeKer-2}, we conclude that
\[
\sum_{n\in\NN}\lambda_n^{1/2}\abs{e_n(\vx)}<\infty,\quad
\text{for all }\vx\in\Domain.
\]
Therefore, Theorem~\ref{t-RKBS-PDK} guarantees
the reproducing properties of
$\Banach_K^p(\Domain)$ for all $1\leq p\leq\infty$.

Since $\sum_{n\in\NN}\rho_n\abs{\varphi_n(\vx)}^2$ is uniformly convergent on $\Domain$,
inequality~\eqref{eq:integral-typeKer-1} ensures that $\sum_{n\in\NN}\lambda_n^{1/2}\abs{e_n(\vx)}$ is also uniformly convergent on $\Domain$.
This yields $\Banach_K^p(\Domain)\subseteq\Cont(\Domain)$ for all $1\leq p\leq\infty$.
\end{proof}

\begin{remark}
In this article, we mainly look at the RKBSs induced by the positive definite kernels defined on compact domains.
Roughly speaking, this chapter focuses on the constructions of RKBSs by the Fourier series.
In the previous paper~\cite{FasshauerHickernellYe2013}, it has been shown how to construct the RKBSs by the positive definite functions defined on the whole space $\Rd$. It is clear that the positive definite function is a translation invariant kernel.
The key technique of the proof in~\cite{FasshauerHickernellYe2013} is the Fourier transform (see \cite[Theorems~4.1 and~4.4]{FasshauerHickernellYe2013}).
The positivity of the Fourier transforms is guaranteed by the Bochner theorem~\cite[Theorem~6.6]{Wendland2005}.
Here, the positive eigenvalues of positive definite kernels can be viewed as the equivalent element of the positive measures of positive definite functions.
In this article, we already show that the generalized Mercer kernel with the negative eigenvalues can become a reproducing kernel of a RKBS.
So, we conjecture that a non-positive definite function could be used to construct a RKBS by the Fourier transform.
\end{remark}

According to the proof of Propositions~\ref{p:PDK-EigenValVec} and \ref{p:integral-typeKer}, the sequence set
\[
\Aset_K=\Aset_K'=\left\{\left(\lambda_n^{1/2}e_n(\vx):n\in\NN\right):\vx\in\Domain\right\},
\]
of the positive definite kernel $K$ given in Propositions~\ref{p:PDK-EigenValVec} and \ref{p:integral-typeKer} is both bounded and closed in $\lone$.

\emph{Comparisons:}
The $p$-norm RKBSs induced by the positive definite kernels can be viewed as the special case of the $p$-norm RKBSs discussed in Chapter~\ref{char-GMK}.
By Theorem~\ref{t-RKBS-PDK}, we show that many well-known positive definite kernels can become the reproducing kernels of these $p$-norm RKBSs for all $1\leq p\leq\infty$ such as the min kernel, the Gaussian kernel, and the power series kernel (see Sections~\ref{s:min},~\ref{s:gaussian} and~\ref{s:powerseries}).
Because of the symmetry of the positive definite kernels, the dualities of these $p$-norm RKBSs given here are consistent with themselves.
Proposition~\ref{p:integral-typeKer} shows that
the reproducing properties can be checked by the integral-formats of the positive definite kernels.
The eigenvalues and eigenfunctions of the positive definite kernels ensure the imbedding, compactness, and universal approximation of these $p$-norm RKBSs.
In Section~\ref{s:SVM-Sp-RKBS}, we shall find that the support vector machine solutions in the $p$-norm RKBSs can be represented by the eigenvalues and eigenfunctions.

\subsection*{Imbedding}

The imbedding of RKBSs stated in Propositions~\ref{p:RKBS-imbedding} and \ref{p:RKBS-imbedding-dual} ensures the imbedding of $\Banach_K^p(\Domain)$.
Suppose that the sequence set $\Aset_K$ is bounded in $\lone$. We define a positive constant
\[
C_1:=\sup_{\vx\in\Domain}\sum_{n\in\NN}\lambda_n^{1/2}\abs{e_n(\vx)}<\infty.
\]
Here, if $\sum_{n\in\NN}\lambda_n^{1/2}\abs{e_n(\vx)}$ is uniformly convergent on the compact Hausdorff space $\Domain$, then $\vx\mapsto\sum_{n\in\NN}\lambda_n^{1/2}\abs{e_n(\vx)}$ belongs to $\Cont(\Domain)$; hence the compactness of $\Domain$ implies that $\Aset_K$ is bounded and closed in $\lone$.

Let $1\leq p,q\leq\infty$ such that $p^{-1}+q^{-1}=1$.
For $\vx\in\Domain$,
we have that
\[
\Phi_q(\vx)^{1/q}=\norm{K(\vx,\cdot)}_{\Banach_K^q(\Domain)}
=\left(\sum_{n\in\NN}\lambda_n^{q/2}\abs{e_n(\vx)}^q\right)^{1/q},
\]
when $1\leq q<\infty$,
and
\[
\Phi_{\infty}(\vx)=\norm{K(\vx,\cdot)}_{\Banach_K^{\infty}(\Domain)}
=\sup_{n\in\NN}\lambda_n^{1/2}\abs{e_n(\vx)}.
\]
Moreover, we find that
\[
\int_{\Domain}\abs{\Phi_1(\vx)}^{\gamma}\mu(\ud\vx)
=\int_{\Domain}\left(\sum_{n\in\NN}\lambda_n^{1/2}\abs{e_n(\vx)}\right)^{\gamma}\mu(\ud\vx)
\leq C_{1}^{\gamma}\mu(\Domain)<\infty,
\]
when $1\leq\gamma<\infty$,
and
\[
\sup_{\vx\in\Domain}\abs{\Phi_1(\vx)}
=\sup_{\vx\in\Domain}\sum_{n\in\NN}\lambda_n^{1/2}\abs{e_n(\vx)}
= C_{1}<\infty.
\]
This ensures that $\Phi_1\in\Leb_{\gamma}(\Domain)$.
Since
\[
0\leq \Phi_q(\vx)^{1/q}\leq \Phi_1(\vx),\quad\text{for }\vx\in\Domain,
\]
we have that
\[
\Phi_q^{1/q}\in\Leb_{\gamma}(\Domain),\quad \text{for all }1\leq\gamma\leq\infty.
\]
This shows that
\begin{equation}\label{eq:imbedding-RKBS-PDK-0}
\vx\mapsto\norm{K(\vx,\cdot)}_{\Banach_K^q(\Domain)}\in\Leb_{\gamma}(\Domain),\quad \text{for all }1\leq\gamma\leq\infty.
\end{equation}

\begin{proposition}\label{p:imbedding-RKBS-PDK}
Let $1\leq p,\gamma\leq\infty$
and let $K\in\Cont(\Domain\times\Domain)$ be a symmetric positive definite kernel such that
the eigenvalues $\Lambda_K$ and eigenfunctions $\Eset_{K}$ of $K$ satisfy
assumption (A-PDK).
If the sequence set $\Aset_K$ of $K$ is bounded in $\lone$,
then the identity map from $\Banach_K^p(\Domain)$ into
$\Leb_{\gamma}(\Domain)$ is continuous. In particular, if $\sum_{n\in\NN}\lambda_n^{1/2}\abs{e_n(\vx)}$ is uniformly convergent on $\Domain$,
then $\Banach_K^p(\Domain)$ is imbedded into
$\Leb_{\gamma}(\Domain)$.
\end{proposition}
\begin{proof}
According to Theorem~\ref{t-RKBS-PDK}, the space
$\Banach_K^1(\Domain)$
is a right-sided RKBS and the space
$\Banach_K^p(\Domain)$ for $1<p\leq\infty$
is a two-sided RKBS.
Thus, inclusion relation~\eqref{eq:imbedding-RKBS-PDK-0} ensures the continuity of the identity map by Proposition~\ref{p:RKBS-imbedding}.

For the final statement, the uniform convergence of $\sum_{n\in\NN}\lambda_n^{1/2}\abs{e_n(\vx)}$ guarantees that $\Banach_K^p(\Domain)\subseteq\Cont(\Domain)$.
Moreover, since $\supp(\mu)=\Domain$, we have that $\Banach_K^p(\Domain)$ satisfies the $\mu$-measure zero condition; hence the imbedding of $\Banach_K^p(\Domain)$ in
$\Leb_{\gamma}(\Domain)$ is verified by Corollary~\ref{c:RKBS-imbedding}.
\end{proof}

When the kernel $K$ is symmetric, then the left-sided integral operator $I_K$ and the right-sided integral operator $I_K'$ are the same.
Since $K\in\Cont(\Domain\times\Domain)$ is defined on a compact space $\Domain$, the integral operator $I_K$ is well-defined on $\Leb_{\beta}(\Domain)$ for any $1\leq\beta\leq\infty$.
We now show that the integral operator $I_K$ maps $\Leb_{\beta}(\Domain)$ continuously into $\Banach_K^p(\Domain)$.

\begin{proposition}\label{p:RKBS-imbedding-dual-PDK}
Let $1\leq p<\infty$ and $1<q\leq\infty$ such that $p^{-1}+q^{-1}=1$, let $1\leq\beta\leq\infty$,
and let $K\in\Cont(\Domain\times\Domain)$ be a symmetric positive definite kernel such that
the eigenvalues $\Lambda_K$ and eigenfunctions $\Eset_{K}$ of $K$ satisfy
assumption (A-PDK).
If the sequence set $\Aset_K$ of $K$ is bounded in $\lone$,
then the integral operator $I_K$ maps $\Leb_{\beta}(\Domain)$ into $\Banach_K^p(\Domain)$ continuously such that
\begin{equation}\label{eq:imbeding-RKBS-dual-PDK}
\int_{\Domain}g(\vy)\xi(\vy)\mu(\ud\vy)=\langle g,I_K\xi \rangle_{\Banach_K^q(\Domain)}=\langle I_K\xi,g \rangle_{\Banach_K^p(\Domain)},
\end{equation}
for all $\xi\in\Leb_{\beta}(\Domain)$ and all $g\in\Banach_K^q(\Domain)$.
In particular, if $\sum_{n\in\NN}\lambda_n^{1/2}\abs{e_n(\vx)}$ is uniformly convergent on $\Domain$, then the range of $I_K$ is (weakly*) dense in $\Banach_K^p(\Domain)$ when $1<p<\infty$ and $1<\beta<\infty$ ($p=1$ or $\beta=1,\infty$).
\end{proposition}
\begin{proof}
Theorem~\ref{t-RKBS-PDK} provides that the space
$\Banach_K^q(\Domain)$
is a two-sided RKBS and the dual space of $\Banach_K^q(\Domain)$ is isometrically equivalent to $\Banach_K^p(\Domain)$.
Moreover, the uniform convergence of $\sum_{n\in\NN}\lambda_n^{1/2}\abs{e_n(\vx)}$ and the equality of $\supp(\mu)=\Domain$ ensures that $\Banach_K^p(\Domain)$ satisfies the $\mu$-measure zero condition.
Let $\gamma$ be the conjugate exponent of $\beta$.
Based on equation~\eqref{eq:imbedding-RKBS-PDK-0}, we employ Proposition~\ref{p:RKBS-imbedding-dual} and Corollary~\ref{c:RKBS-imbedding-dual} to complete the proof.
\end{proof}

Since $\Domain$ is compact, we know that $\Leb_{\gamma_2}(\Domain)$ is imbedded into $\Leb_{\gamma_1}(\Domain)$ by the identity map when $1\leq\gamma_1\leq\gamma_2\leq\infty$.
On the other hand, $\Banach_K^{p_1}(\Domain)$ is imbedded into $\Banach_K^{p_2}(\Domain)$ by the identity map when $1\leq p_1\leq p_2\leq\infty$.

If $1<p_1<p_2<\infty$, then $\Banach_K^{p_1}(\Domain)$ and $\Banach_K^{p_2}(\Domain)$ have various normed structures while
equation~\eqref{eq:imbeding-RKBS-dual-PDK} provides that
\[
\langle f,I_K\zeta \rangle_{\Banach_K^{p_1}(\Domain)}=
\int_{\Domain}f(\vx)\zeta(\vx)\mu(\ud\vx)=\langle f,I_K\zeta \rangle_{\Banach_K^{p_2}(\Domain)},
\]
for all $f\in\Banach_K^{p_1}(\Domain)\subseteq\Banach_K^{p_2}(\Domain)$ and all $\zeta\in\Leb_{\gamma}(\Domain)$. This means that the $p$-norm RKBSs with the same reproducing kernel could be seen as the identical elements in the integral sense.

\subsection*{Compactness}

Below, we consider the compactness of the $p$-norm RKBS.

\begin{proposition}\label{p:PDK-compact-RKBS}
Let $1\leq p,q\leq\infty$ such that $p^{-1}+q^{-1}=1$
and let $K\in\Cont(\Domain\times\Domain)$ be a symmetric positive definite kernel such that
the eigenvalues $\Lambda_K$ and eigenfunctions $\Eset_{K}$ of $K$ satisfy
assumption (A-PDK).
If the sequence set $\Aset_K$ of $K$ is compact in $\lq$,
then the identity map from $\Banach_{K}^p(\Domain)$ into $\Linfty(\Domain)$ is compact.
\end{proposition}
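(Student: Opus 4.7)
The plan is to reduce the compactness statement to the general compactness criterion for RKBSs, namely Proposition~\ref{p:RKBS-compact}: it suffices to show that the right-sided kernel set $\Kset_K'$ is compact in the dual space of $\Banach_K^p(\Domain)$. Since the positive definite kernel $K$ is symmetric, the adjoint kernel $\adjK$ coincides with $K$, and consequently $\Kset_K' = \Kset_K = \{K(\vx,\cdot): \vx\in\Domain\}$.

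First I will invoke Theorem~\ref{t-RKBS-PDK} to obtain that $\Banach_K^p(\Domain)$ is a right-sided (when $p=1$) or two-sided (when $1<p\le\infty$) RKBS with reproducing kernel $K$, so that Proposition~\ref{p:RKBS-compact} applies. Next I will identify the sequence $(\lambda_n^{1/2}e_n(\vx):n\in\NN)$ with $K(\vx,\cdot)$ through the isometric isomorphism of function spaces and sequence spaces established in Section~\ref{s:p-RKBS} and~\ref{s:1-RKBS}. Concretely, since the expansion sets here are $\phi_n=\adjphi_n=\lambda_n^{1/2}e_n$, the reproducing kernel admits the expansion $K(\vx,\cdot)=\sum_{n\in\NN}\lambda_n^{1/2}e_n(\vx)\,\adjphi_n$, and under the isometric isomorphism of Lemma~\ref{l:p-RKBS} (or Lemma~\ref{l:infty-RKBS} when $q=\infty$) the element $K(\vx,\cdot)\in\Banach_K^q(\Domain)$ corresponds exactly to the sequence $(\lambda_n^{1/2}e_n(\vx):n\in\NN)\in\lq$, i.e.\ to the element of $\Aset_K$ indexed by $\vx$.

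Therefore the compactness of $\Aset_K$ in $\lq$ transfers, via this isometric isomorphism, to the compactness of $\Kset_K'$ in $\Banach_K^q(\Domain)$. For $1<p<\infty$ the space $\Banach_K^q(\Domain)$ is the dual of $\Banach_K^p(\Domain)$ (Proposition~\ref{p:RKBS-MecerKer-pq}), and for $p=\infty$ we similarly have $\Banach_K^1(\Domain)\cong (\Banach_K^\infty(\Domain))'$, so the hypothesis directly yields compactness of $\Kset_K'$ in the dual. The case $p=1$ is where a small subtlety appears: here $q=\infty$ and the dual $(\Banach_K^1(\Domain))'$ is isometrically equivalent to $\linfty$, while $\Banach_K^\infty(\Domain)\cong\czero$ is only isometrically \emph{embedded} into this dual; however, since the embedding $\czero\hookrightarrow\linfty$ is an isometry, compactness of $\Aset_K$ in $\linfty$ (equivalently in $\czero$, once we note that the sequences $(\lambda_n^{1/2}e_n(\vx):n\in\NN)$ lie in $\czero$ by the condition (C-PDK)) still pushes forward to compactness of $\Kset_K'$ in $(\Banach_K^1(\Domain))'$.

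Once $\Kset_K'$ is shown to be compact in the dual of $\Banach_K^p(\Domain)$, Proposition~\ref{p:RKBS-compact} yields that the identity map $\Banach_K^p(\Domain)\to\linfty(\Domain)$ is compact and completes the proof. The only non-routine step is the $p=1$ case, where one has to be mindful that the dual of $\Banach_K^1(\Domain)$ is strictly larger than $\Banach_K^\infty(\Domain)$; but because the relevant elements $K(\vx,\cdot)$ always lie in the smaller space $\Banach_K^\infty(\Domain)\cong\czero$, and because the inclusion $\czero\hookrightarrow\linfty$ is isometric, compactness is preserved and no additional work is needed.
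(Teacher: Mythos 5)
Your proposal is correct and follows essentially the same route as the paper's proof: reduce to Proposition~\ref{p:RKBS-compact} and transfer the compactness of $\Aset_K$ in $\lq$ to the kernel set $\Kset_K'=\Kset_K$ via the isometric identification of $\Banach_K^q(\Domain)$ with $\lq$ (resp.\ $\czero$ when $q=\infty$). Your explicit treatment of the $p=1$ case --- noting that $\Banach_K^\infty(\Domain)\cong\czero$ is only isometrically embedded in $(\Banach_K^1(\Domain))'\cong\linfty$, but that this isometric embedding still carries compact sets to compact sets --- is a point the paper's proof passes over more tersely, and is a welcome clarification rather than a deviation.
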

\begin{proof}
According to Proposition~\ref{p:RKBS-compact}, the compactness is proved if we verify that the kernel set $\Kset_K$ of $K$ is compact in $\Banach_{K}^q(\Domain)$.
Because $\Aset_K$ in $\lq$ is the identical element of $\Kset_K$ in $\Banach_{K}^q(\Domain)$ when $1\leq q<\infty$, and $\Aset_K$ in $\czero$ is the identical element of $\Kset_K$ in $\Banach_{K}^{\infty}(\Domain)$, the compactness of $\Aset_K$ in $\lq$ implies the compactness of $\Kset_K$ in $\Banach_{K}^q(\Domain)$.
\end{proof}

One technique to determine the relative compactness of $\Aset_K$ is to find
a positive sequence $\left(\alpha_n:n\in\NN\right)\in \czero$
such that
\[
\left(\alpha_n^{-1}\lambda_n^{1/2}\abs{e_n(\vx)}:n\in\NN\right)\in \lq,
\]
and
\[
\sup_{\vx\in\Domain}\sum_{n\in\NN}\frac{\lambda_n^{q/2}\abs{e_n(\vx)}^q}{\alpha_n^q}<\infty,\quad
\text{when }1\leq q<\infty,
\quad
\sup_{\vx\in\Domain,n\in\NN}\frac{\lambda_n^{1/2}\abs{e_n(\vx)}}{\alpha_n}<\infty.
\]

\subsection*{Universal Approximation}

Now we investigate the universal approximation of the $p$-norm RKBS $\Banach_{K}^p(\Domain)$ by Propositions~\ref{p:universial-approx-RKBS-pq} and~\ref{p:universial-approx-RKBS-1}.

\begin{proposition}\label{p:universial-approx-RKBS-PDK}
Let $1\leq p\leq\infty$ and let $K\in\Cont(\Domain\times\Domain)$ be a symmetric strictly positive definite kernel such that
the eigenvalues $\Lambda_K$ and eigenfunctions $\Eset_{K}$ of $K$ satisfy
assumption (A-PDK).
If $\sum_{n\in\NN}\lambda_n^{1/2}\abs{e_n(\vx)}$ is uniformly convergent on $\Domain$,
then the reproducing kernel Banach spaces $\Banach_{K}^p(\Domain)$ has the universal approximation property.
\end{proposition}
\begin{proof}
If the sufficient conditions of Propositions~\ref{p:universial-approx-RKBS-pq} and~\ref{p:universial-approx-RKBS-1} are checked, then the proof is complete. The expansion terms $\phi_n$ are composed of the eigenvalues $\Lambda_K$ and eigenfunctions $\Eset_{K}$ of $K$, that is, $\phi_n=\lambda_n^{1/2}e_n$ for all $n\in\NN$.

First the uniform convergence of $\sum_{n\in\NN}\lambda_n^{1/2}\abs{e_n(\vx)}$ ensures that $\sum_{n\in\NN}\abs{\phi_n(\vx)}^q$ is uniformly convergent for all $1\leq q<\infty$.

Next,
the compactness of $\Domain$ provides that $\Cont(\Domain)\subseteq\Leb_2(\Domain)$.
Moreover, the continuous eigenfunctions $\Eset_{K}$ of the strictly positive definite kernel $K$ become an orthonormal basis of $\Leb_2(\Domain)$; Since the support of $\mu$ is $\Domain$, we have that $\Span\left\{\Eset_{K}\right\}$ is dense in $\Cont(\Domain)$. This ensures that $\Span\left\{\Sset_{K}\right\}$ is dense in $\Cont(\Domain)$.

Therefore, the universal approximation property of $\Banach_{K}^p(\Domain)$ has been obtained by Propositions~\ref{p:universial-approx-RKBS-pq} and~\ref{p:universial-approx-RKBS-1}.
\end{proof}

As the discussions in Remarks~\ref{r:universial-approx-RKBS-1} and~\ref{r:RKBS-universial-approx},
the strictly positive definite kernel $K$ is the two-sided universal kernel for the $p$-norm RKBS $\Banach_{K}^p(\Domain)$ when $1<p\leq\infty$. But, the strictly positive definite kernel $K$ is only the left-sided universal kernel for the $1$-norm RKBS $\Banach_{K}^1(\Domain)$.

\subsection*{Equivalent Eigenfunctions}

In the above statements, the eigenvalues and eigenfunctions of the same positive definite kernel are fixed.
It is well-known that the RKHS or the $2$-norm RKBS is unique for an arbitrary positive definite kernel.
But the $p$-norm RKBSs induced by a variety of expansion sets could be different from their norms.
Now we discuss the affections of the $p$-norm RKBSs by various choices of eigenfunctions of the same positive definite kernel, and show that the $p$-norm RKBSs driven by the equivalent eigenfunctions are equivalent by the identity map.

According to the spectral theorem for compact and self-adjoint operators, the eigenvalues $\Lambda_K$ of the positive definite kernel $K$ has zero as the only accumulation point and each eigenvalue $\lambda_n$ has finite multiplicity, or more precisely, the space
\[
\Space_n:=\left\{f\in\Leb_2(\Domain):I_K(f)=\lambda_nf\right\}
\]
is finite dimensional for all $n\in\NN$.
Let the dimension of $\Space_n$ be $N_n$.
Obviously, if the dimension of $\Space_n$ is equal to one for all $n\in\NN$, then the choice of eigenfunctions is just different from the sign.
We are more interested in the space $\Space_n$ with the non-single basis.
For convenience, we reorder the eigenvalues $\Lambda_K$ such that they become a strictly decreasing sequence, that is,
\[
\lambda_1>\lambda_2>\cdots>\lambda_n>\cdots>0.
\]
Then the eigenfunctions $\Eset_K$ of the positive definite kernel $K$ compose of an orthonormal basis
$\Eset_K^n:=\left\{e_{n,k}:k\in\NN_{N_n}\right\}$ of the space $\Space_n$, that is,
\[
\Eset_K=\underset{n\in\NN}{\cup}\Eset_K^n,
\]
where $\NN_{N_n}:=\left\{1,\ldots,N_n\right\}$.
Then equation~\eqref{eq:PDK-expansion} can be rewritten as
\[
K(\vx,\vy)=\sum_{n\in\NN}\sum_{k\in\NN_{N_n}}\lambda_ne_{n,k}(\vx)e_{n,k}(\vy),\quad
\text{for }\vx,\vy\in\Domain.
\]

It is obvious that the eigenfunctions $\Eset_K$ is unique if $N_1=\cdots=N_n=\cdots=1$.
But, if $N_n\neq1$ for some $n\in\NN$, then we have another choice of eigenfunctions $\Eset_W$; hence we obtain another
orthonormal basis $\Eset_{W}^n:=\left\{\varphi_{n,k}:k\in\NN_{N_n}\right\}$ of $\Space_n$ such that
\[
\Eset_{W}:=\underset{n\in\NN}{\cup}\Eset_{W}^n.
\]
To avoid the confusion, we use another symbol to denote the representation
\[
W(\vx,\vy):=\sum_{n\in\NN}\sum_{k\in\NN_{N_n}}\lambda_n\varphi_{n,k}(\vx)\varphi_{n,k}(\vy),\quad
\text{for }\vx,\vy\in\Domain.
\]
Actually we have that $K=W$, and the eigenvalues $\Lambda_W$ of $W$ are equal to $\Lambda_K$.

Since $\Eset_{K}^n$ and $\Eset_{W}^n$ are bases of $\Space_n$, we let
\[
\varphi_{n,j}:=\sum_{k\in\NN_{N_n}}u_{j,k}^{n}e_{n,k},\quad\text{for }j\in\NN_n\text{ and }n\in\NN.
\]
Then
\begin{align*}
\int_{\Domain}\varphi_{n,i}(\vx)\varphi_{n,j}(\vx)\mu\left(\ud\vx\right)&
=\sum_{k,l\in\NN_{N_n}}u_{i,k}^nu_{j,l}^n\int_{\Domain}e_{n,k}(\vx)e_{n,l}(\vy)\mu\left(\ud\vx\right),\\
\delta_{ij}&=\sum_{k,l\in\NN_{N_n}}u_{i,k}^nu_{j,l}^n\delta_{kl},
\end{align*}
for $i,j,k,l\in\NN_n$, where $\delta_{ij}$ is the Kronecker delta function of $i$ and $j$, that is, $\delta_{ij}=1$ when $i=j$ and $\delta_{ij}=0$ when $i\neq j$. This yields that
\[
\vU_n:=\left(u_{j,k}^n:j,k\in\NN_{N_n}\right),
\]
is a unitary matrix, that is, $\vU_n^T\vU_n=\vI$.
Let
\[
\ve_n:=\left(e_{n,k}:k\in\NN_{N_n}\right),
\quad
\vvarphi_n:=\left(\varphi_{n,k}:k\in\NN_{N_n}\right),\quad\text{for }n\in\NN.
\]
Then
\[
\vvarphi_n(\vx)=\vU_n\ve(\vx),\quad \ve_n(\vx)=\vU_n^T\vvarphi_n(\vx),
\quad
\text{for }\vx\in\Domain\text{ and }n\in\NN;
\]
hence
\[
\norm{\vU_n}_{\infty}^{-1}\norm{\ve_n(\vx)}_1
=
\norm{\vU_n^T}_1^{-1}\norm{\ve_n(\vx)}_1\leq\norm{\vvarphi_n(\vx)}_1\leq\norm{\vU_n}_1\norm{\ve_n(\vx)}_1.
\]

Moreover, we say that these two kinds of the eigenfunctions $\Eset_K$ and $\Eset_W$ are \emph{equivalent} if
the positive constant
\begin{equation}\label{eq:equivalent-eigenfuns}
C_U:=\sup_{n\in\NN}\left(\norm{\vU_n}_{\infty}+\norm{\vU_n}_1\right)<\infty,
\end{equation}
is well-defined. In the following discussions, we suppose that $\Eset_K$ and $\Eset_W$ are equivalent.

The equivalence of the eigenfunctions $\Eset_K$ and $\Eset_W$ indicates that
\begin{equation}\label{eq:PDK-equiv-eigenfun-cond}
C_U^{-1}\sum_{n\in\NN}\lambda_n^{1/2}\norm{\ve_n(\vx)}_1
\leq\sum_{n\in\NN}\lambda_n^{1/2}\norm{\vvarphi_n(\vx)}_1
\leq
C_U\sum_{n\in\NN}\lambda_n^{1/2}\norm{\ve_n(\vx)}_1,
\end{equation}
for all $\vx\in\Domain$.
Inequality~\eqref{eq:PDK-equiv-eigenfun-cond} provides that $\Lambda_K$ and $\Eset_K$ of $K$ satisfy conditions (C-PDK)
if and only if $\Lambda_{W}$ and $\Eset_{W}$ of $W$ satisfy conditions (C-PDK).
Therefore, if assumption (A-PDK) is well-posed for any choice of $\Lambda_K,\Eset_K$ or $\Lambda_W,\Eset_W$,
then the RKBSs $\Banach_{K}^p(\Domain)$ and $\Banach_{W}^p(\Domain)$ are both well-defined for all $1\leq p\leq\infty$ by Theorem~\ref{t-RKBS-PDK}.

Finally, we verify that the RKBSs $\Banach_{K}^p(\Domain)$ and $\Banach_{W}^p(\Domain)$ are equivalent by the identity map, that is, the identity map is an isomorphism from $\Banach_{K}^p(\Domain)$ onto $\Banach_{W}^p(\Domain)$.

Here, we need another condition to complete the proof.
We further suppose that $\Aset_K$ or $\Aset_{W}$ is bounded in $\lone$. By inequality~\eqref{eq:PDK-equiv-eigenfun-cond} $\Aset_K$ is bounded in $\lone$ if and only if $\Aset_{W}$ is bounded in $\lone$.
According to Proposition~\ref{p:imbedding-RKBS-PDK}, both RKBSs $\Banach_{K}^p(\Domain)$ and $\Banach_{W}^p(\Domain)$ can be seen as a subspace of $\Leb_2(\Domain)$.

Let $f\in\Leb_2(\Domain)$.
Then $f$ can be represented as the expansions
\[
f=\sum_{n\in\NN}\sum_{k\in\NN_{N_n}}a_{n,k}\lambda_n^{1/2}e_{n,k},
\]
and
\[
f=\sum_{n\in\NN}\sum_{k\in\NN_{N_n}}b_{n,k}\lambda_n^{1/2}\varphi_{n,k},
\]
where
\[
a_{n,k}:=\lambda_n^{-1/2}\int_{\Domain}f(\vx)e_{n,k}(\vx)\mu\left(\ud\vx\right),
\quad
b_{n,k}:=\lambda_n^{-1/2}\int_{\Domain}f(\vx)\varphi_{n,k}(\vx)\mu\left(\ud\vx\right);
\]
hence $f\in\Banach_K^p(\Domain)$ or $f\in\Banach_{W}^p(\Domain)$ if and only if
\[
\sum_{n\in\NN}\norm{\va_n}_p^p<\infty\text{ or }
\sum_{n\in\NN}\norm{\vb_n}_p^p<\infty,\quad\text{when }1\leq p<\infty,
\]
and
\[
\lim_{n\to\infty}\norm{\va_n}_{\infty}=0
\text{ or }
\lim_{n\to\infty}\norm{\vb_n}_{\infty}=0,
\]
where $\va_n:=\left(a_{n,k}:k\in\NN_{N_n}\right)$ and $\vb_n:=\left(b_{n,k}:k\in\NN_{N_n}\right)$.
We further find the relationships of the above coefficients as follows:
\[
\vb_n=\vU_n\va_n,
\]
and
\[
C_U^{-1}\norm{\va_n}_{p}\leq
\norm{\vU_n^T}_{p}^{-1}\norm{\va_n}_{p}\leq\norm{\vb_n}_{p}\leq\norm{\vU_n}_{p}\norm{\va_n}_{p}\leq C_U\norm{\va_n}_{p},
\]
for all $n\in\NN$.
This indicates that $f\in\Banach_K^p(\Domain)$ if and only if $f\in\Banach_{W}^p(\Domain)$, and their norms are equivalent, that is,
\[
C_U^{-1}\norm{f}_{\Banach_K^p(\Domain)}\leq\norm{f}_{\Banach_{W}^p(\Domain)}
\leq C_U\norm{f}_{\Banach_K^p(\Domain)}.
\]

Therefore, we conclude the following result.
\begin{proposition}\label{p:equivalent-eigenfuns}
Let $K\in\Cont(\Domain\times\Domain)$ be a symmetric positive definite kernel such that $K$ has
the equivalent eigenfunctions $\Eset_K$ and $\Eset_W$ satisfying
assumption (A-PDK). If the sequence set $\Aset_K$ of $K$ or the sequence set $\Aset_W$ of $W$ is bounded in $\lone$, then the reproducing kernel Banach spaces $\Banach_{K}^p(\Domain)$ and $\Banach_{W}^p(\Domain)$ are equivalent by the identity map.
\end{proposition}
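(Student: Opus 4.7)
The plan is to follow the sketch laid out in the paragraphs immediately preceding the proposition, making the norm-comparison across eigenspaces precise. First, I would verify that the setup is self-consistent: inequality~\eqref{eq:PDK-equiv-eigenfun-cond} shows that $\Lambda_K,\Eset_K$ satisfy (C-PDK) if and only if $\Lambda_W,\Eset_W$ do, and the same inequality shows that $\lone$-boundedness of $\Aset_K$ is equivalent to $\lone$-boundedness of $\Aset_W$. Thus, by Theorem~\ref{t-RKBS-PDK}, both $\Banach_K^p(\Domain)$ and $\Banach_W^p(\Domain)$ are well-defined reproducing kernel Banach spaces for every $1\le p\le\infty$, and by Proposition~\ref{p:imbedding-RKBS-PDK} both imbed into $\Leb_2(\Domain)$, so elements of either space can be viewed inside the common ambient Hilbert space $\Leb_2(\Domain)$.

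Next, I would decompose $\Leb_2(\Domain)$ along the spectral subspaces $\Space_n$ of the integral operator $I_K$. For any $f\in\Leb_2(\Domain)$ the Fourier coefficients within $\Space_n$ organize into finite vectors $\va_n,\vb_n\in\RR^{N_n}$ when expanded against $\lambda_n^{1/2}\ve_n$ and $\lambda_n^{1/2}\vvarphi_n$, respectively, and the change-of-basis identity $\vb_n=\vU_n\va_n$ holds with the unitary transition matrices $\vU_n$ from \eqref{eq:equivalent-eigenfuns}. Membership in $\Banach_K^p(\Domain)$ (resp.\ $\Banach_W^p(\Domain)$) then amounts to $\sum_{n\in\NN}\norm{\va_n}_p^p<\infty$ (resp.\ $\sum_{n\in\NN}\norm{\vb_n}_p^p<\infty$) for $1\le p<\infty$, with the obvious $\czero$-version when $p=\infty$, and these sums equal the respective norms to the $p$-th power.

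The core estimate is then $\norm{\vb_n}_p\le\norm{\vU_n}_{p\to p}\norm{\va_n}_p$ and the reverse inequality using $\vU_n^T$. Here the key input is that $\norm{\vU_n}_{\infty\to\infty}$ equals the maximum row $\ell_1$-norm, $\norm{\vU_n}_{1\to 1}$ equals the maximum column $\ell_1$-norm, and both are controlled by $C_U$ of \eqref{eq:equivalent-eigenfuns}. The main (only nontrivial) obstacle is therefore interpolating these bounds to obtain $\norm{\vU_n}_{p\to p}\le C_U$ uniformly in $n$ for every intermediate $p$; I would invoke Riesz--Thorin interpolation on the linear operator $\vU_n\colon\ell_p^{N_n}\to\ell_p^{N_n}$, which gives $\norm{\vU_n}_{p\to p}\le\max(\norm{\vU_n}_{1\to 1},\norm{\vU_n}_{\infty\to\infty})\le C_U$, and the same bound for $\vU_n^T$.

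Finally, summing the block estimates over $n$ yields
\[
C_U^{-1}\norm{f}_{\Banach_K^p(\Domain)}\le\norm{f}_{\Banach_W^p(\Domain)}\le C_U\norm{f}_{\Banach_K^p(\Domain)},
\]
for $1\le p<\infty$, and for $p=\infty$ the block bound $\norm{\vb_n}_\infty\le C_U\norm{\va_n}_\infty$ together with the fact that $\norm{\va_n}_\infty\to 0$ iff $\norm{\vb_n}_\infty\to 0$ gives the same two-sided inequality with the $\norm{\cdot}_{\Banach_K^\infty(\Domain)}$ norms. Consequently $f\in\Banach_K^p(\Domain)$ iff $f\in\Banach_W^p(\Domain)$, and the identity map is a bicontinuous linear isomorphism between the two spaces, as claimed.
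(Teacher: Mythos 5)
Your proposal is correct and follows essentially the same route as the paper: decompose along the eigenspaces $\Space_n$, relate the block coefficient vectors by the unitary transition matrices $\vU_n$, and bound $\norm{\vU_n}_{p\to p}$ uniformly by $C_U$ to get the two-sided norm equivalence. The only addition is your explicit appeal to Riesz--Thorin interpolation to justify $\norm{\vU_n}_{p\to p}\le\max\left(\norm{\vU_n}_{1},\norm{\vU_n}_{\infty}\right)$ for intermediate $p$, a step the paper states without comment; this is a welcome clarification rather than a different argument.
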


In particular, if $1<p,q<\infty$ such that $p^{-1}+q^{-1}=1$, then
the dual bilinear products of $\Banach_K^p(\Domain)$ and $\Banach_{W}^p(\Domain)$ are identical, that is,
\[
\langle f,g \rangle_{\Banach_K^p(\Domain)}
=
\langle f,g \rangle_{\Banach_{W}^p(\Domain)},
\]
for all $f\in\Banach_K^p(\Domain)=\Banach_{W}^p(\Domain)$
and all $g\in\Banach_K^q(\Domain)=\Banach_{W}^q(\Domain)$.

\section{Min Kernels}\label{s:min}
\sectionmark{Min Kernels}

In this section, we investigate the use of the min kernel in constructing the $p$-norm RKBSs.
We first recall
the min kernel with the homogeneous boundary condition on $[0,1]$ defined by
\[
\Psi_1(x,y):=\min\{x,y\}-xy,\quad 0\leq x,y\leq1.
\]
The min kernel $\Psi_1$ has the absolutely and uniformly convergent representation
\[
\Psi_1(x,y)=\sum_{n\in\NN}\rho_n\varphi_n(x)\varphi_n(y),
\]
expanded by its positive eigenvalues and continuous eigenfunctions
\[
\rho_{n}:=\frac{1}{n^2\pi^2},\quad \varphi_{n}(x):=\sqrt{2}\sin(n\pi x),\quad\text{for }n\in\NN.
\]
Associated with the min kernel $\Psi_1$ we define the integral-type kernel
$$
K_1(x,y):=\int_0^1\Psi_1(x,z)\Psi_1(z,y)\ud z.
$$
It can be computed that
\begin{align*}
K_1(x,y)&=\sum_{n\in\NN}\rho_{n}^2\varphi_{n}(x)\varphi_{n}(y)\\
&=\begin{cases}
-\frac{1}{6}x^3+\frac{1}{6}x^3y+\frac{1}{6}xy^3-\frac{1}{2}xy^2+\frac{1}{3}xy,&0\leq x\leq y\leq 1,\\
-\frac{1}{6}y^3+\frac{1}{6}xy^3+\frac{1}{6}x^3y-\frac{1}{2}x^2y+\frac{1}{3}xy,&0\leq y\leq x \leq1.
\end{cases}
\end{align*}
This ensures that the eigenvalues and eigenfunctions of $K_1$ can be represented, respectively, as
\[
\lambda_{1,n}:=\rho_{n}^2,\quad
e_{1,n}:=\varphi_{n},\quad\text{for }n\in\NN.
\]
Clearly, $\Psi_1$ is a continuous function and it is positive definite on $[0,1]$.

Next, we study the integral-type min kernel $K_d$ defined on the $d$-dimensional cubic $[0,1]^d$.
Here, the measure $\mu$ is the Lebesgue measure defined on $[0,1]^d$.
Let the product kernel
\[
\Psi_d(\vx,\vy):=\prod_{k\in\NN_d}\Psi_1\left(x_k,y_k\right),
\]
for $\vx:=(x_k:k\in\NN_d),\vy:=(y_k:k\in\NN_d)\in[0,1]^d$.
Associated with $\Psi_d$, we have the $d$-dimensional integral-type kernel
\[
K_d(\vx,\vy)=\int_{[0,1]^d}\Psi_d(\vx,\vz)\Psi_d(\vz,\vy)\ud\vz,
\quad\text{for }\vx,\vy\in[0,1]^d.
\]
Moreover, its eigenvalues and eigenfunctions can be computed by $\Lambda_{\Phi}$ and $\Eset_{\Phi}$, that is,
\[
\lambda_{d,\vn}:=\prod_{k\in\NN_d}\lambda_{1,n_k}=\prod_{k\in\NN_d}\rho_{n_k}^2,
\quad
e_{d,\vn}(\vx):=\prod_{k\in\NN_d}e_{1,n_k}(x_k)=\prod_{k\in\NN_d}\varphi_{n_k}(x_k),
\]
for $\vn:=(n_k:k\in\NN_d)\in\NN^d$. Here, $\NN_d:=\left\{1,2,\ldots,d\right\}$ is a finite set of natural numbers and $\NN^d=\otimes_{k=1}^d\NN$ is the tensor product of natural numbers.
We find that
\begin{equation}\label{eq:int-min-kernel}
K_d(\vx,\vy)=\prod_{k\in\NN_d}K_1(x_k,y_k),\quad \text{for }\vx,\vy\in[0,1]^d.
\end{equation}
If the expansion set $\Sset_{K_d}=\left\{\phi_{d,\vn}:\vn\in\NN^d\right\}$ of $K_d$ is chosen to be the collection of
\[
\phi_{d,\vn}(\vx):=\lambda_{d,\vn}^{1/2}e_{d,\vn}(\vx)=\prod_{k\in\NN_d}\rho_{n_k}\varphi_{n_k}(x_k),\quad
\text{for }\vn\in\NN^d,
\]
then we have that
\[
K_d(\vx,\vy)=\sum_{\vn\in\NN^d}\phi_{d,\vn}(\vx)\phi_{d,\vn}(\vy),
\quad\text{for }\vx,\vy\in[0,1]^d.
\]
Using the expansion sets $\Sset_{K_d}$ we construct the normed spaces
\[
\Banach_{K_d}^p\left([0,1]^d\right):=\left\{f:=\sum_{\vn\in\NN^d}a_{\vn}\phi_{d,\vn}:\left(a_{\vn}:\vn\in\NN^d\right)\in \lp\right\},\quad
\text{when }1\leq p<\infty,
\]
equipped with the norm
\[
\norm{f}_{\Banach_{K_d}^p\left([0,1]^d\right)}:=\left(\sum_{\vn\in\NN^d}\abs{a_{\vn}}^p\right)^{1/p},
\]
and
\[
\Banach_{K_d}^{\infty}\left([0,1]^d\right):=
\left\{f:=\sum_{\vn\in\NN^d}a_{\vn}\phi_{d,\vn}:\left(a_{\vn}:\vn\in\NN^d\right)\in \czero\right\},
\]
equipped with the norm
\[
\norm{f}_{\Banach_{K_d}^{\infty}\left([0,1]^d\right)}:=\sup_{\vn\in\NN^d}\abs{a_{\vn}}.
\]

\begin{remark}\label{r:Multi-Kernel-Expan}
To complete the constructions of the kernels defined on $d$-dimensional spaces, we repeat the sum-product techniques as follows:
\[
\prod_{k\in\NN_d}\left(\sum_{n\in\NN}a_{k,n}\right)
=\sum_{n_1,\ldots,n_d\in\NN}\prod_{k\in\NN_d}a_{k,n_k},
\]
if $\sum_{n\in\NN}a_{k,n}$ are convergent for all positive sequences $\left(a_{k,n}:n\in\NN\right)$ and $k\in\NN_d$.
\end{remark}

Using the techniques given in Remark~\ref{r:Multi-Kernel-Expan}, it is easy to check that
\[
\sum_{\vn\in\NN^d}\lambda_{d,\vn}^{1/2}
=\sum_{\vn\in\NN^d}\prod_{k\in\NN_d}\frac{1}{n_k^2\pi^2}
=\prod_{k\in\NN_d}\left(\sum_{n_k\in\NN}\frac{1}{n_k^2\pi^2}\right)
=\left(\sum_{n\in\NN}\frac{1}{n^2\pi^2}\right)^d<\infty,
\]
and
\[
\sup_{\vn\in\NN^d}\norm{e_{d,\vn}}_{\infty}
=\sup_{\vn\in\NN^d,\vx\in[0,1]^d}\prod_{k\in\NN_d}\abs{\sqrt{2}\sin(n_k\pi x_k)}=2^{d/2}<\infty.
\]
By Proposition~\ref{p:PDK-EigenValVec}, we have the following result.

\begin{corollary}\label{c:int-min-RKBS}
The integral-min kernel $K_d$ defined in equation~\eqref{eq:int-min-kernel} is the right-sided reproducing kernel of the right-sided reproducing kernel Banach space $\Banach_{K_d}^{1}\left([0,1]^d\right)$
and
the two-sided reproducing kernel of the two-sided reproducing kernel Banach space $\Banach_{K_d}^{p}\left([0,1]^d\right)$ for $1<p\leq\infty$.
Moreover, $\Banach_{K_d}^{p}\left([0,1]^d\right)\subseteq\Cont\left([0,1]^d\right)$ for $1\leq p\leq\infty$.
\end{corollary}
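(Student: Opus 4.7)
The plan is to verify that the integral-min kernel $K_d$ fits the hypotheses of Proposition~\ref{p:PDK-EigenValVec} and then invoke it directly. The statement breaks into three claims about $K_d$: it is a symmetric continuous positive definite kernel on the compact space $[0,1]^d$, its eigenvalues satisfy $\sum_{\vn\in\NN^d}\lambda_{d,\vn}^{1/2}<\infty$, and its eigenfunctions are uniformly bounded. Once these are checked, Proposition~\ref{p:PDK-EigenValVec} delivers both the reproducing-kernel structure for every $1\leq p\leq\infty$ and the inclusion $\Banach_{K_d}^p([0,1]^d)\subseteq\Cont([0,1]^d)$.

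First, I would note that $\Phi_1$ is continuous, symmetric, and positive definite on $[0,1]$ by its Mercer expansion, and since $K_d(\vx,\vy)=\prod_{k\in\NN_d}K_1(x_k,y_k)$ with $K_1$ continuous symmetric positive definite (being itself an integral-type kernel built from $\Phi_1$, as covered by Proposition~\ref{p:integral-typeKer}), the tensor product $K_d$ inherits symmetry, continuity, and positive definiteness on the compact cube $[0,1]^d$. The eigenvalues $\lambda_{d,\vn}$ and eigenfunctions $e_{d,\vn}$ identified in the text are then the actual Mercer data of $K_d$, because the $\{e_{d,\vn}:\vn\in\NN^d\}$ form an orthonormal set in $\Leb_2([0,1]^d)$ (as a tensor product of orthonormal bases of $\Leb_2([0,1])$) and reproduce $K_d$ through the factorized expansion.

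Second, I would carry out the two summability checks using the sum-product identity recalled in Remark~\ref{r:Multi-Kernel-Expan}. For the eigenvalue sum,
\[
\sum_{\vn\in\NN^d}\lambda_{d,\vn}^{1/2}
=\sum_{\vn\in\NN^d}\prod_{k\in\NN_d}\frac{1}{n_k^2\pi^2}
=\left(\sum_{n\in\NN}\frac{1}{n^2\pi^2}\right)^d
=\left(\frac{1}{6}\right)^d<\infty.
\]
For the eigenfunction bound, since $|\sqrt{2}\sin(n\pi x)|\leq\sqrt{2}$ uniformly on $[0,1]$,
\[
\sup_{\vn\in\NN^d}\norm{e_{d,\vn}}_{\Leb_\infty([0,1]^d)}
\leq\prod_{k\in\NN_d}\sqrt{2}=2^{d/2}<\infty.
\]

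Finally, Proposition~\ref{p:PDK-EigenValVec} applies with $\Domain=[0,1]^d$ and the above eigendata, yielding: $\Banach_{K_d}^1([0,1]^d)$ is a right-sided RKBS with right-sided reproducing kernel $K_d$; $\Banach_{K_d}^p([0,1]^d)$ is a two-sided RKBS with two-sided reproducing kernel $K_d$ for every $1<p\leq\infty$; and all of these spaces are contained in $\Cont([0,1]^d)$. There is essentially no obstacle here beyond bookkeeping — the only delicate point is making sure that the tensor-product eigenfunctions really are a complete orthonormal system of eigenfunctions of the integral operator $I_{K_d}$, which follows from the product structure of $K_d$ and Fubini, and that the sum-product interchange in the summability calculation is legitimate, which is immediate because all terms are nonnegative.
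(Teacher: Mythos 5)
Your proposal is correct and follows essentially the same route as the paper: verify $\sum_{\vn\in\NN^d}\lambda_{d,\vn}^{1/2}=(\sum_{n}1/(n^2\pi^2))^d<\infty$ and $\sup_{\vn}\norm{e_{d,\vn}}_{\Leb_\infty([0,1]^d)}=2^{d/2}<\infty$ via the sum--product identity of Remark~\ref{r:Multi-Kernel-Expan}, then apply Proposition~\ref{p:PDK-EigenValVec}. Your additional remarks on the tensor-product eigenstructure of $K_d$ are sound and merely make explicit what the paper leaves implicit.
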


According to the facts that
\[
\sum_{\vn\in\NN^d}\abs{\phi_{d,\vn}(\vx)}
\leq\sup_{\vx\in[0,1]^d,\vn\in\NN^d}\abs{e_{d,\vn}(\vx)}\sum_{\vn\in\NN^d}\lambda_{d,\vn}^{1/2}
=\sum_{\vn\in\NN^d}\prod_{k\in\NN_d}\frac{1}{n_k^2\pi^2}<\infty,
\]
and the continuity of the map $\vx\mapsto\sum_{\vn\in\NN^d}\lambda_{d,\vn}^{1/2}\abs{e_{d,\vn}(\vx)}$ on $[0,1]^d$,
the sequence set
\[
\Aset_{K_d}:=\left\{\left(\phi_{d,\vn}(\vx):\vn\in\NN^d\right):\vx\in[0,1]^d\right\},
\]
is bounded and closed in $\lone$.
By Propositions~\ref{p:imbedding-RKBS-PDK} and~\ref{p:RKBS-imbedding-dual-PDK}, we also obtain the imbedding of
$\Banach_{K_d}^{p}\left([0,1]^d\right)$.
Let $\alpha_{\vn}:=\prod_{k\in\NN_d}n_k^{-1/2}$ for all $\vn\in\NN^d$. Then $\left(\alpha_{\vn}:\vn\in\NN^d\right)\in \czero$ and
\[
\sum_{\vn\in\NN^d}\frac{\abs{\phi_{d,\vn}(\vx)}}{\alpha_n}
\leq\sup_{\vx\in[0,1]^d,\vn\in\NN^d}\abs{e_{d,\vn}(\vx)}\sum_{\vn\in\NN^d}\frac{\lambda_{d,\vn}^{1/2}}{\alpha_n}
=\sum_{\vn\in\NN^d}\prod_{k\in\NN_d}\frac{1}{n_k^{3/2}\pi^2}<\infty.
\]
This ensures that $\Aset_{K_d}$ is relatively compact in $\lone$. Hence, $\Aset_{K_d}$ is relatively compact in $\lq$ for all $1\leq q\leq\infty$.
Because of the compactness of $\Aset_{K_d}$, Proposition~\ref{p:PDK-compact-RKBS} implies that
the identity map from $\Banach_{K}^p\left([0,1]^d\right)$ into $\Linfty\left([0,1]^d\right)$ is compact.
Since the integral-min kernel $K$ is strictly positive definite on $(0,1)^d$ and the measure of the boundary of $[0,1]^d$ is equal to zero, Proposition~\ref{p:universial-approx-RKBS-PDK} ensures that $\Banach_{K}^p\left([0,1]^d\right)$ has the universal approximation property.

\section{Gaussian Kernels}\label{s:gaussian}
\sectionmark{Gaussian Kernels}

There is a long history of the
Gaussian kernel
\[
G_{\theta}(x,y):=e^{-\theta^2\abs{x-y}},\quad
\text{for }x,y\in\RR,
\]
where $\theta$ is a positive shape parameter.  The kernel was first introduced by K. F. Gaussian in his book~\cite{Gauss1809} in 1809.
Recently, the Gaussian kernels became a vigorous numerical tool for high dimensional approximation and machine learning. Now we show that the Gaussian kernel can also be employed to construct the $p$-norm RKBSs.

Let the $\vtheta$-norm on $d$-dimensional real space $\Rd$
be
\[
\norm{\vx-\vy}_{\vtheta}:=\left(\sum_{k\in\NN_d}\theta_k^2\abs{x_k-y_k}^2\right)^{1/2},
\]
for $\vx:=(x_k:k\in\NN_d),\vy:=(y_k:k\in\NN_d)\in\Rd$, where $\vtheta:=\left(\theta_k:k\in\NN_d\right)$ is a vector of positive shape parameters.
We write the Gaussian kernel with shape parameters $\vtheta\in\RR_{+}^d$ as
\[
G_{\vtheta}(\vx,\vy):=e^{-\norm{\vx-\vy}_{\vtheta}^2},
\quad\text{for }\vx,\vy\in\Rd.
\]
Moreover, the $d$-dimensional Gaussian kernel $G_{\vtheta}$ can be written as the product of univariate Gaussian kernels $G_{\theta_k}$ for $k\in\NN_d$, that is,
\[
G_{\vtheta}(\vx,\vy)=\prod_{k\in\NN_d}G_{\theta_k}\left(x_k,y_k\right),
\quad\text{for }\vx,\vy\in\Rd.
\]

It is well-known that the Gaussian kernel is a strictly positive definite kernel on $\Rd$.
Since the space $\Rd$ is not compact, there is a question whether the theorems in Section~\ref{s:RKBS-PDK} are applicable to the Gaussian kernels. The reason of the condition of the compactness of $\Domain$ in Section~\ref{s:RKBS-PDK} is that there are two conditions needed for the constructions and the imbedding. One is to guarantee the integral operator $I_K$ is a compact operator from $\Leb_2(\Domain)$ to $\Leb_2(\Domain)$. Thus, the continuous positive definite kernel $K$ has countable positive eigenvalues and continuous eigenfunctions by the Mercer theorem. The other one is to ensure $\mu\left(\Domain\right)<\infty$.
By the theoretical results given in papers \cite{FasshauerHickernellWozniakowski2012,FasshauerHickernellWozniakowski2012MCQMC}
and \cite[Section~4.4]{SteinwartChristmann2008}, we also obtain these two important conditions for the Gaussian kernels even though $\Rd$ is not compact.

For the probability density function
\[
p_d(\vx):=\frac{1}{\pi^{d/2}}e^{-\norm{\vx}_2^2},\quad \text{for }\vx\in\Rd,
\]
we define a probability measure $\mu_d$ on $\Rd$
\[
\mu_d(\ud\vx):=p_d(\vx)\ud\vx,
\]
such that $\Rd$ endowed with $\mu_d$ becomes a probability space.
For convenience we denote by $\Domain_d$ the \emph{probability measurable space} $\left(\Rd,\Borel_{\Rd},\mu_d\right)$.
Clearly, we have that
\[
\mu_d(\Domain_d)=\int_{\Domain_d}\mu_d(\ud\vx)=\int_{\Rd}p_d(\vx)\ud\vx=1.
\]
It is clear that $\supp\left(\mu_d\right)=\Domain_d$.
We verify that the integral operator $I_{K_{\vtheta}}$ is a compact operator from $\Leb_2\left(\Domain_d\right)$
to $\Leb_2\left(\Domain_d\right)$.

Let us look at the eigenvalues and eigenfunctions of the univariate Gaussian kernel $G_{\theta}$.
Papers \cite{FasshauerHickernellWozniakowski2012,FasshauerHickernellWozniakowski2012MCQMC}
showed that the eigenvalues and eigenfunctions of $G_{\theta}$ are represented, respectively, as
\begin{equation}\label{eq:Gaussian-1}
\rho_{\theta,n}:=\left(1-w_{\theta}\right)w_{\theta}^{n-1},
\end{equation}
and
\[
e_{\theta,n}(x):=
\left(\frac{\left(1+4\theta^2\right)^{1/4}}{2^{n-1}(n-1)!}\right)^{1/2}
e^{-u_{\theta}x^2}
H_{n-1}\left(\left(1+4\theta^2\right)^{1/4}x\right),
\]
for $n\in\NN$, where
\[
w_{\theta}:=\frac{2\theta^2}{1+\left(1+4\theta^2\right)^{1/2}+2\theta^2},
\]
and
\[
u_{\theta}:=\frac{2\theta^2}{1+\left(1+4\theta^2\right)^{1/2}}.
\]
Here, $H_{n-1}$ is the Hermite polynomial of degree $n-1$, that is,
\[
H_{n-1}(x):=(-1)^{n-1}e^{x^2}\frac{\ud^{n-1}}{\ud x^{n-1}}\left(e^{-x^2}\right),
\]
so that
\[
\int_{\Domain_1}H_{n-1}(x)^2\mu_1(\ud x)
=\frac{1}{\pi^{1/2}}\int_{\RR}H_{n-1}(x)^2e^{-x^2}\ud x
=2^{n-1}(n-1)!.
\]
Since $w_{\theta}\in(0,1)$, we have that
\[
\sum_{n\in\NN}\rho_{\theta,n}
=(1-w_{\theta})\sum_{n\in\NN}w_{\theta}^{n-1}
=(1-w_{\theta})\frac{1}{1-w_{\theta}}=1.
\]
Because of the fact that
\[
\int_{\Domain_1}e_{\theta,m}(x)e_{\theta,n}(x)\mu_1(\ud x)
=\delta_{mn},
\]
for all $m,n\in\NN$,
the univariate Gaussian kernel $G_{\theta}$ also has the absolutely and uniformly convergent representation
\[
G_{\theta}(x,y)
=\sum_{n\in\NN}\rho_{\theta,n}e_{\theta,n}(x)e_{\theta,n}(y),
\quad\text{for }x,y\in\Domain_1.
\]
Now we compute the integral-type kernel
\begin{equation}\label{eq:Gaussian-2}
\begin{split}
K_{\theta}(x,y)&=\int_{\Domain_1}G_{\theta}(x,z)G_{\theta}(z,y)\mu_1\left(\ud z\right)\\
&=\int_{\RR}G_{\theta}(x,z)G_{\theta}(z,y)p_1(z)\ud z\\
&=\frac{1}{\sqrt{\pi}}\int_{\RR}e^{-\theta^2(x-z)^2-\theta^2(z-y)^2-z^2}\ud z\\
&=\frac{1}{\sqrt{\pi}}e^{-\theta^2\left(x^2+y^2\right)}
\int_{\RR}e^{2\theta^2(x+y)z-\left(2\theta^2+1\right)z^2}\ud z\\
&=\left(2\theta^2+1\right)^{-1/2}e^{-\vartheta(\theta)^2\left(x-y\right)^2-\alpha(\theta)^2xy}\\
&=\left(2\theta^2+1\right)^{-1/2}G_{\vartheta(\theta)}(x,y)\Psi_{\alpha(\theta)}(x,y),
\end{split}
\end{equation}
for $x,y\in\RR$, where
\[
\vartheta(\theta):=\left(\frac{\theta^4+\theta^2}{2\theta^2+1}\right)^{1/2},
\quad
\alpha(\theta):=\left(\frac{2\theta^2}{2\theta^2+1}\right)^{1/2},
\]
and
\[
\Psi_{\alpha}(x,y):=e^{-\alpha^2xy}.
\]

Next, we look at the integral-type kernel $K_{\vtheta}$ induced by the $d$-dimensional Gaussian kernel $G_{\vtheta}$, that is,
\begin{equation}\label{eq:int-Gaussian-kernel}
K_{\vtheta}(\vx,\vy):=
\int_{\Domain_d}G_{\vtheta}(\vx,\vz)G_{\vtheta}(\vz,\vy)\mu_d\left(\ud\vz\right),
\quad\text{for }\vx,\vy\in\Domain_d.
\end{equation}
Then the integral-type kernel $K_{\vtheta}$ can also be rewritten as
\begin{equation}\label{eq:Gaussian-3}
K_{\vtheta}(\vx,\vy)
=\prod_{k\in\NN_d}\int_{\Domain_1}G_{\theta_k}\left(x_k,z_k\right)G_{\theta_k}\left(z_k,y_k\right)\mu_1\left(\ud z_k\right)
=\prod_{k\in\NN_d}K_{\theta_k}\left(x_k,y_k\right),
\end{equation}
for $\vx:=\left(x_k:k\in\NN_d\right),\vy:=\left(y_k:k\in\NN_d\right)\in\Domain_d$.
Let
\[
\vvartheta(\vtheta):=\left(\vartheta(\theta_k):k\in\NN_d\right),
\quad
\valpha(\vtheta):=\left(\alpha(\theta_k):k\in\NN_d\right)
\]
and
\[
\Psi_{\valpha}(\vx,\vy):=
\prod_{k\in\NN_d}\Psi_{\alpha_k}\left(x_k,y_k\right)
=
\prod_{k\in\NN_d}e^{-\alpha_k^2x_ky_k}=e^{-(\vx,\vy)_{\valpha}},
\]
where the $\valpha$-inner product is given by
\[
(\vx,\vy)_{\valpha}:=\sum_{k\in\NN_d}\alpha_k^2x_ky_k.
\]
Combining equations~\eqref{eq:Gaussian-2} and~\eqref{eq:Gaussian-3}, we have that
\[
K_{\vtheta}(\vx,\vy)=C(\vtheta)G_{\vvartheta(\vtheta)}(\vx,\vy)\Psi_{\vtheta}(\vx,\vy),
\]
where
\[
C(\vtheta):=\prod_{k\in\NN_d}\left(2\theta_k^2+1\right)^{-1/2}.
\]

By the proof of Proposition~\ref{p:integral-typeKer}, we obtain the eigenvalues and eigenfunctions of $K_{\vtheta}$
\[
\lambda_{\vtheta,\vn}:=\prod_{k\in\NN_d}\lambda_{\theta_k,n_k}=\prod_{k\in\NN_d}\rho_{\theta_k,n_k}^2,
\quad
e_{\vtheta,\vn}(\vx):=\prod_{k\in\NN_d}e_{\theta_k,n_k}\left(x_k\right),
\]
for $\vn:=\left(n_k:k\in\NN_d\right)\in\NN^d$. Hence, the expansion elements of $K_{\vtheta}$ have the form
\[
\phi_{\vtheta,\vn}(\vx):=\lambda_{\vtheta,\vn}^{1/2}e_{\vtheta,\vn}(\vx)
=\prod_{k\in\NN_d}\rho_{\theta_k,n_k}e_{\theta_k,n_k}\left(x_k\right).
\]
Based on the expansion set
$\Sset_{K_{\vtheta}}:=\left\{\phi_{\vtheta,\vn}:\vn\in\NN^d\right\}$, we define the $p$-norm spaces as
\[
\Banach_{K_{\vtheta}}^p\left(\Domain_d\right):=\left\{f:=\sum_{\vn\in\NN^d}a_{\vn}\phi_{\vtheta,\vn}:\left(a_{\vn}:\vn\in\NN^d\right)\in \lp\right\},\quad
\text{when }1\leq p<\infty,
\]
equipped with the norm
\[
\norm{f}_{\Banach_{K_{\vtheta}}^p\left(\Domain_d\right)}:=\left(\sum_{\vn\in\NN^d}\abs{a_{\vn}}^p\right)^{1/p},
\]
and
\[
\Banach_{K_{\vtheta}}^{\infty}\left(\Domain_d\right):=
\left\{f:=\sum_{\vn\in\NN^d}a_{\vn}\phi_{\vtheta,\vn}:\left(a_{\vn}:\vn\in\NN^d\right)\in \czero\right\},
\]
equipped with the norm
\[
\norm{f}_{\Banach_{K_{\vtheta}}^{\infty}\left(\Domain_d\right)}:=\sup_{\vn\in\NN^d}\abs{a_{\vn}}.
\]
Therefore, Proposition~\ref{p:integral-typeKer} yields the reproducing properties of $\Banach_{K_{\vtheta}}^p\left(\Domain_d\right)$.

\begin{corollary}\label{c:Gaussian-RKBS}
The integral-Gaussian kernel $K_{\vtheta}$ defined in equation~\eqref{eq:int-Gaussian-kernel} is the right-sided reproducing kernel of the right-sided reproducing kernel Banach space $\Banach_{K_{\vtheta}}^{1}\left(\Domain_d\right)$
and
the two-sided reproducing kernel of the two-sided reproducing kernel Banach space $\Banach_{K_{\vtheta}}^{p}\left(\Domain_d\right)$ for $1<p\leq\infty$.
Moreover, $\Banach_{K_{\vtheta}}^{p}\left(\Domain_d\right)\subseteq\Cont\left(\Domain_d\right)$ for $1\leq p\leq\infty$.
\end{corollary}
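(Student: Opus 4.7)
The plan is to reduce Corollary~\ref{c:Gaussian-RKBS} to Theorem~\ref{t-RKBS-PDK} by explicitly verifying the hypothesis~(A-PDK) for the integral-Gaussian kernel $K_{\vtheta}$. The one conceptual hurdle is that $\Domain_d=(\Rd,\Borel_{\Rd},\mu_d)$ is not a compact Hausdorff space, so we cannot quote Proposition~\ref{p:integral-typeKer} directly. However, the authors have already observed (following \cite{FasshauerHickernellWozniakowski2012,FasshauerHickernellWozniakowski2012MCQMC} and \cite[Section~4.4]{SteinwartChristmann2008}) that the integral operator $I_{K_{\vtheta}}$ is a compact self-adjoint operator on $\Leb_2(\Domain_d)$, and that the explicit eigenpairs $\{(\rho_{\theta,n},e_{\theta,n}):n\in\NN\}$ displayed in~\eqref{eq:Gaussian-1}-\eqref{eq:Gaussian-2} supply a genuine Mercer-type decomposition on $\Domain_d$. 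With this in hand, the compactness assumption in Theorem~\ref{t-RKBS-PDK} is used only to produce the eigensystem, and the remainder of the argument (namely the reductions to Theorems~\ref{t:RKBS-MercerKer-p}, \ref{t:RKBS-MercerKer-1}, and~\ref{t:RKBS-MercerKer-infty}) goes through verbatim, since those theorems require only linear independence of the expansion set and the condition~(C-$1*$), both of which are purely analytic statements about $\Leb_0(\Domain_d)$.

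First, I would record the product structure explicitly: since $G_{\vtheta}(\vx,\vy)=\prod_k G_{\theta_k}(x_k,y_k)$ and $\mu_d=\otimes_k\mu_1$, Fubini and the univariate decomposition yield
\[
K_{\vtheta}(\vx,\vy)=\sum_{\vn\in\NN^d}\lambda_{\vtheta,\vn}e_{\vtheta,\vn}(\vx)e_{\vtheta,\vn}(\vy),
\]
with $\lambda_{\vtheta,\vn}=\prod_k\rho_{\theta_k,n_k}^2$ and $e_{\vtheta,\vn}(\vx)=\prod_ke_{\theta_k,n_k}(x_k)$, exactly as in the proof of Proposition~\ref{p:integral-typeKer}. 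The eigenfunctions $\{e_{\vtheta,\vn}\}$ form an orthonormal set in $\Leb_2(\Domain_d)$, which in particular guarantees linear independence of the expansion set $\Sset_{K_{\vtheta}}$.

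Next I would verify~(C-PDK). Using the product structure it suffices to bound each univariate factor $\sum_{n\in\NN}\rho_{\theta,n}|e_{\theta,n}(x)|$. By Cauchy--Schwarz together with $\sum_n\rho_{\theta,n}=1$ (a geometric series identity) and the fact that $\sum_n\rho_{\theta,n}e_{\theta,n}(x)^2=G_{\theta}(x,x)=1$, one gets
\[
\sum_{n\in\NN}\rho_{\theta,n}|e_{\theta,n}(x)|\le\Bigl(\sum_{n\in\NN}\rho_{\theta,n}\Bigr)^{1/2}\Bigl(\sum_{n\in\NN}\rho_{\theta,n}e_{\theta,n}(x)^2\Bigr)^{1/2}=1,
\]
so the $d$-fold product is bounded by $1$ pointwise, and in particular is finite for every $\vx\in\Domain_d$. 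With~(C-PDK) verified, Theorem~\ref{t-RKBS-PDK} (applied in the non-compact setting just as above) yields the three reproducing statements for $\Banach_{K_{\vtheta}}^1(\Domain_d)$ and $\Banach_{K_{\vtheta}}^p(\Domain_d)$, $1<p\le\infty$.

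Finally, for the inclusion $\Banach_{K_{\vtheta}}^p(\Domain_d)\subseteq\Cont(\Domain_d)$, I would show that $\sum_{\vn}|\phi_{\vtheta,\vn}(\vx)|$ converges uniformly on every compact subset of $\Rd$, because then each $f=\sum_{\vn}a_{\vn}\phi_{\vtheta,\vn}\in\Banach_{K_{\vtheta}}^{p}(\Domain_d)$ is a uniform-on-compacta limit of continuous functions and hence continuous. The rapid geometric decay $\rho_{\theta,n}=(1-w_\theta)w_\theta^{n-1}$ with $w_\theta\in(0,1)$, combined with the standard Cram\'er-type pointwise bound $|e_{\theta,n}(x)|\le C_\theta e^{x^2/2}$ (the Gaussian weight $e^{-u_\theta x^2}$ dominates the Hermite polynomial up to a factor $e^{x^2/2}$, as a short computation with $u_\theta$ and $(1+4\theta^2)^{1/4}$ shows), gives the uniform-on-compacta majorant $\sum_n\rho_{\theta,n}|e_{\theta,n}(x)|\le C_\theta' e^{x^2/2}\sum_nw_\theta^{n-1}$. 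The main obstacle I anticipate is precisely this last step: securing a clean pointwise bound on $|e_{\theta,n}(x)|$ that depends on $x$ only through a locally bounded factor, so that the geometric tail of $\rho_{\theta,n}$ controls everything; everything else is a mechanical transcription of the Mercer-kernel RKBS machinery already developed in Chapters~\ref{char-GMK} and~\ref{char-PDK}.
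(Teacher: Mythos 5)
Your proposal is correct and, for the reproducing-property claims, takes essentially the same route as the paper: the paper proves the corollary by invoking Proposition~\ref{p:integral-typeKer} for integral-type kernels, whose proof consists precisely of your Cauchy--Schwarz estimate
$\sum_{n}\rho_{\theta,n}\abs{e_{\theta,n}(x)}\leq\left(\sum_{n}\rho_{\theta,n}\right)^{1/2}\left(\sum_{n}\rho_{\theta,n}e_{\theta,n}(x)^2\right)^{1/2}=G_{\theta}(x,x)^{1/2}=1$,
followed by the reduction to Theorem~\ref{t-RKBS-PDK}; your treatment of the non-compactness of $\Domain_d$ matches the paper's remarks preceding the corollary (compactness is used only to secure the Mercer eigensystem and $\mu_d(\Domain_d)<\infty$, both of which hold here). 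Where you genuinely diverge is the continuity claim. The paper obtains uniform convergence of $\sum_{\vn}\abs{\phi_{\vtheta,\vn}(\vx)}$ by applying the \emph{same} Cauchy--Schwarz inequality to tails: for any tail set $T$ one has $\sum_{\vn\in T}\rho_{\vtheta,\vn}\abs{e_{\vtheta,\vn}(\vx)}\leq\left(\sum_{\vn\in T}\rho_{\vtheta,\vn}\right)^{1/2}G_{\vtheta}(\vx,\vx)^{1/2}=\left(\sum_{\vn\in T}\rho_{\vtheta,\vn}\right)^{1/2}$, which tends to zero uniformly on all of $\Rd$ because $\sum_{\vn}\rho_{\vtheta,\vn}=1$ and $G_{\vtheta}(\vx,\vx)\equiv1$; continuity of every $f\in\Banach_{K_{\vtheta}}^{p}(\Domain_d)$ follows with no further input. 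Your alternative via a Cram\'er-type bound $\abs{e_{\theta,n}(x)}\leq C_{\theta}e^{x^2/2}$ does check out --- the exponent bookkeeping gives $-u_{\theta}+\tfrac{1}{2}(1+4\theta^2)^{1/2}=\tfrac{1}{2}$ and the $n$-dependence of Cram\'er's constant cancels against the normalization of $e_{\theta,n}$ --- and it yields uniform convergence on compacta, which suffices. But it imports an extra classical inequality and delivers a weaker (local rather than global) uniformity; the obstacle you flagged as the main difficulty dissolves entirely if you simply reuse the (C-PDK) estimate on tails.
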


Let
\[
\rho_{\vtheta,\vn}:=\prod_{k\in\NN_d}\rho_{\theta_k,n_k},
\quad\text{for }\vn\in\NN^d.
\]
Then the $d$-dimensional Gaussian kernel $G_{\vtheta}$ can be represented as
\[
G_{\vtheta}(\vx,\vy)=
\prod_{k\in\NN_d}\left(\sum_{n_k\in\NN}\rho_{\theta_k,n_k}e_{\theta_k,n_k}\left(x_k\right)e_{\theta_k,n_k}\left(y_k\right)\right)
=\sum_{\vn\in\NN^d}\rho_{\vtheta,\vn}e_{\vtheta,\vn}(\vx)e_{\vtheta,\vn}(\vy).
\]
Equation~\eqref{eq:Gaussian-1} implies that
\[
\sum_{\vn\in\NN^d}\rho_{\vtheta,\vn}
=\prod_{k\in\NN_d}\left(\sum_{n_k\in\NN}\rho_{\theta_k,n_k}\right)
=1.
\]
Using the same techniques of Theorem~\ref{p:integral-typeKer}, we determine that
\[
\sum_{\vn\in\NN^d}\abs{\phi_{\vtheta,\vn}(\vx)}
\leq\left(\sum_{\vn\in\NN^d}\rho_{\vtheta,\vn}\right)^{1/2}
\left(\sum_{\vn\in\NN^d}\rho_{\vtheta,\vn}\abs{e_{\vtheta,\vn}(\vx)}^2\right)^{1/2}
=\left(G_{\vtheta}(\vx,\vx)\right)^{1/2},
\]
for all $\vx\in\Domain_d$. This shows that the sequence set
\[
\Aset_{K_{\vtheta}}:=\left\{\left(\phi_{\vtheta,\vn}(\vx):\vn\in\NN^d\right):\vx\in\Domain_d\right\}
\]
is bounded in $\lone$.
By Propositions~\ref{p:imbedding-RKBS-PDK} and~\ref{p:RKBS-imbedding-dual-PDK}, we obtain the imbedding of the
$p$-norm RKBSs $\Banach_{K_{\vtheta}}^p\left(\Domain_d\right)$.

In addition, we check the compactness of $\Banach_{K_{\vtheta}}^p\left(\Domain_d\right)$. Take a sequence
\[
\left(w_{\vtheta}^{(\vn-1)/4}:\vn\in\NN^d\right),
\]
where
\[
w_{\vtheta}:=\prod_{k\in\NN_d}w_{\theta_k},\quad
w_{\vtheta}^{(\vn-1)/4}:=\prod_{k\in\NN_d}w_{\theta_k}^{(n_k-1)/4}.
\]
Then we have that
\[
\left(w_{\vtheta}^{(\vn-1)/4}:\vn\in\NN^d\right)\in \czero,
\]
because $w_{\theta_k}\in(0,1)$ for all $k\in\NN_d$.
By the techniques given in Remark~\ref{r:Multi-Kernel-Expan}, equation~\eqref{eq:Gaussian-1} ensures that
\begin{align*}
\sum_{\vn\in\NN^d}\frac{\rho_{\vtheta,\vn}}{w_{\vtheta}^{(\vn-1)/2}}&
=\prod_{k\in\NN_d}\left(\sum_{n_k\in\NN}\frac{\rho_{\theta_k,n_k}}{w_{\theta_k}^{(n_k-1)/2}}\right)\\
&=\prod_{k\in\NN_d}\left(1-w_{\theta_k}\right)\sum_{n_k\in\NN}\left(w_{\theta_k}^{1/2}\right)^{n_k-1}\\
&=\prod_{k\in\NN_d}\frac{1-w_{\theta_k}}{1-w_{\theta_k}^{1/2}}.
\end{align*}
Therefore,
\begin{align*}
\sum_{\vn\in\NN^d}\frac{\rho_{\vtheta,\vn}\abs{e_{\vtheta,\vn}(\vx)}}{w_{\vtheta}^{(\vn-1)/4}}
\leq&
\left(\sum_{\vn\in\NN^d}\frac{\rho_{\vtheta,\vn}}{w_{\vtheta}^{(\vn-1)/2}}\right)^{1/2}
\left(\sum_{\vn\in\NN^d}\rho_{\vtheta,\vn}\abs{e_{\vtheta,\vn}(\vx)}^2\right)^{1/2}\\
=&\left(G_{\vtheta}(\vx,\vx)\right)^{1/2}\prod_{k\in\NN_d}\frac{1-w_{\theta_k}}{1-w_{\theta_k}^{1/2}}
<\infty,
\end{align*}
for all $\vx\in\Domain_d$.
This ensures that $\Aset_{K_{\vtheta}}$ is relatively compact in $\lone$.
The continuity of the map $\vx\mapsto\rho_{\vtheta,\vn}\abs{e_{\vtheta,\vn}(\vx)}$ ensures that
$\Aset_{K_{\vtheta}}$ is closed in $\lone$.
So we obtain the compactness of $\Aset_{K_{\vtheta}}$.
Proposition~\ref{p:PDK-compact-RKBS} provides that the identity map is a compact operator from
$\Banach_{K_{\vtheta}}^p\left(\Domain_d\right)$ into $\Linfty\left(\Domain_d\right)$.

Clearly, the integral-Gaussian kernel $K_{\vtheta}$ is a strictly positive definite kernel on $\Domain_d$.
Since the domain $\Domain_d$ is not compact, the uniform norm is not well-defined on $\Cont(\Domain_d)$.
But, for any compact domain $\Domain$ in $\Domain_d$, the $p$-norm RKBS $\Banach_{K_{\vtheta}}^p\left(\Domain\right)$ has the universal approximation property by Proposition~\ref{p:universial-approx-RKBS-PDK}.

\subsection*{Gaussian Kernels defined on $d$-dimensional space $\Rd$}

There is another interesting question whether the Gaussian kernel can still become a reproducing kernel of some RKBSs when the measure of its domain is not finite, that is, $\mu(\Domain)=\infty$.
To answer this question, we consider another symmetric expansion set $\tilde{\Sset}_{G_{\vtheta}}=\tilde{\Sset}_{G_{\vtheta}}'$ of the $d$-dimensional Gaussian kernel $G_{\vtheta}$ defined on $\Rd$.
According to \cite[Theorem~4.38]{SteinwartChristmann2008} about the expansions of the Gaussian kernel defined on $\RR$, the univariate Gaussian kernel $G_{\theta}$ has the absolutely and uniformly convergent representation
\[
G_{\theta}(x,y)=\sum_{n\in\NN_0}\tilde{\phi}_{\theta,n}(x)\tilde{\phi}_{\theta,n}(y),
\quad\text{for }x,y\in\RR,
\]
where
\[
\tilde{\phi}_{\theta,n}(x):=\left(\frac{2^n}{n!}\right)^{1/2}\left(\theta x\right)^ne^{-\theta^2x^2},
\quad\text{for }n\in\NN_0=\NN\cup\{0\}.
\]
Here, the $d$-dimensional space $\Rd$ is a standard Euler space and the measure $\mu$ is the Lebesgue measure defined on $\Rd$.
This indicates that
\begin{align*}
G_{\vtheta}(\vx,\vy)=&\prod_{k\in\NN_d}G_{\theta_k}\left(x_k,y_k\right)
=\sum_{k\in\NN_d}\left(\sum_{n_k\in\NN_0}\tilde{\phi}_{\theta_k,n_k}(x_k)\tilde{\phi}_{\theta_k,n_k}(y_k)\right)\\
=&\sum_{\vn\in\NN_0^d}\left(\prod_{k\in\NN_d}\tilde{\phi}_{\theta_k,n_k}(x_k)\tilde{\phi}_{\theta_k,n_k}(y_k)\right)
=\sum_{\vn\in\NN_0^d}\tilde{\phi}_{\vtheta,\vn}(\vx)\tilde{\phi}_{\vtheta,\vn}(\vy),
\end{align*}
where
\begin{equation}\label{eq:Gaussian-exp-element}
\tilde{\phi}_{\vtheta,\vn}(\vx):=\prod_{k\in\NN_d}\tilde{\phi}_{\theta_k,n_k}(x_k)
=\left(\frac{2^{\norm{\vn}_1}}{\vn!}\right)^{1/2}\vtheta^{\vn}\vx^{\vn}e^{-\norm{\vx}_{\vtheta}^2}.
\end{equation}
Using the power series
\[
\sum_{n\in\NN_0}\frac{\abs{z}^{n}}{\sqrt{n!}}
\leq\left(\sum_{n\in\NN_0}\tau^{2n}\right)
\left(\sum_{n\in\NN_0}\frac{\left(\tau^{-1}z\right)^{2n}}{n!}\right)
=\frac{1}{1-\tau^2}e^{\tau^{-2}z^2},
\]
for any $0<\tau<1$,
we have that
\begin{align*}
\sum_{\vn\in\NN_0^d}\abs{\tilde{\phi}_{\vtheta,\vn}(\vx)}
\leq&
\left(\sum_{\vn\in\NN_0^d}\left(\prod_{k\in\NN_d}\sqrt{\frac{2^{n_k}}{n_k!}}\theta_k^{n_k}\abs{x_k}^{n_k}e^{-\theta_k^2x_k^2}\right)\right)\\
\leq&\prod_{k\in\NN_d}\left(\sum_{n_k\in\NN_0}\frac{\left(\sqrt{2}\theta_k\abs{x_k}\right)^{n_k}}{\sqrt{n_k!}}\right)e^{-\theta_k^2x_k^2}\\
\leq&\prod_{k\in\NN_d}\frac{e^{\left(2\tau^{-2}-1\right)\theta_k^2x_k^2}}{1-\tau^2}
<\infty,
\end{align*}
for each $\vx\in\Rd$.
Let $\Domain$ be an arbitrary connected region of $\Rd$.
Then the expansion set $\tilde{\Sset}_{G_{\vtheta}}:=\left\{\tilde{\phi}_{\vtheta,\vn}|_{\Domain}:\vn\in\NN_0^d\right\}$ satisfies conditions (C-$1*$).
Combining this with the linear independence of $\tilde{\Sset}_{G_{\vtheta}}$, we ensure that the expansion sets $\tilde{\Sset}_{G_{\vtheta}}=\tilde{\Sset}_{G_{\vtheta}}'$ of the Gaussian kernel $G_{\vtheta}|_{\Domain\times\Domain}$ satisfy assumption (A-$1*$).
Same as the discussion of Theorem~\ref{t-RKBS-PDK},
the Gaussian kernel $G_{\vtheta}|_{\Domain\times\Domain}$ is a reproducing kernel and its
RKBSs $\Banach_{G_{\vtheta}}^p\left(\Domain\right)$ for all $1\leq p\leq\infty$ are well-defined by $\tilde{\Sset}_{G_{\vtheta}}$, that is,
\[
\Banach_{G_{\vtheta}}^p\left(\Domain\right):=
\left\{f:=\sum_{\vn\in\NN_0^d}a_{\vn}\tilde{\phi}_{\vtheta,\vn}|_{\Domain}:\left(a_{\vn}:\vn\in\NN^d\right)\in \lp\right\},\quad
\text{when }1\leq p<\infty,
\]
equipped with the norm
\[
\norm{f}_{\Banach_{G_{\vtheta}}^p\left(\Domain\right)}:=\left(\sum_{\vn\in\NN_0^d}\abs{a_{\vn}}^p\right)^{1/p},
\]
and
\[
\Banach_{G_{\vtheta}}^{\infty}\left(\Domain\right):=
\left\{f:=\sum_{\vn\in\NN_0^d}a_{\vn}\tilde{\phi}_{\vtheta,\vn}|_{\Domain}:\left(a_{\vn}:\vn\in\NN^d\right)\in \czero\right\},
\]
equipped with the norm
\[
\norm{f}_{\Banach_{G_{\vtheta}}^{\infty}\left(\Domain\right)}:=\sup_{\vn\in\NN_0^d}\abs{a_{\vn}}.
\]

If the domain $\Domain$ is compact in $\Rd$, then
the sequence set
\[
\tilde{\Aset}_{G_{\vtheta}}:=\left\{\left(\tilde{\phi}_{\vtheta,\vn}(\vx):\vn\in\NN_0^d\right):\vx\in\Domain\right\},
\]
is compact in $\lone$. Hence, we obtain
the imbedding properties and compactness of $\Banach_{G_{\vtheta}}^p\left(\Domain\right)$ by the same manners of Propositions~\ref{p:imbedding-RKBS-PDK},~\ref{p:RKBS-imbedding-dual-PDK} and~\ref{p:PDK-compact-RKBS}.
By Proposition~\ref{p:universial-approx-RKBS-PDK}, the $p$-norm RKBS $\Banach_{G_{\vtheta}}^p\left(\Domain\right)$ has the universal approximation property.

\section{Power Series Kernels}\label{s:powerseries}
\sectionmark{Power Series Kernels}

This example of power series kernels show that we use non-eigenvalues and non-eigenfunctions of positive definite kernels to construct its $p$-norm RKBSs differently from RKHSs.

Let
\[
\vx^{\vn}:=\prod_{k\in\NN_d}x_k^{n_k},\quad
\vn!:=\prod_{k\in\NN_d}n_k!,
\]
for $\vx:=\left(x_k:k\in\NN_d\right)\in\Rd$ and $\vn:=\left(n_k:k\in\NN_d\right)\in\NN_0^d$.
According to \cite[Remark 1]{Zwicknagl2009}, we know that the \emph{power series kernel} $K$
\[
K(\vx,\vy):=\sum_{\vn\in\NN^d_0}w_{\vn}\frac{\vx^{\vn}}{\vn!}\frac{\vy^{\vn}}{\vn!},
\quad\text{for }\vx,\vy\in (-1,1)^d,
\]
is a positive definite kernel if the positive sequence $\left(w_{\vn}:\vn\in\NN_0^d\right)$ satisfies
\[
\sum_{\vn\in\NN_0^d}\frac{w_{\vn}}{\vn!^2}<\infty.
\]
Here, the measure $\mu$ is the Lebesgue measure defined on $(-1,1)^d$.
We observe that the polynomial expansion elements of the power series kernel $K$
\begin{equation}\label{eq:power-kernel-1}
\phi_{\vn}(\vx):=w_{\vn}^{1/2}\frac{\vx^{\vn}}{\vn!},
\end{equation}
are NOT the eigenvalues and eigenfunctions of $K$.
However, we still use the expansion set $\Sset_{K}:=\left\{\phi_{\vn}:\vn\in\NN_0^d\right\}$ to construct the $p$-norm RKBSs.

Next we consider a special case of power series kernels.
A power series kernel $K$ is called \emph{nonlinear and factorizable} if
there exists an analytic function
$$
\eta(z):=\sum_{n\in\NN_0}c_nz^n
$$
with positive coefficients $\left\{c_n:n\in\NN_0\right\}$
such that
\begin{equation}\label{eq:non-fact-power-kernel}
K(\vx,\vy)=\prod_{k\in\NN_d}\eta(x_ky_k).
\end{equation}
Hence,
\[
K(\vx,\vy)=\prod_{k\in\NN_d}\left(\sum_{n_k\in\NN_0}c_{n_k}x_k^{n_k}y_k^{n_k}\right)
=\sum_{\vn\in\NN_0^d}\left(\prod_{k\in\NN_d}c_{n_k}\right)\vx^{\vn}\vy^{\vn}.
\]
This indicates that the positive weights of the nonlinear factorizable power series kernel $K$ are given by
\[
w_{\vn}:=\prod_{k\in\NN_d}c_{n_k}\left(n_k!\right)^2,\quad\text{for }\vn\in\NN_0^d.
\]
Hence, its expansion elements defined in equation~\eqref{eq:power-kernel-1} can be rewritten as
\begin{equation}\label{eq:non-fact-power-kernel-expan-element}
\phi_{\vn}(\vx):=\left(\prod_{k\in\NN_d}c_{n_k}^{1/2}\right)\vx^{\vn}.
\end{equation}
Moreover, if
\[
\sum_{n\in\NN_0}c_n^{1/2}<\infty,
\]
then
\begin{equation}\label{eq:power-kernel-2}
\begin{split}
\sum_{\vn\in\NN_0^d}\abs{\phi_{\vn}(\vx)}
=\sum_{\vn\in\NN_0^d}\left(\prod_{k\in\NN_d}c_{n_k}^{1/2}\right)\abs{\vx^{\vn}}
\leq\left(\sum_{n\in\NN_0}c_n^{1/2}\right)^d<\infty,
\end{split}
\end{equation}
for $\vx\in(-1,1)^d$ because
\[
\sum_{\vn\in\NN_0^d}\left(\prod_{k\in\NN_d}c_{n_k}^{1/2}\right)
=\prod_{k\in\NN_d}\left(\sum_{n_k\in\NN_0}c_{n_k}^{1/2}\right)
=\left(\sum_{n\in\NN_0}c_n^{1/2}\right)^d.
\]
Since the standard polynomial functions $\left\{\vx^{\vn}:\vn\in\NN_0^d\right\}$ are linearly independent, the expansion set $\Sset_K=\Sset_K'$ of the power series kernel $K$ satisfies assumption (A-$1*$).
Thus, we employ
the expansion set
$\Sset_{K}$ to build the $p$-norm spaces
\[
\Banach_{K}^p\left((-1,1)^d\right):=
\left\{f:=\sum_{\vn\in\NN_0^d}a_{\vn}\phi_{\vn}:\left(a_{\vn}:\vn\in\NN_0^d\right)\in \lp\right\},\quad
\text{when }1\leq p<\infty,
\]
equipped with the norm
\[
\norm{f}_{\Banach_{K}^p\left((-1,1)^d\right)}:=\left(\sum_{\vn\in\NN_0^d}\abs{a_{\vn}}^p\right)^{1/p},
\]
and
\[
\Banach_{K}^{\infty}\left((-1,1)^d\right):=
\left\{f:=\sum_{\vn\in\NN_0^d}a_{\vn}\phi_{\vn}:\left(a_{\vn}:\vn\in\NN_0^d\right)\in \czero\right\},
\]
equipped with the norm
\[
\norm{f}_{\Banach_{K}^{\infty}\left((-1,1)^d\right)}:=\sup_{\vn\in\NN_0^d}\abs{a_{\vn}}.
\]
In the same manner as in Theorem~\ref{t-RKBS-PDK}, we obtain the following result.

\begin{corollary}\label{c:Power-Fac-RKBS}
If the positive expansion coefficients $\left\{c_n:n\in\NN_0\right\}$ of the nonlinear factorizable power series kernel $K$ satisfy that
\begin{equation}\label{eq:power-kernel-3}
\sum_{n\in\NN_0}c_n^{1/2}<\infty,
\end{equation}
then $K$ is the right-sided reproducing kernel of the right-sided reproducing kernel Banach space $\Banach_{K}^{1}\left((-1,1)^d\right)$
and
the two-sided reproducing kernel of the two-sided reproducing kernel Banach space $\Banach_{K}^{p}\left((-1,1)^d\right)$ for $1<p\leq\infty$.
Moreover, $\Banach_{K}^{p}\left((-1,1)^d\right)\subseteq\Cont\left((-1,1)^d\right)$ for $1\leq p\leq\infty$.
\end{corollary}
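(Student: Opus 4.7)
The plan is to verify the hypotheses of Theorems~\ref{t:RKBS-MercerKer-p}, \ref{t:RKBS-MercerKer-1}, and \ref{t:RKBS-MercerKer-infty} for the expansion sets $\Sset_K = \Sset_K' = \{\phi_{\vn} : \vn \in \NN_0^d\}$ with $\phi_{\vn}$ given by equation~\eqref{eq:non-fact-power-kernel-expan-element}, and then invoke those theorems to read off the reproducing properties. This is precisely the strategy already used in the proof of Theorem~\ref{t-RKBS-PDK}; the only twist here is that we are not using eigenvalues/eigenfunctions of $K$, but simply the natural monomial-type expansion coming from the factorization~\eqref{eq:non-fact-power-kernel}.

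First I would record that $K$ indeed admits the pointwise expansion $K(\vx,\vy) = \sum_{\vn \in \NN_0^d} \phi_{\vn}(\vx) \phi_{\vn}(\vy)$ for $\vx,\vy \in (-1,1)^d$, which follows from expanding each factor $\eta(x_k y_k) = \sum_{n_k \in \NN_0} c_{n_k} x_k^{n_k} y_k^{n_k}$ and multiplying, as done in the paragraph preceding the corollary. Second, I would verify linear independence of $\Sset_K$: since $\phi_{\vn}$ is a nonzero scalar multiple of the monomial $\vx^{\vn}$ and distinct multivariate monomials are linearly independent on any open subset of $\Rd$, the set $\Sset_K$ is linearly independent (and symmetric, so $\Sset_K=\Sset_K'$ is also linearly independent).

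Third, I would check the conditions~(C-$1*$). This is the content of the estimate~\eqref{eq:power-kernel-2}: for every $\vx \in (-1,1)^d$,
\[
\sum_{\vn \in \NN_0^d} \abs{\phi_{\vn}(\vx)} \leq \sum_{\vn \in \NN_0^d} \prod_{k \in \NN_d} c_{n_k}^{1/2} = \Bigl(\sum_{n \in \NN_0} c_n^{1/2}\Bigr)^d < \infty,
\]
where we used $\abs{x_k} < 1$ and Fubini-type expansion of the product, together with hypothesis~\eqref{eq:power-kernel-3}. Hence $\Sset_K$ and $\Sset_K'$ satisfy the assumption (A-$1*$), so Theorem~\ref{t:RKBS-MercerKer-1} yields that $\Banach_K^1((-1,1)^d)$ is a right-sided RKBS with right-sided reproducing kernel $K$, Theorem~\ref{t:RKBS-MercerKer-infty} gives the two-sided RKBS property of $\Banach_K^{\infty}((-1,1)^d)$, and since (A-$1*$) implies (A-$p$) for every $1<p<\infty$ (as recorded in~\eqref{A1-to-Ap-1}--\eqref{A1-to-Ap-2}), Theorem~\ref{t:RKBS-MercerKer-p} delivers the two-sided RKBS property of $\Banach_K^p((-1,1)^d)$ for $1<p<\infty$.

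Finally, for the continuity conclusion $\Banach_K^p((-1,1)^d) \subseteq \Cont((-1,1)^d)$, I would note that the monomials $\vx \mapsto \vx^{\vn}$ are continuous, so each $\phi_{\vn}$ is continuous, and it suffices to establish locally uniform convergence of $\sum_{\vn} \abs{\phi_{\vn}(\vx)}$ on $(-1,1)^d$. For any compact $\Kset \subseteq (-1,1)^d$, choose $r \in (0,1)$ with $\Kset \subseteq [-r,r]^d$; then $\abs{\phi_{\vn}(\vx)} \leq r^{\norm{\vn}_1} \prod_k c_{n_k}^{1/2}$, and the resulting majorant sums to $(\sum_n c_n^{1/2} r^n)^d \leq (\sum_n c_n^{1/2})^d < \infty$, giving uniform convergence on $\Kset$ by the Weierstrass $M$-test. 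The continuity of each $f \in \Banach_K^p((-1,1)^d)$ then follows from Proposition~\ref{p:RKBS-continue} (or directly from the uniform bound $\abs{f(\vx)} \le \norm{f}_{\Banach_K^p}(\sum_{\vn}\abs{\phi_{\vn}(\vx)}^q)^{1/q}$ combined with local uniform convergence). The only mildly delicate point is this passage from the pointwise summability~\eqref{eq:power-kernel-2} to local uniform convergence, but the geometric factor $r^{\norm{\vn}_1}$ handles it without effort.
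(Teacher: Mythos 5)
Your proposal is correct and follows essentially the same route as the paper: check that $\Sset_K=\Sset_K'$ is linearly independent and satisfies (C-$1*$) via the estimate~\eqref{eq:power-kernel-2}, invoke Theorems~\ref{t:RKBS-MercerKer-p}, \ref{t:RKBS-MercerKer-1} and~\ref{t:RKBS-MercerKer-infty}, and then deduce the inclusion in $\Cont\left((-1,1)^d\right)$ from uniform convergence of $\sum_{\vn}\abs{\phi_{\vn}(\vx)}$. The only cosmetic difference is in the last step, where the paper cites the Abel uniform convergence test on all of $(-1,1)^d$ while you use the Weierstrass $M$-test on compacta; in fact the majorant $\prod_{k\in\NN_d}c_{n_k}^{1/2}$ already bounds $\abs{\phi_{\vn}}$ on the whole open cube and is summable, so the geometric factor $r^{\norm{\vn}_1}$ is not even needed.
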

\begin{proof}
Since $K$ satisfies assumption (A-$1*$),
Theorems~\ref{t:RKBS-MercerKer-p}, \ref{t:RKBS-MercerKer-1} and \ref{t:RKBS-MercerKer-infty}
assure the reproducing properties of $\Banach_{K}^{p}\left((-1,1)^d\right)$ for all $1\leq p\leq\infty$.
By the Abel uniform convergence test,
the power series $\sum_{\vn\in\NN_0^d}\abs{\phi_{\vn}(\vx)}$ is uniformly convergent on $(-1,1)^d$. It follows that $\Banach_{K}^{p}\left((-1,1)^d\right)$ is included in $\Cont\left((-1,1)^d\right)$.
\end{proof}

Inequality~\eqref{eq:power-kernel-2} implies that the sequence set
\[
\Aset_K:=\left\{\left(\phi_{\vn}(\vx):\vn\in\NN_0^d\right):\vx\in(-1,1)^d\right\}
\]
is bounded in $\lone$. Therefore, we obtain the imbedding properties of $\Banach_{K}^{p}\left((-1,1)^d\right)$ the same as in Propositions~\ref{p:imbedding-RKBS-PDK} and~\ref{p:RKBS-imbedding-dual-PDK}.
Moreover, by Proposition~\ref{p:universial-approx-RKBS-PDK}, the $p$-norm RKBS $\Banach_{K}^{p}\left(\Domain\right)$ defined on any compact domain $\Domain$ of $(-1,1)^d$ (a bounded and closed region in $(-1,1)^d$) has the universal approximation property.

We give below examples of nonlinear factorizable power series kernels with the condition given in equation~\eqref{eq:power-kernel-3}, for example,
\begin{align*}
K(\vx,\vy):=\prod_{k\in\NN_d}e^{x_ky_k},\quad&\text{when }\eta(z):=e^z,\\
K(\vx,\vy):=\prod_{k\in\NN_d}\frac{1}{1-\theta x_ky_k},\quad&\text{when }\eta(z):=\frac{1}{1-\theta z},~0<\theta<1,\\
K(\vx,\vy):=\prod_{k\in\NN_d}I_0\left(2x_ky_k^{1/2}\right),\quad&\text{when }\eta(z):=I_0(2z^{1/2}),
\end{align*}
where $I_0$ is the modified Bessel function of the first kind of order $0$.

\chapter{Support Vector Machines}\label{char-SVM}

We develop in this chapter support vector machines on the $p$-norm RKBSs for $1<p<\infty$ and the $1$-norm RKBSs induced by the generalized Mercer kernels.

\section{Background of Support Vector Machines}\label{s:Background-svm}
\sectionmark{Background of Support Vector Machines}

We begin with a review of the traditional support vector machines on a reproducing Hilbert space, which solves a binary classification problem with the labels $\left\{\pm1\right\}$. For the classical machine learning, the original support vector machines~\cite{BoserGuyonVapnik1992,CortesVapnik1995} were generalized by the portrait method for the pattern recognitions invented earlier in~\cite{VapnikLerner1963}.
The field of learning methods has witnessed intense activities in the study of the support vector machines.

Let the training data $D:=\left\{\left(\vx_k,y_k\right):k\in\NN_N\right\}$ be composed of input data points $X:=\left\{\vx_k:k\in\NN_N\right\}\subseteq\Domain\subseteq\Rd$ and output data values $Y:=\left\{y_k:k\in\NN_N\right\}\subseteq\left\{\pm1\right\}$.
We begin to find an affine linear hyperplane described by $\left(\vw_s,b_s\right)\in\Rd\times\RR$ that separates the training data $D$ into the two groups for $y_k=1$ and $y_k=-1$.
The generalized portrait algorithm finds a perfectly separating hyperplane
\[
s(\vx):=\vw_s^T\vx+b_s,
\]
that has the maximal \emph{margin} to maximize the distance of the hyperplane to the points in $D$. This separating hyperplane gives us a decision rule to predict the labels at unknown locations, that is,
\[
r(\vx):=\Sign\left(s(\vx)\right),\quad\text{for }\vx\in\Rd.
\]
The $\left(\vw_s,b_s\right)$ is solved by the hard margin support vector machine
\begin{equation}\label{eq:hard-margin-svm}
\begin{split}
&\min_{\left(\vw,b\right)\in\RR^d\times\RR}\norm{\vw}_2^2\\
\text{s.t. }&y_k\left(\vw^T\vx_k+b\right)\geq1,\quad\text{for all }k\in\NN_N.
\end{split}
\end{equation}
It is also called the maximal margin classifier.
To relax the constraints of optimization problem~\eqref{eq:hard-margin-svm} for some slack variables $\xi_k\geq0$, we find $\left(\vw_s,b_s\right)$ by the soft margin support vector machine
\begin{equation}\label{eq:soft-margin-svm}
\begin{split}
&\min_{\left(\vw,b\right)\in\RR^d\times\RR}
\left\{C\sum_{k\in\NN_N}\xi_k+\frac{1}{2}\norm{\vw}_2^2\right\}\\
\text{s.t. }&y_k\left(\vw^T\vx_k+b\right)\geq1-\xi_k,\quad \xi_k\geq0, \quad\text{for all }k\in\NN_N,\\
\end{split}
\end{equation}
where the constant $C$ is a free positive parameter for balancing the margins and the errors.
Let
$L(\vx,y,t):=\max\left\{0,1-yt\right\}$ be the hinge loss function and $\sigma:=(2NC)^{-1/2}$. Optimization problem~\eqref{eq:soft-margin-svm} is then equivalent to
\begin{equation}\label{eq:svm-hint-loss-vec}
\min_{\left(\vw,b\right)\in\RR^d\times\RR}
\left\{\frac{1}{N}\sum_{k\in\NN_N}L\left(\vx_k,y_k,\vw^T\vx_k+b\right)+\sigma^2\norm{\vw}_2^2\right\}.
\end{equation}

The support vector machines are also considered in a Hilbert space $\Hilbert$ of functions $f:\Domain\to\RR$. Such a Hilbert space is also called the feature space. A \emph{feature map} $F:\Domain\to\Hilbert$ is driven to generalize the portrait algorithm to the mapped data set $\left\{\left(F(\vx_k),y_k\right):k\in\NN_N\right\}$.
If we replace the vector $\vw$ and the doc inner product $\vw^T\vx_k$, respectively, by the function $f$ and the inner product $\left(f,F(\vx_k)\right)_{\Hilbert}$, then we extend optimization problem~\eqref{eq:svm-hint-loss-vec} to
\begin{equation}\label{eq:svm-hint-loss-RKHS}
\min_{f\in\Hilbert}
\left\{\frac{1}{N}\sum_{k\in\NN_N}L\left(\vx_k,y_k,\left(f,F(\vx_k)\right)_{\Hilbert}\right)+\sigma^2\norm{f}_{\Hilbert}^2\right\}.
\end{equation}
Since the offset term $b$ of the hyperplane has neither a known theoretical nor an empirical advantage for the nonlinear feature map, the offset term $b$ is absorbed by the function $f$ (more details may be found in \cite[Chapter 11]{SteinwartChristmann2008}). We also hope that the feature map $F$ satisfies $\left(f,F(\vx)\right)_{\Hilbert}=f(\vx)$, or more precisely, $F(\vx)$ can be viewed as the identical element of the point evaluation functional $\delta_{\vx}$. Combining this with $\Hilbert'\cong\Hilbert$, we have that $\delta_{\vx}\in\Hilbert'$. Moreover, if we let
\[
K(\vx,\vy):=\left(F(\vy),F(\vx)\right)_{\Hilbert}=\left(\delta_{\vx},\delta_{\vy}\right)_{\Hilbert'},\quad
\text{for }\vx,\vy\in\Domain,
\]
then we check that
\[
F(\vx)=K(\vx,\cdot), \quad\text{for }\vx\in\Domain,
\]
and the space $\Hilbert$ is a RKHS with the reproducing kernel $K$.
The representer theorem (\cite[Theorem~5.5]{SteinwartChristmann2008}) ensures that
the optimal solution $s$ of the support vector machine~\eqref{eq:svm-hint-loss-RKHS} can be represented as
\[
s(\vx)=\sum_{k\in\NN_N}c_kK(\vx_k,\vx),\quad\text{for }\vx\in\Domain,
\]
for some suitable parameters $c_1,\ldots,c_N\in\RR$.

In addition, we solve the general support vector machines in a RKHS $\Hilbert$ of functions $f$
defined on a probability space $\Domain$ equipped with a probability measure $\mu$.
Given the hinge loss function $L$,
our learning goal is to find a target function that approximately achieves the smallest possible risks
\[
\inf_{f\in\Leb_0(\Domain)}\int_{\Domain\times\RR}L(\vx,y,f(\vx))\PP\left(\ud\vx,\ud y\right),
\]
where $\PP$ is a probability measure generating input and output data. The probability $\PP$ can be usually represented by a conditional probability $\PP(\cdot|\vx)$ on $\RR$, that is, $\PP(\vx,y)=\PP(y|\vx)\mu(\vx)$.
It is often too difficult to solve the target function directly.
We then turn to estimating the target function by a support vector machine solution $s$ of a regularized infinite-sample or empirical minimization over the certain RKHS $\Hilbert$, that is,
\[
s=\underset{f\in\Hilbert}{\text{argmin}}
\left\{\int_{\Domain\times\RR}L(\vx,y,f(\vx))\PP\left(\ud\vx,\ud y\right)+\sigma^2\norm{f}_{\Hilbert}^2\right\},
\]
or
\[
s=\underset{f\in\Hilbert}{\text{argmin}}
\left\{\frac{1}{N}\sum_{k\in\NN_N}L\left(\vx_k,y_k,f(\vx_k)\right)+\sigma^2\norm{f}_{\Hilbert}^2\right\},
\]
where the training data $D:=\left\{\left(\vx_k,y_k\right):k\in\NN_N\right\}\subseteq\Domain\times\RR$ are the independent random duplicates of $\PP(\vx,y)$. By the representer theorem (\cite[Theorem~5.5 and~5.8]{SteinwartChristmann2008}), the support vector solution $s$ can be written as a representation by the reproducing kernel $K$ of the RKHS $\Hilbert$. The oracle inequalities for support vector machines provide the approximation errors and the learning rates of support vector machine solutions (see \cite[Chapter~6]{SteinwartChristmann2008}).

The recent papers~\cite{ZhangXuZhang2009,FasshauerHickernellYe2013} discussed the support vector machines in some semi-inner-product Banach space $\Banach$ composed of functions $f:\Domain\to\RR$. The inner product $(\cdot,\cdot)_{\Hilbert}$ of the Hilbert space $\Hilbert$ is extended into the semi-inner product $[\cdot,\cdot]_{\Banach}$ of the semi-inner-product Banach space $\Banach$ in order that optimization problem~\eqref{eq:svm-hint-loss-RKHS} is generalized to
\begin{equation}\label{eq:svm-hint-loss-RKBS-semi}
\min_{f\in\Banach}
\left\{\frac{1}{N}\sum_{k\in\NN_N}L\left(\vx_k,y_k,\left[f,F(\vx_k)\right]_{\Banach}\right)+\sigma^2\norm{f}_{\Banach}^2\right\},
\end{equation}
where $F$ becomes the related feature map from $\Domain$ into $\Banach$ such that $[f,F(\vx)]_{\Banach}=f(\vx)$. This indicates that the dual element $\left(F(\vx)\right)^{\ast}$ of $F(\vx)$ is identical element of $\delta_{\vx}\in\Banach'$, because
$$
\langle f,\left(F(\vx)\right)^{\ast}\rangle_{\Banach}=[f,F(\vx)]_{\Banach}=f(\vx)=\langle f,\delta_{\vx} \rangle_{\Banach}.
$$
For the construction of its reproducing kernel,
we suppose that the dual space $\Banach'$ of $\Banach$ is isometrically isomorphic onto a normed space $\Fun$ composed of functions $g:\Domain\to\RR$.
Since the semi-inner product is nonsymmetric, we need another adjoint feature map $F^{\ast}:\Domain\to\Banach'$ in order that
the dual element $\left(F^{\ast}(\vy)\right)^{\ast}$ of $F^{\ast}(\vy)$ is the identical element of $\delta_{\vy}\in\Banach''$,
that is,
$$
\langle g,\left(F^{\ast}(\vy)\right)^{\ast} \rangle_{\Banach'}=[g,F^{\ast}(\vy)]_{\Banach'}=g(\vy)=\langle g,\delta_{\vy} \rangle_{\Banach'}.
$$
If the above feature maps $F$ and $F^{\ast}$ both exist, then we define the reproducing kernel
\[
K(\vx,\vy):=\left[\left(F^{\ast}(\vy)\right)^{\ast},F(\vx)\right]_{\Banach}
=\left[\left(F(\vx)\right)^{\ast},F^{\ast}(\vy)\right]_{\Banach'}
,\quad\text{for }\vx,\vy\in\Domain,
\]
in order that
\[
F(\vx)=\left(K(\vx,\cdot)\right)^{\ast},\quad F^{\ast}(\vy)=\left(K(\cdot,\vy)\right)^{\ast},\quad
\text{for }\vx,\vy\in\Domain,
\]
which ensures that the space $\Banach$ is a semi-inner-product RKBS with the semi-inner-product reproducing kernel $K$.
By the representer theorem of empirical data in semi-inner-product RKBSs (\cite[Theorem~4.2]{FasshauerHickernellYe2013}), we know that the dual element $s^{\ast}$ of the support vector machine solution $s$ of optimization problem~\eqref{eq:svm-hint-loss-RKBS-semi} is the linear combination of $K(\vx_1,\cdot),\ldots,K(\vx_N,\cdot)$. When the RKBS $\Banach$ is constructed by the positive definite function, we recover the finite dimensional formula of the support vector machine solution $s$ by its dual element $s^{\ast}$ (see the details given in \cite{FasshauerHickernellYe2013}).

In the previous construction of RKBSs by using the semi-inner products, it was required that the RKBSs are reflexive, uniformly convex and Fr\'{e}chet differentiable so that the dual elements are all well-defined. In the following sections, we improve the original theoretical results of RKBSs and employ the dual bilinear product $\langle \cdot,\cdot \rangle_{\Banach}$ to replace the semi-inner product $[\cdot,\cdot]_{\Banach}$ in order to obtain the support vector machines in a general Banach space $\Banach$.

\section{Support Vector Machines in $p$-norm Reproducing Kernel Banach Spaces for $1<p<\infty$}\label{s:SVM-p-RKBS}
\sectionmark{Solving Support Vector Machines in $p$-norm RKBS for $1<p<\infty$}

The Riesz representation theorem~\cite{Riesz1907,Riesz1909} provides the finite dimensional representations of the support vector machine solutions in RKHSs for their numerical implementations.
In this section,
we shall show that the infinite dimensional support vector machines in the $p$-norm RKBSs can be equivalently transferred into the finite dimensional convex optimization problems by Theorem~\ref{t:RKBS-opt-rep} (representer theorem for learning methods in RKBSs).
This ensures that we employ the finite suitable parameters to reconstruct the support vector machine solutions in the $p$-norm RKBSs.

Let $1<p,q<\infty$ such that $p^{-1}+q^{-1}=1$.
By Theorems~\ref{t:RKBS-MercerKer-p} and \ref{t:RKBS-MercerKer-1}, the two-sided
RKBSs $\Banach_K^p(\Domain)$ and $\Banach_{K'}^q(\Domain')$ are well-defined by the left-sided and right-sided expansion sets $\Sset_{K}=\left\{\phi_n:n\in\NN\right\}$ and $\Sset_{K}'=\left\{\adjphi_n:n\in\NN\right\}$ of the generalized Mercer kernel $K\in\Leb_0(\Domain\times\Domain')$.
When the loss function $L$ and the regularization function $R$ satisfy assumption~(A-ELR) given in Section~\ref{s:RepThm}, the representer theorem (Theorem~\ref{t:RKBS-opt-rep}) guarantees the global minimizer of the regularized empirical risks over $\Banach_K^p(\Domain)$
\begin{equation}\label{eq:svm-p-RKBS}
\svm_p(f):=
\frac{1}{N}\sum_{k\in\NN_N}L(\vx_k,y_k,f(\vx_k))+R\left(\norm{f}_{\Banach_{K}^p(\Domain)}\right),
\quad\text{for }f\in\Banach_{K}^p(\Domain),
\end{equation}
for the given pairwise distinct data points $X=\left\{\vx_k:k\in\NN_N\right\}\subseteq\Domain$ and the related observing values $Y=\left\{y_k:k\in\NN_N\right\}\subseteq\RR$.
Now we study whether the support vector machine solutions can also be represented by the expansion sets with the finite suitable parameters.

\begin{remark}
In this section, the support vector machines can be computed by not only the hinge loss function but also general loss functions such as least square loss, $p$th power absolute distance loss, logistic loss, Huber loss, pinball loss, and $\epsilon$-insensitive loss.
The regularization function can also be endowed with various forms, for example, $R(r):=\sigma r^{m}$ with $\sigma>0$ and $m\geq1$.
\end{remark}

In the solving processes, Theorem~\ref{t:RKBS-opt-rep} is employed. Hence, we need additional condition of the linear independence of the right-sided kernel set $\Kset_{K}'$ of $K$.
Special kernels always ensure the linear independence everywhere, for example, the continuous symmetric positive definite kernels.
In Section~\ref{s:RepThm}, we require $\Kset_{K}'$ to be linearly independent such that arbitrary data points $X$ are well-posed in Theorem~\ref{t:RKBS-opt-rep}. But, if the data points $X$ are already given in practical implementations, then we only need to check the linear independence of $\left\{K(\vx_k,\cdot):k\in\NN_N\right\}$ that guarantees the theoretical results of Theorem~\ref{t:RKBS-opt-rep}.

\begin{theorem}\label{t:RKBS-MercerKer-svm-rep-pq}
Given any pairwise distinct data points $X\subseteq\Domain$ and any associated data values $Y\subseteq\RR$,
the regularized empirical risk $\svm_p$ is defined as in equation~\eqref{eq:svm-p-RKBS}.
Let $1<p,q<\infty$ such that $p^{-1}+q^{-1}=1$,
and let $K\in\Leb_0(\Domain\times\Domain')$ be a generalized Mercer kernel such that the expansion sets $\Sset_K$ and $\Sset_K'$ of $K$ satisfy assumption (A-$p$) and
the right-sided kernel set $\Kset_K'$ is linearly independent.
If the loss function $L$ and the regularization function $R$ satisfy assumption~(A-ELR),
then
the support vector machines
\[
\min_{f\in\Banach_{K}^p(\Domain)}\svm_p(f),
\]
has a unique optimal solution $s_p$, which has the representation
\begin{equation}\label{eq:svm-coef-pq}
s_p=\sum_{n\in\NN}\left(\sum_{j\in\NN_N}c_{p,j}\phi_n(\vx_j)\abs{\sum_{k\in\NN_N}c_{p,k}\phi_n(\vx_k)}^{q-2}\right)\phi_n,
\end{equation}
for some suitable parameters $c_{p,1},\ldots,c_{p,N}\in\RR$ and whose norm can be written as
\[
\norm{s_p}_{\Banach_{K}^p(\Domain)}
=\left(\sum_{n\in\NN}\abs{\sum_{k\in\NN_N}c_{p,k}\phi_n(\vx_k)}^{q}\right)^{1/p}.
\]
\end{theorem}
\begin{proof}
The main idea of the proof is to recover $s_p$ by using the G\^{a}teaux derivative $\GateauxNorm(s_p)$ of the $p$-norm of the two-sided RKBS $\Banach_{K}^p(\Domain)$.

The isometrical isomorphisms given in Proposition~\ref{p:RKBS-MecerKer-pq} preserve
the geometrical structures of $\Banach_{K}^p(\Domain)\cong \lp$ and $\Banach_{\adjK}^q(\Domain')\cong \lq$.
This ensures that the two-sided RKBS $\Banach_{K}^p(\Domain)$ is reflexive, strictly convex, and smooth, because $1<p<\infty$ implies the reflexivity, strict convexity, and smoothness of $\lp$.
Thus, Theorem~\ref{t:RKBS-opt-rep} can be applied to ensure that the support vector machine in $\Banach_{K}^p(\Domain)$ has a unique optimal solution
\[
s_p=\sum_{n\in\NN}a_{p,n}\phi_n\in\Banach_{K}^p(\Domain),
\]
such that its G\^{a}teaux derivative $\GateauxNorm(s_p)$ is a linear combination of
$K(\vx_1,\cdot),\ldots,K(\vx_N,\cdot)$, that is,
\begin{equation}\label{eq:sp-1}
\GateauxNorm(s_p)=\sum_{k\in\NN_N}\beta_kK(\vx_k,\cdot)
\in\Banach_{\adjK}^q(\Domain'),
\end{equation}
for the suitable parameters $\beta_1,\ldots,\beta_N\in\RR$, and its norm can be written as
\begin{equation}\label{eq:sp-1-1}
\norm{s}_{\Banach_{K}^p(\Domain)}=\sum_{k\in\NN_N}\beta_ks_p(\vx_k).
\end{equation}

We complete the proof of the finite dimensional representation of $s_p$ given in equation~\eqref{eq:svm-coef-pq} if there exist the coefficients $c_{p,1},\ldots,c_{p,N}$ such that
\begin{equation}\label{eq:sp-coef}
a_{p,n}=\sum_{j\in\NN_N}c_{p,j}\phi_n(\vx_j)\abs{\sum_{k\in\NN_N}c_{p,k}\phi_n(\vx_k)}^{q-2}.
\end{equation}
When $s_p=0$, the conclusion is clearly true.
Now we suppose that $s_p$ is not a trivial solution.
Since $\va_p:=\left(a_{p,n}:n\in\NN\right)$ is an isometrically equivalent element of $s_p$, we use the G\^{a}teaux derivative $\GateauxNorm(\va_p)$ of the map $\va\mapsto\norm{\va}_p$ at $\va_p$ to compute the G\^{a}teaux derivative $\GateauxNorm(s_p)$ of the norm of $\Banach_{K}^p(\Domain)$ at $s_p$.
Specifically, because $\va_p\neq0$ and $1<p<\infty$ (see the examples in \cite{James1947,Giles1967}), we have that
\[
\GateauxNorm(\va_p)=\left(\frac{a_{p,n}\abs{a_{p,n}}^{p-2}}{\norm{\va_p}_{p}^{p-1}}: n\in\NN\right)\in \lq.
\]
Then we obtain the following formula of the G\^{a}teaux derivative
\begin{equation}\label{eq:sp-2}
\GateauxNorm(s_p)=\sum_{n\in\NN}\frac{a_{p,n}\abs{a_{p,n}}^{p-2}}{\norm{\va_{p}}_{p}^{p-1}}\adjphi_n\in\Banach_{\adjK}^q(\Domain'),
\end{equation}
where $\va_p:=\left(a_{p,n}:n\in\NN\right)$.
Using the expansion
$$
K(\vx,\vy)=\sum_{n\in\NN}\phi_n(\vx)\adjphi_n(\vy),
$$
equation~\eqref{eq:sp-1} can be rewritten as
\begin{equation}\label{eq:sp-3}
\GateauxNorm(s_p)=\sum_{k\in\NN_N}\beta_k\sum_{n\in\NN}\phi_n(\vx_k)\adjphi_n
=\sum_{n\in\NN}\left(\sum_{k\in\NN_N}\beta_k\phi_n(\vx_k)\right)\adjphi_n.
\end{equation}
Comparing equations~\eqref{eq:sp-2} and~\eqref{eq:sp-3}, we have that
\[
\frac{a_{p,n}\abs{a_{p,n}}^{p-2}}{\norm{\va_{p}}_{p}^{p-1}}=\left(\sum_{k\in\NN_N}\beta_k\phi_n(\vx_k)\right),
\quad
\text{for all }n\in\NN.
\]
Hence, we choose the parameters
\begin{equation}\label{eq:sp-4-0}
c_{p,k}:=\norm{\va_{p}}_{p}^{p-1}\beta_k,
\quad
\text{for all }k\in\NN_N,
\end{equation}
to obtain equation~\eqref{eq:sp-coef}. In this way, the representation of $s_p$ is achieved.

Combining equations~\eqref{eq:sp-1-1} and~\eqref{eq:sp-4-0}
we find that
\begin{equation}\label{eq:sp-4}
\norm{s_p}_{\Banach_{K}^p(\Domain)}=\sum_{k\in\NN_N}\beta_ks_p(\vx_k)
=\sum_{k\in\NN_N}\norm{\va_{p}}_{p}^{1-p}c_{p,k}s_p(\vx_k).
\end{equation}
Substituting the representation of $s_p$ given in equation~\eqref{eq:svm-coef-pq} and $\norm{s_p}_{\Banach_{K}^p(\Domain)}=\norm{\va_p}_p$ into equation~\eqref{eq:sp-4}, we obtain the desired formula for
$\norm{s_p}_{\Banach_{K}^p(\Domain)}$.
\end{proof}

\begin{remark}\label{r:RKBS-MercerKer-svm-rep-pq}
The parameters of the support vector machine solution $s_p$ may not be equal to the coefficients of its G\^{a}teaux derivative $\GateauxNorm(s_p)$.
According to Theorems~\ref{t:RKBS-opt-rep} and~\ref{t:RKBS-MercerKer-svm-rep-pq} about the representations of the support vector machine solutions in RKBSs, the coefficients of $\GateauxNorm(s_p)$ are always linear but the coefficients of $s_p$ could be nonlinear.
\end{remark}

\subsection*{Fixed-point Algorithm}

To close this section, we develop a fixed-point algorithm for solving the optimization problem studied in this section. Fixed-point iteration for solving convex optimization problems that come from the proximal algorithm was proposed in \cite{MicchelliShenXu2011,ParikhBoyd2014}. Specifically, we apply the techniques of \cite[Corollary~5.5]{FasshauerHickernellYe2013} to compute the finite parameters $c_{p,1},\ldots,c_{p,N}$ of the support vector machine solution $s_p$ by a fixed-point iteration method.
Choosing any parameter vector $\vc=\left(c_k:k\in\NN_N\right)\in\RR^N$, we set up a function $f_{\vc}\in\Banach_{K}^p(\Domain)$ depending on equation~\eqref{eq:svm-coef-pq}, that is,
\[
f_{\vc}(\vx)=\sum_{n\in\NN}\left(\sum_{j\in\NN_N}c_{j}\phi_n(\vx_j)
\abs{\sum_{k\in\NN_N}c_{k}\phi_n(\vx_k)}^{q-2}\right)\phi_n(\vx),
\quad\text{for }\vx\in\Domain.
\]
Clearly, $s_p=f_{\vc_p}$.
Define the vector-valued function
\[
\veta:=\left(\eta_k:k\in\NN_N\right): \RR^N\to\RR^N,
\]
where
\[
\eta_k(\vc):=f_{\vc}(\vx_k),
\quad\text{for }k\in\NN_N.
\]
Thus, the regularized empirical risk of $f_{\vc}$ can be computed by
\[
T_p(\vc):=\svm_p\left(f_{\vc}\right)
=\frac{1}{N}\sum_{k\in\NN_N}L(\vx_k,y_k,\eta_k(\vc))+R\left(\left(\veta(\vc)^T\vc\right)^{1/p}\right),
\]
and the parameters $\vc_p$ of the support vector machine solution $s_p$ is the global minimum solution of
\[
\min_{\vc\in\RR^N}T_p(\vc).
\]
If $R\in\Cont^{1}[0,\infty)$ and $t\mapsto L(\vx,y,t)\in\Cont^{1}(\RR)$ for any fixed $\vx\in\Domain$ and any $y\in\RR$, then
the gradient of $T_p$ has the form
\[
\nabla T_p(\vc)=\frac{1}{N}
\nabla\veta(\vc)^T\vl_t\left(\vc\right)+
\alpha(\vc)
\left(\veta(\vc)+\nabla\veta(\vc)^T\vc\right),
\]
where
\[
\veta_t(\vc):=\left(L_t\left(\vx_k,y_k,\eta_{k}(\vc)\right): k\in\NN_N\right),
\]
and
\[
\alpha(\vc):=
\begin{cases}
p^{-1}\left(\veta(\vc)^T\vc\right)^{-1/q}
R_r\left(\left(\veta(\vc)^T\vc\right)^{1/p}\right),
&\text{when }\vc\in\RR^N\backslash\{\v0\},\\
0,&\text{when }\vc=\v0.
\end{cases}
\]
Here, the notations of $L_t$ and $R_r$ represent the differentiations of $L$ and $R$ at $t$ and $r$, respectively, that is,
$L_t(\vx,y,t):=\frac{\ud}{\ud t}L(\vx,y,t)$ and $R_r(r):=\frac{\ud}{\ud r}R(r)$,
and the element
$$
\nabla\veta(\vc):=\left(\frac{\partial}{\partial c_k}\eta_j(\vc):j,k\in\NN_N\right)
$$
is a Jacobian matrix with the entries
\[
\frac{\partial}{\partial c_k}\eta_j(\vc)
=(q-1)\sum_{n\in\NN}\phi_n(\vx_j)\phi_n(\vx_k)\abs{\sum_{l\in\NN_N}c_l\phi_n(\vx_l)}^{q-2}.
\]
Since the global minimizer $\vc_p$ is a stationary point of $\nabla T_p$, the parameter vector $\vc_p$ is a fixed-point of the function
\begin{equation}\label{eq:fix-point-coef-opt-rep-pq}
F_p(\vc):=\vc+\nabla T_p(\vc),\quad\text{for }\vc\in\RR^N.
\end{equation}

\begin{corollary}\label{c:coef-RKBS-MercerKer-svm-rep-pq}
If the loss function $L$ and the regularization function $R$ satisfy that assumption (A-ELR), $t\mapsto L(\vx,y,t)\in\Cont^{1}(\RR)$ for any fixed $\vx\in\Domain$ and any $y\in\RR$, and $R\in\Cont^{1}[0,\infty)$, then the parameter vector $\vc_p=\left(c_{p,k}:k\in\NN_N\right)$ of the support vector machine solution $s_p$ defined in equation~\eqref{eq:svm-coef-pq} is a fixed-point of the map $F_p$ defined in equation~\eqref{eq:fix-point-coef-opt-rep-pq}, that is, $F_p\left(\vc_p\right)=\vc_p$.
\end{corollary}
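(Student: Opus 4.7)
The strategy is to recast the infinite-dimensional optimization problem as a finite-dimensional one over the parameter vector and then apply the classical first-order optimality condition. Theorem~\ref{t:RKBS-MercerKer-svm-rep-pq} already supplies the backbone: it guarantees that the unique minimizer $s_p$ has the representation $s_p=f_{\vc_p}$ for the parameter vector $\vc_p=(c_{p,k}:k\in\NN_N)$ appearing in~\eqref{eq:svm-coef-pq}. My plan is to show that (i) the finite-dimensional objective $T_p$ attains its global minimum at this same $\vc_p$, (ii) $T_p$ is continuously differentiable in a neighbourhood of $\vc_p$ under the assumed $\Cont^1$ regularity of $L$ and $R$, and (iii) the gradient formula quoted in the statement is correct. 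The fixed-point identity $F_p(\vc_p)=\vc_p$ then drops out of the stationarity condition $\nabla T_p(\vc_p)=\v0$.

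First, I would observe that the parametrization $\vc\mapsto f_{\vc}$ is a well-defined map from $\RR^N$ into $\Banach_K^p(\Domain)$. Setting $u_n(\vc):=\sum_{l\in\NN_N}c_l\phi_n(\vx_l)$, the $n$th coefficient of $f_{\vc}$ with respect to the Schauder basis $\Sset_K$ equals $|u_n(\vc)|^{q-2}u_n(\vc)$, so $\norm{f_{\vc}}_{\Banach_K^p(\Domain)}^p=\sum_{n\in\NN}|u_n(\vc)|^{q}$; this is finite by the assumption~(C-$p$) applied to each $\phi_n(\vx_l)$ together with the elementary inequality $|\sum_{l}c_l\phi_n(\vx_l)|^q\leq N^{q-1}\sum_l|c_l|^q|\phi_n(\vx_l)|^q$. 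Consequently $T_p(\vc)=\svm_p(f_{\vc})$ is defined on all of $\RR^N$. Since $s_p$ is the global minimizer of $\svm_p$ over $\Banach_K^p(\Domain)$ and $s_p=f_{\vc_p}$, we conclude that $T_p(\vc)\geq T_p(\vc_p)$ for every $\vc\in\RR^N$, so $\vc_p$ is a global minimizer of $T_p$.

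Next, I would compute $\nabla T_p$ via the chain rule. The key building block is $\partial u_n/\partial c_k=\phi_n(\vx_k)$, which together with the identity $\partial(|u|^{q-2}u)/\partial u=(q-1)|u|^{q-2}$ (valid for $q>1$ wherever $u\neq 0$) yields
\[
\frac{\partial\eta_j}{\partial c_k}(\vc)
=(q-1)\sum_{n\in\NN}\phi_n(\vx_j)\phi_n(\vx_k)\abs{u_n(\vc)}^{q-2},
\]
matching the stated entries of $\nabla\veta(\vc)$. Differentiating the two parts of $T_p$ and using $\veta(\vc)^T\vc=\sum_{n}|u_n(\vc)|^q$ (which I will verify by expanding the definition of $\eta_k$) together with $p^{-1}-1=-q^{-1}$, I obtain precisely the claimed formula
\[
\nabla T_p(\vc)=\frac{1}{N}\nabla\veta(\vc)^{T}\veta_t(\vc)+\alpha(\vc)\bigl(\veta(\vc)+\nabla\veta(\vc)^{T}\vc\bigr).
\]
Finally, the optimality of $\vc_p$ combined with the $\Cont^1$ regularity gives $\nabla T_p(\vc_p)=\v0$, and hence $F_p(\vc_p)=\vc_p+\nabla T_p(\vc_p)=\vc_p$.

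The main obstacle is differentiability of $T_p$ at $\vc_p$ when $1<q<2$, because the map $u\mapsto|u|^{q-2}u$ has an unbounded derivative at $u=0$. I would handle this by arguing that the series defining each entry of $\nabla\veta(\vc_p)$ converges (using the bound $|\phi_n(\vx_j)\phi_n(\vx_k)|\,|u_n(\vc_p)|^{q-2}\leq|u_n(\vc_p)|^{q-2}\cdot\tfrac{1}{2}(|\phi_n(\vx_j)|^q+|\phi_n(\vx_k)|^q)^{2/q}$ and a H\"older-type estimate) on the set where $u_n(\vc_p)\neq0$, and by noting that if $s_p=0$ the conclusion is trivial with $\vc_p=\v0$. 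The trivial vs.\ nontrivial dichotomy parallels the treatment used for $\GateauxNorm$ in the proof of Theorem~\ref{t:RKBS-MercerKer-svm-rep-pq}, so no new ideas beyond that theorem are required.
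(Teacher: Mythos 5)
Your proposal is correct and follows essentially the same route as the paper: identify $\vc_p$ as a global minimizer of the finite-dimensional objective $T_p(\vc)=\svm_p(f_{\vc})$ because the range of $\vc\mapsto f_{\vc}$ contains $s_p$, compute $\nabla T_p$ by the chain rule to obtain exactly the paper's formula, and conclude $F_p(\vc_p)=\vc_p$ from stationarity. The only difference is that you supply verifications the paper leaves implicit (well-definedness of $f_{\vc}$, the identity $\veta(\vc)^T\vc=\norm{f_{\vc}}_{\Banach_K^p(\Domain)}^p$, and the differentiability caveat for $1<q<2$ at points where some $u_n(\vc_p)=0$ — a caveat the paper does not address at all, and which your sketch also does not fully close in the case $s_p\neq0$ with $u_n(\vc_p)=0$ for some $n$).
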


Corollary~\ref{c:coef-RKBS-MercerKer-svm-rep-pq} ensures that the finite coefficients of the support vector machine solutions in RKBSs
can be computed by the fixed-point iteration algorithm (the proximal algorithm).
This indicates that the support vector machine in RKBSs is of easy implementation the same as the classical methods in RKHSs.

\section{Support Vector Machines in $1$-norm Reproducing Kernel Banach Spaces}\label{s:SVM-1-RKBS}
\sectionmark{Solving Support Vector Machines in $1$-norm RKBS}

Now we show that the support vector machines in the $1$-norm RKBS can be approximated by the support vector machines in the $p$-norm RKBSs as $p\to1$.

In this section, we suppose that the left-sided and right-sided expansion sets $\Sset_{K}=\left\{\phi_n:n\in\NN\right\}$ and $\Sset_{K}'=\left\{\adjphi_n:n\in\NN\right\}$ of the generalized Mercer kernel $K\in\Leb_0(\Domain\times\Domain')$ satisfy assumption (A-$1*$) and the right-sided kernel set $\Kset_K'$ of $K$ is linearly independent. As the discussions in inequalities (\ref{A1-to-Ap-1})-(\ref{A1-to-Ap-2}), we know that assumption (A-$1*$) implies assumption (A-$p$) for all $1<p<\infty$.
Theorems~\ref{t:RKBS-MercerKer-p} and~\ref{t:RKBS-MercerKer-1} show that the generalized Mercer kernel $K$ is the reproducing kernel of the right-sided RKBS $\Banach_{K}^{1}(\Domain)$ and the two-sided RKBS $\Banach_{K}^{p}(\Domain)$ for $1<p<\infty$ induced by $\Sset_K$ (see Sections~\ref{s:p-RKBS} and~\ref{s:1-RKBS} for more details).

For the constructions of the support vector machines in the $1$-norm RKBSs,
we moreover suppose that the loss function $L$ and the regularization function $R$ satisfy assumption~(A-ELR) given in Section~\ref{s:RepThm}.
For the given pairwise distinct data points
$X\subseteq\Domain$ and the associated data values $Y\subseteq\RR$,
we set up the regularized empirical risks in $\Banach_{K}^{1}(\Domain)$
\begin{equation}\label{eq:svm-1-RKBS}
\svm_1(f):=
\frac{1}{N}\sum_{k\in\NN_N}L(\vx_k,y_k,f(\vx_k))+R\left(\norm{f}_{\Banach_{K}^1(\Domain)}\right),
\quad\text{for }f\in\Banach_{K}^1(\Domain).
\end{equation}
We wish to know whether there is a global minimum of $\svm_1$ over $\Banach_{K}^1(\Domain)$
\begin{equation}\label{eq:svm-1-RKBS-opt}
\min_{f\in\Banach_{K}^1(\Domain)}\svm_1(f).
\end{equation}

Since $\Banach_{K}^1(\Domain)$ is not reflexive, strictly convex or smooth, we can not solve the support vector machines in $\Banach_{K}^1(\Domain)$ directly by Theorem~\ref{t:RKBS-opt-rep}.
According to Theorem~\ref{t:RKBS-MercerKer-svm-rep-pq}, however, we solve the uniquely global minimizer $s_p$ of the support vector machines in $\Banach_{K}^p(\Domain)$ for $1<p\leq2$, or more precisely,
\begin{equation}\label{eq:svm-p-RKBS-opt}
s_p=\underset{f\in\Banach_{K}^p(\Domain)}{\text{argmin}}\svm_p(f)
=\underset{f\in\Banach_{K}^p(\Domain)}{\text{argmin}}
\left\{\frac{1}{N}\sum_{k\in\NN_N}L(\vx_k,y_k,f(\vx_k))+R\left(\norm{f}_{\Banach_{K}^p(\Domain)}\right)\right\}.
\end{equation}
The parameters of the infinite basis of
\[
s_p=\sum_{n\in\NN}a_{p,n}\phi_n\in\Banach_{K}^p(\Domain),
\]
can be represented in the form
\[
a_{p,n}:=\sum_{j\in\NN_N}c_{p,j}\phi_n(\vx_j)\abs{\sum_{k\in\NN_N}c_{p,k}\phi_n(\vx_k)}^{q-2},
\quad\text{for all }n\in\NN,
\]
by the finite suitable parameters $c_{p,1},\ldots,c_{p,N}\in\RR$, where $p^{-1}+q^{-1}=1$.
Note that $2\leq q<\infty$ since $1<p\leq 2$.

Now we verify that $s_p$ can be used to approximate $s_1$ when $p\to1$.
Before we show this in Theorem~\ref{t:opt-svm-s1}, we establish four preliminary lemmas.
We first prove that $s_p\in\Banach_{K}^p(\Domain)$ for all $1<p\leq 2$.

\begin{lemma}\label{l:sp-in-1-RKBS}
If the support vector machines in $\Banach_{K}^1(\Domain)$ and $\Banach_{K}^p(\Domain)$ are defined in equations~\eqref{eq:svm-1-RKBS-opt} and~\eqref{eq:svm-p-RKBS-opt}, respectively, then
the support vector machine solution $s_p$ belongs to $\Banach_{K}^1(\Domain)$ for each $1<p\leq2$.
\end{lemma}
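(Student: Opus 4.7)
The plan is to invoke the explicit formula for $s_p$ from Theorem~\ref{t:RKBS-MercerKer-svm-rep-pq} and simplify its coefficients, then show that the resulting sequence of coefficients lies in $\lone$. Write $s_p=\sum_{n\in\NN}a_{p,n}\phi_n$ with
\[
a_{p,n}=\sum_{j\in\NN_N}c_{p,j}\phi_n(\vx_j)\,\Bigl|\sum_{k\in\NN_N}c_{p,k}\phi_n(\vx_k)\Bigr|^{q-2}.
\]
The crucial observation is that the factor $|{\sum_k c_{p,k}\phi_n(\vx_k)}|^{q-2}$ is independent of the summation index $j$, so it can be pulled out; setting $b_n:=\sum_{k\in\NN_N}c_{p,k}\phi_n(\vx_k)$ gives the clean identity $a_{p,n}=b_n|b_n|^{q-2}$, and hence $|a_{p,n}|=|b_n|^{q-1}$. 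Showing $s_p\in\Banach_K^1(\Domain)$ therefore reduces to checking that $(|b_n|^{q-1})_{n\in\NN}\in\lone$, because the isometric identification $\Banach_K^1(\Domain)\cong\lone$ encapsulates precisely this criterion.

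Next I would exploit the assumption~(A-$1*$), which supplies the key summability $\sum_{n\in\NN}\abs{\phi_n(\vx_k)}<\infty$ for every data point $\vx_k$. The triangle inequality then yields
\[
\sum_{n\in\NN}\abs{b_n}\leq\sum_{k\in\NN_N}\abs{c_{p,k}}\sum_{n\in\NN}\abs{\phi_n(\vx_k)}<\infty,
\]
since the inner sum is finite for each of the finitely many data sites. Thus $(b_n)_{n\in\NN}\in\lone$, and in particular $b_n\to 0$ as $n\to\infty$.

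The final step promotes membership in $\lone$ from $(b_n)$ to $(|b_n|^{q-1})$. Here the hypothesis $1<p\leq 2$ is essential: its conjugate exponent satisfies $q\geq 2$, so the exponent $q-1\geq 1$. Because $|b_n|\to 0$, there exists $N_0\in\NN$ with $|b_n|\leq 1$ for all $n\geq N_0$, which forces $|b_n|^{q-1}\leq|b_n|$ for all such $n$. Splitting the series into the finite head $n<N_0$ and the tail $n\geq N_0$ and combining with $\sum_n|b_n|<\infty$ yields $\sum_{n\in\NN}|a_{p,n}|=\sum_{n\in\NN}|b_n|^{q-1}<\infty$, so $s_p\in\Banach_K^1(\Domain)$ as required.

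The argument is short and the only mild subtlety is the observation that the $j$-independent modulus factor collapses the double sum into $b_n|b_n|^{q-2}$; once that algebraic simplification is in hand, the rest is a routine application of $(\text{A-}1^*)$ together with the elementary tail-bound passage from $\lone$ to $\lone[q-1]$ using $q-1\geq 1$. The restriction $p\leq 2$ (equivalently $q\geq 2$) is precisely what makes this last passage trivial; the case $p>2$ would require a different treatment, which presumably motivates the author's choice of range in the statement.
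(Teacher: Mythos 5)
Your proposal is correct and follows essentially the same route as the paper: both reduce the claim to showing $\sum_{n}\abs{b_n}^{q-1}<\infty$ for $b_n:=\sum_{k\in\NN_N}c_{p,k}\phi_n(\vx_k)$, using the summability $\sum_{n}\abs{\phi_n(\vx_k)}<\infty$ from (A-$1*$) together with $q-1\geq1$. The only cosmetic difference is that the paper bounds $\sum_n\abs{b_n}^{q-1}$ directly with explicit constants (via $\sup_j\abs{c_{p,j}}$, the factor $N^{q-2}$, and $C_X^{q-1}$), whereas you first show $(b_n)\in\lone$ and then pass to $\ell_{q-1}$ by a tail argument; both are valid.
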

\begin{proof}
It suffices to prove that series
\begin{equation}\label{eq:sp-in-B1-0}
\sum_{n\in\NN}\abs{a_{p,n}}
=\sum_{n\in\NN}\abs{\sum_{k\in\NN_N}c_{p,k}\phi_n(\vx_k)}^{q-1}
\end{equation}
is convergent.
By the Cauchy-Schwarz inequality, we have for $n\in\NN$ that
\begin{equation}\label{eq:sp-in-B1-1}
\abs{\sum_{k\in\NN_N}c_{p,k}\phi_n(\vx_k)}^{q-1}
\leq \left(\sup_{j\in\NN_N}\abs{c_{p,j}}^{q-1}\right)
\left(\sum_{k\in\NN_N}\abs{\phi_n(\vx_k)}\right)^{q-1}.
\end{equation}
We introduce the vector $\vc_{p}:=\left(c_{p,k}:k\in\NN_N\right)$. Since
\[
\sup_{j\in\NN_N}\abs{c_{p,j}}^{q-1}\leq\norm{\vc_p}_{\infty}^{q-1},
\]
and
\[
\left(\sum_{k\in\NN_N}\abs{\phi_n(\vx_k)}\right)^{q-1}
\leq N^{q-2}\sum_{k\in\NN_N}\abs{\phi_n(\vx_k)}^{q-1},
\quad\text{for }n\in\NN,
\]
from inequality~\eqref{eq:sp-in-B1-1}, we obtain that
\begin{equation}\label{eq:sp-in-B1-2}
\abs{\sum_{k\in\NN_N}c_{p,k}\phi_n(\vx_k)}^{q-1}
\leq \norm{\vc_p}_{\infty}^{q-1}N^{q-2}\sum_{k\in\NN_N}\abs{\phi_n(\vx_k)}^{q-1},
\quad\text{for }n\in\NN.
\end{equation}
Moreover, conditions (C-$1*$) ensure that
\begin{equation}\label{eq:sp-in-B1-3}
\sum_{n\in\NN}\abs{\phi_n(\vx_k)}^{q-1}\leq C_{X}^{q-1},\quad
\text{for }k\in\NN_N,
\end{equation}
where the positive constant $C_X$ is given by
\[
C_{X}:=\sup_{k\in\NN_N}\sum_{n\in\NN}\abs{\phi_n(\vx_k)}.
\]
Combining inequalities~\eqref{eq:sp-in-B1-2} and~\eqref{eq:sp-in-B1-3},
we obtain that
\begin{equation}\label{eq:sp-in-B1-4}
\sum_{n\in\NN}\abs{\sum_{k\in\NN_N}c_{p,k}\phi_n(\vx_k)}^{q-1}
\leq C_{X}^{q-1}N^{q-2}\norm{\vc_p}_{\infty}^{q-1}<\infty,
\end{equation}
proving convergence of series \eqref{eq:sp-in-B1-0}.
\end{proof}

Since $s_p$ is the global minimum solution of the regularized empirical risks over $\Banach_{K}^p(\Domain)$, we have that
\[
\svm_p\left(s_p\right)=\min_{f\in\Banach_{K}^p(\Domain)}\svm_p(f).
\]
Next, we show that the minimum of $1$-norm regularized empirical risks is the upper bound of $\svm_p\left(s_p\right)$.

\begin{lemma}\label{l:bound-1-svm}
If the support vector machines in $\Banach_{K}^1(\Domain)$ and $\Banach_{K}^p(\Domain)$ are defined in equations~\eqref{eq:svm-1-RKBS-opt} and~\eqref{eq:svm-p-RKBS-opt}, respectively,
then
\[
\lim_{p\to1}\svm_p(s_p)\leq\min_{f\in\Banach_{K}^1(\Domain)}\svm_1(f).
\]
\end{lemma}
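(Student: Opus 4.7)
The plan is to exploit the inclusion $\Banach_K^1(\Domain)\hookrightarrow\Banach_K^p(\Domain)$ together with the monotonicity of $R$, so that any competitor in the $1$-norm problem is a competitor in the $p$-norm problem with no larger regularized risk. Since $s_p$ is the global minimizer of $\svm_p$, this will yield the desired upper bound before passing to the limit $p\to 1$.

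First, I would use the isometric isomorphisms $\Banach_K^1(\Domain)\cong\lone$ and $\Banach_K^p(\Domain)\cong\lp$ established in Proposition~\ref{p:RKBS-MecerKer-pq} and Proposition~\ref{p:RKBS-MecerKer-1}, combined with the elementary imbedding $\lone\hookrightarrow\lp$ with $\norm{\cdot}_p\leq\norm{\cdot}_1$, to record that for every $f\in\Banach_K^1(\Domain)$ we have $f\in\Banach_K^p(\Domain)$ and
\[
\norm{f}_{\Banach_K^p(\Domain)}\leq\norm{f}_{\Banach_K^1(\Domain)}.
\]
Since the assumption (A-ELR) states that $R$ is strictly increasing on $[0,\infty)$, this gives $R(\norm{f}_{\Banach_K^p(\Domain)})\leq R(\norm{f}_{\Banach_K^1(\Domain)})$. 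The empirical loss term $\frac{1}{N}\sum_{k\in\NN_N}L(\vx_k,y_k,f(\vx_k))$ depends only on point values of $f$ and is unaffected by the choice of norm, so I would conclude that
\[
\svm_p(f)\leq\svm_1(f),\quad\text{for all }f\in\Banach_K^1(\Domain).
\]

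Next, since $s_p$ is the unique global minimizer of $\svm_p$ over $\Banach_K^p(\Domain)$ guaranteed by Theorem~\ref{t:RKBS-MercerKer-svm-rep-pq}, we have $\svm_p(s_p)\leq\svm_p(f)$ for every $f\in\Banach_K^p(\Domain)$, and in particular for every $f\in\Banach_K^1(\Domain)$. Chaining this with the previous inequality gives $\svm_p(s_p)\leq\svm_1(f)$ for every $f\in\Banach_K^1(\Domain)$, and taking the infimum on the right yields
\[
\svm_p(s_p)\leq\inf_{f\in\Banach_K^1(\Domain)}\svm_1(f)=\min_{f\in\Banach_K^1(\Domain)}\svm_1(f),
\]
valid uniformly in $1<p\leq 2$. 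Passing to the $\limsup$ as $p\to 1$ (which is what the inequality really requires) delivers the stated bound.

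The proof is essentially a one-line monotonicity comparison, so there is no real obstacle; the only subtle point is cosmetic, namely that the statement writes $\lim_{p\to 1}\svm_p(s_p)$ as if this limit is known to exist. For the $\leq$ direction in this lemma the existence of the limit is unnecessary and the argument above suffices; presumably actual convergence (and the matching lower bound) will be established separately in the subsequent theorem comparing $s_p$ to $s_1$.
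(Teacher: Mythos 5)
Your argument is essentially the paper's: you use the imbedding $\Banach_K^1(\Domain)\subseteq\Banach_K^p(\Domain)$ with $\norm{f}_{\Banach_K^p(\Domain)}\leq\norm{f}_{\Banach_K^1(\Domain)}$, the strict monotonicity of $R$, and the minimality of $s_p$ to obtain the uniform bound $\svm_p(s_p)\leq\min_{f\in\Banach_K^1(\Domain)}\svm_1(f)$ for all $1<p\leq2$. That part is correct and is exactly the comparison the paper makes, specialized to the pair $(1,p)$.

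There is, however, one piece you dismiss too quickly. The lemma asserts an inequality for $\lim_{p\to1}\svm_p(s_p)$, so the statement implicitly claims that this limit exists, and your uniform bound alone only controls $\limsup_{p\to1}\svm_p(s_p)$. The paper closes this by running the same comparison between two \emph{general} exponents $1\leq p_1\leq p_2\leq2$: from $\Banach_K^{p_1}(\Domain)\subseteq\Banach_K^{p_2}(\Domain)$ and $\norm{f}_{\Banach_K^{p_2}(\Domain)}\leq\norm{f}_{\Banach_K^{p_1}(\Domain)}$ one gets
\[
\min_{f\in\Banach_K^{p_2}(\Domain)}\svm_{p_2}(f)\leq\min_{f\in\Banach_K^{p_1}(\Domain)}\svm_{p_1}(f),
\]
so $p\mapsto\svm_p(s_p)$ is decreasing on $(1,2]$; being monotone and bounded above as $p\downarrow1$, the limit exists and inherits the bound. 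This costs you nothing extra --- it is the identical one-line monotonicity argument applied to $(p_1,p_2)$ instead of $(1,p)$ --- and you should include it so that the limit in the statement is actually well defined. (Your observation that the subsequent theorem could be reworked with $\limsup$ and $\liminf$ is fair, but as written the lemma and its use in Lemma~\ref{l:opt-svm-s1} rely on the limit existing.)
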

\begin{proof}
Suppose that $1\leq p_1\leq p_2\leq2$. The proof of Propositions~\ref{p:RKBS-MecerKer-pq} and~\ref{p:RKBS-MecerKer-1} show that $\Banach_{K}^{p_1}(\Domain)\cong \lspace_{p_1}$ and
$\Banach_{K}^{p_2}(\Domain)\cong \lspace_{p_2}$. Hence, by the imbedding of various norms $\norm{\cdot}_p$, we have that
\[
\Banach_{K}^{p_1}(\Domain)\subseteq\Banach_{K}^{p_2}(\Domain),
\]
and
\[
\norm{f}_{\Banach_{K}^{p_2}(\Domain)}=\norm{\va}_{p_2}
\leq\norm{\va}_{p_1}=\norm{f}_{\Banach_{K}^{p_1}(\Domain)},
\quad\text{for }f=\sum_{n\in\NN}a_n\phi_n\in\Banach_{K}^{p_1}(\Domain).
\]
This ensures that
\begin{equation}\label{eq:bound-1-svm-1}
\min_{f\in\Banach_{K}^{p_2}(\Domain)}\svm_{p_2}(f)\leq\min_{f\in\Banach_{K}^{p_1}(\Domain)}\svm_{p_2}(f).
\end{equation}
Moreover, since $R$ is strictly increasing, we observe that
\[
R\left(\norm{f}_{\Banach_{K}^{p_2}(\Domain)}\right)\leq R\left(\norm{f}_{\Banach_{K}^{p_1}(\Domain)}\right),
\quad \text{for }f\in\Banach_{K}^{p_1}(\Domain),
\]
and then
\[
\svm_{p_2}(f)\leq\svm_{p_1}(f),\quad
\text{for }f\in\Banach_{K}^{p_1}(\Domain).
\]
Thus, we conclude that
\begin{equation}\label{eq:bound-1-svm-2}
\min_{f\in\Banach_{K}^{p_1}(\Domain)}\svm_{p_2}(f)\leq\min_{f\in\Banach_{K}^{p_1}(\Domain)}\svm_{p_1}(f).
\end{equation}
Combining inequalities~\eqref{eq:bound-1-svm-1} and~\eqref{eq:bound-1-svm-2} we have that
\begin{equation}\label{eq:bound-1-svm-3}
\min_{f\in\Banach_{K}^{p_2}(\Domain)}\svm_{p_2}(f)\leq\min_{f\in\Banach_{K}^{p_1}(\Domain)}\svm_{p_1}(f).
\end{equation}
This gives that
\[
\svm_p\left(s_p\right)=\min_{f\in\Banach_{K}^p(\Domain)}\svm_p(f)
\leq\min_{f\in\Banach_{K}^1(\Domain)}\svm_1(f),
\quad\text{for all }1<p\leq2.
\]
Moreover, the function $p\mapsto \svm_p\left(s_p\right)$ is decreasing on $(1,2]$.
Therefore, the limit of $\lim_{p\to1}\svm_p\left(s_p\right)$ exists and $\min_{f\in\Banach_{K}^1(\Domain)}\svm_1(f)$ is its upper bound.
\end{proof}

Lemma~\ref{l:sp-in-1-RKBS} has already shown that $\left\{s_{p}:1<p\leq2\right\}\subseteq\Banach_{K}^{1}(\Domain)$.
Now, we show that $\left\{s_{p}:1<p\leq2\right\}$ can be used to catch a global minimizer $s_1$ of $\svm_1$ over $\Banach_{K}^{1}(\Domain)$.
By Proposition~\ref{p:RKBS-MecerKer-1}, $\Banach_{K}^1(\Domain)$ is isometrically equivalent to the dual space of $\Banach_{\adjK}^{\infty}(\Domain')$. Thus, we obtain the weak* topology of $\Banach_{K}^1(\Domain)$ to describe the relationships between $\left\{s_{p}:1<p\leq2\right\}$ and $s_1$.

\begin{lemma}\label{l:opt-svm-s1}
Let the support vector machines in $\Banach_{K}^1(\Domain)$ and $\Banach_{K}^p(\Domain)$ be defined in equations~\eqref{eq:svm-1-RKBS-opt} and~\eqref{eq:svm-p-RKBS-opt}, respectively.
If there exist an element $s_1\in\Banach_{K}^{1}(\Domain)$ and a countable sequence $s_{p_m}$, with $\lim_{m\to\infty}p_m=1$, of the support vector machine solutions $\left\{s_{p}:1<p\leq2\right\}$ such that
\[
s_{p_m}\overset{\text{weak*}-\Banach_{\adjK}^{\infty}(\Domain')}{\longrightarrow}s_1,\quad \text{when }p_m\to1,
\]
then
\[
\svm_1\left(s_1\right)=\min_{f\in\Banach_{K}^1(\Domain)}\svm_1(f),
\]
and
\[
\lim_{p_m\to1}\svm_{p_m}\left(s_{p_m}\right)=\svm_1\left(s_1\right).
\]
\end{lemma}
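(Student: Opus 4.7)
The plan is to sandwich $\svm_{1}(s_{1})$ between the upper bound supplied by Lemma~\ref{l:bound-1-svm} and a lower bound obtained from lower semicontinuity of the empirical loss and of the norm along the weak*-convergent sequence $s_{p_m}$, and then force all inequalities to collapse to equalities.

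First I would extract two concrete consequences of the hypothesis $s_{p_m}\overset{\text{weak*}}{\to}s_{1}$. Using that $\Banach_{K}^{1}(\Domain)\cong(\Banach_{\adjK}^{\infty}(\Domain'))'$ and that the dual bilinear product has the explicit coordinate form $\langle g,f\rangle_{\Banach_{\adjK}^{\infty}(\Domain')}=\sum_{n}b_{n}a_{n}$, I would test weak* convergence against two specific elements of $\Banach_{\adjK}^{\infty}(\Domain')$: the functions $\adjphi_{n}$ (whose coefficient sequence is $\ve_{n}\in\czero$), and the sections $K(\vx,\cdot)=\sum_{n}\phi_{n}(\vx)\adjphi_{n}$ (which lie in $\Banach_{\adjK}^{\infty}(\Domain')$ by the conditions~(C-$1*$)). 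This yields, writing $s_{p_m}=\sum_{n}a_{p_m,n}\phi_{n}$ and $s_{1}=\sum_{n}a_{1,n}\phi_{n}$, both the coefficient-wise convergence $a_{p_m,n}\to a_{1,n}$ for each $n\in\NN$ and the pointwise convergence $s_{p_m}(\vx)\to s_{1}(\vx)$ for each $\vx\in\Domain$ (which is also what Proposition~\ref{p:weaklystar-conv-RKBS} predicts).

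Next I would derive $\svm_{1}(s_{1})\le\liminf_{m\to\infty}\svm_{p_m}(s_{p_m})$ term by term. The empirical part is handled by continuity of $L(\vx_{k},y_{k},\cdot)$, which follows from the assumption~(A-ELR), combined with $s_{p_m}(\vx_{k})\to s_{1}(\vx_{k})$, so the sum $\frac{1}{N}\sum_{k}L(\vx_{k},y_{k},s_{p_m}(\vx_{k}))$ converges to its value at $s_{1}$. The main obstacle, and the only delicate step, is the regularization term $R(\norm{s_{p_m}}_{\Banach_{K}^{p_m}(\Domain)})$, because the norms live on \emph{different} spaces and the inequality $\norm{\cdot}_{\Banach_{K}^{p_m}}\le\norm{\cdot}_{\Banach_{K}^{1}}$ points in the wrong direction for a Fatou argument on the $\ell^{1}$-norm. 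I would overcome this by truncating: for each fixed $M\in\NN$, the power-mean inequality gives
\[
\sum_{n=1}^{M}\abs{a_{p_m,n}}\le M^{1-1/p_m}\Bigl(\sum_{n=1}^{M}\abs{a_{p_m,n}}^{p_m}\Bigr)^{1/p_m}\le M^{1-1/p_m}\norm{s_{p_m}}_{\Banach_{K}^{p_m}(\Domain)}.
\]
Letting $m\to\infty$ with $M$ fixed and using coefficient-wise convergence together with $M^{1-1/p_m}\to 1$, then letting $M\to\infty$, I obtain $\norm{s_{1}}_{\Banach_{K}^{1}(\Domain)}\le\liminf_{m\to\infty}\norm{s_{p_m}}_{\Banach_{K}^{p_m}(\Domain)}$. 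Continuity of $R$ on $[0,\infty)$ (which follows from its convexity) promotes this to $R(\norm{s_{1}}_{\Banach_{K}^{1}})\le\liminf_m R(\norm{s_{p_m}}_{\Banach_{K}^{p_m}})$, and adding the empirical part yields $\svm_{1}(s_{1})\le\liminf_{m\to\infty}\svm_{p_m}(s_{p_m})$.

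Finally I would close the sandwich: Lemma~\ref{l:bound-1-svm} already shows $\lim_{m\to\infty}\svm_{p_m}(s_{p_m})\le\min_{f\in\Banach_{K}^{1}(\Domain)}\svm_{1}(f)$ (the limit existing because $p\mapsto\svm_{p}(s_{p})$ is monotone), and tautologically $\min_{f}\svm_{1}(f)\le\svm_{1}(s_{1})$. Chaining these with the lower bound just obtained produces
\[
\svm_{1}(s_{1})\le\liminf_{m\to\infty}\svm_{p_m}(s_{p_m})\le\lim_{m\to\infty}\svm_{p_m}(s_{p_m})\le\min_{f\in\Banach_{K}^{1}(\Domain)}\svm_{1}(f)\le\svm_{1}(s_{1}),
\]
so every inequality is an equality, which is exactly the conclusion of the lemma.
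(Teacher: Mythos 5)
Your proof is correct, and its overall skeleton --- establish $\svm_1(s_1)\le\liminf_{m\to\infty}\svm_{p_m}(s_{p_m})$ from the weak* convergence and then close the sandwich against the upper bound of Lemma~\ref{l:bound-1-svm} --- is the same as the paper's. Where you genuinely diverge is the treatment of the regularization term, and there your argument is the more careful one. The paper invokes weak* lower semicontinuity of the norm of $\Banach_{K}^1(\Domain)$ to get $\norm{s_1}_{\Banach_{K}^1(\Domain)}\le\liminf_{m}\norm{s_{p_m}}_{\Banach_{K}^1(\Domain)}$ and feeds this into $R$; but what appears in $\svm_{p_m}(s_{p_m})$ is $R(\norm{s_{p_m}}_{\Banach_{K}^{p_m}(\Domain)})$, and since $\norm{\cdot}_{\Banach_{K}^{p_m}(\Domain)}\le\norm{\cdot}_{\Banach_{K}^{1}(\Domain)}$ that bound points the wrong way --- exactly the obstacle you identify. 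Your fix (extract coefficient-wise convergence $a_{p_m,n}\to a_{1,n}$ by testing weak* convergence against the biorthogonal functionals $\adjphi_n$, then control a finite section $\sum_{n\le M}\abs{a_{p_m,n}}$ by $M^{1-1/p_m}\norm{s_{p_m}}_{\Banach_{K}^{p_m}(\Domain)}$, letting $m\to\infty$ before $M\to\infty$) delivers the inequality $\norm{s_1}_{\Banach_{K}^1(\Domain)}\le\liminf_{m}\norm{s_{p_m}}_{\Banach_{K}^{p_m}(\Domain)}$ that is actually needed, at the cost of working in explicit Schauder-basis coordinates rather than by a soft duality argument; the paper's implicit repair is the duality computation of Lemma~\ref{l:sp-bound-1-norm}, i.e.\ testing against $g\in\Span\{\Kset_{K}'\}$ and using $\norm{g}_{\Banach_{\adjK}^{q_m}(\Domain')}\to\norm{g}_{\Banach_{\adjK}^{\infty}(\Domain')}$, which your truncation achieves more directly. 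The remaining steps --- pointwise convergence via Proposition~\ref{p:weaklystar-conv-RKBS}, continuity of $L(\vx_k,y_k,\cdot)$ from convexity, monotonicity and continuity of $R$, and the final chain of inequalities --- match the paper.
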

\begin{proof}
Theorem~\ref{t:RKBS-MercerKer-infty} shows that $\Banach_{\adjK}^{\infty}(\Domain')$ is a two-sided RKBS and its two-sided reproducing kernel is the adjoint kernel $\adjK$ of $K$.
The main technique used in the proof is to employ the two-sided reproducing properties of $\Banach_{\adjK}^{\infty}(\Domain')$.

According to Proposition~\ref{p:weaklystar-conv-RKBS} the weak* convergence implies the pointwise convergence in $\Banach_{K}^1(\Domain)$. Hence,
\[
\lim_{m\to\infty}s_{p_m}(\vx)=s_1(\vx),\quad\text{for all }\vx\in\Domain.
\]
Since for any fixed $\vx\in\Domain$ and $y\in\RR$, $L(\vx,y,\cdot)$ is convex, it is continuous.
It follows that
\begin{equation}\label{eq:opt-svm-s1-1}
\lim_{m\to\infty}\frac{1}{N}\sum_{k\in\NN_N}L\left(\vx_k,y_k,s_{p_m}(\vx_k)\right)
=\frac{1}{N}\sum_{k\in\NN_N}L\left(\vx_k,y_k,s_{1}(\vx_k)\right).
\end{equation}
Since $R$ is a strictly increasing function and
\[
\norm{s_1}_{\Banach_{K}^1(\Domain)}
=\norm{\text{weak*}-\lim_{m\to\infty}s_{p_m}}_{\Banach_{K}^1(\Domain)}
\leq\liminf_{m\to\infty}\norm{s_{p_m}}_{\Banach_{K}^1(\Domain)},
\]
by the weakly* lower semi-continuity (\cite[Theorem~2.6.14]{Megginson1998}),
we have that
\begin{equation}\label{eq:opt-svm-s1-2}
R\left(\norm{s_1}_{\Banach_{K}^1(\Domain)}\right)
\leq R\left(\norm{s_{p_m}}_{\Banach_{K}^1(\Domain)}\right).
\end{equation}
Combining equation~\eqref{eq:opt-svm-s1-1} and inequality~\eqref{eq:opt-svm-s1-2}, we obtain that
\begin{equation}\label{eq:opt-svm-s1-3}
\svm_1\left(s_1\right)
\leq\liminf_{m\to\infty}
\svm_{p_m}\left(s_{p_m}\right).
\end{equation}
Since $\lim_{m\to\infty}p_m=1$,
Lemma~\ref{l:bound-1-svm} leads to the inequality
\begin{equation}\label{eq:opt-svm-s1-4}
\lim_{m\to\infty}\svm_{p_m}\left(s_{p_m}\right)\leq\min_{f\in\Banach_{K}^1(\Domain)}\svm_1(f).
\end{equation}
Comparing inequalities~\eqref{eq:opt-svm-s1-3} and~\eqref{eq:opt-svm-s1-4}, we conclude that
\[
\svm_1\left(s_1\right)\leq\lim_{m\to\infty}\svm_{p_m}\left(s_{p_m}\right)
\leq\min_{f\in\Banach_{K}^1(\Domain)}\svm_1(f),
\]
which implies that
\[
\svm_1\left(s_1\right)=\lim_{m\to\infty}\svm_{p_m}\left(s_{p_m}\right)
=\min_{f\in\Banach_{K}^1(\Domain)}\svm_1(f).
\]
\end{proof}

It is desirable to know the upper bound of $\norm{s_p}_{\Banach_{K}^{1}(\Domain)}$ as $p\to1$. To this end, we let
\[
C_0:=\frac{1}{N}\sum_{k\in\NN_N}L(\vx_k,y_k,0)+R(0)<\infty.
\]
Since
\[
R\left(\norm{f}_{\Banach_{K}^{p}(\Domain)}\right)\leq
\svm_{p}\left(s_p\right)
\leq\svm_p(0)=C_0,\quad\text{for }1<p\leq2,
\]
we have that
\begin{equation}\label{eq:sp-bound-1-norm-0}
\norm{s_p}_{\Banach_{K}^{p}(\Domain)}
\leq R^{-1}\left(C_0\right),
\quad\text{for }1<p\leq2,
\end{equation}
where $R^{-1}$ is the inverse function of the strictly increasing function $R$. Thus, the set
$\left\{\norm{s_p}_{\Banach_{K}^{p}(\Domain)}:1<p\leq2\right\}$ is bounded.
Lemma~\ref{l:sp-in-1-RKBS} shows that $\left\{s_p:1<p\leq2\right\}$ is a set of $\Banach_{K}^1(\Domain)$.

\begin{lemma}\label{l:sp-bound-1-norm}
If the support vector machines in $\Banach_{K}^1(\Domain)$ and $\Banach_{K}^p(\Domain)$ are defined in equations~\eqref{eq:svm-1-RKBS-opt} and~\eqref{eq:svm-p-RKBS-opt}, respectively,
then
\[
\limsup_{p\to1}\norm{s_p}_{\Banach_{K}^{1}(\Domain)}<+\infty.
\]
\end{lemma}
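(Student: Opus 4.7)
The plan is to exploit the finite-parameter representation of $s_p$ supplied by Theorem~\ref{t:RKBS-MercerKer-svm-rep-pq} and reduce everything to a finite-dimensional estimate on the coefficient vector $\vc_p:=(c_{p,k}:k\in\NN_N)$. Writing $\tau_{p,n}:=\sum_{k\in\NN_N}c_{p,k}\phi_n(\vx_k)$ and $s_p=\sum_n a_{p,n}\phi_n$, the representation~\eqref{eq:svm-coef-pq} gives $|a_{p,n}|=|\tau_{p,n}|^{q-1}$, where $q=p/(p-1)\geq 2$ since $p\in(1,2]$. First I would use $(q-1)p=q$ together with~\eqref{eq:sp-bound-1-norm-0} to extract the key a priori bound
\[
\sum_n|\tau_{p,n}|^q=\sum_n|a_{p,n}|^p=\norm{s_p}_{\Banach_K^p(\Domain)}^p\leq R^{-1}(C_0)^p,
\]
so that $(\sum_n|\tau_{p,n}|^q)^{1/q}\leq R^{-1}(C_0)^{p/q}=R^{-1}(C_0)^{p-1}$, which stays bounded as $p\to 1$.

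Next, using the elementary inequality $t^{q-1}\leq t+t^q$ valid for $t\geq 0$ and $q\geq 2$, I would split
\[
\norm{s_p}_{\Banach_K^1(\Domain)}=\sum_n|\tau_{p,n}|^{q-1}\leq\sum_n|\tau_{p,n}|+\sum_n|\tau_{p,n}|^q,
\]
whose second summand is already controlled by $R^{-1}(C_0)^p$. For the first, the triangle inequality together with (C-$1*$) yields
\[
\sum_n|\tau_{p,n}|\leq\sum_{k\in\NN_N}|c_{p,k}|\sum_n|\phi_n(\vx_k)|\leq N\,C_{X,\max}\norm{\vc_p}_\infty,
\]
with $C_{X,\max}:=\max_{k\in\NN_N}\sum_n|\phi_n(\vx_k)|<\infty$. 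Thus the whole problem reduces to a uniform bound on $\norm{\vc_p}_\infty$ as $p\to 1$.

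The hard part will be this uniform control on $\norm{\vc_p}_\infty$, because the natural $\lq$-type bound on $(\tau_{p,n})_n$ is with an exponent $q$ that escapes to $\infty$. To handle it I would introduce the linear map $T:\RR^N\to\czero$ given by $T\vc:=\bigl(\sum_{k\in\NN_N}c_k\phi_n(\vx_k)\bigr)_n$; condition (C-$1*$) ensures $T\vc\in\czero$, and the assumed linear independence of $\Kset_K'$ ensures $T$ is injective. Hence $\vc\mapsto\norm{T\vc}_\infty$ is a norm on the finite-dimensional space $\RR^N$, and compactness of its $\norm{\cdot}_\infty$-unit sphere together with continuity of $\vc\mapsto\norm{T\vc}_\infty$ gives
\[
m_*:=\inf_{\norm{\vc}_\infty=1}\norm{T\vc}_\infty>0.
\]
Because $\norm{\vx}_\infty\leq\norm{\vx}_q$ for every sequence $\vx$ and every $q\geq 1$, this estimate is uniform in $q$: for all $\vc\in\RR^N$ and all $q\geq 2$,
\[
\norm{\vc}_\infty\leq\frac{\norm{T\vc}_\infty}{m_*}\leq\frac{\norm{T\vc}_q}{m_*}.
\]
Applying this with $\vc=\vc_p$ gives $\norm{\vc_p}_\infty\leq R^{-1}(C_0)^{p-1}/m_*$.

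Combining the three estimates yields
\[
\norm{s_p}_{\Banach_K^1(\Domain)}\leq\frac{N\,C_{X,\max}\,R^{-1}(C_0)^{p-1}}{m_*}+R^{-1}(C_0)^p,
\]
and taking $\limsup$ as $p\to 1$ (so $R^{-1}(C_0)^{p-1}\to 1$ and $R^{-1}(C_0)^p\to R^{-1}(C_0)$) produces $\limsup_{p\to 1}\norm{s_p}_{\Banach_K^1(\Domain)}\leq N\,C_{X,\max}/m_*+R^{-1}(C_0)<+\infty$, as required.
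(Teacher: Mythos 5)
Your proof is correct, but it takes a genuinely different route from the paper's. The paper argues by duality: it tests $s_p$ against elements $g=\sum_{k\in\NN_M}\alpha_k\adjK(\cdot,\vx_k)$ of the dense subspace $\Span\left\{\Kset_{\adjK}\right\}$ of $\Banach_{\adjK}^{\infty}(\Domain')$, uses the reproducing properties to identify $\langle g,s_p \rangle_{\Banach_{\adjK}^{\infty}(\Domain')}$ with $\langle g,s_p \rangle_{\Banach_{\adjK}^{q}(\Domain')}$, bounds the latter by $R^{-1}(C_0)\norm{g}_{\Banach_{\adjK}^{q}(\Domain')}$ via the same a priori estimate~\eqref{eq:sp-bound-1-norm-0} you invoke, and then lets $q\to\infty$ so that $\norm{g}_{\Banach_{\adjK}^{q}(\Domain')}\to\norm{g}_{\Banach_{\adjK}^{\infty}(\Domain')}$; a density and continuous-extension step then yields $\limsup_{p\to1}\norm{s_p}_{\Banach_K^1(\Domain)}\leq R^{-1}(C_0)$. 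You instead stay entirely on the coefficient side: the representer formula of Theorem~\ref{t:RKBS-MercerKer-svm-rep-pq} reduces the problem to the finite vector $\vc_p$, and the crux is your observation that the injective map $T\vc:=\bigl(\sum_{k\in\NN_N}c_k\phi_n(\vx_k)\bigr)_n$ induces a norm $\vc\mapsto\norm{T\vc}_{\infty}$ on $\RR^N$ whose lower equivalence constant $m_*$ is \emph{independent of $p$}, because $\norm{\cdot}_{\infty}\leq\norm{\cdot}_q$ uniformly in $q$; combined with the identity $(q-1)p=q$, the splitting $t^{q-1}\leq t+t^q$, and condition (C-$1*$), this closes the estimate. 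Your approach buys an explicit, fully quantitative and elementary bound, and it avoids having to interchange $\limsup_{p\to1}$ with the supremum over the dual unit ball that is implicit in the paper's final passage from the pointwise bound on $\langle g,s_p\rangle$ to the bound on the norm; the price is a cruder constant, $N\,C_{X,\max}/m_*+R^{-1}(C_0)$ instead of $R^{-1}(C_0)$, which is immaterial for the lemma as stated (though the paper's sharper constant is not otherwise used downstream either).
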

\begin{proof}
We verify this result by using the definition of the norms of bounded linear functionals.

According to Proposition~\ref{p:RKBS-MecerKer-1} and Theorem~\ref{t:RKBS-MercerKer-infty}, the space $\Banach_{\adjK}^{\infty}(\Domain')$ is a two-sided RKBS with the two-sided reproducing kernel $\adjK$ and $\Banach_{K}^{1}(\Domain)$ is isometrically equivalent to the dual space of $\Banach_{\adjK}^{\infty}(\Domain')$.
Let $1<p\leq2$ and $2\leq q<\infty$ such that $p^{-1}+q^{-1}=1$.
Moreover, Proposition~\ref{p:RKBS-MecerKer-pq} and Theorem~\ref{t:RKBS-MercerKer-q} provide that the space $\Banach_{\adjK}^{q}(\Domain')$ is a two-sided RKBS with the two-sided reproducing kernel $\adjK$ and $\Banach_{K}^{p}(\Domain)$ is isometrically equivalent to the dual space of $\Banach_{\adjK}^{q}(\Domain')$.

For $g\in\Span\left\{\Kset_{\adjK}\right\}=\Span\left\{\Kset_K'\right\}$, we express it in terms of the kernel $K'$ as
$$
g=\sum_{k\in\NN_M}\alpha_k\adjK(\cdot,\vx_k).
$$
Using the left-sided reproducing properties of $\Banach_{\adjK}^{\infty}(\Domain')$ and $\Banach_{\adjK}^{q}(\Domain')$, we have that
\[
\langle g,s_p \rangle_{\Banach_{\adjK}^{\infty}(\Domain')}
=\sum_{k\in\NN_M}\alpha_k\langle \adjK(\cdot,\vx_k),s_p \rangle_{\Banach_{\adjK}^{\infty}(\Domain')}
=\sum_{k\in\NN_M}\alpha_ks_p\left(\vx_k\right),
\]
and
\[
\langle g,s_p \rangle_{\Banach_{\adjK}^{q}(\Domain')}
=\sum_{k\in\NN_M}\alpha_k\langle \adjK(\cdot,\vx_k),s_p \rangle_{\Banach_{\adjK}^{q}(\Domain')}
=\sum_{k\in\NN_M}\alpha_ks_p\left(\vx_k\right).
\]
It follows that
\begin{equation}\label{eq:sp-bound-1-norm-1}
\abs{\langle g,s_p \rangle_{\Banach_{\adjK}^{\infty}(\Domain')}}
=\abs{\langle g,s_p \rangle_{\Banach_{\adjK}^{q}(\Domain')}}
\leq\norm{s_p}_{\Banach_{K}^{p}(\Domain)}
\norm{g}_{\Banach_{\adjK}^{q}(\Domain')}.
\end{equation}
Substituting inequality \eqref{eq:sp-bound-1-norm-0} into inequality~\eqref{eq:sp-bound-1-norm-1}
yields that
\begin{equation}\label{eq:sp-bound-1-norm-2}
\abs{\langle g,s_p \rangle_{\Banach_{\adjK}^{\infty}(\Domain')}}
\leq R^{-1}\left(C_0\right)
\norm{g}_{\Banach_{\adjK}^{q}(\Domain')},
\end{equation}
for $1<p\leq2$ and $2\leq q<\infty$ such that $p^{-1}+q^{-1}=1$.
Using the convergence of $\lq$-norm to $\linfty$-norm, we have that
\[
\lim_{q\to\infty}\norm{g}_{\Banach_{\adjK}^{q}(\Domain')}
=\lim_{q\to\infty}\norm{\vb}_q
=\norm{\vb}_{\infty}
=\norm{g}_{\Banach_{\adjK}^{\infty}(\Domain')},
\]
where $\vb:=\left(b_n:n\in\NN\right)$ are the coefficients of $g$ for the Schauder bases $\Sset_{\adjK}=\Sset_K'$.
Hence, we conclude that
\[
\limsup_{p\to1}\abs{\langle g,s_p \rangle_{\Banach_{\adjK}^{\infty}(\Domain')}}
\leq
R^{-1}\left(C_0\right)\lim_{q\to\infty}
\norm{g}_{\Banach_{\adjK}^{q}(\Domain')}
=
R^{-1}\left(C_0\right)
\norm{g}_{\Banach_{\adjK}^{\infty}(\Domain')}.
\]

Moreover, Proposition~\ref{p:RKBS-dense}
ensures that $\Span\left\{\Kset_{\adjK}\right\}$ is dense in $\Banach_{\adjK}^{\infty}(\Domain')$.
By continuous extensions, we also have that
\[
\limsup_{p\to1}\abs{\langle g,s_p \rangle_{\Banach_{\adjK}^{\infty}(\Domain')}}
\leq
R^{-1}\left(C_0\right)
\norm{g}_{\Banach_{\adjK}^{\infty}(\Domain')},
\]
for all $g\in\Banach_{\adjK}^{\infty}(\Domain')$.
Therefore, we obtain that
\[
\limsup_{p\to1}\norm{s_p}_{\Banach_K^1(\Domain)}\leq
R^{-1}\left(C_0\right)<\infty,
\]
proving the desired result.
\end{proof}

Finally, we show that the support vector machine in $\Banach_{K}^{1}(\Domain)$ is solvable and its solution can be approximated by the solutions of the support vector machines in $\Banach_{K}^{p}(\Domain)$ as $p\to1$.

\begin{theorem}\label{t:opt-svm-s1}
Given any pairwise distinct data points $X\subseteq\Domain$ and any associated data values $Y\subseteq\RR$,
the regularized empirical risk $\svm_1$ is defined as in equation~\eqref{eq:svm-1-RKBS}.
Let $K\in\Leb_0(\Domain\times\Domain')$ be a generalized Mercer kernel such that the expansion sets $\Sset_K$ and $\Sset_K'$ of $K$ satisfy assumption (A-$1*$)
and the right-sided kernel set $\Kset_K'$ is linearly independent.
If the loss function $L$ and the regularization function $R$ satisfy assumption~(A-ELR),
then
the support vector machine
\[
\min_{f\in\Banach_{K}^1(\Domain)}\svm_1(f)
\]
has a global minimum solution $s_1$ and there exists
a countable sequence $s_{p_m}$, with $\lim_{m\to\infty}p_m=1$, of the support vector machine solutions $\left\{s_p:1<p\leq2\right\}$ defined in equation~\eqref{eq:svm-p-RKBS-opt}
such that
\[
\lim_{p_m\to1}s_{p_m}(\vx)=s_1(\vx),\quad \text{for all }\vx\in\Domain,
\]
and
\[
\lim_{p_m\to1}\svm_{p_m}\left(s_{p_m}\right)=\svm_1\left(s_1\right).
\]
\end{theorem}
\begin{proof}
Proposition~\ref{p:RKBS-MecerKer-1} ensures that $\Banach_{K}^1(\Domain)$ is isometrically equivalent to the dual space of $\Banach_{\adjK}^{\infty}(\Domain')$.
Then we employ the Banach-Alaoglu theorem (\cite[Theorem~2.6.18]{Megginson1998}) to find a countable sequence $s_{p_m}$, with $\lim_{m\to\infty}p_m=1$, of
$\left\{s_{p}:1<p\leq2\right\}$ such that it is a weakly* convergent sequence.

According to Lemma~\ref{l:sp-bound-1-norm}, there exists a countable sequence $s_{i_n}$, with $\lim_{n\to\infty}i_n=1$, of
$\left\{s_{p}:1<p\leq2\right\}$ such that the set
\[
\left\{\norm{s_{i_n}}_{\Banach_{K}^1(\Domain)}:\lim_{n\to\infty}i_n=1\right\}
\]
is bounded and
$\left\{i_n:\lim_{n\to\infty}i_n=1\right\}$ is an increasing sequence.
Thus, the Banach-Alaoglu theorem guarantees that the countable sequence $\left\{s_{i_n}:\lim_{n\to\infty}i_n=1\right\}$ is relatively weakly* compact.
This ensures that there exists weakly* convergent sequence $\left\{s_{p_m}:\lim_{m\to\infty}p_m=1\right\}$ of $\left\{s_{i_n}:\lim_{n\to\infty}i_n=1\in\NN\right\}$.
Since $\Banach_{K}^1(\Domain)$ is a Banach space, we find an element $s_1\in\Banach_{K}^1(\Domain)$ such that
\[
s_{p_m}\overset{\text{weak*}-\Banach_{\adjK}^{\infty}(\Domain')}{\longrightarrow}s_1,\quad \text{when }p_m\to1.
\]
By Lemma~\ref{l:opt-svm-s1}, we conclude that
\[
\svm_1\left(s_1\right)=\min_{f\in\Banach_{K}^1(\Domain)}\svm_1(f),
\]
and
\[
\lim_{p_m\to1}\svm_{p_m}\left(s_{p_m}\right)=\svm_1\left(s_1\right).
\]
Moreover, since $\Banach_{\adjK}^{\infty}(\Domain')$ is a two-sided RKBS, Proposition~\ref{p:weaklystar-conv-RKBS} ensures that $s_{p_m}$ is also convergent to $s_1$ pointwise as $p_m\to1$.
\end{proof}

The minimizer $s_1$ of the regularized empirical risk over $\Banach_{K}^1(\Domain)$ may not be unique because $\Banach_{K}^1(\Domain)$ is not strictly convex. If the support vector machine solution $s_1$ is unique, then we apply any convergent sequence
$\left\{s_{p_m}:\lim_{m\to\infty}p_m=1\right\}$ to approximate $s_1$ when $m\to\infty$.

\section{Sparse Sampling in $1$-norm Reproducing Kernel Banach Spaces}\label{s:sparse}
\sectionmark{Sparse Sampling in $1$-norm RKBS}

Fast numerical algorithms for sparse sampling have obtained a central achievement in the signal and image processing.
This engages people to investigate whether the support vector machines possess the sparsity (see~\cite{Steinwart2003}).
Following the discussions in Section~\ref{s:SVM-1-RKBS}, we show that the support vector machines in the $1$-norm RKBSs can be equivalently transferred into the classical $\lone$-sparse approximation.

Let $\Domain$ be a compact Hausdorff space and $K\in\Cont(\Domain\times\Domain)$ be a symmetric positive definite kernel.
Suppose that the positive eigenvalues $\Lambda_K$ and continuous eigenfunctions $\Eset_{K}$ of $K$ satisfy
assumptions (A-PDK). Then Theorem~\ref{t-RKBS-PDK} ensures that $K$ is a reproducing kernel of the $1$-norm RKBS $\Banach_K^1(\Domain)$ induced by $\Lambda_K$ and $\Eset_{K}$.
Further suppose that the eigenfunctions $\Eset_{K}$ is an orthonormal basis of $\Leb_2(\Domain)$.
Here we choose the least square loss
\[
L(\vx,y,t):=\frac{1}{2}\left(y-t\right)^2,\quad
\text{for }\vx\in\Domain,~y\in\RR,~t\in\RR,
\]
and the linear regularization function
\[
R(r):=\sigma r,\quad
\text{for }r\in[0,\infty),
\]
where $\sigma$ is a positive parameter.

Let $X:=\left\{\vx_k:k\in\NN_N\right\}$ be a set of pairwise distinct data points of $\Domain$ and $\vy_X:=\left(y_k:k\in\NN_N\right)\in\RR^N$ compose of the data values $Y:=\left\{y_k:k\in\NN_N\right\}\subseteq\RR$.
Then we construct the support vector machines in $\Banach_K^1(\Domain)$ as follows
\[
\min_{f\in\Banach_K^1(\Domain)}\svm_1(f)
=\min_{f\in\Banach_K^1(\Domain)}
\left\{\frac{1}{N}\sum_{k\in\NN_N}L(\vx_k,y_k,f(\vx_k))+R\left(\norm{f}_{\Banach_{K}^1(\Domain)}\right)\right\}.
\]

First we look at the function $f\in\Leb_2(\Domain)$ that can be represented as $f=\sum_{n\in\NN}\xi_ne_n$ with the coefficients $\vxi:=\left(\xi_n:n\in\NN\right)\in\ltwo$. Thus, we have that
\[
\begin{pmatrix}
f(\vx_1)\\
\vdots\\
f(\vx_N)
\end{pmatrix}
=
\begin{pmatrix}
\sum_{n\in\NN}\xi_ne_n(\vx_1)\\
\vdots\\
\sum_{n\in\NN}\xi_ne_n(\vx_N)
\end{pmatrix}
=
\vE_X\vxi,
\]
where
\[
\vE_X:=
\begin{pmatrix}
e_1(\vx_1)&\cdots&e_n(\vx_1)&\cdots\\
\vdots&\ddots&\vdots&\ddots\\
e_1(\vx_N)&\cdots&e_n(\vx_N)&\cdots
\end{pmatrix}.
\]

Next, we study the function $f=\sum_{n\in\NN}\xi_ne_n\in\Banach_K^1(\Domain)$.
Since
\[
\left(\sum_{n\in\NN}\abs{\xi_n}^2\right)^{1/2}
\leq\sum_{n\in\NN}\abs{\xi_n}
\leq\left(\sup_{n\in\NN}\lambda_n^{1/2}\right)\left(\sum_{n\in\NN}\frac{\abs{\xi_n}}{\lambda_n^{1/2}}\right),
\]
we conclude that $\Banach_K^1(\Domain)\subseteq\Leb_2(\Domain)$ and $\vxi\in\lone$.
This ensures that $f\in\Banach_K^1(\Domain)$ if and only if $\vD_{\lambda}^{1/2}\vxi\in\lone$, where
\[
\vD_{\lambda}^{1/2}:=
\begin{pmatrix}
\lambda_1^{1/2}&\cdots&0&\cdots\\
\vdots&\ddots&\vdots&\ddots\\
0&\cdots&\lambda_n^{1/2}&\cdots\\
\vdots&\ddots&\vdots&\ddots
\end{pmatrix}
\in\RR^{\NN\times\NN}.
\]

Therefore, for any $f:=\sum_{n\in\NN}\xi_ne_n\in\Banach_K^1(\Domain)$, we have that
\begin{equation}\label{eq:svm-sparse-1}
L\left(\vx_k,y_k,f(\vx_k)\right)=\frac{1}{2}\sum_{k\in\NN_N}\left(y_k-f(\vx_k)\right)^2
=\frac{1}{2}\norm{\vy_X-\vE_X\vxi}_2^2,
\end{equation}
and
\begin{equation}\label{eq:svm-sparse-2}
R\left(\norm{f}_{\Banach_K^1(\Domain)}\right)=
\sigma\sum_{n\in\NN}\frac{\abs{\xi_n}}{\sqrt{\lambda_n}}
=\sigma\norm{\vD_{\lambda}^{1/2}\vxi}_1.
\end{equation}
Combining equations~\eqref{eq:svm-sparse-1} and \eqref{eq:svm-sparse-2}, the support vector machine in $\Banach_K^1(\Domain)$ is equivalently to
the classical \emph{$\lone$-sparse regularization}, that is,
\begin{equation}\label{eq:svm-sparse-main}
\min_{f\in\Banach_K^1(\Domain)}\svm_1(f)
=
\min_{\vxi\in\lone}
\left\{\frac{1}{2N}\norm{\vy_X-\vE_X\vxi}_2^2+\sigma\norm{\vD_{\lambda}^{1/2}\vxi}_1\right\}.
\end{equation}

Recently, the sparse sampling inspires the numerical practical developments (see
\cite{DonohoElad2003,CandesRombergTao2006,BrucksteinDonohoElad2009}).
Now we review some fast algorithms which solve the sparse regularization~\eqref{eq:svm-sparse-main} in compressed sensing and large-scale computation.
\begin{itemize}
\item A simple strategy to attack the minimization problem~\eqref{eq:svm-sparse-main} is the \emph{iteratively reweighted least squares algorithms} (IRLS) in~\cite{Karlovitz1970,RaoDelgado1999,RaoEnganCotterPalmerDelgado2003}. The key technique of IRLS is that the $\lone$-norm is viewed as an adaptively weighted version of the squared $\ltwo$-norm. The main iterative steps of the IRLS include regularized least squares -- weight update.
\item But the IRLS may lose much of its appeal when facing the large-scale problems. An alternative method is the \emph{iterative shrinkage-thresholding algorithms} (ISTA) in~\cite{ChambolleDeVoreLeeLucier1998,FigueiredoNowak2003,DaubechiesDefriseMol2004}.
    The primary idea of ISTA is that the updated entries are evaluated by the shrinkage operator at the preliminary entries. In the optimization literature, the ISTA can be traced back to the proximal forward-backward iterative schemes similar to the splitting methods (see~\cite{BruckRonald1997,Passty1979}).
    The general iterative steps of the ISTA include back projection -- shrinkage -- line search -- solution and residual update.
    The ISTA is simple to implement and the ISTA is particularly useful in the large-scale problems, for example, the big size of $X$.
    Moreover, there are many other methods to accelerate the ISTA, for example, the two-step iterative shrinkage-thresholding algorithm (TwIST) in \cite{BioucasFigueiredo2007} and the fast iterative shrinkage-thresholding algorithm (FISTA) in \cite{BeckTeboulle2009}.
\item Recent papers~\cite{MicchelliShenXu2011,LiMicchelliShenXu2012,MicchelliShenXuZeng2013,LiShenXuZhang2015} provide a novel framework of the \emph{proximity algorithm} to study the total variation model. The main point is that a proximity operator can be constructed by the fixed-point methodology. This leads to develop efficient numerical algorithms via various fixed-point iterations. The key steps of the proximity algorithm include proximity subgradient -- fixed-point update. Paper~\cite{ParikhBoyd2014} introduces a library of implementations of proximity operators to present the efficient evaluations of the smooth and nonsmooth optimization problems including parallel and distributed algorithms. Therefore, the fixed-point iterations can accelerate computational processes of support vector machines in $p$-norm RKBSs by the proximity methods.
\end{itemize}

The connection of support vector machines and sparse sampling shows that the fast algorithms for sparse approximation can be applied in machine learning. Almost all sparse models only focus on the least-square loss. But we know that there are many other kinds of loss functions in machine learning.
Based on the constructions of the $1$-norm RKBSs, we propose a new research field of machine learning that call \emph{sparse learning methods}. In our current research proposal, we shall develop efficient numerical algorithms to learn the nonlinear structures of the big data by the sparse learning method.

\section{Support Vector Machines in Special Reproducing Kernel Banach Spaces}\label{s:SVM-Sp-RKBS}
\sectionmark{Solving Support Vector Machines in Special RKBS}

In this section, we study the support vector machines in the special $p$-norm RKBSs.
We suppose that the
expansion sets $\Sset_K$ and $\Sset_K'$ of the generalized Mercer kernel $K\in\Leb_0(\Domain\times\Domain')$ satisfy assumption (A-$1*$) which also implies assumption (A-$p$) for all $1<p<\infty$ (see inequalities (\ref{A1-to-Ap-1})-(\ref{A1-to-Ap-2})).
Let
\begin{equation}\label{eq:SVM-typ-pm}
p_m:=\frac{2m}{2m-1},\quad q_m:=2m,\quad\text{for }m\in\NN.
\end{equation}
Thus, Theorem~\ref{p:RKBS-MecerKer-pq} guarantees that
the $p_m$-norm RKBSs $\Banach_K^{p_m}(\Domain)$ induced by $\Sset_K$ are well-defined for all $m\in\NN$.
Now we show that the support vector machine solutions in $\Banach_K^{p_m}(\Domain)$ can be simplified for convenient coding and computation.
To be more precise, the support vector machine solutions in $\Banach_K^{p_m}(\Domain)$ can be represented as an linear combination of the special kernel basis.

Further suppose that the loss function $L$ and the regularization function $R$ satisfy assumption~(A-ELR). Given the pairwise distinct data points  $X\subseteq\Domain$ and the associated data values $Y\subseteq\RR$,
the regularized empirical risk
\begin{equation}\label{eq:PDK-svm}
\svm_{p_m}(f):=
\frac{1}{N}\sum_{k\in\NN_N}L(\vx_k,y_k,f(\vx_k))+R\left(\norm{f}_{\Banach_{K}^{p_m}(\Domain)}\right),
\end{equation}
is well-defined for any $f\in\Banach_{K}^{p_m}(\Domain)$.
By Theorem~\ref{t:RKBS-MercerKer-svm-rep-pq}, we solve
the unique minimizer $s_{p_m}$ of $\svm_{p_m}$ over $\Banach_{K}^{p_m}(\Domain)$.
Now we simplify the support vector machine solution $s_{p_m}$ in the following way.

\begin{theorem}\label{t:SVM-opt-rep-typ}
Given any pairwise distinct data points $X\subseteq\Domain$ and any associated data values $Y\subseteq\RR$,
the regularized empirical risk $\svm_{p_m}$ is defined as in equation~\eqref{eq:PDK-svm}.
Let $p_m$ be defined in equation~\eqref{eq:SVM-typ-pm} and let
$K\in\Leb_0(\Domain\times\Domain')$ be a generalized Mercer kernel such that the expansion sets $\Sset_K$ and $\Sset_K'$ of $K$ satisfy assumption (A-$1*$)
and the right-sided kernel set $\Kset_K'$ is linearly independent.
If the loss function $L$ and the regularization function $R$ satisfy assumption~(A-ELR),
then the unique global solution $s_{p_m}$ of the support vector machine
\[
\min_{f\in\Banach_K^{p_m}(\Domain)}\svm_{p_m}(f),
\]
has the finite dimensional representation
\[
s_{p_m}(\vx)=\sum_{k_1,\ldots,k_{2m-1}\in\NN_N}\alpha_{k_1,\ldots,k_{2m-1}}
K^{\ast(2m-1)}\left(\vx,\vx_{k_1},\cdots,\vx_{k_{2m-1}}\right), \quad\text{for } \vx\in\Domain,
\]
and the norm of $s_{p_m}$ can be written as
\begin{align*}
\norm{s_{p_m}}_{\Banach_K^{p_m}(\Domain)}
=\left(\sum_{k_0,\ldots,k_{2m-1}\in\NN_N}\beta_{k_0,\ldots,k_{2m-1}}
K^{\ast(2m-1)}\left(\vx_{k_0},\cdots,\vx_{k_{2m-1}}\right)\right)^{1-\frac{1}{2m}},
\end{align*}
where
the kernel $K^{\ast(2m-1)}$ is computed by
\begin{equation}\label{eq:svm-kernel-2m-1}
K^{\ast(2m-1)}\left(\vx,\vy_1,\cdots,\vy_{2m-1}\right)
:=\sum_{n\in\NN}\phi_n(\vx)\prod_{j\in\NN_{2m-1}}\phi_n(\vy_j),
\end{equation}
for $\vx,\vy_1,\ldots,\vy_{2m-1}\in\Domain$
and
the coefficients
\begin{align*}
\alpha_{k_1,\ldots,k_{2m-1}}:=\prod_{j\in\NN_{2m-1}}c_{k_j},
\quad
\beta_{k_0,k_1,\ldots,k_{2m-1}}:=c_{k_0}\prod_{j\in\NN_{2m-1}}c_{k_j},
\end{align*}
for $k_0,\ldots,k_{2m-1}\in\NN_N$ are computed by some suitable parameters $c_1,\ldots,c_{N}\in\RR$.
\end{theorem}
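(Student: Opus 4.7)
The plan is to specialize Theorem~\ref{t:RKBS-MercerKer-svm-rep-pq} to the typical exponents $p_m=2m/(2m-1)$ and exploit the fact that the conjugate exponent $q_m=2m$ is an even integer. Since the assumption (A-$1*$) on $\Sset_K$ and $\Sset_K'$ implies (A-$p$) for every $1<p<\infty$, and the right-sided kernel set $\Kset_K'$ is linearly independent, Theorem~\ref{t:RKBS-MercerKer-svm-rep-pq} applies directly to $p=p_m$: the support vector machine in $\Banach_K^{p_m}(\Domain)$ admits a unique minimizer $s_{p_m}$ with
\[
s_{p_m}=\sum_{n\in\NN}\left(\sum_{j\in\NN_N}c_j\phi_n(\vx_j)\left|\sum_{k\in\NN_N}c_k\phi_n(\vx_k)\right|^{q_m-2}\right)\phi_n,
\]
where I set $c_k:=c_{p_m,k}$ and use $q_m-2=2m-2$.

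The crucial simplification is that $2m-2$ is a nonnegative \emph{even} integer, so $|\cdot|^{2m-2}=(\cdot)^{2m-2}$, and the inner coefficient collapses to the pure power $\bigl(\sum_{k\in\NN_N}c_k\phi_n(\vx_k)\bigr)^{2m-1}$. A multinomial expansion of this $(2m-1)$-fold product yields
\[
\Bigl(\sum_{k\in\NN_N}c_k\phi_n(\vx_k)\Bigr)^{2m-1}
=\sum_{k_1,\ldots,k_{2m-1}\in\NN_N}\Bigl(\prod_{j\in\NN_{2m-1}}c_{k_j}\Bigr)\prod_{j\in\NN_{2m-1}}\phi_n(\vx_{k_j}).
\]
Substituting this into the series for $s_{p_m}(\vx)=\sum_{n}(\cdots)\phi_n(\vx)$, interchanging the finite multi-index sum with the infinite $n$-sum (justified by the absolute convergence inherited from the assumption (C-$1*$)), and recognizing the inner $n$-series as the kernel $K^{\ast(2m-1)}$ defined in~\eqref{eq:svm-kernel-2m-1}, produces precisely the desired finite dimensional representation with $\alpha_{k_1,\ldots,k_{2m-1}}=\prod_{j\in\NN_{2m-1}}c_{k_j}$.

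The norm formula follows by the same device. From Theorem~\ref{t:RKBS-MercerKer-svm-rep-pq} we have
\[
\norm{s_{p_m}}_{\Banach_K^{p_m}(\Domain)}=\Bigl(\sum_{n\in\NN}\bigl|\sum_{k\in\NN_N}c_k\phi_n(\vx_k)\bigr|^{2m}\Bigr)^{1/p_m},
\]
and $1/p_m=(2m-1)/(2m)=1-1/(2m)$. Again $|\cdot|^{2m}=(\cdot)^{2m}$ since $2m$ is even, so a multinomial expansion in $2m$ factors, followed by interchanging sums and invoking the definition~\eqref{eq:svm-kernel-2m-1} with the first index playing the role of $\vx$, gives
\[
\sum_{n\in\NN}\Bigl(\sum_{k\in\NN_N}c_k\phi_n(\vx_k)\Bigr)^{2m}
=\sum_{k_0,\ldots,k_{2m-1}\in\NN_N}\beta_{k_0,\ldots,k_{2m-1}}K^{\ast(2m-1)}(\vx_{k_0},\vx_{k_1},\ldots,\vx_{k_{2m-1}}),
\]
with $\beta_{k_0,\ldots,k_{2m-1}}=c_{k_0}\prod_{j\in\NN_{2m-1}}c_{k_j}$. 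Raising to the $1-1/(2m)$ power completes the formula.

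The only genuine obstacle is the interchange of the infinite sum over $n\in\NN$ with the finite multi-index sums, together with well-definedness of $K^{\ast(2m-1)}$ as a pointwise convergent series on $\Domain^{2m}$. The assumption (C-$1*$) yields $\sum_{n}|\phi_n(\vy)|<\infty$ for every $\vy\in\Domain$, so by repeated H\"older (or by noting that $\sup_{n}|\phi_n(\vy)|$ is finite and $\sum_n|\phi_n(\vx_{k_j})|<\infty$) the series defining $K^{\ast(2m-1)}$ converges absolutely, justifying every interchange by Fubini for counting measures. No additional optimality argument is needed beyond that already encoded in Theorem~\ref{t:RKBS-MercerKer-svm-rep-pq}; the result is essentially a clean algebraic specialization enabled by the parity of $q_m-2$.
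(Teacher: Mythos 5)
Your proposal is correct and follows essentially the same route as the paper: apply Theorem~\ref{t:RKBS-MercerKer-svm-rep-pq} with $p=p_m$, use the parity of $q_m-2=2m-2$ to drop the absolute values, expand multinomially, and interchange the finite multi-index sum with the $n$-series to recognize $K^{\ast(2m-1)}$. Your extra remark justifying the interchange and the pointwise convergence of $K^{\ast(2m-1)}$ via the conditions (C-$1*$) is a welcome detail that the paper leaves implicit, but it does not change the argument.
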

\begin{proof}
Theorem~\ref{t:RKBS-MercerKer-svm-rep-pq} ensures the support vector machine solution
\begin{equation}\label{eq:PDK-SVM-opt-rep-1}
s_{p_m}(\vx)=
\sum_{n\in\NN}\left(\sum_{j\in\NN_N}c_{j}\phi_n(\vx_j)
\abs{\sum_{k\in\NN_N}c_{k}\phi_n(\vx_k)}^{2m-2}\right)\phi_n(\vx)
\end{equation}
and its norm
\begin{equation}\label{eq:PDK-SVM-opt-rep-2}
\norm{s_{p_m}}_{\Banach_{K}^{p_m}(\Domain)}
=\left(\sum_{n\in\NN}\abs{\sum_{k\in\NN_N}c_{k}\phi_n(\vx_k)}^{2m}\right)^{\frac{2m-1}{2m}},
\end{equation}
for some suitable parameters $c_{1},\ldots,c_{N}\in\RR$.
Expanding equations~\eqref{eq:PDK-SVM-opt-rep-1} and~\eqref{eq:PDK-SVM-opt-rep-2}, we have that
\begin{align*}
s_{p_m}(\vx)=&
\sum_{n\in\NN}\sum_{k_1,\ldots,k_{2m-1}\in\NN_N}
\phi_n(\vx)\prod_{j\in\NN_{2m-1}}c_{k_j}\phi_n(\vx_{k_j})
\\
=&\sum_{k_1,\ldots,k_{2m-1}\in\NN_N}
\prod_{j\in\NN_{2m-1}}c_{k_j}\sum_{n\in\NN}
\phi_n(\vx)\prod_{l\in\NN_{2m-1}}\phi_n(\vx_{k_l}),
\end{align*}
and
\begin{align*}
\norm{s_{p_m}}_{\Banach_{K}^{p_m}(\Domain)}^{\frac{2m}{2m-1}}=&
\sum_{n\in\NN}\sum_{k_0,k_1,\ldots,k_{2m-1}\in\NN_N}
c_{k_0}\phi_n(\vx_{k_0})\prod_{j\in\NN_{2m-1}}c_{k_j}\phi_n(\vx_{k_j})
\\
=&\sum_{k_0,k_1,\ldots,k_{2m-1}\in\NN_N}
c_{k_0}\prod_{j\in\NN_{2m-1}}c_{k_j}
\sum_{n\in\NN}\phi_n(\vx_{k_0})\prod_{l\in\NN_{2m-1}}\phi_n(\vx_{k_l}).
\end{align*}
Here $\NN_{2m-1}:=\left\{1,2,\ldots,2m-1\right\}$.
The proof is complete.
\end{proof}

Here, we think that Theorem~\ref{t:SVM-opt-rep-typ} is the special case of Theorem~\ref{t:RKBS-MercerKer-svm-rep-pq}.
In particular, if the support vector machine $\min_{f\in\Banach_K^{1}(\Domain)}\svm_{1}(f)$
has a unique optimal solution $s_1$, then Theorem~\ref{t:opt-svm-s1} confirms that $s_{p_m}$ converges to $s_1$ pointwisely when $m\to\infty$.

\begin{remark}
If $K$ is totally symmetric, then
\[
K^{\ast1}(\vx,\vy)=\sum_{n\in\NN}\phi_n(\vx)\phi_n(\vy)=K(\vx,\vy),
\]
and $\Banach_K^{2}(\Domain)$
is also a RKHS with the reproducing kernel $K$. This indicates that $s_2$ is consistent with the classical support vector machine solutions in RKHSs.
Therefore, Theorem~\ref{t:SVM-opt-rep-typ} also covers the classical results of the support vector machines in RKHSs. Moreover, if the expansion set $\Kset_K$ are composed of the eigenvalues $\Lambda_K$ and the eigenfunctions $\Eset_{K}$ of the positive definite kernel $K$, then the kernel $K^{\ast(2m-1)}$ can be rewritten as
\[
K^{\ast(2m-1)}\left(\vx,\vy_1,\cdots,\vy_{2m-1}\right)
=\sum_{n\in\NN}\lambda_n^me_n(\vx)\prod_{j\in\NN_{2m-1}}e_n(\vy_j).
\]
\end{remark}

\emph{Comparisons:}
Let us look at two special examples of $m=1,2$ such that $p_1=2$ and $p_2=4/3$. Here, we suppose that $K$ is a totally symmetric generalized Mercer kernel. Then we compare the support vector machine solutions in the RKBSs $\Banach_K^{2}(\Domain)$ and $\Banach_K^{4/3}(\Domain)$.
Obviously $\Banach_K^{2}(\Domain)$ is also a Hilbert space while $\Banach_K^{4/3}(\Domain)$ is a Banach space but not a Hilbert space.
According to Theorem~\ref{t:SVM-opt-rep-typ}, the support vector machine solution $s_2$ composes of the kernel basis $\left\{K^{\ast1}\left(\cdot,\vx_k\right):k\in\NN_N\right\}=\left\{K\left(\cdot,\vx_k\right):k\in\NN_N\right\}$.
Moreover, we obtain a novel formula of support vector machine solutions $s_{4/3}$ in the RKBS $\Banach_K^{4/3}(\Domain)$, that is, the representation of $s_{4/3}$ is a liner combination of $\left\{K^{\ast3}\left(\cdot,\vx_{k_1},\vx_{k_2},\vx_{k_3}\right):k_1,k_2,k_3\in\NN_N\right\}$.
We further find that the norm of $s_2$ in $\Banach_K^{2}(\Domain)$ is obtained by the matrix
\begin{equation}\label{eq:matrix-A2}
\vA_2:=\left(K^{\ast1}\left(\vx_{k_0},\vx_{k_1}\right):k_0,k_1\in\NN_N\right)\in\RR^{N\times N},
\end{equation}
while the norm of $s_{4/3}$ in $\Banach_K^{4/3}(\Domain)$ is computed by the tensor
\begin{equation}\label{eq:matrix-A4}
\vA_4:=\left(K^{\ast3}\left(\vx_{k_0},\vx_{k_1},\vx_{k_2},\vx_{k_3}\right):k_0,k_1,k_2,k_3\in\NN_N\right)\in\RR^{N\times N\times N\times N}.
\end{equation}
It is well-known that $\vA_2$ is a positive definite matrix. Hence, the tensor $\vA_4$ would also be called positive definite here.
This shows that $s_{4/3}$ has more kernel-based information than $s_2$ even though both RKBSs $\Banach_K^{2}(\Domain)$ and $\Banach_K^{4/3}(\Domain)$ have the same reproducing kernel $K$. However, the coefficients of $s_2$ and $s_{4/3}$ are both solved by the $N$-dimensional convex optimizations.
This offers the novel learning tools to solve the practical implementations, for example, binary classifications.

Paper~\cite{Ye2014RKBS} shows that the kernel $K^{\ast(2m-1)}$ can be rewritten as the classical kernel-based form to let all $\vy_1,\ldots,\vy_{2m-1}$ mass at one point $\vw$
when the positive definite kernel $K$ is the linear transition of Gaussian functions or Mat\'ern functions.
For convenience, we suppose that $\Domain$ is a domain of $\Rd$ here.
Based on the construction of $K^{\ast(2m-1)}$ defined in equation~\eqref{eq:svm-kernel-2m-1}, the mass point $\vw$ is defined as
\[
\vw\left(\vy_1,\ldots,\vy_{2m-1}\right):=\left(w\left(y_{1,k},\ldots,y_{2m-1,k}\right):k\in\NN_d\right),
\]
where
\[
w\left(y_1,\ldots,y_{2m-1}\right):=\prod_{j\in\NN_{2m-1}}y_{j}.
\]
Roughly speaking, the map $\vw$ can be viewed as the $d$-dimensional version of the map $w$.
Thus, we study a question whether there is a kernel $K_m\in\Leb_0(\Domain\times\Domain)$ such that
\begin{equation}\label{eq:svm-typ-ker}
K^{\ast(2m-1)}\left(\vx,\vy_1,\ldots,\vy_{2m-1}\right)=K_{m}\left(\vx,\vw\left(\vy_1,\ldots,\vy_{2m-1}\right)\right).
\end{equation}
In the following examples, we answer this question.

\begin{example}\label{exa:svm-power-series-kernel}
First we look at the nonlinear factorizable power series kernel $K$ defined on $(-1,1)^d$ (see Section \ref{s:powerseries}).
According to equation~\eqref{eq:non-fact-power-kernel}, the kernel $K$ is set up by the analytic function $\eta$, that is,
\[
K(\vx,\vy)=\prod_{k\in\NN_d}\eta(x_ky_k),
\]
for $\vx:=\left(x_k:k\in\NN_d\right),
\vy:=\left(y_k:k\in\NN_d\right)\in
(-1,1)^d$. Naturally, we guess that the kernel $K_m$ could also be represented by an analytic function $\eta_m$, that is,
\begin{equation}\label{eq:svm-typ-power-1}
K_{m}(\vx,\vy)=\prod_{k\in\NN_d}\eta_{m}\left(x_ky_k\right).
\end{equation}

Now we search the possible analytic function $\eta_m$.
When the analytic function $\eta$ is rewritten as the expansion $\eta(z)=\sum_{n\in\NN_0}a_nz^n$, then equation~\eqref{eq:non-fact-power-kernel-expan-element} yields that the expansion element $\phi_{\vn}$ of $K$ can be represented as
\[
\phi_{\vn}(\vx)=\left(\prod_{k\in\NN_d}a_{n_k}^{1/2}\right)\vx^{\vn},
\]
for $\vn:=\left(n_k:k\in\NN_d\right)\in\NN_0^d$.
Therefore, we have that
\begin{align*}
K^{\ast(2m-1)}\left(\vx,\vy_1,\ldots,\vy_{2m-1}\right)
=&\sum_{\vn\in\NN_0^d}\left(\prod_{k\in\NN_d}a_{n_k}^{1/2}\right)\vx^{\vn}
\prod_{j\in\NN_{2m-1}}\left(\prod_{k\in\NN_d}a_{n_k}^{1/2}\right)\vy_j^{\vn}\\
=&
\prod_{k\in\NN_d}\left(\sum_{n_k\in\NN}a_{n_k}^mx_k^{n_k}w_k\left(\vy_1,\ldots,\vy_{2m-1}\right)^{n_k}\right).
\end{align*}
This leads us to
\[
\eta_{m}(z):=\sum_{n\in\NN_0}a_{n}^mz^n,
\]
such that
\begin{equation}\label{eq:svm-typ-power-2}
K^{\ast(2m-1)}\left(\vx,\vy_1,\ldots,\vy_{2m-1}\right)=
\prod_{k\in\NN_d}\eta_{m}\left(x_kw_k\left(\vy_1,\ldots,\vy_{2m-1}\right)\right).
\end{equation}
Combining equations~\eqref{eq:svm-typ-power-1} and~\eqref{eq:svm-typ-power-2}, we confirm equation~\eqref{eq:svm-typ-ker}.
Clearly $\eta_1=\eta$ and $\eta_{m}$ is an analytic function. This ensures that $K_m$ is also a nonlinear factorizable power series kernel.
\end{example}

\begin{example}\label{exa:svm-Gaussian-kernel}
In this example, we look at the Gaussian kernel $G_{\vtheta}$ with shape parameters $\vtheta\in\RR_{+}^d$, that is,
\[
G_{\vtheta}(\vx,\vy):=e^{-\norm{\vx-\vy}_{\vtheta}^2},
\quad\text{for }\vx,\vy\in\Rd,
\]
where
\[
\norm{\vx-\vy}_{\vtheta}:=\norm{\vTheta(\vx)-\vTheta(\vy)}_2,
\]
and
\[
\vTheta(\vz):=\left(\theta_kz_k:k\in\NN_d\right),\quad\text{for }\vz\in\Rd,
\]
(see Section \ref{s:gaussian}).
By equation~\eqref{eq:Gaussian-exp-element}, the expansion element $\tilde{\phi}_{\vtheta,\vn}$ can be written as
\[
\tilde{\phi}_{\vtheta,\vn}(\vx)
=\left(\frac{2^{\norm{\vn}_1}}{\vn!}\right)^{1/2}\vTheta(\vx)^{\vn}e^{-\norm{\vTheta(\vx)}_{2}^2},
\]
for $\vn\in\NN_0^d$. Thus, we have that
\begin{equation}\label{eq:svm-typ-Gaussain-1}
\begin{split}
&G_{\vtheta}^{\ast(2m-1)}\left(\vx,\vy_1,\ldots,\vy_{2m-1}\right)\\
=&\sum_{\vn\in\NN_0^d}\left(\frac{2^{\norm{\vn}_1}}{\vn!}\right)^{m}
\vTheta(\vx)^{\vn}e^{-\norm{\vTheta(\vx)}_2^2}\prod_{j\in\NN_{2m-1}}\vTheta(\vy_j)^{\vn}
e^{-\norm{\vTheta(\vy_j)}_{2}^2}.
\end{split}
\end{equation}
Moreover, we check that
\begin{equation}\label{eq:svm-typ-Gaussain-2}
\vw\left(\vTheta(\vy_1),\ldots,\vTheta(\vy_{2m-1})\right)^{\vn}
=\prod_{j\in\NN_{2m-1}}\vTheta(\vy_j)^{\vn},
\end{equation}
and
\begin{equation}\label{eq:svm-typ-Gaussain-3}
w\left(\gamma(\vy_1),\ldots,\gamma(\vy_{2m-1})\right)
=\prod_{j\in\NN_{2m-1}}
e^{-\norm{\vTheta(\vy_j)}_{2}^2},
\end{equation}
where
\[
\gamma(\vz):=e^{-\norm{\vTheta(\vz)}_{2}^2}.
\]

Next, we check that
\[
\sum_{n\in\NN_0}\left(\frac{2^n}{n!}\right)^{1/2}<\infty;
\]
hence the construction of the nonlinear factorizable power series kernel
\[
K(\vx,\vy):=\prod_{k\in\NN_d}\eta(x_ky_k),\quad
\text{for }\vx,\vy\in(-1,1)^d,
\]
can be well-defined by the analytic function
\[
\eta(z):=\sum_{n\in\NN_0}\frac{2^n}{n!}z^n,\quad
\text{for }z\in(-1,1),
\]
(see equation~\eqref{eq:non-fact-power-kernel}).
By the discussions in Example~\ref{exa:svm-power-series-kernel}, we obtain another
nonlinear factorizable power series kernel $K_m$ the same as equation~\eqref{eq:svm-typ-power-1}
such that $K^{\ast(2m-1)}$ and $K_m$ can be equivalently transferred by equation~\eqref{eq:svm-typ-ker}.

If the Gaussian kernel $G_{\vtheta}$ is considered in the domain $(-1,1)^d$, then $G_{\vtheta}$ can be rewritten as
\[
G_{\vtheta}(\vx,\vy)=K\left(\vTheta(\vx),\vTheta(\vy)\right)R\left(\gamma(\vx),\gamma(\vy)\right),
\]
where the kernel $R$ is the tensor product kernel, that is, $R(s,t):=st$ for $s,t\in\RR$.
Putting equations \eqref{eq:svm-typ-ker} and~(\ref{eq:svm-typ-Gaussain-2})-(\ref{eq:svm-typ-Gaussain-3}) into equation~\eqref{eq:svm-typ-Gaussain-1},
we have that
\begin{align*}
&G_{\vtheta}^{\ast(2m-1)}\left(\vx,\vy_1,\ldots,\vy_{2m-1}\right)\\
=&K^{\ast(2m-1)}\left(\vx_{\vtheta},\vTheta(\vy_1),\ldots,\vTheta(\vy_{2m-1})\right)
R^{\ast(2m-1)}\left(\gamma(\vx),\gamma(\vy_1),\ldots,\gamma(\vy_{2m-1})\right)\\
=&K_m\left(\vx_{\vtheta},\vw\left(\vTheta(\vy_1),\ldots,\vTheta(\vy_{2m-1})\right)\right)
R\left(\gamma(\vx),w\left(\gamma(\vy_1),\ldots,\gamma(\vy_{2m-1})\right)\right).
\end{align*}
Here, the kernel
$$
R^{\ast(2m-1)}\left(s,t_1,\ldots,t_{2m-1}\right):=s\prod_{j\in\NN_{2m-1}}t_j
$$
can be thought as the simplified version of equation~\eqref{eq:svm-kernel-2m-1}.
\end{example}

Example~\ref{exa:svm-power-series-kernel} illustrates that the kernel basis $K^{\ast(2m-1)}\left(\cdot,\vy_1,\ldots,\vy_{2m-1}\right)$ can be represented by the positive definite kernel $K_m(\cdot,\vw)$ centered at the mass point $\vw$ of $\vy_1,\ldots,\vy_{2m-1}$.
Sometimes, the kernel $K_m$ can not be obtained directly while we may still
introduce the equivalent transformations of equation~\eqref{eq:svm-typ-ker} similar to Example~\ref{exa:svm-Gaussian-kernel}.
However, the kernel $K_m$ will be a case by case.

Finally, we investigate the connections of \emph{Green functions} and reproducing kernels.

Duchon~\cite{Duchon1977} established the connections of Green functions and thin-plate
splines for minimizing rotation-invariant semi-norms in $\Leb_2$-based Sobolev spaces.
Papers~\cite{FasshauerYe2011Dist,FasshauerYe2011DiffBound} guarantee a reproducing kernel and its RKHS can be computed via a Green function and a (generalized) Sobolev space induced by a vector differential operators and a possible vector boundary operator consisting of finitely or countably many elements.
A special case of differential operators is a variety of linear combinations of distributional derivatives.
The theory of \cite{FasshauerYe2011Dist,FasshauerYe2011DiffBound} covers many well-known kernels such as min kernels, Gaussian kernels, polyharmonic splines, and Sobolev splines.

This indicates that the generalized Mercer kernel $K$ can be a Green function.
Moreover, we conjecture that the kernel $K^{\ast(2m-1)}$ could also be computed by the Green function.

\begin{example}\label{exa:Green}
Let us look at a simple example of Green functions that can be transferred into the kernel $K^{\ast(2m-1)}$.
Let $G_{\tau}$ defined on a circle $\TT$ be a Green function of the differential operator $\left(-\Delta\right)^{\tau}$, that is,
\[
\left(-\Delta\right)^{\tau}G_{\tau}=\delta_0,\quad\text{in }\TT,
\]
where $\Delta:=\ud^2/\ud t^2$ is a Laplace differential operator, $\tau\geq1$, and $\delta_0$ is a Dirac delta function at the origin.

For convenience, the eigenfunctions of $G_{\tau}$ could be complex-valued in this example.
In this article, we mainly focus on the real-valued functions and we \emph{only} consider the complex-valued functions in this example. But, it is not hard to extend all theoretical results discussed here to the complex-valued kernels.

Now we solve
the eigenvalues $\Lambda_{G_{\tau}}:=\left\{\lambda_{\tau,n}:n\in\ZZ\right\}$ and the eigenfunctions $\Eset_{G_{\tau}}:=\left\{e_{\tau,n}:n\in\ZZ\right\}$ of $G_{\tau}$ by
the differential equations
\[
\left(-\Delta\right)^{\tau}e_{\tau,n}=\lambda_{\tau,n}^{-1}e_{\tau,n},
\quad\text{for }n\in\ZZ.
\]
This ensures that
\[
\lambda_{\tau,n}:=\frac{1}{n^{2\tau}},\quad
e_{\tau,n}(z):=e^{\ii nz},\quad\text{for }z\in\TT\text{ and }n\in\ZZ,
\]
where $\ii:=\sqrt{-1}$ is denoted to be the imaginary unit. (More detail of the eigenvalues and the eigenfunctions of the Green functions can be found in~\cite{Duffy2001}.)
Obviously, the eigenvalues $\Lambda_{G_{\tau}}$ are positive and the eigenfunctions $\Eset_{G_{\tau}}$ is an orthonormal basis of $\Leb_2(\TT)$.
Thus, the Green kernel $K_{\tau}(x,y):=G_{\tau}(x-y)$ is a positive definite kernel. Moreover, the Green kernel $K_{\tau}$ has the absolutely and uniformly convergent representation
\[
K_{\tau}(x,y)=G_{\tau}(x-y)=\sum_{n\in\ZZ}\lambda_{\tau,n}e_{\tau,n}(x)\overline{e_{\tau,n}(y)},
\quad \text{for }x,y\in\TT.
\]
This indicates that the expansion element $\phi_{\tau,n}$ of $K_{\tau}$ is given by
\[
\phi_{\tau,n}:=\lambda_{\tau,n}^{1/2}e_{\tau,n},\quad
\text{for }n\in\NN.
\]
By expanding the modulus of equation~\eqref{eq:svm-kernel-2m-1}, we have that
\begin{equation}\label{eq:svm-typ-Green-1}
K_{\tau}^{(2m-1)}\left(x,y_1,\ldots,y_{2m-1}\right)
=\sum_{n\in\ZZ}\phi_{\tau,n}(x)\overline{\phi_{\tau,n}(y_1)}\phi_{\tau,n}(y_2)\ldots\overline{\phi_{\tau,n}(y_{2m-1})},
\end{equation}
for $x,y_1,\ldots,y_{2m-1}\in\TT$.

Let
\[
\zeta\left(y_1,\ldots,y_{2m-1}\right):=y_{1}-y_{2}+\cdots-y_{2m-2}+y_{2m-1}.
\]
Combining the tensor products of equation~\eqref{eq:svm-typ-Green-1}, we have that
\[
\begin{split}
K_{\tau}^{(2m-1)}\left(x,y_1,\ldots,y_{2m-1}\right)
=&\sum_{n\in\ZZ}\phi_{\tau,n}(x)\overline{\phi_{\tau,n}\left(\zeta\left(y_1,\ldots,y_{2m-1}\right)\right)}\\
=&G_{m\tau}\left(x-\zeta\left(y_1,\ldots,y_{2m-1}\right)\right)\\
=&K_{m\tau}\left(x,\zeta\left(y_1,\ldots,y_{2m-1}\right)\right).
\end{split}
\]
Therefore, the kernel $K_{\tau}^{(2m-1)}$ can be computed by the Green function $G_{m\tau}$.
\end{example}

This example promises that the Green functions can be used to construct the support vector machine solutions in RKBSs.

{\bf Reduction of Computational Complexity.}
Recently the investigation of the fast computational algorithms of machine learning in RKHSs became a popular research subject.
One of the techniques of the simplification of support vector machines in the RKHS $\Banach_K^2(\Domain)$ is the eigen-decomposition of the positive definite matrix $\vA_2$ defined in equation~\eqref{eq:matrix-A2}, that is, $\vA_2=\vU_2\vD_2\vU_2^T$ where $\vD_2$ and $\vU_2$ are the unitary and diagonal matrices composed of nonnegative eigenvalues and orthonormal eigenvectors of $\vA_2$, respectively.
If we write $\vD_2=\diag\left(\mu_1,\ldots,\mu_N\right)$
and $\vU_2=\left(\vv_1,\ldots,\vv_N\right)$,
then we have that
\[
\vA_2=\sum_{k\in\NN_N}\mu_k\vv_k\vv_k^T.
\]
Thus, we obtain the decompositions of the key elements of the support vector machines in the RKHS $\Banach_K^2(\Domain)$ to compute the coefficients $\vc$ of the optimal solution $s_2$, that is,
\[
\norm{s_2}_{\Banach_K^2(\Domain)}^2
=\vc^T\vA_2\vc
=\sum_{k\in\NN_N}\mu_k\left(\vv_k^T\vc\right)^2,
\]
and
\[
\left(s_2(\vx_k):k\in\NN_N\right)
=\sum_{k\in\NN_N}\mu_k\left(\vv_k^T\vc\right)\vv_k.
\]
Moreover, one of our current researches also focuses on the fast computational algorithms to solve the learning problems in RKBSs.
For example, if we can decompose the positive definite tensor $\vA_4$ by its eigenvalues and eigenvectors, that is,
\[
\vA_4=\sum_{k\in\NN_N}\mu_k\vv_k\otimes\vv_k\otimes\vv_k\otimes\vv_k,
\]
where $\otimes$ is the related tensor product (see \cite{KoldaBader2009,LathauwerMoorVandewalle2000}),
then the eigen-decomposition of $\vA_4$ shows that
\[
\norm{s_{4/3}}_{\Banach_K^{4/3}(\Domain)}^{4/3}
=\sum_{k\in\NN_N}\mu_k\left(\vv_k^T\vc\right)^4,
\]
and
\[
\left(s_{4/3}(\vx_k):k\in\NN_N\right)
=\sum_{k\in\NN_N}\mu_k\left(\vv_k^T\vc\right)^3\vv_k.
\]
Therefore, the fast computational algorithms of the support vector machines in the $p_m$-norm RKBSs can be obtained by the decomposition of the positive definite tensor. If the tensor decomposition is done, then the computational costs of the support vector machines in the RKHSs and the $p_m$-norm RKBSs are different in the polynomial time.
This indicates that the computational complexity of these fast algorithms is mainly evaluated by the computational costs of the tensor decomposition.
Now the decomposition of positive definite matrices induced by positive definite kernels
is still an open problem.
In particular, paper~\cite{FasshauerMcCourt2012} provides a fast and stable algorithm to decompose the positive definite matrices induced by the Gaussian kernels approximating by the eigenvalues and eigenfunctions of the Gaussian kernels, that is,
$\vA_2\approx\hat{\vU}_2\hat{\vD}_2\hat{\vU}_2^T$ where the approximate unitary and diagonal matrices $\hat{\vU}_2,\hat{\vD}_2$
are estimated by the eigenvalues and eigenfunctions of the Gaussian kernels.
Moreover, we have another fast algorithms for the computation of the key elements of machine learning in RKHSs such as a stable Gaussian decomposition in \cite{FornbergLarssonFlyer2011} and a multilevel circulant matrix in \cite{SongXu2010}.
At our current research project, the approximate tensor decomposition will be investigated by the expansion sets of the generalized Mercer kernels to reduce the computational complexity of the support vector machines in the $p_m$-norm RKBSs even though the computational process of the $p_m$-norm support vector machine solutions cover more information of the kernel basis.

\chapter{Concluding Remarks}

Recently, developing machine learning method in Banach spaces
became a popular research subject.
The technique point of this research is that general Banach spaces may not possess any inner product to express the orthogonality in the learning algorithms.
Papers~\cite{Zhou2003, MicchelliPontil2004,ZhangXuZhang2009,FasshauerHickernellYe2013}
began to develop the learning analysis in the special Banach spaces.
The notion of the reproducing kernel Banach space was first introduced in \cite{ZhangXuZhang2009}.
However, the theory of the machine learning in the Banach spaces are still partially understood.
Many problems remain to answer,
such as what kernels are the reproducing kernels of RKBSs, or what RKBSs have the $1$-norm structures. In this article,
we develop the general and specialized theory of the primary learning methods -- the reproducing properties in Banach spaces.

\subsection*{Contributions}

First we complete the theoretical analysis of general RKBSs in Chapter~\ref{char:RKBS}. In this article, we redefine the reproducing properties by the dual bilinear products of Banach spaces such that the RKBSs can be extended into non-reflexive Banach spaces.
This indicates that the RKBSs can be endowed with the $1$-norm structures.
The advanced properties of RKBSs can also be verified such as density, continuity, separability, implicit representation, imbedding, compactness, representer theorem for learning methods, oracle inequality, and universal approximation.

Moreover, we develop a new concept of generalized Mercer kernels to introduce the $p$-norm RKBSs for $1\leq p\leq\infty$ that
preserve the same beautiful format as the Mercer representation of RKHSs and further possess more geometrical structures than RKHSs including sparsity (see Chapter~\ref{char-GMK}).
The primary technique is that the reproducing properties of the $p$-norm RKBS can be set up by the Schauder basis and the biorthogonal system driven from some generalized Mercer kernel which becomes the reproducing kernel of this $p$-norm RKBS.
This reproducing construction ensures that the support vector machines are still well-computable in the $p$-norm RKBS.
In Chapter~\ref{char-PDK}, we show that a large class of positive definite kernels can be used to construct the $p$-norm RKBSs such as min kernels, Gaussian kernels, and power series kernels.

The theory of the $p$-norm RKBSs given here pushes the abstract learning analysis in Banach spaces to the simple implementations for the learning methods in Banach spaces.
In particular, we show that the support vector machine solutions in the $p$-norm RKBSs, which do not have any inner product, can still be represented as a linear combinations of a kernel basis, and their coefficients can be solved by a finite dimensional convex optimization problem. This shows that the learning methods in the $p$-norm RKBSs are still well-posed in practical applications, for the numerical examples of the binary classification in Figure~\ref{fig:Learning}. In particular, the numerical example shows an imaginative ability of the learning method in RKBSs at the areas of no training data.
Surprisingly, we find that the support vector machines in the $1$-norm RKBSs are strongly connected to the sparse sampling.
This leads us to developing the novel learning tools called sparse learning methods.
More details are mentioned in Chapter~\ref{char-SVM}.

\begin{figure}[h]
\begin{minipage}{0.49\textwidth}
\center
{\small Error of testing data $=$ $10.16\%$}
\includegraphics[width=\textwidth, height=0.86\textwidth]{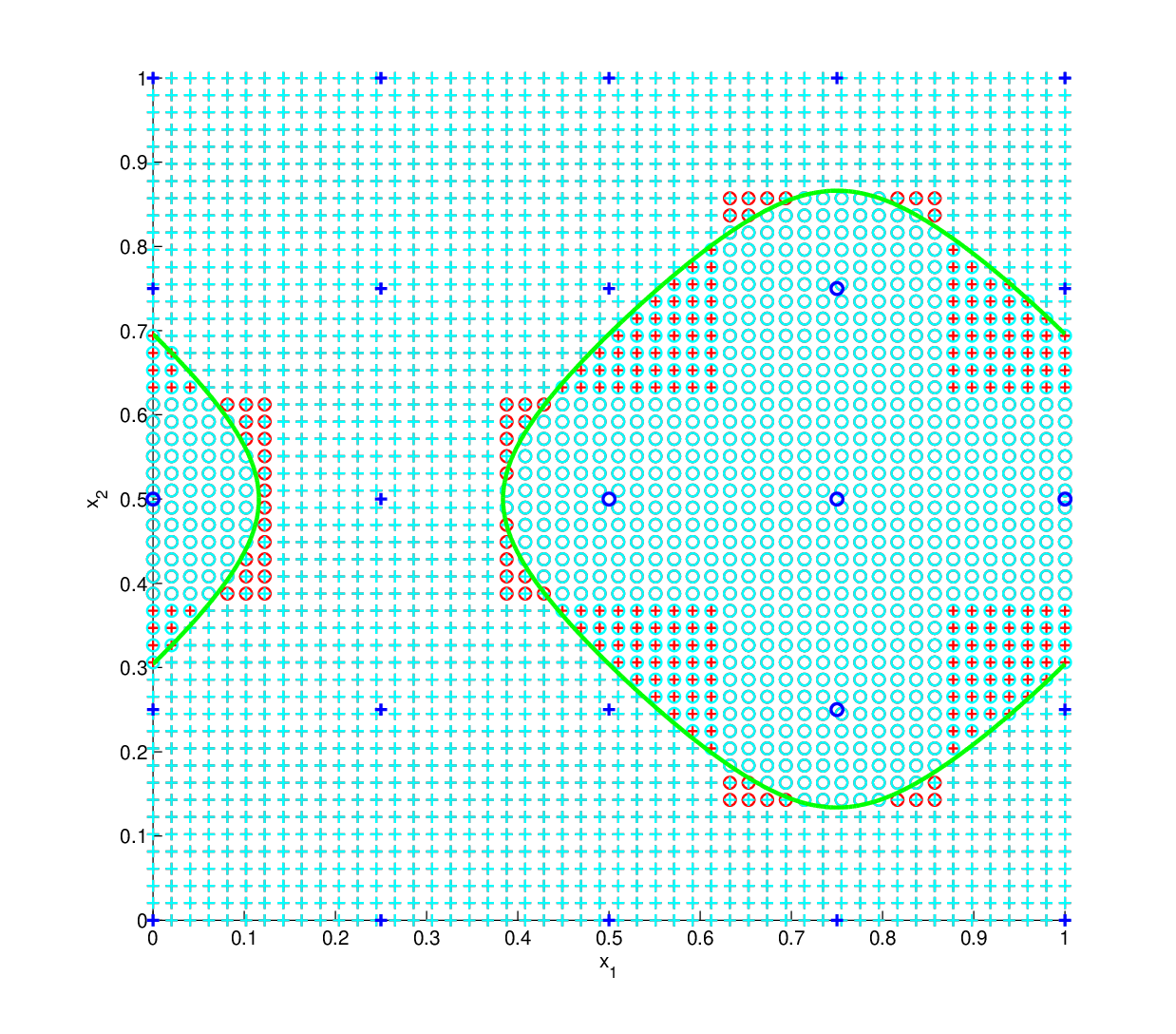}
{\small Error of testing data $=$ $14.72\%$}
\includegraphics[width=\textwidth, height=0.86\textwidth]{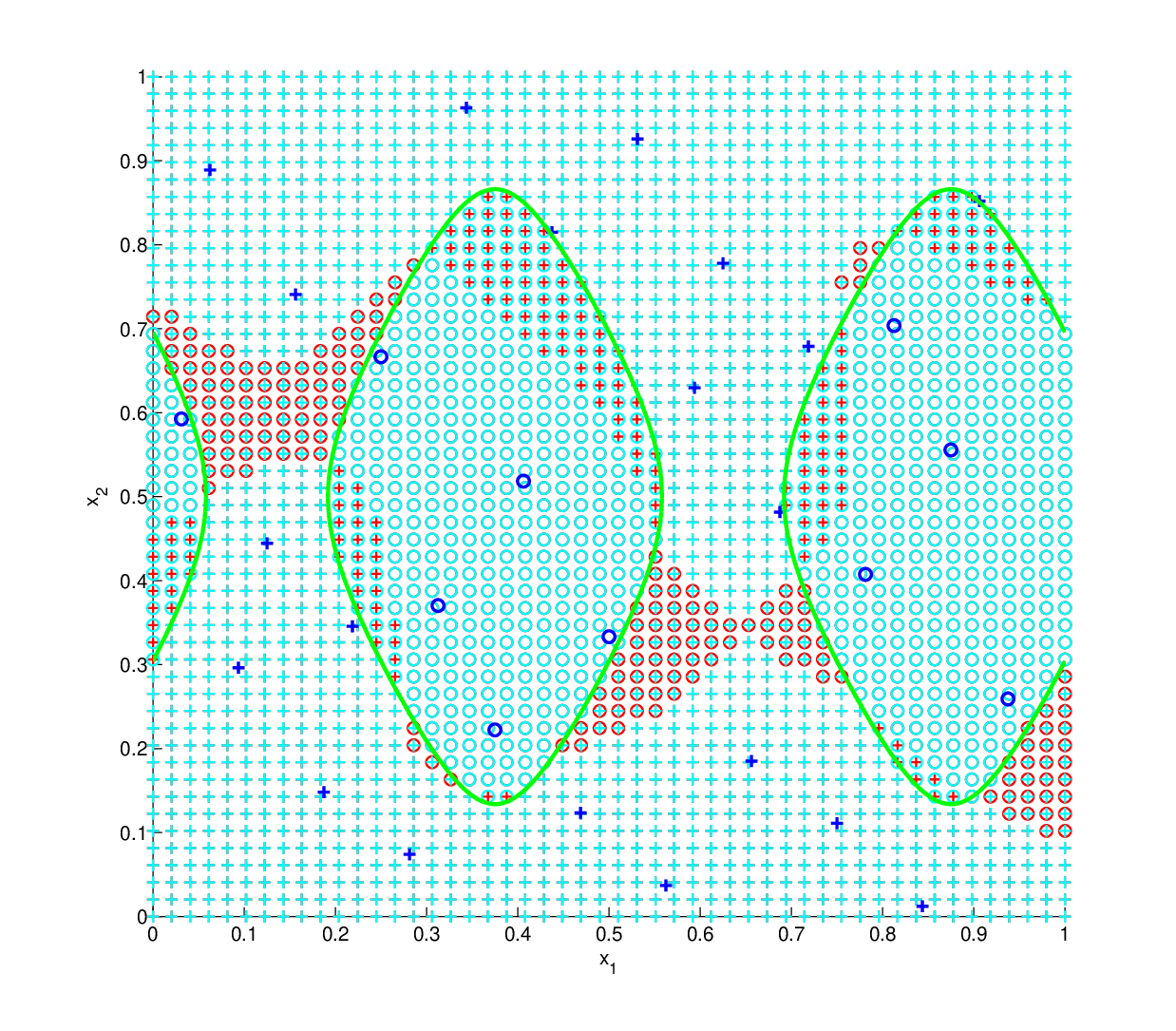}
{\small RKHS $\Banach_G^{2}(\Domain)=\Hilbert_G(\Domain)$}
\end{minipage}
\begin{minipage}{0.49\textwidth}
\center
{\small Error of testing data $=$ $9.80\%$}
\includegraphics[width=\textwidth, height=0.86\textwidth]{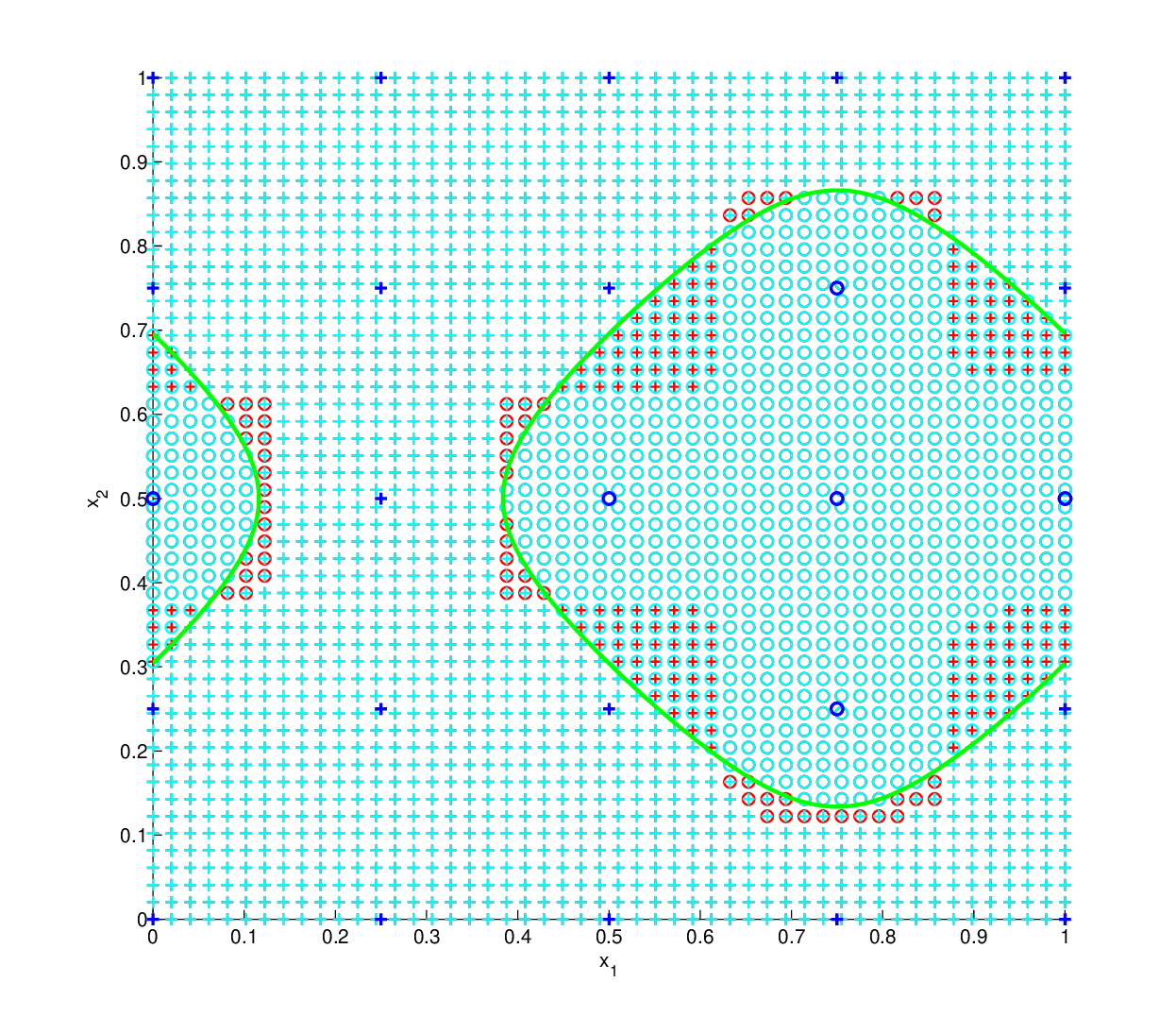}
{\small Error of testing data $=$ $12.32\%$}
\includegraphics[width=\textwidth, height=0.86\textwidth]{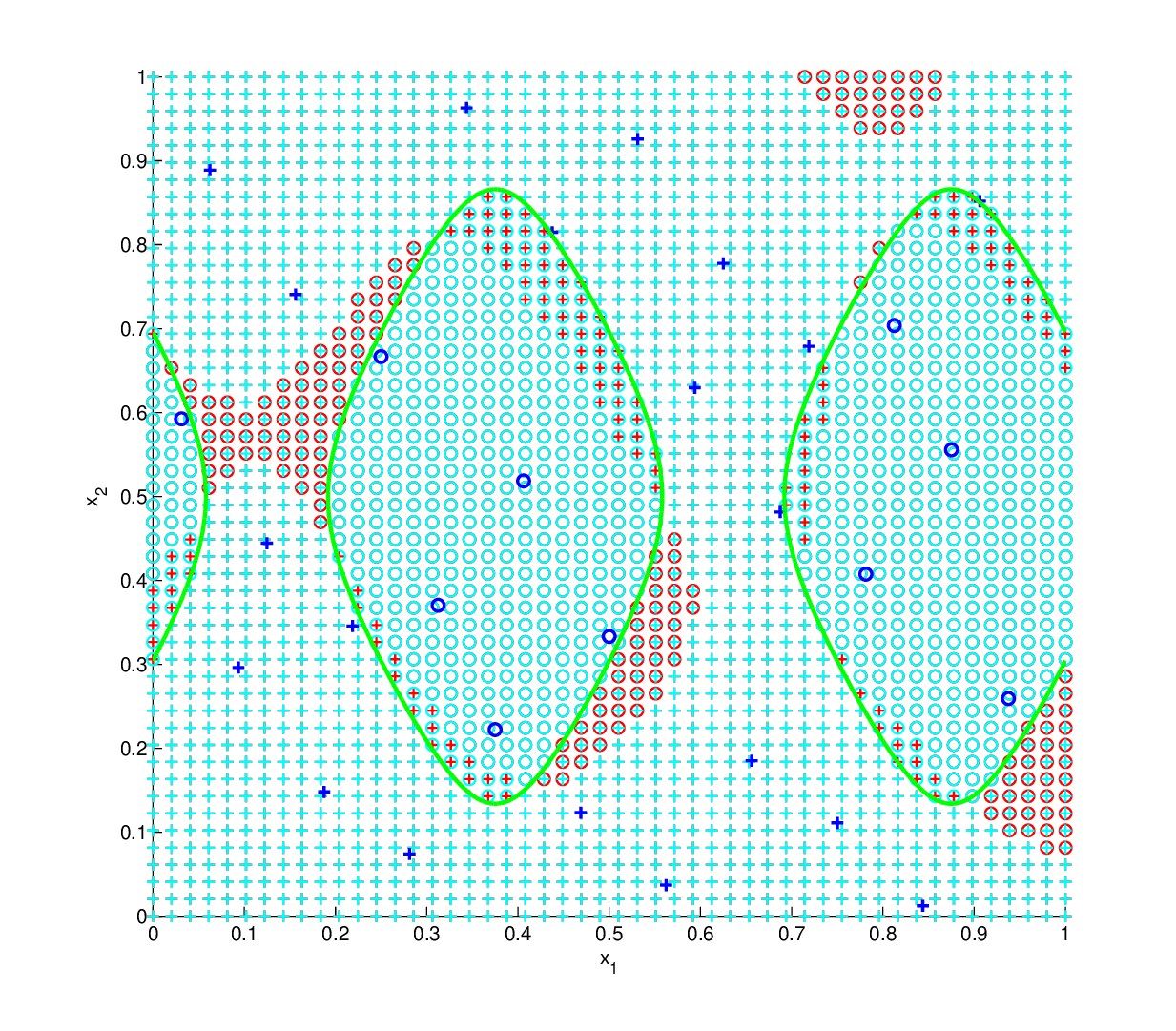}
{\small RKBS $\Banach_G^{4/3}(\Domain)$}
\end{minipage}
\caption{\small The binary classification in the domain $\Domain=[0,1]^2$ discussed in Section~\ref{s:Background-svm}:
The classes are coded as a binary variable (cross$=+1$ and circle$=-1$).
The left and right panels represent the classification results of the support vector machine solutions $s_{2}$ and $s_{4/3}$ induced by the Gaussian kernel $G$ in Example~\ref{exa:svm-Gaussian-kernel}, respectively. The boldface (blue) symbols denote the training data, the lightface (cyan) symbols denote the accurate testing data, and the mixture lightface (red and cyan) symbols denote the false testing data. The training data is the observed data to discover potentially
a predictive relationship such as the discrete data $X$ and $Y$. The testing data are observed data used to assess the strength and
utility of a predictive relationship.
The training data of the top panels are the uniform data and the training data of the bottom panels are the halton data.
All testing data are the uniform data.
The exact decision boundaries are the (green) solid lines.}\label{fig:Learning}
\end{figure}

\begin{remark}
In this article, we start from the generalized Mercer kernels to construct the $p$-norm RKBSs
such that the structures and the notations are consistent with the classical versions of RKHSs in books~\cite{Wahba1990,Wendland2005,Fasshauer2007,SteinwartChristmann2008}.
For simplifying the complexities, the expansion sets of the generalized Mercer kernels are always fixed (see Remark~\ref{r:Gen-Mercer-Ker}).
Actually, we also use the expansion sets to construct the $p$-norm RKBSs and the reproducing kernels.
Suppose that the linearly independent sets $\Sset$ and $\Sset'$ satisfy conditions (C-$p$) and (C-$1*$). Here $\Sset:=\left\{\phi_n:n\in\NN\right\}$ and $\Sset':=\left\{\adjphi_n:n\in\NN\right\}$ are thought as the left-sided and right-sided expansion sets, respectively.
By Proposition~\ref{p:MercerKer-Cp}, the generalized Mercer kernel $K(\vx,\vy)=\sum_{n\in\NN}\phi_n(\vx)\adjphi_n(\vy)$ is well-defined by the expansion sets $\Sset$ and $\Sset'$. Obviously, when we start from the expansion sets, then the generalized Mercer kernels and the $p$-norm RKBSs are uniquely determined.
\end{remark}

%

\chapter*{Acknowledgments}

This research is supported in part by the United States National Science Foundation under grant DMS-1522332, by Guangdong Provincial Government of China through the ``Computational Science Innovative Research Team'' program and by the Natural Science Foundation of China under grants 11471013 and 91530117.

\aufm{Yuesheng Xu}

\bigskip

{\color{black}{I}} would like to express {\color{black}{my}} gratitude to the Philip T. Church postdoctoral fellowship of Syracuse University
and the HKBU FRG grant of Hong Kong Baptist University for their supports of the research of machine learning in reproducing kernel Banach spaces.
The current research of kernel-based methods for machine learning is supported by the grant of the ``Thousand Talents Program'' {\color{black}{for junior scholars}} of China ({\color{black}{C83024}}), by the grant of the Natural Science Foundation of China (11601162), and by the grant of South China Normal University ({\color{black}{671082, S80835, and S81031}}).

\aufm{Qi Ye {\color{black}{(corresponding author)}}}


%
%
%
%
%

\begin{theindex}

\item $\NN$ is the collection of all positive integers,
\item $\NN_0:=\NN\cup\{0\}$,
\item $\NN_N:=\left\{1,\ldots,N\right\}$ for $N\in\NN$,
\item $\NN^d:=\left\{\left(n_1,\cdots,n_d\right):n_1,\ldots,n_d\in\NN\right\}$,
\item $\ZZ$ is the collection of all integers,
\item $1/\infty:=0$,

\indexspace

\item $\RR$ is the collections of all real numbers,
\item $\RR^d$ is the $d$-dimensional real space,
\item $\CC$ is the collections of all complex numbers,
\item $\TT$ is a circle,
\item $\Domain$ is a locally compact Hausdorff space equipped with a regular Borel measure $\mu$,
\item $\supp(\mu)$ is the support of the Borel measure $\mu$,

\indexspace

\item $\lp^n$ is the normed space formed by the $n$-dimensional space $\RR^n$ with the standard norm $\norm{\cdot}_{p}$ for $1\leq p\leq\infty$,
\item $\lp$ is the collection of all countable sequences of scalars with the standard norm $\norm{\cdot}_{p}$ for $1\leq p\leq\infty$,
\item $\czero$ is the subspace of $\linfty$ with all countable sequence of scalars that converge to $0$,

\indexspace

\item $\Leb_0(\Domain)$ is the collection of all measurable functions defined on the domain $\Domain$,
\item $\Leb_p(\Domain)$ is the standard $p$-norm Lebesgue space defined on $\Domain$ for $1\leq p\leq\infty$,
\item $\Linfty(\Domain):=\left\{f\in\Leb_0(\Domain): \sup_{\vx\in\Domain}\abs{f(\vx)}<\infty\right\}$,
\item $\Cont(\Domain)$ is the collection of all continuous functions defined on the domain $\Domain$, when $\Domain$ is compact then $\Cont(\Domain)$ is endowed with uniform norm.
\item $\DualMeasure(\Domain)$ is the collection of all regular signed Borel measures on $\Domain$ endowed with their variation norms,

\indexspace

\item $\Hilbert$ is an inner-product space,
\item $(\cdot,\cdot)_{\Hilbert}$ is the inner product of $\Hilbert$,
\item $\Banach$ is a normed space,
\item $\Banach'$ is the dual space of $\Banach$,
\item $\norm{\cdot}_{\Banach}$ is the norm of $\Banach$,
\item $\langle \cdot,\cdot \rangle_{\Banach}$ is the dual bilinear product defined on $\Banach$ and $\Banach'$,
\item $\cong$ denotes that two normed spaces $\Banach_1$ and $\Banach_2$ are isometrically isomorphic, that is, $\Banach_1\cong\Banach_2$,

\indexspace

\item $\Span\left\{\Eset\right\}$ is the collection of all finite linear combinations of elements of a set $\Eset$ of a linear vector space,

\indexspace

\item $K:\Domain\times\Domain'\to\RR$ is a kernel, for example, reproducing kernel, generalized Mercer kernel and positive definite kernel,
\item $\adjK$ is the adjoint kernel of $K$, that is, $\adjK(\vy,\vx)=K(\vx,\vy)$,
\item $K^{\ast(2m-1)}$ is defined in equation~\eqref{eq:svm-kernel-2m-1},

\indexspace

\item $\Kset_{K}:=\left\{K(\cdot,\vy): \vy\in\Domain'\right\}$ is the left-sided kernel set of the reproducing kernel $K$,
\item $\Kset_{K}':=\left\{K(\vx,\cdot): \vx\in\Domain\right\}$ is the right-sided kernel set of the reproducing kernel $K$,
\item $\Sset_{K}:=\left\{\phi_n:n\in\NN\right\}$ is the left-sided expansion set of the generalized Mercer kernel $K$,
\item $\Sset_{K}':=\left\{\adjphi_n:n\in\NN\right\}$ is the right-sided expansion set of the generalized Mercer kernel $K$,
\item $\Aset_{K}:=\left\{\left(\phi_n(\vx):n\in\NN\right):\vx\in\Domain\right\}$ is the left-sided sequence set of the generalized Mercer kernel $K$,
\item $\Aset_{K}':=\left\{\left(\adjphi_n(\vy):n\in\NN\right):\vy\in\Domain'\right\}$ is the right-sided sequence set of the generalized Mercer kernel $K$,
\item $\Lambda_K:=\left\{\lambda_n: n\in\NN\right\}$ is the eigenvalues of the positive definite kernel $K$,
\item $\Eset_K:=\left\{e_n:n\in\NN\right\}$ is the eigenfunctions of the positive definite kernel $K$,

\indexspace

\item $\Banach_K^p(\Domain)$ is the $p$-norm space induced by $\Sset_{K}$ for $1\leq p\leq\infty$,
\item $\Banach_{\adjK}^q(\Domain')$ is the $q$-norm space induced by $\Sset_{K}'=\Sset_{\adjK}$ for $1\leq q\leq\infty$,

\indexspace

\item $L$ is a loss function,
\item $R$ is a regularization function,
\item $R_r$ is $\frac{\ud}{\ud r}R$,
\item $H_t$ is $\frac{\ud}{\ud t}H$,
\item $\risk$ is a $L$-risk,
\item $\svm$ is a regularized empirical risk over RKBSs,
\item $\widetilde{\svm}$ is a regularized infinite-sample risk over RKBSs,
\item $\svm_p$ is a regularized empirical risk over $\Banach_K^p(\Domain)$,

\indexspace

\item $\normopt$ is the norm operator of $\Banach$, that is, $\normopt(f):=\norm{f}_{\Banach}$ for $f\in\Banach$,
\item $\Gateaux$ is the G\^{a}teaux derivative,
\item $\Frechet$ is the Fr\'{e}chet derivative,
\item $\Gateaux\normopt$ is the standard G\^{a}teaux derivative of the normed operator $\normopt$,
\item $\GateauxNorm$ is the redefined G\^{a}teaux derivative of the norm of $\Banach$, that is, $\GateauxNorm(f)=\Gateaux\normopt(f)$ when $f\neq0$, and $\GateauxNorm(f)=0$ when $f=0$,

\indexspace

\item $\delta_{\vx}$ is a point evaluation functional at $\vx$, that is, $\delta_{\vx}(f):=f(\vx)$,
\item $I_K$ is an left-sided integral operator of the kernel $K$, that is, $I_K(\zeta):=\int_{\Domain}K(\vx,\cdot)\zeta(\vx)\mu(\ud\vx)$,
\item $I_K'$ is an right-sided integral operator of the kernel $K$, that is, $I_K'(\xi):=\int_{\Domain'}K(\cdot,\vy)\xi(\vy)\mu'(\ud\vy)$,
\item $I_K^{\ast}$ is an adjoint operator of the integral operator $I_K$ for the space of continous functions, that is, $\langle f,I_K^{\ast}\nu \rangle_{\Cont(\Domain)}=\langle I_Kf,\nu \rangle_{\Cont(\Domain')}$,
\item $I$ is the identity map,
\item $\Phi$ is the map $\vx\mapsto\norm{K(\vx,\cdot)}_{\Banach'}$, that is, $\Phi(\vx):=\norm{K(\vx,\cdot)}_{\Banach'}$,
\item $\Phi'$ is the map $\vy\mapsto\norm{K(\cdot,\vy)}_{\Banach}$, that is, $\Phi'(\vy):=\norm{K(\cdot,\vy)}_{\Banach}$,
\item $\Phi_q$ is a left-sided upper-bound function of the generalized Mercer kernel $K$, that is, $\Phi_{\infty}(\vx):=\sup_{n\in\NN}\abs{\phi_n(\vx)}$ or $\Phi_q(\vx):=\sum_{n\in\NN}\abs{\phi_n(\vx)}^q$ when $1\leq q<\infty$,
\item $\Phi_p'$ is a right-sided upper-bound function of the generalized Mercer kernel $K$, that is, $\Phi_{\infty}'(\vx):=\sup_{n\in\NN}\abs{\phi_n'(\vx)}$ or $\Phi_p'(\vy):=\sum_{n\in\NN}\abs{\adjphi_n(\vy)}^p$ when $1\leq p<\infty$,

\indexspace

\item $w$ is the map massing $y_1,\ldots,y_{2m-1}\in\RR$ by the tensor product, that is, $w\left(y_1,\cdots,y_{2m-1}\right):=\prod_{j\in\NN_{2m-1}}y_{j}$.
\item $\zeta$ is the map massing $y_1,\ldots,y_{2m-1}\in\RR$ by the sum, that is, $\zeta\left(y_1,\cdots,y_{2m-1}\right):=\sum_{j\in\NN_{2m-1}}(-1)^{j+1}y_{j}$
\item $\vw$ is the map massing $\vy_1,\ldots,\vy_{2m-1}\in\Rd$ by the tensor product, that is,
$\vw\left(\vy_1,\cdots,\vy_{2m-1}\right):=\left(w\left(y_{1,k},\cdots,y_{2m-1,k}\right):k\in\NN_d\right)$.

\indexspace

\item The linear independence of $\Eset$: $\sum_{k\in\NN_N}c_k\phi_k=0$ if and only if $c_1=\ldots=c_N=0$, for any $N\in\NN$ and any finite pairwise distinct elements $\phi_1,\ldots,\phi_N\in\Eset$,

\indexspace

\item Conditions (C-$p$) of $\Sset_{K}$ and $\Sset_{K}'$: $\sum_{n\in\NN}\abs{\phi_n(\vx)}^q,\sum_{n\in\NN}\abs{\adjphi_n(\vy)}^p<\infty$ for all $\vx\in\Domain$ and all $\vy\in\Domain'$ (Here $1<p,q<\infty$ and $p^{-1}+q^{-1}=1$),
\item Conditions (C-$1*$) of $\Sset_{K}$ and $\Sset_{K}'$: $\sum_{n\in\NN}\abs{\phi_n(\vx)},\sum_{n\in\NN}\abs{\adjphi_n(\vy)}<\infty$ for all $\vx\in\Domain$ and all $\vy\in\Domain'$,
\item Conditions (C-GEN) of $\Eset$ and $\Eset'$: $\sum_{n\in\NN}\abs{\phi_n(\vx)},\sum_{n\in\NN}\abs{\adjphi_n(\vy)}<\infty$ for all $\vx\in\Domain$ and all $\vy\in\Domain'$,
\item Conditions (C-PDK) of $\Lambda_K$ and $\Eset_{K}$: $\sum_{n\in\NN}\lambda_n^{1/2}\abs{e_n(\vx)}<\infty$ for all $\vx\in\Domain$,

\indexspace

\item Assumptions (A-$p$) of $K$: $\Sset_{K}$ and $\Sset_{K}'$ are linearly independent and satisfy conditions (C-$p$).
\item Assumptions (A-$1*$) of $K$: $\Sset_{K}$ and $\Sset_{K}'$ are linear independent and satisfy conditions (C-$1*$).
\item Assumptions (A-PDK) of $K$: $\Lambda_K$ and $\Eset_{K}$ satisfy conditions (C-PDK).

\indexspace

\item Assumptions (A-ELR) of $L$ and $R$: $R$ is convex and strictly increasing, and $L$ satisfies that $L(\vx,y,\cdot)$ is convex for any fixed $\vx\in\Domain$ and any fixed $y\in\RR$.
\item Assumptions (A-GLR) of $L$ and $R$: $R$ is convex, strictly increasing, and continuously differentiable, and $L$ satisfies that $L(\vx,y,\cdot)$ is convex for any fixed $\vx\in\Domain$ and any fixed $y\in\RR$, and the function $H$ driven by $L$ (see equation~\eqref{eq:H-Fun}) satisfies that $t\mapsto H(\vx,t)$ is differentiable for any fixed $\vx\in\Domain$ and $\vx\mapsto H(\vx,f(\vx))\in\Leb_1(\Domain),~\vx\mapsto H_t(\vx,f(\vx))\in\Leb_{1}(\Domain)$ whenever $f\in\Leb_{\infty}(\Domain)$.


\end{theindex}


\backmatter
\bibliographystyle{amsalpha}
\bibliography{RKBSRef08}

\newcommand{\etalchar}[1]{$^{#1}$}
\providecommand{\bysame}{\leavevmode\hbox to3em{\hrulefill}\thinspace}
\providecommand{\MR}{\relax\ifhmode\unskip\space\fi MR }
\providecommand{\MRhref}[2]{%
  \href{http://www.ams.org/mathscinet-getitem?mr=#1}{#2}
}
\providecommand{\href}[2]{#2}
\begin{thebibliography}{DLDMV00}

\bibitem[AMP09]{ArgyriouMicchelliPontil2009}
A.~Argyriou, C.A. Micchelli, and M.~Pontil, \emph{{W}hen is there a representer
  theorem? {V}ector versus matrix regularizers}, J. Mach. Learn. Res.
  \textbf{10} (2009), 2507--2529.

\bibitem[AMP10]{ArgyriouMicchelliPontil2010}
\bysame, \emph{{O}n spectral learning}, J. Mach. Learn. Res. \textbf{11}
  (2010), 935--953.

\bibitem[Aro50]{Aronszajn1950}
N.~Aronszajn, \emph{Theory of reproducing kernels}, Trans. Amer. Math. Soc.
  \textbf{68} (1950), 337--404.

\bibitem[BDE09]{BrucksteinDonohoElad2009}
A.M. Bruckstein, D.L. Donoho, and M.~Elad, \emph{From sparse solutions of
  systems of equations to sparse modeling of signals and images}, SIAM Rev.
  \textbf{51} (2009), no.~1, 34--81.

\bibitem[BDF07]{BioucasFigueiredo2007}
J.M. Bioucas-Dias and M.A.T. Figueiredo, \emph{A new twist: two-step iterative
  shrinkage/thresholding algorithms for image restoration}, IEEE Trans. I
  \textbf{16} (2007), 2992--3004.

\bibitem[BGV92]{BoserGuyonVapnik1992}
B.E. Boser, I.~Guyon, and V.~Vapnik, \emph{A training algorithm for optimal
  margin classifiers}, The Fifth Annual ACM Workshop on Computational Learning
  Theory (Pittsburgh, PA) (D.~Haussler, ed.), ACM Press, 1992, pp.~144--152.

\bibitem[BR77]{BruckRonald1997}
J.~Bruck and E.~Ronald, \emph{On the weak convergence of an ergodic iteration
  for the solution of variational inequalities for monotone operators in
  {H}ilbert space}, J. Math. Anal. Appl. \textbf{61} (1977), no.~1, 159--164.

\bibitem[BT09]{BeckTeboulle2009}
A.~Beck and M.~Teboulle, \emph{A fast iterative shrinkage-thresholding
  algorithm for linear inverse problems}, SIAM J. Imaging Sci. \textbf{2}
  (2009), no.~1, 183--202.

\bibitem[BTA04]{BerlinetThomas-Agnan2004}
A.~Berlinet and C.~Thomas-Agnan, \emph{Reproducing {K}ernel {H}ilbert {S}paces
  in {P}robability and {S}tatistics}, Kluwer Academic Publishers, Boston, MA,
  2004.

\bibitem[Buh03]{Buhmann2003}
M.D. Buhmann, \emph{Radial {B}asis {F}unctions: {T}heory and
  {I}mplementations}, Cambridge University Press, Cambridge, 2003.

\bibitem[CDVLL98]{ChambolleDeVoreLeeLucier1998}
A.~Chambolle, R.A. De-Vore, N.Y. Lee, and B.J. Lucier, \emph{Nonlinear wavelet
  image processing: variational problems, compression, and noise removal
  through wavelet shrinkage}, IEEE Trans. Image Process. \textbf{7} (1998),
  319--335.

\bibitem[CMX15]{ChenMicchelliXu2015}
Z.~Chen, C.A. Micchelli, and Y.~Xu, \emph{Multiscale methods for {F}redholm
  integral equations}, Cambridge University Press, Cambridge, 2015.

\bibitem[CRT06]{CandesRombergTao2006}
E.J. Cand{\`e}s, J.K. Romberg, and T.~Tao, \emph{Stable signal recovery from
  incomplete and inaccurate measurements}, Comm. Pure Appl. Math. \textbf{59}
  (2006), no.~8, 1207--1223.

\bibitem[CS02]{CuckerSmale2002}
F.~Cucker and S.~Smale, \emph{On the mathematical foundations of learning},
  Bull. Amer. Math. Soc. (N.S.) \textbf{39} (2002), no.~1, 1--49 (electronic).

\bibitem[CV95]{CortesVapnik1995}
C.~Cortes and V.~Vapnik, \emph{Support vector networks}, Mach. Learn.
  \textbf{20} (1995), 273--297.

\bibitem[DDDM04]{DaubechiesDefriseMol2004}
I.~Daubechies, M.~Defrise, and C.~De-Mol, \emph{An iterative thresholding
  algorithm for linear inverse problems with a sparsity constraint}, Comm. Pure
  Appl. Math. \textbf{57} (2004), no.~11, 1413--1457.

\bibitem[DE03]{DonohoElad2003}
D.L. Donoho and M.~Elad, \emph{Optimally sparse representation in general
  (nonorthogonal) dictionaries via {$l^1$} minimization}, Proc. Natl. Acad.
  Sci. USA \textbf{100} (2003), no.~5, 2197--2202 (electronic).

\bibitem[DLDMV00]{LathauwerMoorVandewalle2000}
L.~De-Lathauwer, B.~De-Moor, and J.~Vandewalle, \emph{A multilinear singular
  value decomposition}, SIAM J. Matrix Anal. Appl. \textbf{21} (2000), no.~4,
  1253--1278 (electronic).

\bibitem[Duc77]{Duchon1977}
J.~Duchon, \emph{Splines minimizing rotation-invariant semi-norms in {S}obolev
  spaces}, Constructive Theory of Functions of Several Variables (Berlin)
  (W.~Schempp and K.~Zeller, eds.), Springer, 1977, pp.~85--100.

\bibitem[Duf01]{Duffy2001}
D.G. Duffy, \emph{Green's {F}unctions with {A}pplications}, Chapman \&
  Hall/CRC, Boca Raton, FL, 2001.

\bibitem[EF08]{EricksonFasshauer2008}
J.F. Erickson and G.E. Fasshauer, \emph{Generalized native spaces},
  Approximation theory {XII}: {S}an {A}ntonio 2007 (M.~Neamtu and L.L.
  Schumaker, eds.), Mod. Methods Math., Nashboro Press, Brentwood, TN, 2008,
  pp.~133--142.

\bibitem[ET83]{EkelandTurnbull1983}
I.~Ekeland and T.~Turnbull, \emph{Infinite-dimensional optimization and
  convexity}, University of Chicago Press, Chicago, IL, 1983.

\bibitem[Fas07]{Fasshauer2007}
G.E. Fasshauer, \emph{Meshfree {A}pproximation {M}ethods with {\sc{matlab}}},
  World Scientific Publishing Co. Pte. Ltd., Hackensack, NJ, 2007.

\bibitem[FHW12a]{FasshauerHickernellWozniakowski2012MCQMC}
G.E. Fasshauer, F.J. Hickernell, and H.~Wo{\'z}niakowski, \emph{Average case
  approximation: convergence and tractability of {G}aussian kernels}, Monte
  Carlo and Quasi-Monte Carlo Methods 2010 (Warszawa) (L.~Plaskota and
  H.~Wo{\'z}niakowski, eds.), Springer-Verlag, 2012, pp.~329--344.

\bibitem[FHW12b]{FasshauerHickernellWozniakowski2012}
\bysame, \emph{On dimension-independent rates of convergence for function
  approximation with {G}aussian kernels}, SIAM J. Numer. Anal. \textbf{50}
  (2012), no.~1, 247--271.

\bibitem[FHY15]{FasshauerHickernellYe2013}
G.E. Fasshauer, F.J. Hickernell, and Q.~Ye, \emph{Solving support vector
  machines in reproducing kernel {B}anach spaces with positive definite
  functions}, Appl. Comput. Harmon. Anal. \textbf{38} (2015), 115--139.

\bibitem[FLF11]{FornbergLarssonFlyer2011}
B.~Fornberg, E.~Larsson, and N.~Flyer, \emph{Stable computations with
  {G}aussian radial basis functions}, SIAM J. Sci. Comput. \textbf{33} (2011),
  no.~2, 869--892.

\bibitem[FM12]{FasshauerMcCourt2012}
G.E. Fasshauer and M.J. McCourt, \emph{Stable evaluation of {G}aussian radial
  basis function interpolants}, SIAM J. Sci. Comput. \textbf{34} (2012), no.~2,
  A737--A762.

\bibitem[FN03]{FigueiredoNowak2003}
M.A.T. Figueiredo and R.D. Nowak, \emph{An {EM} algorithm for wavelet-based
  image restoration}, IEEE Trans. Image Process. \textbf{12} (2003), 906--916.

\bibitem[FY11]{FasshauerYe2011Dist}
G.E. Fasshauer and Q.~Ye, \emph{Reproducing kernels of generalized {S}obolev
  spaces via a {G}reen function approach with distributional operators}, Numer.
  Math. \textbf{119} (2011), no.~3, 585--611.

\bibitem[FY13]{FasshauerYe2011DiffBound}
\bysame, \emph{Reproducing kernels of {S}obolev spaces via a green kernel
  approach with differential operators and boundary operators}, Adv. Comput.
  Math. \textbf{38} (2013), no.~4, 891--921.

\bibitem[Gau09]{Gauss1809}
K.F. Gauss, \emph{Theory of {M}otion of the {H}eavenly {B}odies {M}oving
  {A}bout the {S}un in {C}onic {S}ections: {A} {T}ranslation of {T}heoria
  {M}otus}, Hamburg: Friedrich Perthes and I.H. Besser, 1809.

\bibitem[Gil67]{Giles1967}
J.R. Giles, \emph{Classes of semi-inner-product spaces}, Trans. Amer. Math.
  Soc. \textbf{129} (1967), 436--446.

\bibitem[GP13]{GarciaPortal2012}
A.G. Garc\'ia and A.~Portal, \emph{Sampling in reproducing kernel {B}anach
  spaces}, Mediterr. J. Math. \textbf{10} (2013), 1401--1417.

\bibitem[HTF09]{HastieTibshiraniFriedman2009}
T.~Hastie, R.~Tibshirani, and J.~Friedman, \emph{The {E}lements of
  {S}tatistical {L}earning}, second ed., Springer-Verlag, New York, 2009.

\bibitem[Jam47]{James1947}
R.C. James, \emph{Orthogonality and linear functionals in normed linear
  spaces}, Trans. Amer. Math. Soc. \textbf{61} (1947), 265--292.

\bibitem[Kar70]{Karlovitz1970}
L.A. Karlovitz, \emph{Construction of nearest points in the $l_p$, $p$ even,
  and $l_{\infty}$ norms}, J. Approx. Theory \textbf{3} (1970), no.~2,
  123--127.

\bibitem[KB09]{KoldaBader2009}
T.G. Kolda and B.W. Bader, \emph{Tensor decompositions and applications}, SIAM
  Rev. \textbf{51} (2009), no.~3, 455--500.

\bibitem[Kre89]{Kress1989}
R.~Kress, \emph{Linear {I}ntegral {E}quations}, Springer-Verlag, Berlin, 1989.

\bibitem[KW70]{KimeldorfWahba1970}
G.S. Kimeldorf and G.~Wahba, \emph{A correspondence between {B}ayesian
  estimation on stochastic processes and smoothing by splines}, Ann. Math.
  Statist. \textbf{41} (1970), 495--502.

\bibitem[LMSX12]{LiMicchelliShenXu2012}
Q.~Li, C.A. Micchelli, L.~Shen, and Y.~Xu, \emph{A proximity algorithm
  accelerated by {G}auss-{S}eidel iterations for {L}1/{TV} denoising models},
  Inverse Problems \textbf{28} (2012), no.~9, 095003, 20.

\bibitem[LSXZ15]{LiShenXuZhang2015}
Q.~Li, L.~Shen, Y.~Xu, and N.~Zhang, \emph{Multi-step fixed-point proximity
  algorithms for solving a class of optimization problems arising from image
  processing}, Adv. Comput. Math. \textbf{41} (2015), no.~2, 387--422.

\bibitem[Lum61]{Lumer1961}
G.~Lumer, \emph{Semi-inner-product spaces}, Trans. Amer. Math. Soc.
  \textbf{100} (1961), 29--43.

\bibitem[Meg98]{Megginson1998}
R.E. Megginson, \emph{An {I}ntroduction to {B}anach {S}pace {T}heory},
  Springer-Verlag, New York, 1998.

\bibitem[Mes62]{Meschkowski1962}
H.~Meschkowski, \emph{Hilbertsche {R}\"{a}ume mit {K}ernfunktionen},
  Springer-Verlag, Berlin, 1962.

\bibitem[MP04]{MicchelliPontil2004}
C.A. Micchelli and M.~Pontil, \emph{A function representation for learning in
  {B}anach spaces}, Learning theory (J.~Shawe-Taylor and Y.~Singer, eds.),
  Springer-Verlag, Berlin, 2004, pp.~255--269.

\bibitem[MSX11]{MicchelliShenXu2011}
C.A. Micchelli, L.~Shen, and Y.~Xu, \emph{Proximity algorithms for image
  models: denoising}, Inverse Problems \textbf{27} (2011), no.~4, 045009, 30.

\bibitem[MSXZ13]{MicchelliShenXuZeng2013}
C.A. Micchelli, L.~Shen, Y.~Xu, and X.~Zeng, \emph{Proximity algorithms for the
  {L}1/{TV} image denoising model}, Adv. Comput. Math. \textbf{38} (2013),
  no.~2, 401--426.

\bibitem[MXZ06]{MicchelliXuZhang2006}
C.A. Micchelli, Y.~Xu, and H.~Zhang, \emph{Universal kernels}, J. Mach. Learn.
  Res. \textbf{7} (2006), 2651--2667.

\bibitem[Pas79]{Passty1979}
G.B. Passty, \emph{Ergodic convergence to a zero of the sum of monotone
  operators in {H}ilbert space}, J. Math. Anal. Appl. \textbf{72} (1979),
  no.~2, 383--390.

\bibitem[PB14]{ParikhBoyd2014}
N.~Parikh and S.~Boyd, \emph{Proximal algorithms}, Found. Trends Optim.
  \textbf{1} (2014), no.~3, 127--239.

\bibitem[REC{\etalchar{+}}03]{RaoEnganCotterPalmerDelgado2003}
B.D. Rao, K.~Engan, S.F. Cotter, J.~Palmer, and K.~Kreutz-Delgado, \emph{Subset
  selection in noise based on diversity measure minimization}, IEEE Trans.
  Signal Process. \textbf{51} (2003), 760--770.

\bibitem[Rie07]{Riesz1907}
F.~Riesz, \emph{{S}ur une esp\`ece de g\'eom\'etrie analytique des syst\`emes
  de fonctions sommables}, C. R. Acad. Sci. Paris \textbf{144} (1907),
  1409--1411.

\bibitem[Rie09]{Riesz1909}
\bysame, \emph{{S}ur les op\'erations fonctionnelles lin\'eaires}, C. R. Acad.
  Sci. Paris \textbf{149} (1909), 974--977.

\bibitem[RKD99]{RaoDelgado1999}
B.D. Rao and K.~Kreutz-Delgado, \emph{An affine scaling methodology for best
  basis selection}, IEEE Trans. Signal Process. \textbf{47} (1999), 187--200.

\bibitem[Rud87]{Rudin1987}
W.~Rudin, \emph{Real and {C}omplex {A}nalysis}, third ed., McGraw-Hill Book
  Co., New York, 1987.

\bibitem[SC08]{SteinwartChristmann2008}
I.~Steinwart and A.~Christmann, \emph{Support {V}ector {M}achines},
  Springer-Verlag, New York, 2008.

\bibitem[SFL11]{SriperumbudurFukumizuLanckriet2011}
B.K. Sriperumbudur, K.~Fukumizu, and G.R.G. Lanckriet, \emph{Learning in
  {H}ilbert vs.{B}anach spaces: a measure embedding viewpoint.}, Advances in
  Neural Information Processing Systems (Cambridge) (J.~Shawe-Taylor, R.S.
  Zemel, P.L. Bartlett, F.~Pereira, and K.Q. Weinberger, eds.), MIT Press,
  2011, pp.~1773--1781.

\bibitem[SHS01]{ScholkopfHerbrichSmola2001}
B.~Sch{\"o}lkopf, R.~Herbrich, and A.J. Smola, \emph{{A} generalized
  representer theorem}, Computational learning theory ({A}msterdam)
  (D.~Helmbold and B.~Williamson, eds.), Springer-Verlag, Berlin, 2001,
  pp.~416--426.

\bibitem[SS02]{ScholkopfSmola2002}
B.~Sch{\"o}lkopf and A.J. Smola, \emph{{L}earning with {K}ernels}, MIT Press,
  Cambridge, Massachusetts, 2002.

\bibitem[Ste03]{Steinwart2003}
I.~Steinwart, \emph{{S}parseness of support vector machines}, J. Mach. Learn.
  Res. \textbf{4} (2003), 1071--1105.

\bibitem[SW06]{SchabackWendland2006}
R.~Schaback and H.~Wendland, \emph{Kernel techniques: from machine learning to
  meshless methods}, Acta Numer. \textbf{15} (2006), 543--639.

\bibitem[SX10]{SongXu2010}
G.~Song and Y.~Xu, \emph{Approximation of high-dimensional kernel matrices by
  multilevel circulant matrices}, J. Complexity \textbf{26} (2010), no.~4,
  375--405.

\bibitem[SZ04]{SmaleZhou2004}
S.~Smale and D.-X. Zhou, \emph{Shannon sampling and function reconstruction
  from point values}, Bull. Amer. Math. Soc. (N.S.) \textbf{41} (2004), no.~3,
  279--305 (electronic).

\bibitem[SZ11]{SongZhang2011}
G.~Song and H.~Zhang, \emph{Reproducing kernel {B}anach spaces with the
  {$\ell^1$} norm {II}: {E}rror analysis for regularized least square
  regression}, Neural Comput. \textbf{23} (2011), no.~10, 2713--2729.

\bibitem[SZH13]{SongZhangHickernell2013}
G.~Song, H.~Zhang, and F.J. Hickernell, \emph{Reproducing kernel {B}anach
  spaces with the {$\ell^1$} norm}, Appl. Comput. Harmon. Anal. \textbf{34}
  (2013), no.~1, 96--116.

\bibitem[VHK13]{VillmannHaaseKastner2013}
T.~Villmann, S.~Haase, and M.~K\"{a}stner, \emph{Gradient based learning in
  vector quantization using differentiable kernels}, Advances in
  Self-Organizing Maps (Santiago, Chile) (P.A. Est\'{e}vez, J.C. Pr\'{i}ncipe,
  and P.~Zegers, eds.), Springer-Verlag, 2013, pp.~193--204.

\bibitem[VL63]{VapnikLerner1963}
V.N. Vapnik and A.~Lerner, \emph{Pattern recognition using generalized portrait
  method}, Auto. Rmote Control \textbf{24} (1963), 774--780.

\bibitem[Wah90]{Wahba1990}
G.~Wahba, \emph{Spline {M}odels for {O}bservational {D}ata}, Society for
  Industrial and Applied Mathematics (SIAM), Philadelphia, PA, 1990.

\bibitem[Wen05]{Wendland2005}
H.~Wendland, \emph{Scattered {D}ata {A}pproximation}, Cambridge University
  Press, Cambridge, 2005.

\bibitem[Ye12]{YeThesis2012}
Q.~Ye, \emph{Analyzing reproducing kernel approximation method via a {G}reen
  function approach}, Ph.D. thesis, Illinois Institute of Technology, Chicago,
  2012.

\bibitem[Ye14]{Ye2014RKBS}
Q.~Ye, \emph{{S}upport vector machines in reproducing kernel {H}ilbert spaces
  versus {B}anach spaces}, Approximation Theory XIV: San Antonio 2013 (New
  York) (G.E. Fasshauer and L.L. Schumaker, eds.), Springer-Verlag, 2014,
  pp.~377--395.

\bibitem[Zho03]{Zhou2003}
D.-X. Zhou, \emph{Capacity of reproducing kernel spaces in learning theory},
  IEEE Trans. Inform. Theory \textbf{49} (2003), no.~7, 1743--1752.

\bibitem[Zwi09]{Zwicknagl2009}
B.~Zwicknagl, \emph{Power series kernels}, Constr. Approx. \textbf{29} (2009),
  no.~1, 61--84.

\bibitem[ZXZ09]{ZhangXuZhang2009}
H.~Zhang, Y.~Xu, and J.~Zhang, \emph{Reproducing kernel {B}anach spaces for
  machine learning}, J. Mach. Learn. Res. \textbf{10} (2009), 2741--2775.

\bibitem[ZZ12]{ZhangZhang2012}
H.~Zhang and J.~Zhang, \emph{Regularized learning in {B}anach spaces as an
  optimization problem: representer theorems}, J. Global Optim. \textbf{54}
  (2012), no.~2, 235--250.

\end{thebibliography}
\printindex

\end{document}